\newtheorem{theorem}{Theorem}
\theoremstyle{definition}
\newtheorem{proposition}[theorem]{Proposition}
\newtheorem{lemma}[theorem]{Lemma}
\newtheorem{corollary}[theorem]{Corollary}
\newtheorem{conjecture}[theorem]{Conjecture}
\def\BN{\mathbbm N}
\def\BZ{\mathbbm Z}
\def\BQ{\mathbbm Q}
\def\BR{\mathbbm R}
\def\BC{\mathbbm C}
\def\calI{\mathcal I}
\def\IZ{\mathbb{Z}}
\def\calP{\mathcal P}
\def\calI{\mathcal I}
\def\calW{\mathcal W}
\def\calS{\mathcal S}
\def\s{\sigma}
\def\SL{\mathrm{SL}}
\def\PSL{\mathrm{PSL}}
\def\GL{\mathrm{GL}}
\def\tq{\tilde{q}}
\def\tx{\tilde{x}}
\def\ty{\tilde{y}}
\def\tw{\tilde{w}}
\def\={\;=\;}
\def\+{\,+\,}
\def\-{\,-\,}
\def\be{\begin{equation}}
\def\ee{\end{equation}}
\def\v{\varepsilon}
\def\Li{\mathrm{Li}}
\def\th{\theta}
\def\ve{\varepsilon}
\def\bb{\mathsf{b}}
\def\ri{\mathrm{i}}
\def\rd{\mathrm{d}}
\def\wt{\widetilde}
\def\tx{\tilde{x}}
\def\re{{\rm e}}
\def\ri{{\rm i}}
\def\rd{{\rm d}}
\def\Ahat{\widehat{A}}
\def\Bhat{\widehat{B}}
\def\Im{\mathrm{Im}}
\def\Re{\mathrm{Re}}
\def\diag{\mathrm{diag}}
\def\ub{u_{\mathsf{b}}}
\newenvironment{psmall}{\left(\begin{smallmatrix}}{\end{smallmatrix}\right)}
\def\mc{\mathcal}
\def\md{\mathbf}
\def\mf{\mathfrak}
\def\mfS{\mathfrak{S}}
\def\ms{\mathsf}
\def\cO{O}
\def\IC{\mathbb{C}}
\def\IN{\mathbb{N}}
\def\IR{\mathbb{R}}
\def\IZ{\mathbb{Z}}
\def\cJ{\mathcal{J}}
\def\cH{\mathcal{H}}
\def\cW{\mathcal{W}}
\newcommand{\knot}[1]{\md{#1}}
\newcommand{\wh}[1]{\widehat{#1}}
\newcommand{\nn}{\nonumber \\}
\begin{document}
\title[Peacock patterns and resurgence in complex Chern-Simons
theory]{Peacock patterns and resurgence in
  complex Chern-Simons theory} \author{Stavros Garoufalidis}
\address{% Max Planck Institute for Mathematics \\
         % Vivatsgasse 7, 53111 Bonn, GERMANY \newline
  International Center for Mathematics, Department of Mathematics \\
  Southern University of Science and Technology \\
  Shenzhen, China \newline
  {\tt \url{http://people.mpim-bonn.mpg.de/stavros}}}
\email{stavros@mpim-bonn.mpg.de}
\author{Jie Gu}
\address{D\'epartement de Physique Th\'eorique, Universit\'e de Gen\`eve \\
Universit\'e de Gen\`eve, 1211 Gen\`eve 4, Switzerland}
\email{jie.gu@unige.ch}
\author{Marcos Mari\~no}
\address{Section de Math\'ematiques et D\'epartement de Physique Th\'eorique\\
Universit\'e de Gen\`eve, 1211 Gen\`eve 4, Switzerland  \newline
  {\tt \url{http://www.marcosmarino.net}}}
\email{marcos.marino@unige.ch}

% \thanks{S.G. was supported in part by grant DMS-0805078 of the 
% US National Science Foundation.}
% {\em 2010 Mathematics Subject Classification:} 
% Primary 57N10. Secondary 57M25, 33F10, 39A13.\\
% {\em Key words and phrases:}  $q$-series, Nahm sums, $\knot{4}_1$, 
% state integrals, the 3D index, quantum
% dilogarithm, asymptotics, Kashaev invariant, modular forms.

\thanks{
{\em Key words and phrases:}
Chern-Simons theory, holomorphic blocks, state-integrals, knots, 3-manifolds,
resurgence, perturbative series, Borel resummation, Stokes automorphisms,
Stokes constants, $q$-holonomic modules, $q$-difference equations, DT-invariants,
BPS states, peacocks, meromorphic quantum Jacobi forms.
}

\date{7 April 2021}%\today }
%\dedicatory{}

\begin{abstract}
  The partition function of complex Chern-Simons theory on a
  3-manifold with torus boundary reduces to a finite dimensional
  state-integral which is a holomorphic function of a complexified
  Planck's constant $\tau$ in the complex cut plane and an entire
  function of a complex parameter $u$. This gives rise to a vector of
  factorially divergent perturbative formal power series whose Stokes
  rays form a peacock-like pattern in the complex plane.

  We conjecture that these perturbative series are resurgent, their
  trans-series involve two non-perturbative variables, their Stokes
  automorphism satisfies a unique factorization property and that it
  is given explicitly in terms of a fundamental matrix solution to a (dual)
  linear $q$-difference equation. We further conjecture that a
  distinguished entry of the Stokes automorphism matrix is
  the 3D-index of Dimofte--Gaiotto--Gukov. We provide proofs of our
  statements regarding the $q$-difference equations and their
  properties of their fundamental solutions and illustrate our
  conjectures regarding the Stokes matrices with numerical calculations
  for the two simplest hyperbolic $\knot{4}_1$ and $\knot{5}_2$ knots.
\end{abstract}

\maketitle

{\footnotesize
\tableofcontents
}

%%%%%%%%%%%%%%%%%%%%%%%%%%%%%%%%%%%%%%%%%%%%%%%%%%%%%%%%%%%%%%%%%%%%%%%%%%%%
%%%%%%%%%%%%%%%%%%%%%%%%%%%%%%%%%%%%%%%%%%%%%%%%%%%%%%%%%%%%%%%%%%%%%%%%%%%%

\section{Introduction}
\label{sec.intro}

\subsection{Chern--Simons theory with compact and complex
  gauge group}
\label{sub.intro}

Chern--Simons gauge theory, introduced by Witten in his seminal
paper~\cite{witten-jones} as a quantum field theory proposal of the
Jones polynomial~\cite{Jones}, remains one of the most fascinating
quantum field theories. It gives a powerful framework to study the
quantum topology of knots and three-manifolds, and at the same time it
provides a rich yet tractable model to explore general aspects of
quantum field theories.

In~\cite{witten-jones}, Witten analyzed in detail Chern--Simons gauge
theory with a compact gauge group (such as $\mathrm{SU}(2)$). Its
partition function on a 3-manifold $M$ with torus boundary components
depends on a quantized version $k \in \BZ$ of Planck's constant (or
equivalently, on a complex root of unity $\re^{2\pi \ri k}$), as well
as a discrete color (a finite dimensional irreducible representation
of $\mathrm{SU}(2)$) per boundary component of $M$. A more powerful
Chern--Simons theory with complex gauge group (such as $\SL(2,\BC)$)
was introduced by Witten~\cite{witten:complexCS} and developed
extensively by Gukov~\cite{gukov}.  A key feature of complex
Chern--Simons theory is that the partition function $Z_M(u;\tau)$ for
a 3-manifold $M$ with torus boundary components depends analytically
on a complex parameter $\tau$ (where $\tau=1/k$ in the Chern--Simons
theory with compact gauge group) as well as on a complex parameter $u$
per each boundary component of $M$ that plays the role of the holonomy
of a peripheral curve.  The analytic dependence of $Z_M(u;\tau)$ on
the parameters $u$ and $\tau$ allows one to formulate questions of
complex analysis and complex geometry which would be difficult, or
impossible, to do in Chern--Simons theory with compact gauge group.

There is a key difference between Chern--Simons theory with compact
versus complex gauge group: the former is an exactly solvable theory,
meaning that the partition function can be computed by a finite
state-sum, a consequence of the fact that it is a TQFT in 3
dimensions.  On the other hand, the situation with complex
Chern--Simons theory is more mysterious. For reasons that are not
entirely understood, the partition function $Z_M(u;\tau)$ for
manifolds with torus boundary components reduces to a
finite-dimensional integral (the so called-state integral) whose
integrand is a product of Faddeev's quantum dilogarithm
functions~\cite{Faddeev}, assembled out of an ideal triangulation of
the manifold. This was the approach taken by
Andersen-Kashaev~\cite{AK, AK-review} and Dimofte~\cite{dimofte-rev}
following prior ideas of~\cite{Hikami,DGLZ}. Focusing for simplicity
on the case of a 3-manifold with a single torus boundary component
(such as the complement of a hyperbolic knot in $S^3$), the
state-integral $Z_M(u;\tau)$ is a holomorphic function of
$\tau \in \BC'=\BC\setminus (-\infty,0]$ and 
$u \in \BC$ that satisfies a pair of linear $q$-difference equations.
The existence of these equations for a state-integral follows from the
closure properties of Zeilberger's theory of $q$-holonomic functions
and the the quasi-periodicity properties of Faddeev's quantum
dilogarithm, in much the same way as the $q$-holonomicity of the
colored Jones polynomial of a knot follows from a state-sum
formula~\cite{GL}. In fact, it is conjectured that the linear
$q$-difference equation satisfied by the colored Jones polynomial of a
knot coincides with the linear $q$-difference equations of
$Z_M(u;\tau)$ (see e.g.~\cite{AM} and Sections~\ref{sec.41}
and~\ref{sec.52} below for examples).

\subsection{Resurgence and the Stokes automorphism}
\label{sub.wall}
  
The global function $Z_M(u;\tau)$ gives rise to a vector
$\Phi(x;\tau)$ of perturbative series in $\tau$ whose coefficients are
meromorphic functions of $u$. These series are typically factorially
divergent and a key question is a description of the analytic
continuation of their Borel transform in Borel plane, their
trans-series and their Stokes automorphisms. This is a typical
question in perturbative quantum field theory where resurgence aims to
reproduce analytic functions from factorially divergent series (for an
introduction to resurgence, see for instance~\cite{msauzin,abs,mmreview,mmbook}),
and where Chern--Simons theory with a compact or complex gauge group is an
excellent case to analyze. Some aspects of resurgence in Chern--Simons
theory were studied
in~\cite{Ga:resurgence,CG,mmreview,gmp,gh-res,GZ:qseries,GZ:kashaev}.  The
multi-valuedness of the complex Chern-Simons action dictates that the
transseries are assembled out of monomials in $\tx$ and $\tq$ where
$\tq=\re^{-2 \pi \ri/\tau}$ and $\tx=\re^{u/\tau}$.

Our discoveries are summarized as follows:
\begin{itemize}
\item The singularities of the series $\Phi(x;\tau)$ in Borel
  plane are arranged in horizontal lines $2\pi \ri$ apart, and within
  these lines in finitely many points $\log x$ apart. This defines a
  collection of Stokes lines in a peacock-like pattern (see
  Figure~\ref{fig:stokes_rays}) whose corresponding Stokes
  automorphisms satisfy a unique factorization property with integer
  Stokes constants.
\item The Stokes automorphism $\sf{S}(x;q)$ along a half-plane is
  a fundamental matrix solution to a (dual) linear q-difference
  equation, hence fully computable.
\item The function $Z_M(u;\tau)$ is one entry of a matrix-valued collection
  of descendant partition functions which are a fundamental solution
  to a $q$-holonomic system in two variables.
\end{itemize}

\begin{figure}[htpb!]
  \centering
  \includegraphics[width=0.3\linewidth]{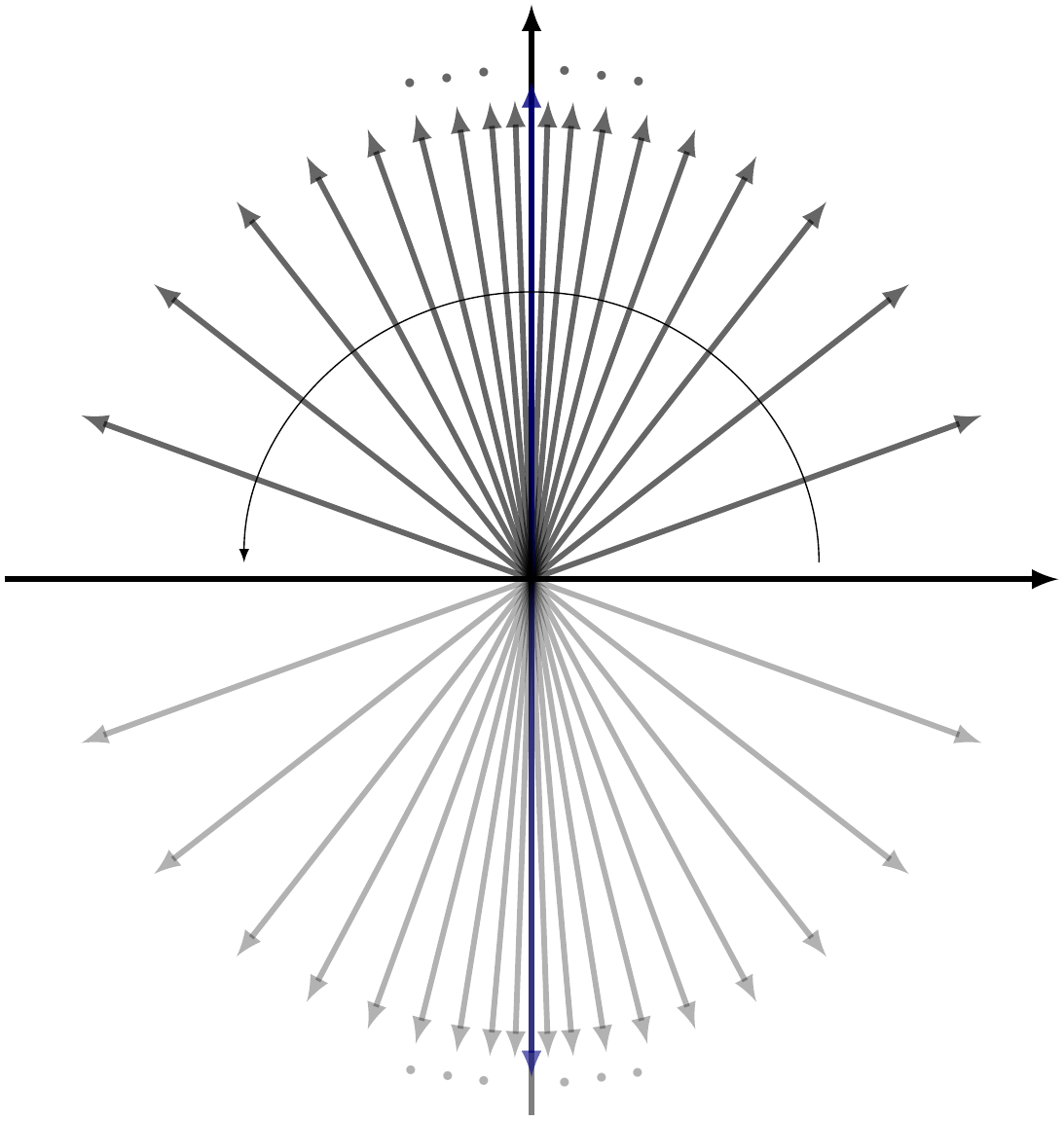}
  \caption{A peacock arrangement of Stokes rays in the complex Borel
    plane.}
  \label{fig:stokes_rays}
\end{figure}
  
The arrangement of the singularities in Borel plane
is reminiscent of a ``stability datum'' of
Kontsevich-Soibelman~\cite{KS,KS:stability, KS:wall-crossing} where the
corresponding integers are often called DT-invariants or BPS degeneracies. 
The Stokes automorphisms along half-planes are analogous to the spectrum
generators in
Gaiotto-Moore-Neitzke~\cite{gmn, gmn2, gmn3}. Our integers are locally
constant functions of a complex parameter $x$ and their jumping along
a wall-crossing will be the topic of a subsequent publication.

Our paper gives a concrete realization of these abstract ideas of
perturbative series and their resurgence, Stokes automorphisms and
their wall-crossing formulas for the case of complex Chern--Simons
theory, and illustrate our results with the 3-manifolds of the two
simplest hyperbolic knot complements, the complements of the
$\knot{4}_1$ and the $\knot{5}_2$ knots.

\subsection{A $q$-holonomic module of the partition function
  and its descendants}
\label{sub.results}

In this section we discuss a $q$-holonomic module associated to the
partition function and its descendants. This module and its
fundamental solutions are crucial to our exact computation of the
Stokes matrices in Section~\ref{sub.pert} below. One advantage of
introducing this module before we discuss resurgence of perturbative
series is that the former has been established mathematically in many
cases, whereas the latter remains a mathematical challenge.

We begin our discussion with a factorization of the state-integral
\begin{equation}
  \label{Zfacx}
  Z_M(\ub;\tau) = B(\tx,\tq^{-1})^T \Delta(\tau) B(x;q),
  \qquad (\tau \in \BC\setminus\BR)
\end{equation}
where
\begin{equation}
  \label{ub}
  \ub = \frac{u}{2\pi \bb} \,
\end{equation}
(this rescaling is dictated by the asymptotics of Faddeev's quantum
dilogarithm), $\Delta(\tau)$ is a diagonal matrix with diagonal
entries a 24-th root of unity times an integer power of
$\re^{\frac{\pi \ri}{24}(\tau+\tau^{-1})}$, 
$B(x;q)=(B^{(j)}(x;q))_{j=1}^r$ is a vector of holomorphic blocks, and
\begin{equation}
  \label{xq}
  q=\re^{2\pi \ri \tau}, \qquad \tq= \re^{-2 \pi \ri/\tau}, \qquad
  x=\re^{u}, \qquad \tx = \re^{u/\tau}, \qquad \tau=\bb^2 \,.
%  q=\re^{2\pi \ri \bb^2}, \qquad \tq= \re^{-2 \pi \ri \bb^{-2}}, \qquad
%  x=\re^{2 \pi \bb u}, \qquad \tx = \re^{2 \pi \bb^{-1} u} \,.
\end{equation}
The above notation is consistent with the literature in modular forms
and Jacobi forms~\cite{EZ} and indicates that $u \in \BC$ can be thought of
as a Jacobi variable. The
factorization~\eqref{Zfacx} was first noted in a related context in
\cite{pas} and further developed in complex Chern--Simons theory
in~\cite{Beem,dimofte-state}. We find that this factorization persists
to descendant state-integrals parametrized by a pair of (Jacobi-like)
variables $m$ and $\mu$ (see Equations \eqref{Z41x-desc} and
\eqref{Z52x-desc} for the definition of descendant state-integrals for
the knots $\knot{4}_1$, $\knot{5}_2$) 

\begin{equation}
  \label{ZM-desc}
  Z_{M,m,\mu}(\ub;\tau) 
  =
  (-1)^{m+\mu} q^{m/2}\tq^{\mu/2} B_{-\mu}(\tx;\tq^{-1})^T
  \Delta(\tau) B_m(x;q), \qquad (m, \mu \in \BZ)
\end{equation}
where $Z_{M,0,0}=Z_M$. The holomorphic blocks determine a matrix
$W_m(x;q)$ defined by
\begin{equation}
  \label{Wm}
  W_m(x;q) =
  \begin{pmatrix}
    B^{(1)}_{m}(x;q) & \ldots & B^{(r)}_{m}(x;q) \\
    \vdots & & \vdots \\
    B^{(1)}_{m+r-1}(x;q) & \ldots & B^{(r)}_{m+r-1}(x;q)
  \end{pmatrix}
\end{equation}
with the following properties

\begin{itemize}
\item[(a)] The entries of $W_m(x;q)$ are holomorphic functions of
  $|q| \neq 1$, meromorphic functions of $x \in \BC^*$ with poles in
  $x \in q^\BZ$ of order at most $r$, and have Taylor series
  expansions in $(1-x)^{-r}\BZ[x^\pm][[q^{\frac{1}{2}}]]$ whose
  monomials $x^k q^{n/2}$ satisfy $n= O(k^2)$.
\end{itemize}
In other words, the support of the monomials in $x$ and $q$
in $(1-x)^r W_m(x;q)$ is similar to the one of Jacobi forms (in their
holomorphic version of Eichler-Zagier~\cite[Eqn.(3)]{EZ} or the
meromorphic version from Zweger's thesis~\cite[Chpt.3]{Zwegers:thesis})
and of the admissible functions of Kontsevich-Soibelman~\cite{KS}.

\begin{itemize}
\item[(b)] The matrix 
  \begin{equation}
    \label{Zfacx-desc}
    W_{m,\mu}(u;\tau) =
    W_{-\mu}(\tx;\tq^{-1}) \Delta(\tau) W_{m}(x;q)^T
  \end{equation}
  defined for $\tau=\BC\setminus\BR$, extends to a holomorphic
  function of $\tau \in \BC'$ and $u \in \BC$ for
  all integers $m$ and $\mu$.
\end{itemize}
More precisely, if we define the normalized descendant integral
\begin{equation}
  \label{Zr}
  z_{M,m,\mu}(u;\tau) =
  (-1)^{m+\mu}q^{-m/2}\tq^{-\mu/2}
  Z_{M,m,\mu}(u;\tau) \,,
\end{equation}
then $W_{m,\mu}(u;\tau) =(z_{m+i,\mu+j}(\ub;\tau))_{i,j=0}^{r-1}$.
The above statement is remarkable in two ways: (i) $W_m(x;q)$ is a
holomorphic function of $\tau \in \BC\setminus\BR$ that cannot be
extended holomorphically over the positive reals, yet
$W_{m,\mu}(u;\tau)$ holomorphically extends over the positive reals
and (ii) $W_m(x;q)$ is a meromorphic function of $u$ with
singularities, yet $W_{m,\mu}(u;\tau)$ is an entire function of $u$.
Property (ii) is common in quantum mechanics, where the wave-function
is often entire whereas its WKB expansion is singular at the turning
points. The same behavior is also observed in the case of open topological
strings in~\cite{Marino:exact}.

%\begin{remark}
%\label{rem.delta0}
%Even though the power series $\varphi(x,y;\tau)$ is ill-defined when
%$\delta(x,y)=0$, Theorem~\ref{thm.41a} implies that the state-integral
%$Z(u;\tau)$ and the holomorphic blocks make sense at those special
%points.  This phenomemon is common in WKB theory, where the wave
%function often entire whereas the WKB expansion is singular at the
%turning points, and was observed for instance the case of open
%topological strings in~\cite{Marino:exact}.
%\end{remark}

\begin{itemize}
\item[(c)] We have an orthogonality relation
  \begin{equation}
    \label{Wortho}
    W_{-1}(x;q) \, W_{-1}(x;q^{-1})^T \in \GL(r,\BZ[x^\pm]) \,.
  \end{equation}
\item[(d)] The columns of $W_m(x;q)$, as functions of $(x,m)$, form a
  $q$-holonomic module of rank $r$.
\end{itemize}
The factorization~\eqref{ZM-desc} and (d) implies that the
annihilator $\calI_M$ of $z_{M,m,\mu}(\ub;\tau)$ as a function of
$(x,m)$ coincides with the annihilator of $W_m(x;q)$.  The latter is a
left ideal in the Weyl algebra $\calW$ over $\BQ[q^\pm]$ generated by
the pairs $(S_x,x)$ and $(S_m,q^m)$ of $q$-commuting operators which
act on a function $f(x,m;q)$ by
\begin{align}
  (S_x f)(x,m;q) &= f(qx,m;q) & (x f)(x,m;q) &= xf(x,m;q) \\
  (S_m f)(x,m;q) &= f(x,m+1;q) & (q^m f)(x,m;q) &= q^m f(x,m;q) \,.
\end{align}
Properties (a)-(d) above define meromoprhic quantum Jacobi forms, a
concept which is further studied in~\cite{GMZ}.  Although the above
statements are largely conjectural for the partition function of
complex Chern--Simons theory, we have the following result (see
Theorems~\ref{thm.41a}, \ref{thm.41b}, \ref{thm.52a}, \ref{thm.52b}
below).
  
\begin{theorem}
  \label{thm.4152W}
  The above statements hold for the $\knot{4}_1$ and $\knot{5}_2$
  knots.
\end{theorem}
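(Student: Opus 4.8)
The plan is to reduce the four structural properties (a)--(d) to explicit computations with the Andersen--Kashaev state integrals of the two knots, where everything is accessible in closed form; the concrete verifications are the content of Theorems~\ref{thm.41a}--\ref{thm.52b} below. First I would write down, for each knot, the state integral $Z_M(\ub;\tau)$ as a low-dimensional integral of a ratio of Faddeev quantum dilogarithms $\Phih_\bb$ assembled from a fixed ideal triangulation ($\knot{4}_1$ is built from two ideal tetrahedra, $\knot{5}_2$ from three). For $\tau\in\BC\setminus\BR$ one closes the contour and sums residues at the poles of the integrand; this produces the holomorphic factorization~\eqref{Zfacx} and identifies the holomorphic blocks $B^{(j)}(x;q)$ explicitly as Nahm-type $q$-hypergeometric sums. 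Inserting the monomial factors that implement the integer shifts $m,\mu$ (the descendant integrands of~\eqref{Z41x-desc}, \eqref{Z52x-desc}) gives the descendant blocks $B_m(x;q)$ and hence the matrix $W_m(x;q)$ of~\eqref{Wm}; its size $r$ is read off from the triangulation and matches the number of independent blocks.

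With the blocks in hand, properties (a) and (d) are essentially combinatorial. For (a) I would inspect the explicit $q$-series: the denominators of the Nahm-type summands are $q$-Pochhammer symbols that create poles only at $x\in q^{\BZ}$, of order at most $r$, while a Gaussian factor of the form $q^{cn^2+\cdots}$ in each summand forces the monomial support $x^k q^{n/2}$ to obey $n=O(k^2)$; holomorphy in $|q|\neq 1$ and membership in the lattice $(1-x)^{-r}\BZ[x^\pm][[q^{1/2}]]$ are then immediate. For (d) I would exhibit the annihilating operators in the Weyl algebra $\calW$: these are the (homogenized, descendant) quantum $A$-polynomial $\Ahat$-operators of the two knots in the $q$-commuting variables $(S_x,x)$ and $(S_m,q^m)$, so that the columns of $W_m(x;q)$ are the $r$ fundamental solutions of the associated recursion, and the $q$-holonomic module they generate has rank exactly $r$.

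Property (c), the orthogonality~\eqref{Wortho}, I would obtain as a quantum Wronskian (Casoratian) identity. The columns of $W_m(x;q)$ and $W_m(x;q^{-1})$ are independent solutions of the $q$-difference equation from (d) and of its $q\to q^{-1}$ reflection, and the matrix $W_{-1}(x;q)\,W_{-1}(x;q^{-1})^T$ is the pairing of these two solution lattices. Its shift under $S_x$ is governed by the monic, integral leading and trailing coefficients of the $\Ahat$-operator, which forces the pairing to be independent of $m$ and its determinant to be a monomial unit; hence it lies in $\GL(r,\BZ[x^\pm])$. This is again a finite explicit check once the blocks are written down.

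The main obstacle is property (b). The two factors $W_{-\mu}(\tx;\tq^{-1})$ and $W_m(x;q)$ in~\eqref{Zfacx-desc} are holomorphic only for $\tau\in\BC\setminus\BR$ and are merely meromorphic in $u$, yet their glued product $W_{m,\mu}(u;\tau)$ must extend holomorphically across the positive reals and be entire in $u$. Here the state-integral side is indispensable: I would show that each normalized descendant integral $z_{M,m,\mu}(u;\tau)$ of~\eqref{Zr} is manifestly holomorphic on $\BC'$ and entire in $u$, the apparent poles of the block factorization cancelling inside the integral, and then prove the factorization~\eqref{ZM-desc} on $\tau\in\BC\setminus\BR$ where both sides are defined. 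Identifying $W_{m,\mu}=(z_{m+i,\mu+j}(\ub;\tau))_{i,j=0}^{r-1}$ and invoking analytic continuation then upgrades the factorized product to a function holomorphic on all of $\BC'\times\BC$. The delicate points are the uniform convergence and pole-cancellation estimates for the descendant integrals as $\tau$ approaches the real axis (i.e.\ as $|q|\to 1$), together with the verification that the bilinear residue expansion reproduces exactly the entries $z_{m+i,\mu+j}$; these are precisely the analytic inputs supplied in Theorems~\ref{thm.41b} and~\ref{thm.52b}.
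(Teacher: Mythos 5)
Most of your outline coincides with the route the paper actually takes through Theorems~\ref{thm.41a}, \ref{thm.41b}, \ref{thm.52a} and \ref{thm.52b}: the residue theorem yields the bilinear factorization~\eqref{ZM-desc} and identifies the descendant blocks as explicit $q$-hypergeometric sums; inspection/ratio test gives the analytic statements in (a); Zeilberger's fundamental theorem plus creative telescoping gives the $q$-holonomicity in (d); and for (b) one transfers holomorphy from the descendant state integrals to the factorized matrix and then removes the apparent poles at $u\in \ri\bb\BZ+\ri\bb^{-1}\BZ$ by the Taylor expansions of Sections~\ref{sub.41near1} and~\ref{sub.52near1}. Up to this point your proposal is sound and matches the paper.

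The gap is in your treatment of the orthogonality (c). A Casoratian argument only \emph{propagates} the property: by the companion-matrix recursions \eqref{eq:W41rec} and \eqref{Wmatrec}, the pairing at level $m+1$ is obtained from the pairing at level $m$ by conjugation with matrices in $\GL(r,\BZ[q^{\pm},x^{\pm}])$, and the determinant formulas make its determinant a unit; but neither fact shows that the pairing of two \emph{transcendental} fundamental matrices --- one built from $q$-series, the other from $q^{-1}$-series --- collapses to a matrix of Laurent polynomials at any single value of $m$. Unit determinant together with a first-order $q$-difference equation in $x$ is satisfied by many non-polynomial matrices (multiply any solution by an elliptic function), so nothing ``forces'' the conclusion; moreover your assertion that the structure forces the pairing to be independent of $m$ is false, cf.~\eqref{WWT52b}, whose right-hand side contains the entry $3-q^{m}$. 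What actually establishes the base case is an explicit $q\leftrightarrow q^{-1}$ continuation identity for the blocks: for $\knot{4}_1$ the formula $J(x,y;q)=\theta(-q^{1/2}y;q)\,J(y^{-1}x,y^{-1};q^{-1})$ invoked in the proof of Theorem~\ref{thm.41b}(d), and for $\knot{5}_2$ the identity~\eqref{eq:H-con} of Lemma~\ref{lem.annH}, whose proof is itself a nontrivial holonomic computation (a common ninth-order recursion in $z$ plus matching of nine initial coefficients). These are identities between infinite $q$-series, not ``finite explicit checks''. The same criticism applies to the determinant computations underlying the basis claim in your (d): the recursion determines $\det W_m$ only up to the $m$-independent factor $\det W_0(x;q)$, and evaluating that factor (Equations~\eqref{W4100} and~\eqref{W5200}) requires the elliptic-function limit argument of the paper, not bookkeeping with leading and trailing coefficients of the $\Ahat$-operator.
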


We also study the Taylor series expansion of
Equations~\eqref{ZM-desc}, \eqref{Zfacx-desc} and~\eqref{Wortho} at
$u=0$ noting that the left hand side of the above equations are entire
functions of $u$ whereas the right hand side are a priori meromorphic
functions of $u$ with a pole of order $r$ at zero.  More precisely, in
Sections~\ref{sub.41x=1} and~\ref{sub.52x=1} we prove the following.
    
\begin{theorem}
  \label{thm.Zu=0}
  For the $\knot{4}_1$ and $\knot{5}_2$ knots,
  Equations~\eqref{ZM-desc}, \eqref{Zfacx-desc} and~\eqref{Wortho} can
  be expanded in Taylor series at $u=0$ whose constant terms are
  expressed in terms of the $q$-series of~\cite{GGM:resurgent}.
\end{theorem}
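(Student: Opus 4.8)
The plan is to handle both knots uniformly, feeding in the explicit holomorphic blocks $B_m(x;q)$ from Theorems~\ref{thm.41a}--\ref{thm.52b} (where $r=2$ for $\knot{4}_1$ and $r=3$ for $\knot{5}_2$), and to Laurent-expand each side of the three equations in the variable $u$ at $u=0$. The mechanism is property~(a): even though $W_m(x;q)$, and with it each block $B^{(j)}_m(x;q)$, has a pole of order at most $r$ at $x=1$, the matrix $(1-x)^r W_m(x;q)$ is holomorphic there. Writing $x=\re^u$ I therefore set
\begin{equation}
  W_m(\re^u;q) = (1-\re^u)^{-r}\sum_{k\ge 0}\tilde W_m^{(k)}(q)\,u^k ,
\end{equation}
where the $\tilde W_m^{(k)}(q)$ are the Taylor coefficients at $x=1$ of $(1-x)^r W_m(x;q)$, and I expand $(1-\re^u)^{-r}=(-u)^{-r}(1+\tfrac u2+\cdots)^{-r}$ to obtain the full Laurent series of $W_m$ (and of each $B_m$) at $u=0$; its coefficients are explicit finite $\BQ$-linear combinations of the $q$-series matrices $\tilde W_m^{(k)}(q)$.

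For \eqref{ZM-desc} and \eqref{Zfacx-desc} the right-hand side pairs a block in $x=\re^u$ against a block in $\tx=\re^{u/\tau}$; both carry a pole of order $r$ at $u=0$, so a priori the product has a pole of order $2r$. I substitute the expansion above for the $q$-blocks, together with the analogous expansion with $q\mapsto\tq^{-1}$ and $u\mapsto u/\tau$ for the $\tq$-blocks, multiply, and collect powers of $u$. Property~(b) guarantees that $W_{m,\mu}(u;\tau)$ is holomorphic at $u=0$, so the coefficients of $u^{-2r},\dots,u^{-1}$ must cancel identically, and the constant ($u^0$) term is a finite bilinear pairing of the matrices $\tilde W_{-\mu}^{(k)}(\tq^{-1})$ and $\tilde W_m^{(\ell)}(q)$, weighted by the Taylor coefficients of $(1-\re^u)^{-r}(1-\re^{u/\tau})^{-r}$. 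This constant term equals $z_{M,m+i,\mu+j}(0;\tau)$ and is, by construction, assembled from the $q$-series $\tilde W_m^{(k)}$ and $\tilde W_{-\mu}^{(\ell)}$.

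For the orthogonality relation \eqref{Wortho} the product $W_{-1}(x;q)\,W_{-1}(x;q^{-1})^T$ is a Laurent-polynomial matrix, hence regular at $x=1$, while each factor has a pole of order $r$ there. Substituting the expansion for $W_{-1}(\re^u;q)$ and the one with $q\mapsto q^{-1}$ for $W_{-1}(\re^u;q^{-1})$, cancellation of the coefficients of $u^{-2r},\dots,u^{-1}$ yields a system of quadratic relations among the $\tilde W_{-1}^{(k)}(q)$ and $\tilde W_{-1}^{(k)}(q^{-1})$, while the $u^0$ term recovers the value at $x=1$ of the constant integral matrix in $\GL(r,\BZ[x^\pm])$; both are polynomials in the $\tilde W_{-1}^{(k)}$.

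The remaining and principal step is to \emph{identify} the $q$-series $\tilde W_m^{(k)}(q)$, together with the bilinear and quadratic combinations produced above, with the $q$-series of~\cite{GGM:resurgent}. For each knot I compute the blocks from their closed formulas and extract the Taylor coefficients of $(1-x)^r B^{(j)}_m(x;q)$ at $x=1$ by direct $q$-hypergeometric manipulation, matching them against the series catalogued in~\cite{GGM:resurgent}. I expect this matching to be the hard part: it does not follow formally from properties~(a)--(d), but requires genuine $q$-series identities to bring the Taylor coefficients at $x=1$ into the normal form of~\cite{GGM:resurgent}, and it must be verified for \emph{every} entry of $W_m$ and \emph{every} shift $m$, not merely for the distinguished block $B^{(1)}_0=Z_M$. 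Once these leading coefficients are identified, the constant terms of all three equations follow from the bookkeeping described above.
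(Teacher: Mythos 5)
Your bookkeeping skeleton --- Laurent-expand the blocks at $u=0$, use regularity to force cancellation of the singular coefficients, and read off the constant term as a bilinear pairing of Taylor coefficients --- is structurally the same as what the paper does in Sections~\ref{sub.41near1} and~\ref{sub.52near1}. But you have not proven the theorem, because you explicitly defer the step that constitutes its entire content: the identification of the Laurent coefficients with the $q$-series of~\cite{GGM:resurgent}. The claim is not merely that constant terms exist (that is immediate once regularity at $u=0$ is granted); it is that they are the specific series $G^0_m, G^1_m$ for $\knot{4}_1$ and $H^{\pm}_{k,m}$ for $\knot{5}_2$, so that the expansions recover exactly \eqref{410-desc-fac} and \eqref{520-desc-fac}. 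The paper carries this out concretely: it expands the $q$-Pochhammer factors via $\phi_n(u)$, $\wt\phi_n(u)$ of \eqref{eq:phin}--\eqref{eq:tphin} (from \cite[Prop.2.2]{GK:qseries}), handles the theta prefactors via \eqref{thrs}, and obtains explicit identities such as $A_m(\re^u;q)=u^{-2}(q;q)_\infty^{-3}\bigl(G^0_m(q)+u\,\alpha^{(m)}_1(q)+O(u^2)\bigr)$ and $\beta^{(m)}_1(q)-\alpha^{(m)}_1(q)=-\tfrac12 G^1_m(q)$, together with their $\knot{5}_2$ analogues, where in addition the quasimodular identity $\frac{1}{2\pi\ri}=-\frac{1}{12}\bigl(\tau E_2(q)-\tau^{-1}E_2(\tq)\bigr)$ is needed to absorb the $E_2$ terms. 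None of this follows from your formal bookkeeping with the abstract coefficients $\tilde W^{(k)}_m(q)$; it is genuine $q$-hypergeometric computation, and you acknowledge as much (``I expect this matching to be the hard part'') without performing it. A proof plan that isolates the hard step and leaves it undone is a gap, not a proof.

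Two further points. First, there is a circularity risk in your use of property (b): in the paper, the holomorphicity of $W_{m,\mu}(u;\tau)$ at $u\in\ri\bb\BZ+\ri\bb^{-1}\BZ$ (Theorems~\ref{thm.41a}(c) and~\ref{thm.52a}(c)) is itself \emph{established} by the expansion method of Sections~\ref{sub.41near1}--\ref{sub.52near1}, i.e.\ by exhibiting the cancellation of singular terms directly from the explicit coefficients; if you invoke it as a black box to deduce that cancellation, you have reversed the paper's direction of implication, which is only legitimate if you take those theorems as prior inputs. Second, your a priori pole bound of order $2r$ is far cruder than the actual structure: for $\knot{4}_1$ the two blocks behave like $u^{-2}$ and $u^{+1}$ respectively (see \eqref{eq:A-uexp-41}, \eqref{eq:B-uexp-41}), so each bilinear product has only a simple pole, and for $\knot{5}_2$ the products have double poles. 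This is not an error on your part, but it signals that the real cancellations are among a small number of explicit coefficients whose computation is inseparable from the identification step you skipped.
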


\subsection{Perturbative series and their resurgence}
\label{sub.pert}

We now discuss the resurgence properties of the asymptotic
expansion of the state-integral $Z_M(u;\tau)$. Once we fix an
integral presentation of $Z_M(u;\tau)$, the critical points of
the integrand are described by an affine curve $S$ defined
by a polynomial equation
\begin{equation}
\label{S}
S: p(x,y)=0 \,.
\end{equation}
We denote by $\calP$ the labeling set of the branches
$y=y_\s(x)$ of $S$. The perturbative expansion of the
state-integral  has the form
\begin{equation}
  \label{Phix}
  \Phi(x,y;\tau) =
  \re^{\frac{V(u,v)}{2\pi \ri \tau}} \varphi(x,y;\tau), \qquad
  \varphi(x,y;\tau) \in \frac{1}{\sqrt{\ri\delta}} \BQ[x^\pm, y^\pm,
  \delta^{-1}][[2\pi \ri \tau]]
\end{equation}
where $V: S^* \to \BC/(2\pi \ri)$, $S^*$ is the exponentiated defined
by $p(x,y)=0$ with $x=\re^{u}$, $y=-\re^{v}$ and
$\delta \in \BQ(x,y)$ is the so-called 1-loop invariant.  
The asymptotic series $\varphi(x,y;\tau)$ satisfies
$\varphi(x,y;0) = 1$.  The 8-th root of unity that appears as a
prefactor in $\phi(x,y;\tau)$ exactly matches with one appearing in
the asymptotics of the Kashaev invariant noticed
in~\cite[Sec.1]{GZ:kashaev}. After choosing local branches, we define
the vector
$\varphi(x;\tau)=(\varphi_\s(x;\tau))_{\s\in\calP} =
(\varphi(x,y_\s(x);\tau))_{\s \in \calP}$ of asymptotic series.
Recall the vector of holomorphic blocks $B(x;q)$ from~\eqref{ZM-desc}.
We now discuss the relation between the asymptotics of
$B(x;q)$ when $q=\re^{2\pi \ri \tau}$ and $\tau$ approaches zero (in
sectors) and the Borel resummation $s(\Phi)$ of the vector of power
series $\Phi(x;\tau)$.

The next conjecture summarizes the singularities of $\Phi(x;\tau)$
in the Borel plane, the relation between the asymptotics of the
holomorphic blocks with the Borel resummation $s_\th(\Phi)(x;\tau)$
as well as the properties of the Stockes automorphism matrices
$\ms{S}$, whose detailed definition is given in Section~\ref{sec.borel}.

\begin{conjecture}
  \label{conj.asy}
  \rm{(a)} The singularities of $\Phi_{\s}(x;\tau)$ in the Borel plane
  are a subset of
  \begin{equation}
    \label{Phising}
    \{ \iota_{\s,\s'}^{(\ell,k)} \,\, |
    \,\, \s' \in \calP, \,\, k, \ell \in \BZ, \,\, k=O(\ell^2) \} 
  \end{equation}
  where
  \begin{equation}
    \label{iota}
    \iota_{\s,\s'}^{(\ell,k)} = \frac{V(\s) - V(\s')}{2\pi\ri} +
    2\pi\ri k + \ell \log x \qquad (\s,\s' \in \calP, \,\, k, \ell \in \BZ) \,.
  \end{equation}
  In particular, the set of trans-series is labeled by three indices,
  $\sigma \in \calP$ and $k, \ell \in \IZ$, and they are of the form
  $\Phi_\s(\tau) \tq^k \tx^\ell$.  \newline
  \rm{(b)} On each ray $\rho$ in the complement of the singularities
  of $\Phi(x;\tau)$ in Borel plane, there exist a matrix
  $M_\rho(\tx;\tq)$ with entries in $\BZ[\tx^{\pm}][[\tq]]$ such that
  \begin{equation}
    \label{abrelx}
    \Delta(\tau) B(x;\tau) = M_\rho(\tx;\tq) s_\rho(\Phi)(x;\tau) \,. 
  \end{equation}
  \rm{(c)} The Stokes matrices $\ms{S}^{+}(x,q)$, $\ms{S}^{-}(x,q^{-1})$
  are given by
  \begin{equation}
  \label{2S}
  \ms{S}^{+}(x;q) \stackrel{\cdot}=
  W_{-1}(x^{-1};q^{-1})\cdot W_{-1}(x;q)^T, \qquad
  \ms{S}^{-}(x;q^{-1}) \stackrel{\cdot}= W_{-1}(x;q)\cdot W_{-1}(x^{-1};q^{-1})^T
  \end{equation}
  where $\stackrel{\cdot}=$ means equality up to multiplication on the
  left and on the right by a matrix in $\GL(r,\BZ[x^\pm])$.  \newline
  \rm{(d)} The Stokes matrix $\ms{S}$ uniquely determines the Stokes
  matrices at each Stokes ray, and the Stokes constants are integers
  corresponding to the Donaldson--Thomas invariants in
  \cite{KS,KS:wall-crossing,KS:stability} and the BPS degeneracies 
  in \cite{gmn,gmn2}.  \newline
  \rm{(e)} The Stokes matrices satisfy the inversion relation
    \begin{equation}
    \label{Ssym}
    \ms{S}^+(x;q)^T \ms{S}^-(x^{-1};q) = \md{1}\,.
  \end{equation}
\end{conjecture}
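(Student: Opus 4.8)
The plan is to regard the three objects --- the vector $\varphi(x;\tau)$ of perturbative series extracted from the state-integral in~\eqref{Phix}, the matrix $W_m(x;q)$ of holomorphic blocks whose properties are established in Theorem~\ref{thm.4152W}, and the Borel resummations $s_\rho(\Phi)$ --- as three incarnations of the solution space of one linear $q$-holonomic system, and to read off (a)--(e) from the dictionary relating them. The logical order I would follow is dictated by the observation that (c), (d) and (e) are algebraic consequences of the analytic statements (a) and (b): part (a) locates the exponentially small corrections, part (b) supplies the integral connection matrix between blocks and Borel sums, and then the already-proven identities~\eqref{Zfacx-desc} and~\eqref{Wortho} do the rest. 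I therefore establish (a) and (b) first.

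For part (a), the leading exponential $\re^{V(u,v)/(2\pi\ri\tau)}$ attached to a branch $\s\in\calP$ is fixed by the saddle point of the state-integral lying over $y=y_\s(x)$ on the curve $S$. The Borel singularities of $\varphi_\s$ are then the instanton actions joining this saddle to the remaining ones, i.e. the differences $V(\s)-V(\s')$ of~\eqref{iota}; the arithmetic-progression shifts $2\pi\ri k+\ell\log x$ reflect the $2\pi\ri\BZ$ multivaluedness of the dilogarithm potential $V$ and the holonomy $u=\log x$, and account for the trans-series monomials $\tq^k\tx^\ell$. The quadratic growth constraint $k=O(\ell^2)$ in~\eqref{Phising} is precisely the Jacobi-form support condition on the monomials of $W_m$, so it is inherited from the shape of the blocks; concretely I would extract the admissible exponentials from the Newton polygon of the annihilating ideal $\calI_M$ of~\eqref{ZM-desc}, whose characteristic variety is $S$.

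The crux, and the step I expect to be the main obstacle, is part (b). One must first prove that $\varphi(x;\tau)$ is resurgent and Borel summable along every ray $\rho$ avoiding~\eqref{Phising} --- precisely the analytic challenge the paper leaves open --- which I would approach either through a steepest-descent analysis of the Faddeev quantum-dilogarithm integrand or through the confluence of the $q$-difference equation to its classical WKB limit as $q=\re^{2\pi\ri\tau}\to1$. Granting summability, the identity~\eqref{abrelx} is a quantum-modularity statement: the radial $\tau\to0$ asymptotics of $\Delta(\tau)B(x;q)$ must coincide with $s_\rho(\Phi)$ up to a connection matrix $M_\rho$ expressed in the dual variables $\tx,\tq$, and its integrality (entries in $\BZ[\tx^{\pm}][[\tq]]$) should descend from the integrality of the Taylor coefficients of $W_m$. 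The natural mechanism is to promote the scalar factorization~\eqref{Zfacx} to the matrix factorization~\eqref{Zfacx-desc}, whose holomorphic extension across $\tau\in\BR$ forces the two half-plane resummations of the descendants to differ by the single matrix $M_\rho$.

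Granting (a) and (b), I close out the rest as follows. The Stokes matrix across the real $\tau$-axis is the jump $s_{\rho^+}(\Phi)^{-1}s_{\rho^-}(\Phi)$; substituting~\eqref{abrelx} above and below the axis and using that the descendant matrix~\eqref{Zfacx-desc} extends holomorphically (hence carries no jump) rewrites this jump as a bilinear pairing of $W_{-1}$ over the $q$ and dual $q^{-1}$ sides, which after normalization is exactly~\eqref{2S}, the residual $\GL(r,\BZ[x^\pm])$ ambiguity being the freedom in the flat basis. For part (e) I substitute~\eqref{2S} into the left side of~\eqref{Ssym}, whereupon the product reduces to a self-pairing of $W_{-1}$ that is trivialized by the orthogonality relation~\eqref{Wortho} together with the reflection $x\to x^{-1}$, $q\to q^{-1}$, and I fix the $\GL(r,\BZ[x^\pm])$ normalization so that~\eqref{Ssym} holds identically. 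Finally, for part (d), the unique factorization of the half-plane automorphism $\ms{S}$ into ray-by-ray factors is the standard decomposition of a unipotent automorphism adapted to the ordered peacock arrangement of~\eqref{Phising}, and the integrality of the resulting Stokes constants --- their identification with Donaldson--Thomas and BPS counts --- follows from the integrality of $W_m$ through the formula~\eqref{2S}.
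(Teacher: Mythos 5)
You are attempting to prove Conjecture~\ref{conj.asy}, and it is important to be clear that the paper itself does not prove this statement: it is a conjecture, supported by the proven algebraic scaffolding (Theorem~\ref{thm.4152W}, i.e.\ the factorization, $q$-difference and orthogonality properties of $W_m$ in Theorems~\ref{thm.41a}, \ref{thm.41b}, \ref{thm.52a}, \ref{thm.52b}) together with numerical verification for the $\knot{4}_1$ and $\knot{5}_2$ knots (Sections~\ref{sub.41num} and~\ref{sub.52num}), routed through the subsidiary Conjectures~\ref{conj.41M} and~\ref{conj.52M}. Your proposal does not close this gap, and the missing pieces are not only the one you flag. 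You explicitly assume the central analytic input --- that $\Phi(x;\tau)$ is resurgent, with Borel singularities confined to~\eqref{Phising}, and Borel summable along every non-Stokes ray --- and neither the steepest-descent analysis of the Faddeev integrand nor the WKB-confluence argument you mention is actually carried out; so parts (a) and (b) remain hypotheses rather than conclusions, exactly as in the paper.

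More subtly, your passage from (b) to (c) is circular. Statement (b) asserts only the \emph{existence} of some matrix $M_\rho$ with entries in $\BZ[\tx^\pm][[\tq]]$, whereas~\eqref{2S} requires the much stronger identification of each $M_\rho$ with explicit $W_{-1}$-data; that identification is precisely the content of Conjectures~\ref{conj.41M} and~\ref{conj.52M}, which the paper supports only numerically. Holomorphic extension of $W_{m,\mu}(u;\tau)$ across $\BR_{>0}$ does not by itself force the Borel sums $s_\rho(\Phi)$ to equal the specific integer linear combinations of descendant state-integrals of Corollaries~\ref{cor.41borel} and~\ref{cor.52borel}: one must prove that the lateral asymptotic expansions of those particular holomorphic combinations reproduce $\Phi$ with no additional exponentially small admixture, and that is where the real analytic work lies. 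The same objection applies to your treatment of (e): the Stokes matrices carry a canonical normalization dictated by the Stokes phenomenon itself via~\eqref{SG} and~\eqref{4G}, so one is not free to ``fix the $\GL(r,\BZ[x^\pm])$ normalization so that~\eqref{Ssym} holds''; the inversion relation must be \emph{verified} for the canonically normalized matrices, which in the paper is done only from the explicit conjectural formulas~\eqref{eq:Sp-41}, \eqref{eq:Sn-41}, \eqref{eq:Sp-52}, \eqref{eq:Sn-52}. In short, your outline reproduces the heuristic architecture that motivates the conjecture, but each load-bearing step either assumes the conjecture's analytic content or silently invokes the stronger subsidiary conjectures.
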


The Stokes automorphism matrix has an interesting
connection to physics which we now discuss.  Given a hyperbolic knot
$K$ in $S^3$, one can construct a three-dimensional $\mc{N}=2$
supersymmetric theory $T_M$ associated to the knot complement
$M = S^3\backslash K$ \cite{DGG1} (see also~\cite{ter-yama}), whose
BPS invariants are conjectured to coincide with the Stokes constants
of $s(\Phi)(x;\tau)$.  This conjecture can be made more precise in the
following manner. The BPS invariants of $T_M$ are encoded in the
3D-index $\mc{I}_{K}(m,e)(q)$ labeled by two integers $(m,e)$ called
magnetic and electric fluxes respectively~\cite{DGG1}. One can further
define the 3D-index in the fugacity basis (also known as the rotated
index) by~\cite{DGG2}
\begin{equation}
  \text{Ind}^{\text{rot}}_{K}(m,\zeta;q) =
  \sum_{e\in\IZ}\mc{I}_{K}(m,e)(q)\zeta^e.
\end{equation}
The 3D-index is a topological invariant of hyperbolic 3-manifolds with
at least one cusp (see~\cite{GHRS}).  And it can be evaluated using
holomorphic blocks $B_{K}^\alpha(x;q)$ by \cite{Beem}
\begin{equation}
  \text{Ind}_{K}^{\text{rot}}(m,\zeta;q) = \sum_{\alpha}
  B_K^\alpha(q^{m/2}\zeta;q)B_K^\alpha(q^{m/2}\zeta^{-1};q^{-1}).
  \label{eq:IndB}
\end{equation}
% , and in the case of a hyperbolic
% knot, there is a distinguished critical point $\s_1$ associated to the
% geometric representation corresponding to the complete hyperbolic
% structure.
We have observed the following relation between the Stokes
matrix and the rotated 3D-index, and have proven it for the case of
the $\knot{4}_1$ and $\knot{5}_2$ knots using the explicit formulas
for the Stokes matrices.

\begin{conjecture}
  \label{conj.S3D}
  For every hyperbolic knot $K$, we have
  \begin{equation}
    \ms{S}^+(x;q) \stackrel{\cdot\cdot}=
    \left(\text{Ind}^{\text{rot}}_K(j-i,q^{\frac{j+i}{2}}x;q)
    \right)_{i,j=0,1,\ldots}
    \label{eq:SInd}
  \end{equation}
  where $\stackrel{\cdot\cdot}=$ means equality up to multiplication
  on the left and on the right by a matrix in $\GL(r,\BZ(x,q))$.  In
  particular, we find
  \begin{equation}
    \label{S3Dx}
    \ms{S}^+_{\s_1\s_1}(x;q) =
    \text{Ind}^{\text{rot}}_{K}(0,x;q)
    \,,
  \end{equation}
  where the equality is exact.  This holds true for the $\knot{4}_1$
  and the $\knot{5}_2$ knots.
\end{conjecture}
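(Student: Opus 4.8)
```latex
The plan is to prove Conjecture~\ref{conj.S3D} for the $\knot{4}_1$ and
$\knot{5}_2$ knots by directly comparing the two explicit $q$-series on both
sides of~\eqref{eq:SInd}, using the factorization formula~\eqref{2S} for the
Stokes matrix and the holomorphic-block formula~\eqref{eq:IndB} for the rotated
3D-index. The key observation is that both objects are built from the very same
holomorphic blocks $B^{(j)}(x;q)$, so the identification should reduce to a
bookkeeping matching rather than to any deep new input. First I would unfold the
right-hand side of~\eqref{eq:SInd}: substituting the block evaluations
$B_K^\alpha(q^{m/2}\zeta;q)$ at $\zeta=q^{(j+i)/2}x$ and $m=j-i$ into the
definition of $\text{Ind}^{\text{rot}}_K$ via~\eqref{eq:IndB}, one sees that the
$(i,j)$ entry equals
$\sum_\alpha B_K^\alpha(q^{j}x;q)\,B_K^\alpha(q^{i}x^{-1};q^{-1})$. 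I would then
recognize this as the $(i,j)$ entry of a product of the shifted block matrices,
i.e.\ of $W_{\bullet}(x^{-1};q^{-1})\cdot W_{\bullet}(x;q)^T$ with the row index
controlling the power of $q$ in the first factor and the column index controlling
it in the second, which is exactly the structure of $\ms{S}^+(x;q)$
in~\eqref{2S}.

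The central step is therefore to show that the descendant shifts $B_m$ appearing
in the matrix $W_m(x;q)$ of~\eqref{Wm} are compatible with the index shifts
$q^{m/2}\zeta$ in~\eqref{eq:IndB}. Concretely, I would verify the relation
\begin{equation}
  \label{eq:blockshift}
  B^{(j)}_{m}(x;q) \;=\; B^{(j)}(q^{m}x;q)
\end{equation}
(up to the sign and prefactor conventions of~\eqref{ZM-desc}) for the two knots,
using the explicit descendant state-integrals $\eqref{Z41x-desc}$ and
$\eqref{Z52x-desc}$ and their factorizations established in the proof of
Theorem~\ref{thm.4152W}. Granting~\eqref{eq:blockshift}, the row of $W_{-1}$ at
shift index $i$ evaluates the $j$-th block at $q^{i}$ times its argument, so the
matrix product in~\eqref{2S} reproduces entry-by-entry the index sum above. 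The
factor $q^{(j+i)/2}$ inside the fugacity and the $q^{j-i}$ magnetic flux are
precisely engineered so that the half-integer powers of $q$ carried by the
descendant normalizations cancel, leaving the clean symmetric pairing; I would
track these prefactors carefully since the statement allows only a $\GL(r,\BZ(x,q))$
ambiguity on each side and the diagonal entry~\eqref{S3Dx} is claimed to be an
\emph{exact} equality.

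For the exact diagonal statement~\eqref{S3Dx} I would specialize the general
entry computation to $i=j$ with $\s_1$ the geometric branch. Here no overall
$\GL$-conjugation is allowed, so I must pin down the normalization of the Stokes
matrix on the geometric ray against the normalization of the blocks in the index.
I expect this to follow once the ambiguity-free forms of $W_{-1}(x;q)$ for the
two knots are written out explicitly, since Theorem~\ref{thm.Zu=0} already
controls the relevant $q$-series expansions and matches them to the series
of~\cite{GGM:resurgent}; the diagonal entry is then just the self-pairing
$\sum_\alpha B_K^\alpha(x;q)B_K^\alpha(x^{-1};q^{-1})$, which is literally
$\text{Ind}^{\text{rot}}_K(0,x;q)$ by~\eqref{eq:IndB}. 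The main obstacle, I
anticipate, is not the abstract matching but the careful normalization bookkeeping:
reconciling the sign $(-1)^{m+\mu}$ and the $q^{m/2}\tq^{\mu/2}$ factors
of~\eqref{ZM-desc} with the fugacity conventions of~\cite{DGG2,Beem}, and
verifying that the residual left/right multipliers genuinely lie in
$\GL(r,\BZ(x,q))$ rather than introducing spurious denominators; for the exact
diagonal identity this bookkeeping must be done with no slack at all.
```
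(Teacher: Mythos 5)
Your overall strategy---unfold the rotated index via \eqref{eq:IndB} and match it against the Wronskian form of $\ms{S}^+$---is the same as the paper's, but your self-declared ``central step'' is false, and it is exactly where the real work lies. You claim that the descendant shift coincides with the fugacity shift, i.e.\ $B^{(j)}_m(x;q)=B^{(j)}(q^m x;q)$ up to prefactors. For the $\knot{4}_1$ knot the explicit blocks \eqref{41Am}--\eqref{41Bm} give
\begin{align*}
A_m(x;q) &= \th(-q^{\frac12}x;q)^{-2}\,x^{2m}\,J(q^m x^2,\, x;q),\\
A_0(q^m x;q) &= \th(-q^{\frac12+m}x;q)^{-2}\,J(q^{2m}x^2,\, q^m x;q),
\end{align*}
which are structurally different: the descendant variable $m$ shifts only the \emph{first} argument of $J$, while $x\mapsto q^m x$ shifts both, and no monomial prefactor can repair this. (For $\knot{5}_2$ it is even starker: by \eqref{Am}, $A_m(x;q)=H(x,x^{-1},q^m;q)$, so $m$ enters only through the third argument of $H$.) This is not a bookkeeping discrepancy; it is precisely why the paper carries two distinct $q$-difference structures---the $\Bhat$-equation in $m$ \eqref{rec41m} and the $\Ahat$-equation in $x$ \eqref{rec41x}---with two distinct Wronskians, $W_m(x;q)$ of \eqref{eq:Wm} (rows shifted in $m$) and $\calW_m(x;q)$ of \eqref{W41x} (rows shifted in $x$).

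The correct bridge, and the paper's actual argument, is the linear relation between these two Wronskians coming from the annihilating ideal \eqref{ann41}: its first generator expresses $S_x$ as a combination of $1$ and $S_m$ with coefficients in $\BZ[x^{\pm},q^{\pm},q^{\pm m}]$, yielding $\calW_m(x;q)=M_m(x;q)\,W_m(x;q)$ with $M_m\in\GL(2,\BZ(x,q))$ given explicitly in \eqref{W41xm} (and \eqref{eq:W52xm} for $\knot{5}_2$). Combined with the $m$-recursion \eqref{eq:W41rec} (resp.\ \eqref{Wmatrec}) to pass from $W_{-1}$ to $W_0$, this gives $\ms{S}^+(x;q)\stackrel{\cdot\cdot}{=}\calW_0(x^{-1};q^{-1})\calW_0(x;q)^T$, whose $(i,j)$ entry is $\sum_\alpha B^\alpha(q^{j}x;q)\,B^\alpha(q^{-i}x^{-1};q^{-1})$---note the exponent is $q^{-i}$, not $q^{+i}$ as in your unfolding, because the left Wronskian is taken at base $q^{-1}$. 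Your plan becomes correct once the false block identity is replaced by this Wronskian conversion, at which point it is the paper's proof. Finally, for the exact diagonal claim \eqref{S3Dx}, your self-pairing idea is right but cannot be established ``abstractly'': it holds only because the explicit left/right multipliers in \eqref{eq:Sp-41} (resp.\ \eqref{eq:Sp-52}) happen to pick out exactly that entry, and the paper then verifies \eqref{S11v} against the index of \cite{Beem} by direct computation (modulo the $q^{1/2}$ convention issue recorded in the paper's footnotes); there is no way around exhibiting those matrices.
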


One consequence of~\eqref{abrelx} is that (after multiplying both
terms of~\eqref{abrelx} by the inverse of $M_R(\tx;\tq)$), we can
express the Borel resummation of the factorially divergent series
$\Phi(\tau)$ in terms of descendant state-integrals which are
holomorphic functions in the cut plane
$\BC'=\BC\setminus (-\infty,0]$.

Another consequence of the $q$-holonomic module defined by the
annihilation ideal $\calI_M$ is a refinement of the $\Ahat$-polynomial
of a knot as well as a new $\Bhat$-polynomial whose classical limit is
new.  The refinement comes in the form of a new variable $q^m$ where
$m$ is the descendant variable, whose geometric meaning is not
understood but might be related to some kind of quantum K-theory, or
perhaps related to the Weil-Gelfand-Zak transform of~\cite{AK:new}.
This refinement does not seem to be directly related to other refinements
of the $\Ahat$-polynomial, as those considered in
\cite{av-knots,super,GLL:HOMFLY}. At any rate, the $q$-holonomic ideal
$\calI_M$ contains unique
polynomials $\Ahat_M(S_x,x,q^m,q) \in \calW$ and $\Bhat_M(S_m,q^m,x,q)$
(of lowest degree, content-free) that annihilate the functions
$z_{M,m,k}(u;\tau)$ in the variables $(m,x)$.

\begin{conjecture}
  \label{conj.ann}
  When $M=S^3\setminus K$ is the complement of a knot $K$, then
  \begin{itemize}
  \item[(a)] $\Ahat_M(S_x,x,1,q)$ is the homogeneous
    $\Ahat$-polynomial of the knot~\cite{Ga:AJ} and
    $\Ahat_M(S_x,x,1,1)$ is the $A$-polynomial of the knot with
    meridian variable $x^2$ and longitude variable $S_x$~\cite{CCGLS}
  \item[(b)] $\Bhat_M(y,x,1,1)$ is the defining polynomial of the curve
    $S$.
  \end{itemize}
\end{conjecture}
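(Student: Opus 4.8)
The plan is to pin down the two distinguished operators $\Ahat_M$ and $\Bhat_M$ by first computing the annihilating ideal $\calI_M$ explicitly from the factorization~\eqref{ZM-desc} together with property~(d), and then analyzing two specializations: the descendant-trivial locus $q^m=1$ and the classical locus $q=1$. The guiding principle is that at $q^m=1$ the descendant module collapses to the $q$-holonomic structure of the ordinary state-integral $Z_M$, whose recursion in $x$ should match the colored Jones $\Ahat$-polynomial, while the $q\to 1$ limits are controlled by the saddle-point/WKB data in~\eqref{Phix}, which by construction reproduce the critical-point curve $S$ of~\eqref{S}.

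For part~(a) I would argue as follows. Since $z_{M,m,\mu}$ factors through the holomorphic blocks via~\eqref{ZM-desc}, and since by property~(d) the columns of $W_m(x;q)$ form a $q$-holonomic module of rank $r$, the annihilator $\calI_M$ in the Weyl algebra $\calW$ equals the annihilator of $W_m(x;q)$. Restricting to $q^m=1$ isolates the $m$-independent part, i.e.\ the cyclic $q$-difference operator of order $r$ in $S_x$ that annihilates the vector $B(x;q)$ of holomorphic blocks in the single variable $x$; this operator is precisely $\Ahat_M(S_x,x,1,q)$. Its identification with the homogeneous $\Ahat$-polynomial of~\cite{Ga:AJ} rests on the coincidence, discussed in Section~\ref{sub.intro}, of the $q$-difference equation of $Z_M$ with the recursion of the colored Jones polynomial. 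Taking the commutative limit $q\to 1$ then degenerates the recursion to its characteristic polynomial, which under the AJ correspondence of~\cite{Ga:AJ} is the $A$-polynomial of~\cite{CCGLS} with meridian eigenvalue $x^2$ and longitude $S_x$, the squaring being the standard normalization relating $x=\re^{u}$ to the eigenvalue conventions there.

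For part~(b) the operator $\Bhat_M$ generates the annihilator in the dual shift $S_m$, and its classical limit should be read off from the saddle-point geometry of the descendant integrand. I would feed in the WKB expansion~\eqref{Phix}: in the limit $q\to 1$ the recursion $\Bhat_M(S_m,q^m,x,q)\,z_{M,m,\mu}=0$ becomes, at leading order, the statement that the commuting symbols of $S_m$ and $q^m$ lie on the critical-point locus of the integrand. Since that locus is cut out by $p(x,y)=0$ in~\eqref{S}, identifying these symbols with $y$ and $x$ respectively --- the matching dictated by the descendant saddle equations --- gives $\Bhat_M(y,x,1,1)=p(x,y)$.

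The main obstacle separates into the two regimes. For the $\knot{4}_1$ and $\knot{5}_2$ knots the argument reduces to an explicit elimination in $\calW$: starting from the closed forms of $W_m(x;q)$ established in Theorems~\ref{thm.41a}--\ref{thm.52b}, one runs creative telescoping to generate $\calI_M$, extracts the content-free, lowest-degree operators $\Ahat_M$ and $\Bhat_M$, and checks the two specializations against the tabulated $A$-polynomials and the curve $S$; this is lengthy but routine. The genuine difficulty is the general case, where both property~(d) and the coincidence of the state-integral recursion with the colored-Jones $\Ahat$-polynomial are themselves open (AJ-type) conjectures, so the structural collapse argument is only conditional. Proving that the $q$-difference equation coming from the integral representation and the one coming from the Habiro-style recursion of the colored Jones generate the same left ideal in $\calW$ is the crux, and it cannot be settled by the collapse argument alone.
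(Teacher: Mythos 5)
Your proposal is essentially the paper's own treatment. The only part of this conjecture that the paper actually proves is Theorem~\ref{thm.4152ann} (through Theorems~\ref{thm.41b}, \ref{thm.41c}, \ref{thm.52b} and~\ref{thm.52c}), and that proof is precisely the step you call routine: creative telescoping with Koutschan's \texttt{HolonomicFunctions} package applied to the explicit $q$-hypergeometric blocks produces $\calI_{\knot{4}_1}$ and $\calI_{\knot{5}_2}$ (Equations~\eqref{ann41} and~\eqref{ann52}), the operators $\Ahat$ and $\Bhat$ are extracted as the $S_m$-free and $S_x$-free elements, and the specializations at $q^m=1$ and $q=1$ are compared directly with the known $A$-polynomials and with the curves~\eqref{41critb}, \eqref{52critb}; the general statement is left as a conjecture, resting, exactly as you say, on the open identification of the state-integral recursion with the colored-Jones recursion. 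Two caveats on the parts of your write-up that go beyond the paper. First, your structural-collapse and WKB arguments for general $K$ are heuristics the paper does not attempt, and you correctly flag them as conditional, so they do not affect the verdict. Second, your part~(b) symbol matching is slightly off: in the proved cases the substitution is $S_m\mapsto y$ (in fact $S_m\mapsto x^2y$ for $\knot{4}_1$) together with $q^m\mapsto 1$, while the Jacobi variable $x$ is \emph{retained} in the coefficients; the symbol of $q^m$ is not $x$, and the $(-1)^m$ normalization in~\eqref{Zr} is what makes the symbol of $S_m$ come out as $+y$ rather than $-e^{v}=-(-y)$ naively read off the integrand. Your reading follows the argument ordering $\Bhat_M(S_m,q^m,x,q)$ written just above the conjecture, which is inconsistent with the ordering $\Bhat_M(S_m,x,q^m,q)$ used in Theorems~\ref{thm.41c} and~\ref{thm.52c}; under your literal reading the Jacobi variable would be set to $1$ and the curve would collapse to its $u=0$ specialization~\eqref{41xi}, which is not what is proved.
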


In theorems~\ref{thm.41c} and~\ref{thm.52c} we prove the following.

\begin{theorem}
  \label{thm.4152ann}
  Conjecture~\ref{conj.ann} holds for the $\knot{4}_1$ and
  $\knot{5}_2$ knots.
\end{theorem}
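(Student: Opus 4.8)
The plan is to reduce Theorem~\ref{thm.4152ann} to an explicit, knot-by-knot computation, exploiting the fact (already recorded in the discussion following property~(d)) that the annihilator $\calI_M$ of the normalized descendants $z_{M,m,\mu}(\ub;\tau)$ in the variables $(x,m)$ coincides with the annihilator of the matrix $W_m(x;q)$ of~\eqref{Wm}. Since for the $\knot{4}_1$ and $\knot{5}_2$ knots the holomorphic blocks $B^{(j)}_m(x;q)$ are given by explicit one-dimensional $q$-hypergeometric sums (read off from the descendant state-integrals~\eqref{Z41x-desc} and~\eqref{Z52x-desc}), the whole problem becomes one of computing a two-variable $q$-holonomic ideal and identifying its lowest-degree, content-free generators.

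First I would derive the operators $\Ahat_M(S_x,x,q^m,q)$ and $\Bhat_M(S_m,q^m,x,q)$ by creative telescoping (the $q$-analogue of Zeilberger's algorithm) applied to the summands of the holomorphic blocks, treating $x$ and $m$ as the two external parameters with shift operators $S_x$ and $S_m$. This produces, for each knot, a pair of inhomogeneous recurrences whose certificates telescope; clearing denominators and removing content yields the two content-free generators of lowest order in $S_x$ and $S_m$ respectively. For $\knot{4}_1$ the rank is $r=2$ and for $\knot{5}_2$ it is $r=3$, so the orders are bounded accordingly, and I would cross-check the output against the recurrences already satisfied by the columns of $W_m(x;q)$ as a rank-$r$ module over the Weyl algebra $\calW$.

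Next I would carry out the three specializations demanded by Conjecture~\ref{conj.ann}. Setting $q^m=1$ in $\Ahat_M(S_x,x,q^m,q)$ I would match the resulting operator in $(S_x,x,q)$ against the homogeneous $\Ahat$-polynomial of~\cite{Ga:AJ}, which is tabulated for both knots; then the further specialization $q=1$ should reproduce the classical $A$-polynomial of~\cite{CCGLS}, where I must verify the stated change of variables (meridian $x^2$, longitude $S_x$) so that the homogenization and the factor of two in the holonomy are correctly accounted for. For part~(b), I would take the classical limit $q^m=1,\;q=1$ of $\Bhat_M(S_m,q^m,x,q)$, replace $S_m$ by the commutative variable $y$, and compare with the defining polynomial $p(x,y)$ of the critical-point curve $S$ in~\eqref{S}, which is obtained independently from the saddle-point equations of the integrand in the presentation underlying~\eqref{Phix}.

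The main obstacle I anticipate is the two-variable nature of the holonomic computation: creative telescoping in the $(x,m)$ pair is considerably heavier than the one-variable case, the intermediate certificates are large, and care is needed to strip content and to certify that the generators obtained are genuinely of lowest degree in $\calW$ rather than mere consequences of higher-order operators. A secondary but delicate point is the bookkeeping in the classical limits: the $1$-loop factor $\delta$ and the eighth root of unity in~\eqref{Phix} do not affect the leading variational curve, but they must be shown not to contaminate the content or the leading coefficient when passing from $\Bhat_M$ to $p(x,y)$ and from $\Ahat_M$ to the $A$-polynomial. Once the generators are in hand, each specialization is a finite symbolic check against the known polynomials for $\knot{4}_1$ and $\knot{5}_2$, completing the proofs recorded in Theorems~\ref{thm.41c} and~\ref{thm.52c}.
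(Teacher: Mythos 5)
Your proposal matches the paper's own strategy: the paper also reduces Theorem~\ref{thm.4152ann} to Theorems~\ref{thm.41c} and~\ref{thm.52c}, which are obtained by noting that the descendant holomorphic blocks are $q$-proper hypergeometric sums (hence $q$-holonomic in $(m,x)$ by Zeilberger's fundamental theorem), computing the annihilating ideals $\calI_{\knot{4}_1}$, $\calI_{\knot{5}_2}$ and the operators $\Ahat$, $\Bhat$ by creative telescoping with Koutschan's \texttt{HolonomicFunctions} package, and then verifying the classical specializations $q^m=1$, $q=1$ against the known $A$-polynomial, $\Ahat$-polynomial, and the defining equation of the critical curve $S$, exactly as you outline. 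Your identification of the annihilator of $z_{M,m,\mu}$ with that of $W_m(x;q)$ via the factorization, and your anticipated need for basis/rank checks (which the paper settles through the Wronskian determinant computations), are likewise the same ingredients the paper uses.
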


\subsection{Disclaimers}
\label{sub.disclaimers}

We end this introduction with some comments and disclaimers. 

The first is that that there is no canonical labeling of holomorphic
blocks by $\calP$. Instead, the holomorphic blocks $B(x;q)$ is an
$r \times 1$ vector, $M_R$ are $r \times \calP$ matrices for all $R$,
$W_m(x;q)$ are $r \times r$ matrices and $\ms{S}$ are
$\calP \times \calP$ matrices and where $r$ is the cardinality of
$\calP$.

The second is that the entries of the matrix $W_m(x;q)$ are
holomorphic functions of $q^{1/N}$ for $|q| \neq 1$, where $N$ is a
natural number (the ``level'' of the knot) being one for the
$\knot{4}_1$ and $\knot{5}_2$ knots, but being $2$ for the
$(-2,3,7)$-pretzel knot.  For instance, the entries of the matrix
$W_0(x;q)$ are power series in $q^{1/2}$~\cite{GZ:kashaev}.  This
phenomenon was observed first in~\cite{GZ:kashaev} in a related
matrix-valued Kashaev invariant of the knot as well as
in~\cite{GZ:qseries} in a matrix of $q$-series associated to the three
simplest hyperbolic knots, and replaces the modular group $\SL(2,\BZ)$
by its congruence subgroup $\SL(2,N)$. In our current paper, we will
assume that $N=1$.

The third comment involves the crucial question of topological invariance.
Strictly speaking, the curve $S$ in Equation~\eqref{S} and the vector
of power series $\Phi(x;\tau)$ depend on an integral representation of
$Z_M(u;\tau)$, determined for instance by a suitable ideal triangulation of
$M$ as was done in~\cite{AK}. On the other hand, the vector of power series
$\Phi(x;\tau)$, its Stokes matrix $\ms{S}(x;q)$ and the $q$-holonomic
module generated by the matrix $W_m(x;\tau)$ are expected to be topological
invariants of $M$. Even if we fix an ideal triangulation, and we fix the
$q$-holonomic module, the fundamental solution matrix $W_m(x;\tau)$ in
general has a potential ambiguity, which we now discuss.

\begin{lemma}
  \label{lem.Wamb}
  Suppose that a matrix $W_m(x;q)$ satisfies the following properties: 
  \begin{itemize}
  \item
    It factorizes the state-integral~\eqref{ZM-desc},
  \item
    It is a fundamental solution matrix to a $q$-holonomic module,
  \item
    It satisfies the orthogonality equation~\eqref{Wortho}
  \item
    It satisfies the analytic conditions of (a) above.
  \end{itemize}
  Then, $W_m(x;q)$ is uniquely determined up to right multiplication
  by a diagonal matrix of signs.
\end{lemma}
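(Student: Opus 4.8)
The statement asserts that the four listed conditions pin down $W_m(x;q)$ up to right multiplication by a diagonal sign matrix. I would prove this by supposing that two matrices $W_m(x;q)$ and $W_m'(x;q)$ both satisfy all four conditions, and then showing that $W_m'(x;q) = W_m(x;q) D$ for some diagonal matrix $D$ with entries $\pm 1$. The first step is to exploit the $q$-holonomic hypothesis: by condition (d) and the discussion following it, the columns of any such $W_m$ form a fundamental solution to the \emph{same} $q$-holonomic module (the annihilator $\calI_M$ is fixed once the state-integral and its factorization are fixed). Two fundamental solution matrices to the same linear $q$-difference system differ by right multiplication by a matrix $C=C(x;q)$ whose entries are $q$-periodic, i.e.\ invariant under $x\mapsto qx$: explicitly $W_m'(x;q)=W_m(x;q)\,C(x;q)$ with $S_x C = C$. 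This is the $q$-difference analogue of the classical fact that two fundamental matrices of a linear ODE differ by a constant invertible matrix.

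\emph{Constraining the gauge matrix $C$.} The heart of the argument is to show that the $q$-periodicity of $C$, together with the remaining analytic and algebraic constraints, forces $C$ to be a diagonal matrix of signs. First I would use the analytic condition (a): the entries of $(1-x)^r W_m(x;q)$ lie in $\BZ[x^\pm][[q^{1/2}]]$ with the stated $n=O(k^2)$ support on monomials $x^kq^{n/2}$. Since both $W_m$ and $W_m'$ satisfy this, and since a nonconstant $q$-periodic function $C(x;q)$ (genuinely depending on $x$) would have to be an elliptic-type function in $x$ with poles — incompatible with the polynomial growth/support constraint in (a) once multiplied into the holomorphic-block structure — the periodicity plus the meromorphy-and-growth control should force the entries of $C$ to be independent of $x$, i.e.\ $C=C(q)\in\GL(r,\,\cdot\,)$ a matrix over an appropriate coefficient ring in $q$. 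The point is that a $q$-periodic holomorphic function on $|q|\neq 1$ with the admissible (Jacobi-form-type) support can only be a constant in $x$.

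\emph{Using factorization and orthogonality to finish.} With $C=C(q)$ constant in $x$, I would next feed in the factorization condition~\eqref{ZM-desc} and the orthogonality relation~\eqref{Wortho}. The factorization says that the \emph{same} state-integral $Z_{M,m,\mu}$ is reproduced; writing the factorization for both $W_m$ and $W_m'=W_mC$ and comparing forces a compatibility between $C(q)$ and $C(q^{-1})$ coming from the two factors $B_{-\mu}(\tx;\tq^{-1})$ and $B_m(x;q)$ (the $\tau\to$ inversion swaps $q\leftrightarrow q^{-1}$). Simultaneously, substituting $W_{-1}'=W_{-1}C$ into~\eqref{Wortho} gives
\begin{equation}
  C(q)^T \bigl(W_{-1}(x;q)W_{-1}(x;q^{-1})^T\bigr) C(q^{-1}) \in \GL(r,\BZ[x^\pm]),
\end{equation}
and since $W_{-1}(x;q)W_{-1}(x;q^{-1})^T$ is already in $\GL(r,\BZ[x^\pm])$ by~\eqref{Wortho}, this pins $C(q)^T(\cdots)C(q^{-1})$ to have $\BZ[x^\pm]$ entries with integer-invertible determinant. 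Because the determinant of an element of $\GL(r,\BZ[x^\pm])$ is a unit $\pm x^{k}$, taking determinants shows $\det C(q)\det C(q^{-1})=\pm 1$, constraining $C$ to be unimodular over $\BZ$; combined with the constant-in-$x$ property and the requirement that $W_m'$ again have coefficients in $\BZ[x^\pm][[q^{1/2}]]$, this forces $C$ to be an integral matrix that is its own compatible inverse under $q\mapsto q^{-1}$, which I expect to reduce to $C$ being a permutation-with-signs matrix. The final input removing the permutation freedom is that the labeling of the \emph{columns} (the branches $\calP$) is fixed once the triangulation and the geometric series $\Phi_\sigma$ are fixed — the factorization matches the $j$-th holomorphic block to the $j$-th geometric expansion — so $C$ cannot permute columns and must be diagonal, leaving exactly the diagonal sign matrices $\diag(\pm 1,\ldots,\pm 1)$.

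\emph{Main obstacle.} I expect the hardest step to be the rigorous passage from ``$C$ is $q$-periodic in $x$'' to ``$C$ is constant in $x$.'' Unlike the ODE case, where periodicity under a translation on $\BC$ forces nothing without growth control, here one must genuinely use the admissibility/support condition in (a) to rule out nonconstant elliptic gauge transformations; a $q$-periodic meromorphic function of $x\in\BC^*$ is an elliptic function on the torus $\BC^*/q^{\BZ}$, and one must show that any such function intertwining two admissible fundamental solutions and preserving the $(1-x)^{-r}\BZ[x^\pm][[q^{1/2}]]$ structure has no poles and no $x$-dependence, hence is constant. Controlling the pole structure (the order-$\le r$ poles at $x\in q^{\BZ}$) carefully enough to conclude that $C$ introduces no new poles, and does not secretly multiply by a nontrivial elliptic unit, is the delicate technical point; the subsequent reduction of a constant integral unimodular $q$-involutive matrix to a signed permutation, and then the pruning of permutations by the fixed column labeling, should be comparatively routine.
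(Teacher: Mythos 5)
Your skeleton --- a gauge matrix between two fundamental solutions, then constraints from the remaining conditions --- is the same as the paper's, and you correctly single out the passage from ``$q$-periodic in $x$'' to ``constant in $x$'' as the crux. But your argument for that step has a genuine gap, exactly where you fear: condition (a) permits poles of order up to $r$ at $x\in q^{\BZ}$, so a $q$-periodic gauge entry may a priori be a nonconstant elliptic function on $\BC^*/q^{\BZ}$ with poles at the lattice point (a Weierstrass-type function); no support or growth consideration excludes this. The paper closes the gap by invoking the orthogonality relation~\eqref{Wortho} \emph{at this stage}, not afterwards: it forces each gauge entry to satisfy $E(x;q)E(x;q^{-1})=1$, so the zeros of $E(\cdot;q)$ are the poles of $E(\cdot;q^{-1})$ and hence are also confined to the lattice; an elliptic function whose entire divisor sits over a single point of the torus must be constant (in the paper's phrasing, $E$ and $1/E$ are both polynomials in the Weierstrass function, which forces both to be constant). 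Without this zero control your step cannot be completed.

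Your endgame has a second, independent gap, rooted in a misreading of what the factorization gives. Comparing~\eqref{ZM-desc} for $W$ and $W'=WC$ yields $C(\tx;\tq^{-1})\,\Delta(\tau)\,C(x;q)^T=\Delta(\tau)$, a relation between $C$ at $(x;q)$ and at $(\tx;\tq^{-1})$ --- not between $C(q)$ and $C(q^{-1})$ as you state; only orthogonality relates $q$ to $q^{-1}$. This cross-modular relation is precisely what your integrality/unimodularity reasoning lacks, and without it your conclusion fails: the scalar multiplier $C=q^k\,\md{1}$ passes every constraint you actually impose in the endgame (integral entries, $\det C(q)\det C(q^{-1})=1$, $C(q)C(q^{-1})=\md{1}$, preservation of the $(1-x)^{-r}\BZ[x^\pm][[q^{1/2}]]$ structure, no permutation to prune), yet it is not a sign matrix. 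What kills all such multipliers in the paper is a modular-unit argument with no counterpart in your proposal: once $E=g(q)$ is constant in $x$, the two functional equations combine (via $\bb\mapsto\bb^{-1}$) into $E(x;q)=E(\tx;\tq)$, hence $g(q)=g(\tq)$, i.e.\ invariance under $\tau\mapsto-1/\tau$; together with $\tau\mapsto\tau+1$ this makes $g$ a modular function for $\SL(2,\BZ)$, holomorphic and nonvanishing on the upper half-plane (since $1/g$ shares these properties), i.e.\ a modular unit, and by Kubert--Lang there are no nontrivial modular units for $\SL(2,\BZ)$, whence $g=\pm1$. This step is the actual core of the lemma (and the reason the paper's level-$N$ disclaimer matters), so the proposal as written does not establish the statement.
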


\begin{proof}
  Any two fundamental solutions of a $q$-holonomic system differ by
  multiplication by a diagonal matrix $\diag(E(x;q))$. If both
  fundamental solutions satisfy~\eqref{ZM-desc} and~\eqref{Wortho}, it
  follows that each $E(x;q)$ satisfies
\begin{equation}
  \label{EE}
  E(\tx;\tq^{-1}) E(x;q) =1 , \qquad E(x;q)E(x;q^{-1})=1 \,.
\end{equation}
Thus, $E(x;q^{-1})=E(\tx;\tq^{-1})$ and after replacing $\bb$ by $\bb^{-1}$,
it implies that $E(x;q)=E(\tx;\tq)$.
It follows that $E(qx;q)=E(x;q)$ and $E(\tq \tx;\tq)=E(\tx;\tq)$.  In
other words $E$ is elliptic. Condition (a) implies that the poles of
$E(x;q)$ are a subset of $\ri \bb \BZ + \ri \bb^{-1} \BZ$ for $|q| \neq 1$.
It follows from $E(x;q) E(x;q^{-1})=1$ that both the poles and the zeros
of $E(x;q)$ are a subset of $\ri \bb \BZ + \ri \bb^{-1} \BZ$ and each
pole and zero has order at most $r$. Thus, $E(x;q)$ and $1/E(x;q)$ is a
polynomial in the Weierstrass polynomial $p(x;q)$ with coefficients
independent of $x$, and this implies that $E(x;q)=g(q)$ is independent of $x$,
where $g$ is a modular function with no zeros in the upper half-plane,
hence $g$ is a modular unit~\cite{Kubert-Lang}. There is none for $\SL(2,\BZ)$
(see~\cite{Kubert-Lang}), hence $g(q) = \pm 1$. Hence, $W_m(q;x)$ is
well-defined up to right multiplication by a diagonal matrix of signs. 
\end{proof}

\subsection{Further directions}
\label{sub.problems}

In this short section we make some comments about future directions. 
The factorization of the state-integral~\eqref{Zfacx} and its
descendant version~\eqref{ZM-desc} into a matrix points towards a
TQFT in 4 dimensions where the vector space associated to a 3-manifold is
labeled by $\calP$.

In another direction, 
as shown in \cite{klmvw,gmn2}, in $\mathcal{N}=2$ theories in four
dimensions, the BPS invariants can be studied by applying WKB methods
to their Seiberg--Witten curve. Since, in complex Chern--Simons
theory, the A-polynomial curve plays in a sense the r\^ole of a
Seiberg--Witten curve~\cite{gukov}, one could study it with the
techniques of \cite{klmvw,gmn2}, further extended in
\cite{esw,blm1,blm2} to curves in exponentiated variables.  It would
be interesting to see one can obtain in this way the BPS invariants
directly from the A-polynomial of the hyperbolic knot.

Peacock patterns of Borel singularities, with integer Stokes constants, are likely to appear in 
problems controlled by a quantum curve in exponentiated variables. An important example is 
topological string theory on Calabi--Yau threefolds, and indeed, peacock patterns can be observed in e.g. \cite{cms}. It would 
be very interesting to understand the resurgent structure in these examples, and work along this direction is in progress.

\subsection*{Acknowledgements} 
The authors would like to thank Jorgen Andersen, Maxim Kontsevich,
Pietro Longhi and Don Zagier for enlightening conversations. The work
of J.G.  has been supported in part by the NCCR 51NF40-182902 ``The
Mathematics of Physics'' (SwissMAP). The work of M.M. has been
supported in part by the ERC-SyG project ``Recursive and Exact New
Quantum Theory" (ReNewQuantum), which received funding from the
European Research Council (ERC) under the European Union's Horizon
2020 research and innovation program, grant agreement No. 810573.

%%%%%%%%%%%%%%%%%%%%%%%%%%%%%%%%%%%%%%%%%%%%%%%%%%%%%%%%%%%%%%%%%%%%%%%%%%%% 
%%%%%%%%%%%%%%%%%%%%%%%%%%%%%%%%%%%%%%%%%%%%%%%%%%%%%%%%%%%%%%%%%%%%%%%%%%%%

\section{Borel resummation and Stokes automorphisms}
\label{sec.borel}

\subsection{Borel resummation}
\label{sub.borel}

In this section we briefly review the process of Borel resummation of
a factorially divergent series, its Laplace integral along rays and
the corresponding Stokes automorphism across a Stokes ray. The
material in this section is classical and well-known and is explained
in detail in the books~\cite{Costin:asymptotics,Miller:applied,msauzin},
and in the references therein. We will be following the physics convention
of Borel resummation as found for example in~\cite[Sec. 3.2]{mmbook}
and~\cite[Sec.42.5]{Zinn-Zustin:QFT}, which differs by a factor of
$\tau$ from the Borel resummation found in the math literature.

Borel resummation is a 2-step process to pass from a factorially
divergent series $F(\tau)$ to the analytic function $s(F)(\tau)$
defined in the right half-plane $\Re(\tau)>0$ summarized in the
following diagram
\begin{equation}
  \label{borel-sum}
  F(\tau) \leadsto %\longrightarrow
  \wh{F}(\zeta) \leadsto %\longrightarrow
  s(F)(\tau) \,.
\end{equation}
Here one starts with a Gevrey-1 a formal power series $F(\tau)$
\begin{equation}
  F(\tau) = \sum_{n=0}^\infty f_n \tau^{n} ,\quad f_n = O(C^n \, n!)
\end{equation}
and defines its Borel stransform $\wh{F}(\zeta)$ by
\begin{equation}
  \wh{F}(\zeta) = \sum_{n=0}^\infty \frac{f_n}{n!} \zeta^n \,.
\end{equation}
It follows that $\wh{F}$ is the germ of an analytic function at
$\zeta=0$. If it analytically continues to an $L^1$-analytic function
along the ray $\rho_\theta:=\re^{\ri\theta}\IR_+$ where
$\theta = \arg \tau$, we define its Laplace transform by 
\begin{equation}
  s_{\theta}(F)(\tau)=
  \int_0^\infty \wh{F}(\tau\zeta) \re^{-\zeta} \rd \zeta=
  \frac{1}{\tau} \int_{\rho_\theta}
  \wh{F}(\zeta)\re^{-\zeta/\tau} \rd \zeta
  \label{eq:brlsum}
\end{equation}
% \begin{equation}
%   \int_0^\infty \wh{F}(\tau\zeta) \re^{-\zeta} \rd \zeta=
%   \frac{1}{\tau} \int_{\BR_+} %_0^{\re^{\ri\theta}\infty}
%   \wh{F}(\zeta)\re^{-\zeta/\tau} \rd \zeta
%   \label{eq:brlsum}
% \end{equation}
The function $s(F)(\tau)$ is often called the Borel resummation of the
formal power series $F$, and we often suppress the subscript $\theta=0$
when $\tau$ is real and positive.
% and it is an analytic function in the
% half-plane $\Re(\tau)>0$. %$\Re(\re^{-\ri\theta} \tau)>0$.
% Suppose now that $\wh{F}$ extends to an $L^1$-analytic function on the
% ray $\rho_\th:=\re^{\ri\theta} \BR_+$, then we can define the Laplace
% transform
% \begin{equation}
%   \label{laplaceth}
%   s_\th(F)(\tau) =
%   \frac{1}{\tau} \int_{\rho_\th}
%   \wh{F}(\zeta) \re^{-\zeta/\tau} \rd \zeta 
% \end{equation}
% which is an analytic function on the half-plane
% $\Re(\re^{-\ri\theta}\tau)>0$.
If we vary $\theta = \arg \tau$ and we do not encounter singularities
of $\wh{F}$, the function $s_\th(F)(\tau)$ is locally analytic. Thus,
the problem is to understand the analytic continuation of $\wh{F}$ and
to analyze what happens to the Borel resummation $s_\theta(F)(\tau)$
when $\theta =\arg(\tau)$ crosses a Stokes ray, i.e., a ray in Borel
plane that contains one or more singularities of $\wh{F}$. This is
described by a Stokes automorphism.

\subsection{Stokes automorphism}
\label{sub.stokes}

We will specialize our discussion to the series of interest, namely to
the Borel transform $\wh{\Phi}(x;\zeta)$ of the vector of series
$\Phi(x;\tau)$. The singularities of $\Phi(x;\tau)$ are conjectured to
be in the set~\eqref{Phising} that generates a set of Stokes rays
whose complement is a countable union of open cones in Borel plane.
When $\th$ is in a fixed such cone $C$, the Laplace transform
$s_\th(\Phi)(x;\tau)$ depends on $C$ but not on $\th$.  To compare two
adjacent such cones, let $\iota_{\s,\s'}^{(\ell,k)}$ denote one of the
singularities of $\Phi_\s(x;\tau)$, $\theta$ denote its argument and
$\rho=\re^{\ri\theta}\BR_+$ denote the corresponding Stokes ray.  When
$x$ is generic, a Stokes ray contains a single singularity and the
Laplace integrals to the right and the left of $\rho$ are related by
\begin{equation}
  s_{\theta^+}(\Phi_\s)(x;\tau) =  s_{\theta^-}(\Phi_\s)(x;\tau) +
  \mc{S}_{\s,\s'}^{(\ell,k)}\tx^\ell \tq^k
  s_{\theta^-}(\Phi_{\s'})(x;\tau),
  \label{eq:Sauto}
\end{equation}
where $\mc{S}_{\s,\s'}^{(\ell,k)}$ is the Stokes constant. In matrix
form, the above formula reads
\begin{equation}
\label{lap2}
  s_{\theta^+}(\Phi)(x;\tau) =
  \mfS_{\theta}(\tx;\tq) s_{\theta^-}(\Phi)(x;\tau) 
\end{equation}
where
\begin{equation}
\label{Grho}
\mfS_\th(\tx;\tq)=
I + \mc{S}_{\s,\s'}^{(\ell,k)}\, \tx^\ell \, \tq^k \, E_{\s,\s'}
\end{equation}
where $E_{\s,\s'}$ is the elementary matrix with $(\s,\s')$-entry $1$
and all other entries zero.

More generally, consider two non-Stokes rays $\rho_{\th^+}$ and
$\rho_{\th^-}$ whose arguments satisfy $0 \leq \th^+-\th^- \leq
\pi$. Then, the Laplace integrals are related by
\begin{equation}
\label{lap3}
  s_{\theta^+}(\Phi)(x;\tau) =
  \mfS_{\theta^- \rightarrow \theta^+}(\tx;\tq) s_{\theta^-}(\Phi)(x;\tau) 
\end{equation}
where the Stokes matrices satisfy the factorization property
\begin{equation}
  \mfS_{\theta^- \rightarrow \theta^+ }(\tx,\tq) =
  \prod_{\th^- < \th < \th^+}^{\longleftarrow} \mfS_{\th}(\tx,\tq).
  \label{eq:SRR-fac}
\end{equation}
where the ordered product is taken over the Stokes rays in the cone
genarated by $\rho_{\th^-}$ and $\rho_{\th^+}$. This factorization is
well-known in the classical litetature on WKB (see for instance
Voros~\cite[p.228]{voros} who called it the ``radar method''  for
obvious visual reasons). In our case, there are four
special non-Stokes rays denoted by 
\begin{equation}
\label{4I}
I=\re^{\ri \epsilon}\BR_+,
\qquad II=\re^{\ri (\pi-\epsilon)}\BR_+,
\qquad III=\re^{\ri (\pi+\epsilon)}\BR_+,
\qquad IV=\re^{\ri (2\pi-\epsilon)}\BR_+
\end{equation}
(for $\epsilon>0$ and sufficently small) that belong to the four
distinguished cones (labeled $I,II,III,IV$) adjacent to the real axis
and free of Stokes lines. The corresponding Stokes matrices
\begin{equation}
  \label{SG}
  \ms{S}^+(\tx;\tq) =
  \mf{S}_{I\rightarrow II}(\tx;\tq)\, \mf{S}_{IV\rightarrow I}(\tx;\tq),
  \qquad
  \ms{S}^-(\tx;\tq) =
  \mf{S}_{III\rightarrow IV}(\tx;\tq)\, \mf{S}_{II\rightarrow III}(\tx;\tq)
\end{equation}
that swap two complementary and nearly horizontal half-planes
separated by a line $L$ are the ones that appear in
Conjecture~\ref{conj.asy}. They are related to the matrices
$M_\rho$ in the second part of that conjecture by
\begin{align}
  \mfS_{I\rightarrow II}(\tx;\tq)
  &= (M_{II}(\tx;\tq))^{-1}\cdot M_{I}(\tx;\tq),\quad |\tq|<1\nn
    \mfS_{III\rightarrow IV}(\tx;\tq)
  &= (M_{IV}(\tx;\tq^{-1}))^{-1}\cdot M_{III}(\tx;\tq^{-1}),\quad |\tq|<1\nn
    \mfS_{IV\rightarrow I}(\tx;\tq)
  &= (M_{I}(\tx;\tq))^{-1}\cdot M_{IV}(\tx;\tq),\nn
    \mfS_{II\rightarrow III}(\tx;\tq)
  &= (M_{III}(\tx;\tq))^{-1}\cdot M_{II}(\tx;\tq) \,.
    \label{4G}
\end{align}

We now come to an important feature of our resurgent series,
a unique factorization property for the Stokes matrices reminiscent
of the ``stability data'' description of DT-invariants in
Kontsevich-Soibelman~\cite{KS,KS:stability, KS:wall-crossing}), and
of the properties of BPS spectrum generators in
Gaiotto-Moore-Neitzke~\cite{gmn, gmn2, gmn3}. 

\begin{lemma}
  \label{lem.fac}
  $\ms{S}$
  uniquely determines $\mfS_{\th}$ for all $\th$. 
\end{lemma}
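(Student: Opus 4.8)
The factorization~\eqref{eq:SRR-fac} expresses each half-plane matrix $\ms{S}^\pm$ of~\eqref{SG} as an ordered product of the single-ray factors $\mfS_\theta$ of~\eqref{Grho}; the lemma is the converse, that this product determines its factors, i.e.\ a \emph{unique factorization} statement. The plan is to realize the product~\eqref{eq:SRR-fac} inside a pro-nilpotent group graded by the positions~\eqref{iota} of the Borel singularities and to peel the factors off one ray at a time, ordered by the argument of the corresponding singularity, in the manner of the reconstruction of stability data in Kontsevich--Soibelman~\cite{KS:stability}. It suffices to treat $\ms{S}^+$, whose factors are the rays of argument in $(-\epsilon,\pi-\epsilon)$; the case of $\ms{S}^-$ in the lower half-plane is identical.

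The first and decisive input is an additivity of the positions~\eqref{iota}. A factor $\mfS_\theta = I + c\,\tx^{\ell}\tq^{k}E_{\s\s'}$ carries the elementary matrix $E_{\s\s'}$ and sits on the ray through $\iota^{(\ell,k)}_{\s\s'}$. When two such factors are multiplied, the only new contribution is the cross term $E_{\s\s'}E_{\s''\s'''}=\delta_{\s'\s''}E_{\s\s'''}$, which survives precisely when $\s'=\s''$; in that case the telescoping $V(\s)-V(\s')+V(\s')-V(\s''')=V(\s)-V(\s''')$ in~\eqref{iota} gives
\begin{equation*}
  \iota^{(\ell_1,k_1)}_{\s\s'}+\iota^{(\ell_2,k_2)}_{\s'\s'''}
  =\iota^{(\ell_1+\ell_2,\,k_1+k_2)}_{\s\s'''}.
\end{equation*}
Thus the positions~\eqref{iota} add under the partial composition of matrix units, so the Stokes factors live in a group graded by the additive sub-semigroup of $\BC$ generated by these positions, with $E_{\s\s'}$ placed in degree $\iota_{\s\s'}$.

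Second I would use \emph{betweenness}: since all positions contributing to $\ms{S}^+$ have argument in the open interval $(-\epsilon,\pi-\epsilon)$ of length $\pi$, the sum of two of them has argument strictly between theirs. Consequently a composite degree $\iota^{(\ell_1+\ell_2,k_1+k_2)}_{\s\s'''}$ never has extremal argument, so the position of least argument occurring in $\ms{S}^+$ can only be produced by the single rightmost factor $\mfS_{\theta_1}$ on the first Stokes ray $\theta_1$. Reading off the graded component of $\ms{S}^+$ at that position (grading by position, not merely by argument, separates singularities even if several share a ray) therefore returns the Stokes constant attached to $\theta_1$; multiplying $\ms{S}^+$ on the right by $\mfS_{\theta_1}^{-1}$ removes this factor, and repeating with the next smallest argument recovers every $\mfS_\theta$ and shows each is forced, which is the asserted uniqueness.

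The step I expect to be the main obstacle is making this infinite peeling rigorous, i.e.\ supplying the completeness in which~\eqref{eq:SRR-fac} converges and the induction closes. Since the entries lie in $\BZ[\tx^\pm][[\tq]]$ and the positions obey $k=O(\ell^2)$ in~\eqref{Phising}, I would topologize by a filtration combining the $\tq$-adic order with the discreteness of the arrangement~\eqref{Phising}, and check that this filtration is compatible with the grading above, so that modulo each level only finitely many rays contribute and the group is genuinely pro-nilpotent. Verifying that betweenness survives this completion---equivalently, that no accumulation of singularities onto a single ray spoils the extremal-argument selection---is the delicate point, and is where the peacock bound $k=O(\ell^2)$ does the real work.
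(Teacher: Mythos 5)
Your proof is correct, but it is organized around a different induction than the paper's, and the comparison is instructive. The paper's proof is very short: it writes $\ms{S}^+$ as the radially ordered product \eqref{SScal} over all singularities above the line $L$, observes that the entries lie in $\BZ[\tx^\pm][[\tq]]$ and that up to any fixed height there are only finitely many horizontal lines with finitely many $\tx$-dots on each, and then inducts on the $\tq$-order $k$, extracting the finite collection $\{\calS^{(\ell,k)}_{\s,\s'}\}_\ell$ level by level. You instead induct on the argument of the Borel singularity, Kontsevich--Soibelman style, and your two structural inputs --- additivity of the positions \eqref{iota} under composition of matrix units, and betweenness of arguments within a half-plane --- are exactly what the paper's induction uses tacitly but never states. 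This is a genuine gain in completeness: factors at the \emph{same} $\tq$-level do interact, since a sum of two level-$k$ positions can coincide with a third singularity at that level. For instance, for $\knot{5}_2$ at $u=0$ one has $\iota_{3,1}+\iota_{1,2}=\iota_{3,2}$, all with $k=0$, so in \eqref{eq:S0pu052} the $(3,2)$-entry of $\ms{S}^+(0)$ is $3+(-3)(4)=-9$ rather than the Stokes constant $3$; only the radial ordering (your extremal-argument peeling, refined by radius along a ray when several singularities are collinear) disentangles the constant from the cross term, and any honest version of the paper's induction step must invoke exactly this triangularity. Conversely, the delicate point you flag --- whether the infinite peeling converges and betweenness survives completion --- is settled by precisely the finiteness the paper records: modulo $\tq^{N+1}$ only the finitely many singularities with $k\le N$ contribute, so each truncation of \eqref{SScal} is a finite product, your peeling terminates there, and uniqueness modulo every power of $\tq$ gives uniqueness; no accumulation of rays ever enters a fixed truncation, and the peacock bound is needed only to guarantee finitely many dots per horizontal line. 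In short, the paper's proof is the economical $\tq$-adic induction with the within-level step left implicit, while yours supplies that step; together they form a complete argument.
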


\begin{proof}
  Without loss of generality, we will show that $\ms{S}^+$ uniquely
  determines the Stokes matrices $\mfS_\th$ for all $\th$ such that
  $-\varepsilon < \th < \pi - \varepsilon$ for $\varepsilon>0$ and
  sufficiently small. We have
  \begin{equation}
    \label{SScal}
    \ms{S}^+(\tx;\tq) = \prod_{\s,\s',k,\ell}^{\longleftarrow}
    (I + \calS_{\s,\s'}^{(\ell,k)} \tx^\ell \, \tq^k E_{\s,\s'})
  \end{equation}
  where the product is over all the singularities above the line $L$.
  The entries of the above matrices are in the ring
  $\BZ[\tx^\pm][[\tq]]$.  For each fixed natural number $N$, there are
  only finitely many horizontal lines of singularities in Borel plane,
  of height at most $N$ and within those, there are finitely many
  $\tx$-dots. It follows by induction on $k$ that the finite
  collection $\{\calS_{\s,\s'}^{(\ell,k)}\}_\ell$ is uniquely
  determined from $\ms{S}^+(\tx;\tq)$.
\end{proof}

It follows that we can repackage the information of the Stokes
constants in two matrices $\calS^+$ and $\calS^-$ defined by

\begin{equation}
  \label{calS}
  \calS^\pm(\tx;\tq) = \sum_{\ell,k} \calS_{\s,\s'}^{(\ell,k)}
  \tx^\ell \, \tq^k E_{\s,\s'}
\end{equation}
where the sum in $\calS^+$ (resp. $\calS^-$) is over the singularities
above (resp., below) $L$. The matrices $\calS^\pm(\tx;\tq)$ appear to
have some positivity properties; see Sections~\ref{sub.41stokes}
and~\ref{sub.52stokes} below for the $\knot{4}_1$ and the $\knot{5}_2$
knots.
    
%%%%%%%%%%%%%%%%%%%%%%%%%%%%%%%%%%%%%%%%%%%%%%%%%%%%%%%%%%%%%%%%%%%%%%%%%%%%
%%%%%%%%%%%%%%%%%%%%%%%%%%%%%%%%%%%%%%%%%%%%%%%%%%%%%%%%%%%%%%%%%%%%%%%%%%%%

\section{A summary of the story when $u=0$}
\label{sec.summary}

In this section we recall briefly the results
from~\cite{GGM:resurgent} for our two sample hyperbolic knots, the
$\knot{4}_1$ and the $\knot{5}_2$ knot.

\subsection{The $\knot{4}_1$ knot when $u=0$}
\label{sub.41x=1}

The state integral of the $\knot{4}_1$ at $u=0$ is given by
\begin{equation}
  \label{Z410}
  Z_{\knot{4}_1}(0;\tau) = 
  \int_{\BR+\ri 0} \Phi_\bb(v)^2 \, \re^{-\pi \ri v^2} \rd v \,.
\end{equation}
The critical points of the integrand are the logarithms of the
solutions $\xi_1= {\rm e}^{2 \pi \ri/6}$ and
$\xi_2= \re^{-2 \pi \ri/6}$ of the polynomial equation
\begin{equation}
  \label{41xi}
  (1-y)(1-y^{-1})=1 \,.
\end{equation}
The labeling set $\calP=\{\s_1,\s_2\}$, where $\s_1$ corresponds to
the geometric representation of the $\knot{4}_1$ and $\s_2$ to the
complex conjugate of the geometric representation. Observe that
$\xi_1$ (resp., $\xi_2$) lie in the trace field $\BQ(\sqrt{-3})$
(resp., its complex conjugate) of the $\knot{4}_1$ knot, where
$\BQ(\sqrt{-3})$ is a subfield of the complex numbers with $\sqrt{-3}$
taken to have positive imaginary part.

The first ingredient is a vector of formal power series 
\be
\Phi(\tau)=
\begin{pmatrix} \Phi_{\s_1}(\tau)\\ \Phi_{\s_2}(\tau) \end{pmatrix} 
\ee
defined by the asymptotic expansion of the state-integral~\eqref{Z410}
at each of the two critical points, and which has the form
\be
\Phi_{\s}(\tau) = \exp\left( { V(\s) \over 2\pi \ri \tau} \right)
\varphi_\s(\tau),
\ee
satisfies the symmetry
$\Phi_{\s_2}(\tau) = {\rm \ri} \Phi_{\s_1}(-\tau)$, where 
\begin{equation}
  \label{volume-41}
  V(\s_1)= \ri \text{Vol}(\knot{4}_1)
  = \ri\, 2 {\rm Im}\, {\rm Li}_2(\re^{\ri \pi/3})
  = \ri\, 2.029883\dots \,.
\end{equation}
with $\text{Vol}(\knot{4}_1)$ being the hyperbolic volume of
$\knot{4}_1$ and the first few terms of
$\varphi_{\s_1}(\tau/(2\pi \ri)) \in 3^{-1/4} \BQ(\sqrt{-3})[[\tau]]$
are given by
\begin{equation}
  \label{varphi41}
  \varphi_{\s_1}\left(\frac{\tau}{2\pi \ri}\right) =
  \frac{1}{\sqrt[4]3}\, 
  \Bigl(1 \+ \frac{11\tau}{72\sqrt{-3}}
  \+ \frac{697\tau^2}{2\,(72\sqrt{-3})^2}
  \+ \frac{724351\tau^3}{30\,(72\sqrt{-3})^3} \+\cdots\Bigr)\,. 
\end{equation}
% h=2\pi \ri \tau

The second ingredient is a vector
$G(q)=\begin{psmall} G^0(q) \\ G^1(q)
\end{psmall}$ of $q$-series defined for $|q|<1$ by
\begin{subequations}
\begin{align}
\label{gm0}
G^0(q) &=\sum_{n=0}^\infty (-1)^n \frac{q^{n(n+1)/2}}{(q;q)_n^2}
\\
\label{Gm0}
G^1(q) &=\sum_{n=0}^\infty (-1)^n \frac{q^{n(n+1)/2}}{(q;q)_n^2}
\left(E_1(q) + 2 \sum_{j=1}^n \frac{1+q^j}{1-q^j} \right) \,,
\end{align}
\end{subequations}
where $E_1(q)=1-4\sum_{n=1}^\infty q^n/(1-q^n)$ is the Eisenstein
series, and extended to $|q|>1$ by $G^0(q^{-1})=G^0(q)$ and
$G^1(q^{-1})=-G^1(q)$.  These series are motivated by, and appear in,
the factorization of the state-integral of the $\knot{4}_1$ knot given
in~\cite[Cor.1.7]{GK:qseries}
\begin{equation}
  \label{Z410-fac}
  Z_{\knot{4}_1}(0;\tau) =
  - \frac{\ri}{2} \left(\frac{q}{\tq}\right)^{\frac{1}{24}} 
  \left(
    \sqrt{\tau} G^0(\tq) G^1(q) - \frac{1}{\sqrt{\tau}} G^0(q) G^1(\tq) 
  \right), \qquad (\tau \in \BC\setminus\BR)
\end{equation}
where 
\begin{equation}
\label{qqt}
q=\re^{2 \pi \ri \tau}, \qquad \tq=\re^{-2\pi \ri/\tau} \,.
\end{equation}
The above factorization follows by applying the residue theorem to the
integrand of~\eqref{Z410}, a meromorphic function of $v$ with
prescribed zeros and poles. In particular, the integrand
of~\eqref{Z410} determines the $q$-hypergeometric formula for the
vector $G(q)$ of $q$-series.  Below, given a $q$-series $H(q)$ defined
on $|q| \neq 1$, we denote by $h(\tau)= H(\re^{2\pi \ri\tau})$ the
corresponding holomorphic function in $\BC\setminus\BR$.

The vector $G(q)$ of $q$-series and the vector of asymptotic
series $\Phi(\tau)$ come together when we consider the asymptotics
of $\diag(\frac{1}{\sqrt{\tau}},\sqrt{\tau}) g(\tau)$
%$\begin{psmall} \frac{1}{\sqrt{\tau}} g^0(\tau) \\ \sqrt{\tau}
%  g^1(\tau) \end{psmall}$
in the $\tau$-plane (as was studied in~\cite{GZ:qseries}) and compare
them with the Borel summed vector $\Phi$.
Recall that when the Borel transform of an asymptotic series has
singular points $\iota_i$ in the Borel plane, the rays (Stokes rays)
emanating from the origin with angle $\theta = \arg \iota_i$
% passing through these singular points
divide the complex plane into different sectors.  When one crosses
into neighboring sectors, the Borel sum of the asymptotic series
undergoes Stokes automorphism.
In the case of the vector of asymptotic series $\Phi(\tau)$, the
singularities of the Borel transforms of its two component asymptotic
series are located at
\begin{equation}
  \iota_{i,j} = \frac{V(\s_i) - V(\s_j)}{2\pi\ri}\,,\quad i,j=1,2,\;i\neq j,
\end{equation}
as well as
\begin{equation}
  2\pi\ri k,\; \iota_{i,j}+2\pi\ri k\,,
  \quad k\in \IZ_{\neq 0},
\end{equation}
forming vertical towers as illustrated in
Figure~\ref{fig:sings-u0-41}. In particular, the two singularities
$\iota_{1,2}, \iota_{2,1}$ are on the positive and the negative real
axis. We pick out four sectors which separate 
the two singularities on the real axis and all the others, and label
them by $I,II,III,IV$, as illustrated in Figure~\ref{fig:secs-u0-41}.
The relation between the vector $G(q)$ and the Borel summed vector
$\Phi(\tau)$ depend on the sector $R$.
In~\cite{GGM:resurgent}, we found out
that we do not get an agreement, but rather both sides agree up to
powers of the exponentially small quantity $\tq$, and what is more,
several coefficients of those powers were numerically recognized to be
integers. In other words, we found that
\begin{equation}
\label{MR}
\diag(\frac{1}{\sqrt{\tau}},\sqrt{\tau}) g(\tau)
% \begin{pmatrix} \frac{1}{\sqrt{\tau}} g^0_0(\tau) \\
%  \sqrt{\tau} g^1_0(\tau)
%\end{pmatrix}
= M_R (\tq) \, s_R(\Phi)(\tau) \,.
\end{equation}
where $\diag(v)$ denotes the diagonal matrix with diagonal given by
$v$ and $M_R(q)$ is a matrix of $q$-series with integer coefficients.

\begin{figure}[htpb!]
\leavevmode
\begin{center}
  \includegraphics[height=7cm]{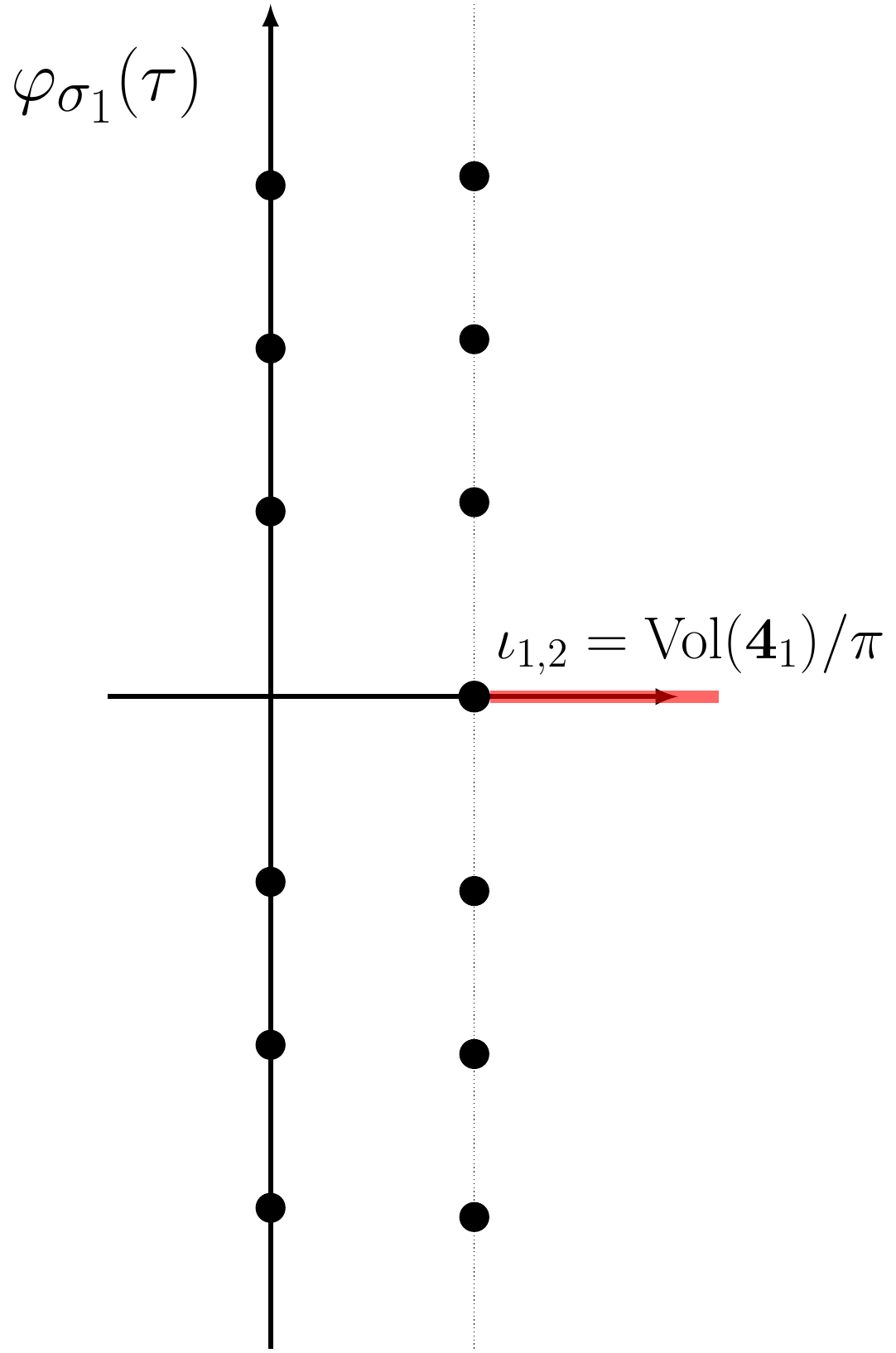} \hspace{5ex}
  \includegraphics[height=7cm]{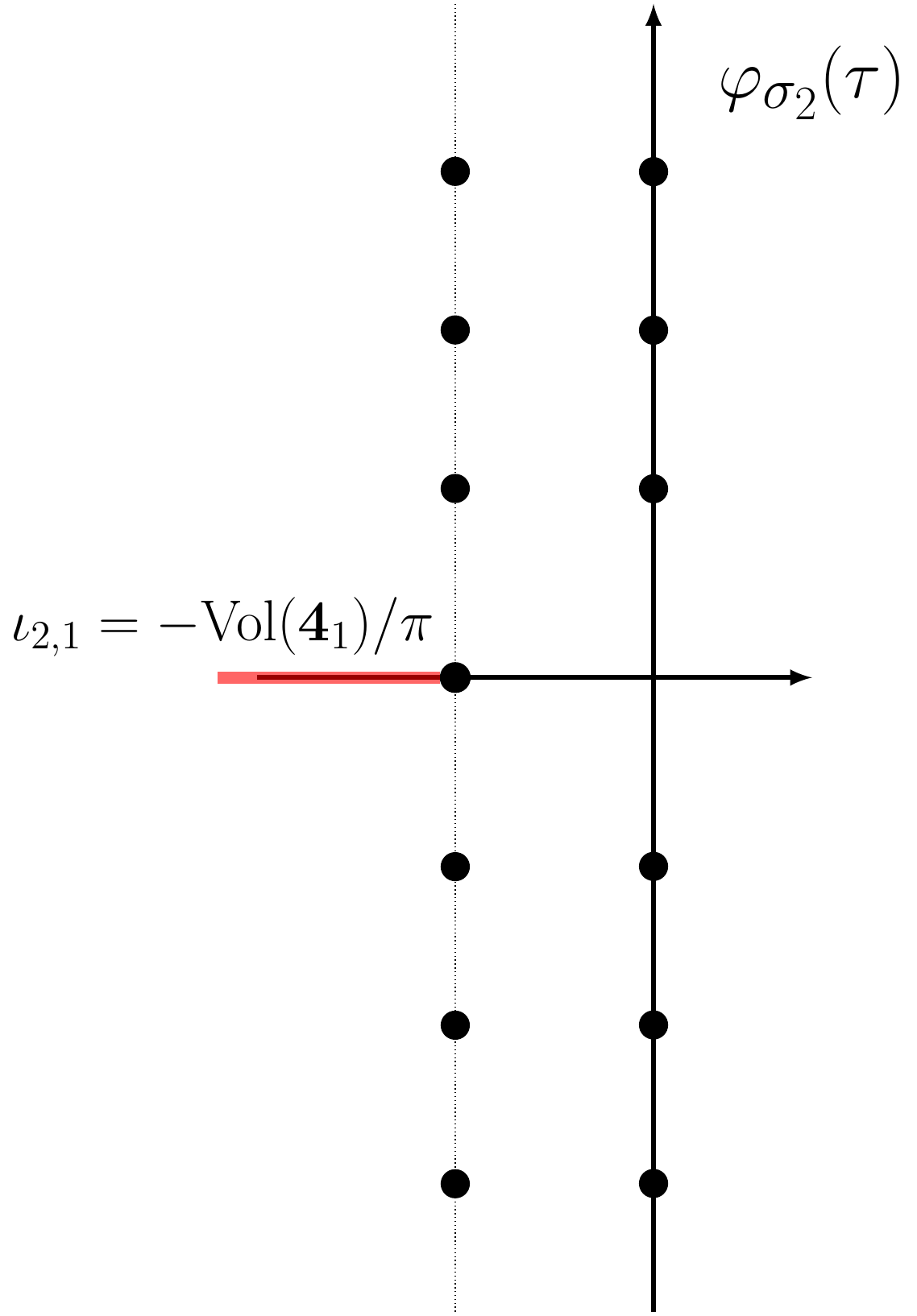}
\end{center}
\caption{The singularities in the Borel plane for the series
  $\varphi_{\sigma_{j}} (0;\tau)$ for $j=1,2$ of knot $\knot{4}_1$.}
\label{fig:sings-u0-41}
\end{figure}

\begin{figure}[htpb!]
\leavevmode
\begin{center}
\includegraphics[height=7cm]{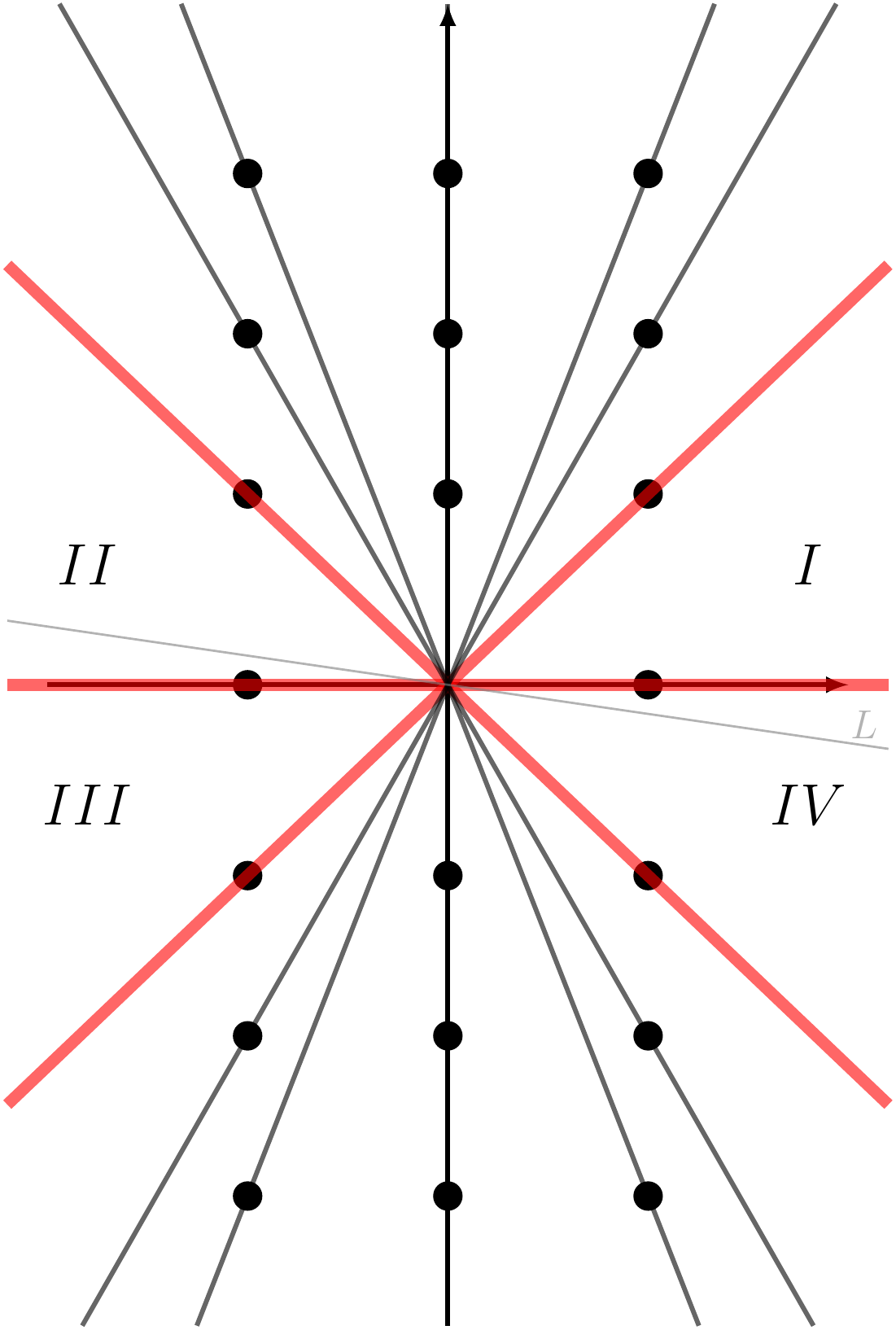}
\end{center}
\caption{Four different sectors in the $\tau$-plane for $\Phi(0;\tau)$
  of knot $\knot{4}_1$.}
\label{fig:secs-u0-41}
\end{figure} 

To identify the matrices $M_R$, we used the third ingredient,
namely the linear $q$-difference equation
\begin{equation}
\label{41qdiff}
y_{m+1}(q) -(2-q^m) y_m(q) + y_{m-1}(q)=0 \qquad (m \in \BZ) \,.
\end{equation}
It has a fundamental solution set given by the columns of the following
matrix 
\begin{equation}
\label{Gfund}
W_m(q) =
\begin{pmatrix} G^0_m(q) & G^1_m(q) \\ G^0_{m+1}(q) & G^1_{m+1}(q)
\end{pmatrix}, \qquad (|q| \neq 1)
\end{equation} 
where $G_m(q)=\begin{psmall} G^0_m(q) \\ G^1_m(q) \end{psmall}$,
and $G^0_m(q)$ and $G^1_m(q)$ are defined by 
\begin{subequations}
\begin{align}
\label{gm}
G^0_m(q) &=\sum_{n=0}^\infty (-1)^n \frac{q^{n(n+1)/2+m n}}{(q;q)_n^2}
\\
\label{Gm}
G^1_m(q) &=\sum_{n=0}^\infty (-1)^n \frac{q^{n(n+1)/2+m n}}{(q;q)_n^2}
\left(2m+ E_1(q) + 2 \sum_{j=1}^n \frac{1+q^j}{1-q^j} \right) \,,
\end{align}
\end{subequations}
for $|q|<1$ and extended to $|q|>1$ by
$G^j_m(q^{-1}) = (-1)^j G^j_m(q)$.  Observe that $G_0(q)=G(q)$, the
vector that appears in the factorization~\eqref{Z410-fac} of the
state-integral $Z_{\knot{4}_1}(0;\tau)$.  The matrix $W_m(q)$
of holomorphic functions in $|q| \neq 1$ satisfies several
properties summarized in the following theorem.

\begin{theorem}
  \label{thm.410}
  $W_m(q)$ is a fundamental solution of the linear $q$-difference
  equation~\eqref{41qdiff} that has constant determinant
  \begin{equation}
\label{det41}
\det(W_m(q))=2 \,,
\end{equation}
satisfies the symmetry 
\begin{equation}
  \label{W41inv}
  W_m(q^{-1}) = W_{-m}(q)
  \begin{pmatrix} 1 & 0 \\ 0 & -1 \end{pmatrix} \,,
\end{equation}
the orthogonality property
\begin{equation}
\label{WWT41b}
\frac{1}{2} W_m(q)
\begin{pmatrix} 0 & 1 \\ -1 & 0 \end{pmatrix}
W_{m}(q)^T =
\begin{pmatrix} 0 & 1 \\ -1 & 0 \end{pmatrix} 
\end{equation}
%% we can also write it as follows
%%  \begin{equation}
%%\label{WWT41bb}
%%\frac{1}{2} W_m(q)
%%\begin{pmatrix} 0 & 1 \\ 1 & 0 \end{pmatrix}
%%W_{m}(q^{-1})^T =
%%\begin{pmatrix} 0 & 1 \\ -1 & 0 \end{pmatrix} 
%%\end{equation}
as well as
\begin{equation}
\label{WWT41}
\frac{1}{2} W_m(q)
\begin{pmatrix} 0 & 1 \\ -1 & 0 \end{pmatrix}
W_{\ell}(q)^T \in \SL(2,\BZ[q^\pm]) 
\end{equation}
for all integers $m, \ell$ and for $|q| \neq 1$. 
\end{theorem}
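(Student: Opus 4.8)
The plan is to establish the five assertions in a single pass, in the order: the recurrence, the determinant, the two orthogonality relations, and finally the $q\mapsto q^{-1}$ symmetry, since each step feeds the next and most of them are formal consequences of the first two.

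First I would verify directly that both columns of $W_m(q)$ solve \eqref{41qdiff}. Write $a_n=a_n(m)=(-1)^n q^{n(n+1)/2+mn}/(q;q)_n^2$, so that $G^0_{m\pm1}=\sum_n q^{\pm n}a_n$ and the recurrence operator applied to $G^0$ is $\sum_n a_n(q^n+q^{-n}-2+q^m)$. The key observation is that $q^n+q^{-n}-2=q^{-n}(1-q^n)^2$ together with $(1-q^n)^2/(q;q)_n^2=1/(q;q)_{n-1}^2$, so that after the shift $n\mapsto n-1$ one gets $\sum_n a_n(q^n+q^{-n}-2)=-q^m G^0_m$; adding $q^m G^0_m$ gives $0$. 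For the second solution I would write $G^1_m=2m\,G^0_m+H_m$ with $H_m=\sum_n a_n\,d_n$, where $d_n=E_1(q)+2\sum_{j=1}^n(1+q^j)/(1-q^j)$ is independent of $m$. Applying the operator and using the $G^0$ identity, the $2m$-part contributes $2(G^0_{m+1}-G^0_{m-1})$, while the $H$-part, using $d_n-d_{n-1}=2(1+q^n)/(1-q^n)$ and the same shift, contributes $-2q^m\sum_k\frac{1+q^{k+1}}{1-q^{k+1}}a_k$. These cancel because of the one-line identity $q^m\frac{1+q^{k+1}}{1-q^{k+1}}a_k=(q^{k+1}-q^{-(k+1)})a_{k+1}$, which follows from the ratio $a_{k+1}/a_k=-q^{k+1+m}/(1-q^{k+1})^2$ and telescopes the sum to $\sum_j(q^j-q^{-j})a_j=G^0_{m+1}-G^0_{m-1}$. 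Convergence for $|q|<1$ and the vanishing of boundary terms are routine.

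Next, for the determinant I would exploit the transfer-matrix form of the recurrence: with state vector $(G^j_m,G^j_{m+1})^T$ one has $W_{m+1}=U_m W_m$, where $U_m=\left(\begin{smallmatrix}0&1\\-1&2-q^{m+1}\end{smallmatrix}\right)\in\SL(2,\BZ[q^\pm])$. Hence $\det W_m$ is independent of $m$, and I would fix its value by letting $m\to+\infty$ with $|q|<1$: there $G^0_m\to1$ and $G^1_m=2m+E_1(q)+o(1)$, so $\det W_m=G^0_m G^1_{m+1}-G^1_m G^0_{m+1}\to(2m+2+E_1)-(2m+E_1)=2$. Since the determinant is exactly constant in $m$, it equals this limit, giving \eqref{det41}; linear independence of the columns (hence ``fundamental solution'') follows from $\det W_m=2\neq0$. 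The orthogonality relations are then essentially free: for any $2\times2$ matrix $A$ one has $AJA^T=(\det A)J$ with $J=\left(\begin{smallmatrix}0&1\\-1&0\end{smallmatrix}\right)$, so $\tfrac12 W_m J W_m^T=\tfrac12(\det W_m)J=J$, which is \eqref{WWT41b}. For \eqref{WWT41} I would iterate the transfer matrices, $W_m=P_{m,\ell}W_\ell$ with $P_{m,\ell}$ a product of the $U_k^{\pm1}$ (the inverses $U_k^{-1}=\left(\begin{smallmatrix}2-q^{k+1}&-1\\1&0\end{smallmatrix}\right)$ also have integer Laurent entries), so $P_{m,\ell}\in\SL(2,\BZ[q^\pm])$; then $\tfrac12 W_m J W_\ell^T=P_{m,\ell}\bigl(\tfrac12 W_\ell J W_\ell^T\bigr)=P_{m,\ell}J\in\SL(2,\BZ[q^\pm])$ by \eqref{WWT41b} at index $\ell$.

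Finally, for the symmetry \eqref{W41inv} I would use that \eqref{41qdiff} is invariant under the simultaneous substitution $(q,m)\mapsto(q^{-1},-m)$, so that $y_m(q)\mapsto y_{-m}(q^{-1})$ maps its solution space to itself; combined with the even/odd extension of $G^0$ and $G^1$ under $q\mapsto q^{-1}$, this should force $W_m(q^{-1})$ and $W_{-m}(q)\,\diag(1,-1)$ to be two presentations of the same pair of solutions, which I would then identify by comparing a single pair of consecutive values. I expect the main obstacle to be twofold. The computational heart is getting the $G^1$ recurrence to close: this is the only place where the ``descendant''/derivative structure carrying $E_1$ and the harmonic sums enters, and it hinges on the telescoping identity above. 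The second delicate point is keeping the reflection bookkeeping in \eqref{W41inv} consistent with the chosen row convention for $W_m$, since the reflection $m\mapsto-m$ reverses the order of consecutive indices and one must track this carefully to land exactly on $\diag(1,-1)$. Everything else reduces to the recurrence and to $\det W_m=2$.
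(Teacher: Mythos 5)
Your handling of the recursion, the determinant, and the two orthogonality relations is correct, but it follows a genuinely different route from the paper's. The paper never manipulates the series $G^j_m$ directly: it first proves the $x$-dependent Theorem~\ref{thm.41b} for the holomorphic blocks $A_m(x;q)$, $B_m(x;q)$ (the recursion \eqref{rec41m} via creative telescoping and the \texttt{HF} package, the determinant \eqref{W410} via the $q$-Bessel Wronskian evaluation, the orthogonality \eqref{WW41b} via the analytic continuation formula for $J$), and then, in Section~\ref{sub.41near1}, deduces Theorem~\ref{thm.410} by Taylor expansion at $u=0$: comparing $\det W_m(\re^u;q)=\tfrac12\det W_m(q)+O(u)$ with $x^{3m+3}=1+O(u)$ gives \eqref{det41}; the recursion \eqref{41qdiff} is the $u=0$ limit of \eqref{rec41m}; and \eqref{WWT41b} follows from $\det W_m(q)=2$ by exactly the adjugate trick you use ($AJA^T=(\det A)J$ for $2\times 2$ matrices), with \eqref{WWT41} then coming from the transfer matrices, as in your last step. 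Your proposal replaces all of this with a self-contained verification at $u=0$: your telescoping identity $q^m\tfrac{1+q^{k+1}}{1-q^{k+1}}a_k=(q^{k+1}-q^{-(k+1)})a_{k+1}$ is correct and does close the $G^1$ recursion, and the transfer-matrix-plus-$m\to\infty$ evaluation of the constant determinant is a clean elementary substitute for the paper's computation. What the paper's route buys is the stronger $x$-dependent statements and a treatment of $|q|<1$ and $|q|>1$ on an equal footing; what yours buys is independence from the $q$-holonomic machinery.

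The genuine gap is the symmetry \eqref{W41inv}, which in your write-up is a plan rather than a proof, and the obstacle you flag there is not mere bookkeeping. The reflection $m\mapsto -m$ sends the consecutive pair $(m,m+1)$ to the \emph{decreasing} pair $(-m,-m-1)$: with the extension $G^j_k(q^{-1})=(-1)^jG^j_{-k}(q)$ (the one compatible with the recursion), the rows of $W_m(q^{-1})$ carry indices $-m$ and $-m-1$, while those of $W_{-m}(q)$ carry $-m$ and $-m+1$, so no diagonal sign matrix on the right can identify them; a row swap and an index shift are forced, and the identity one can actually prove is
\begin{equation*}
  W_m(q^{-1})=\begin{psmall}0&1\\1&0\end{psmall}\,W_{-m-1}(q)\,\begin{psmall}1&0\\0&-1\end{psmall}\,,
\end{equation*}
which is also what the paper's own expansion \eqref{eq:Wn-uexp-41} (whose rows carry $G^0_{-m}$ and $G^0_{-m-1}$) supports. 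Indeed \eqref{W41inv} as printed cannot hold alongside \eqref{det41}: taking determinants in \eqref{W41inv} gives $\det W_m(q^{-1})=-\det W_{-m}(q)=-2$, contradicting $\det W_m(q^{-1})=2$. So your plan to ``identify by comparing a single pair of consecutive values'' cannot land on \eqref{W41inv}; the statement (or the extension convention) must be corrected first. This matters beyond the symmetry itself: in your scheme the $|q|>1$ halves of \eqref{det41}, \eqref{WWT41b} and \eqref{WWT41} are reduced to the $|q|<1$ case precisely through this $q\mapsto q^{-1}$ relation, so they remain contingent on it. The corrected identity above repairs this: the extra permutation and sign matrices each have determinant $-1$ and integer entries, so all four remaining assertions then follow for $|q|>1$ as well. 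To be fair, the paper's own proof is silent on \eqref{W41inv}, so this is as much an issue with the statement as with your proposal--but a complete write-up must address it rather than defer it.
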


\begin{conjecture} 
\label{conj.41M0}  
Equation~\eqref{MR} holds where the matrices $M_R(q)$ are
given in terms of $W_{-1}(q)$ as follows
\begin{subequations}
\begin{align}
  \label{M41I}
  M_I(q)
  &= W_{-1}(q)^T \,
    \begin{pmatrix}
      0 & -1 \\
      1 & -1
    \end{pmatrix}, & |q|<1 \,,
  \\
  \label{M41II}
  M_{II}(q)
  &= \begin{pmatrix}
    1 & 0 \\
    0 & -1
  \end{pmatrix} \, W_{-1}(q)^T \,
        \begin{pmatrix}
          1 & 0 \\
          1 &-1
        \end{pmatrix}, & |q| <1 \,,
  \\
  \label{M41III}
  M_{III}(q)
  &=
    W_{-1}(q^{-1})^T \,
    \begin{pmatrix}
      1 & 0 \\
      1 & 1 \end{pmatrix}, & |q| >1 \,,
  \\
  \label{M41IV}
  M_{IV}(q)
  &=\begin{pmatrix}
    1 & 0 \\
    0 & -1
  \end{pmatrix} \,
        W_{-1}(q^{-1})^T \,
        \begin{pmatrix}
          0 & 1 \\
          1 & 1
        \end{pmatrix}, & |q|>1 \,.
\end{align}
\end{subequations}
\end{conjecture}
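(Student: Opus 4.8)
The plan is to derive the connection formula~\eqref{MR} from the exact bilinear factorization~\eqref{Z410-fac} and its descendants, reading off the matrices $M_R$ as the dual ($\tq$) blocks $W_{-1}(\tq)$ up to constant integer matrices, and then fixing those constants by a leading-order saddle-point matching together with the algebraic identities of Theorem~\ref{thm.410}. First I would assemble the descendant family. Promoting~\eqref{Z410} to the descendant integrals $z_{\knot{4}_1,m,\mu}(0;\tau)$ and applying~\eqref{ZM-desc}, one obtains the exact matrix factorization of~\eqref{Zfacx-desc}, in which $W_m(q)$ is the fundamental solution~\eqref{Gfund} of the $q$-difference equation~\eqref{41qdiff} and the antisymmetric pairing is the one in~\eqref{WWT41b}. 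By property (b) of the introduction (whose ingredients for $\knot{4}_1$ are exactly the contents of Theorem~\ref{thm.410}) the left-hand side extends holomorphically across the real $\tau$-axis; this is what makes the forthcoming resummation statement an exact identity rather than an asymptotic one, since the ``resummed'' object is simply this holomorphic extension.

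Second I would carry out the radial asymptotics sector by sector. Fix $R$ and let $\tau\to 0$ inside it. For $\Im(\tau)>0$ (sectors $I,II$) one has $|q|<1$, $|\tq|<1$ and $\tq\to 0$, so the dual blocks $G^j_{-\mu}(\tq)$ are genuine convergent power series in $\tq$ and supply the entries of the connection matrix, while a saddle-point (equivalently Euler--Maclaurin) analysis of the integrand of~\eqref{Z410} and of the sums~\eqref{gm}--\eqref{Gm} at the two critical points $\s_1,\s_2$ shows that the perturbative expansion of the scaled blocks $\diag(\tau^{-1/2},\tau^{1/2})g(\tau)$ is $M_R(0)\,\Phi(\tau)$ for an explicit integer matrix $M_R(0)$. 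Borel summability along the rays bounding $R$, combined with the holomorphic extension from the first step, upgrades this to the exact relation~\eqref{MR} with $M_R(\tq)=W_{-1}(\tq)^T C_R$; here the descendant index $\mu=1$ together with the reflection $G^j(\tq^{-1})=(-1)^jG^j(\tq)$ singles out the block $W_{-1}(\tq)$. For $\Im(\tau)<0$ (sectors $III,IV$) one reduces to the previous case through the reflection $G^j(q^{-1})=(-1)^j G^j(q)$ and the symmetry~\eqref{W41inv}, which is the source of the $W_{-1}(q^{-1})$ in~\eqref{M41III}--\eqref{M41IV}.

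Third I would pin down the constant matrices and the inter-sector consistency. The leading ($\tq^0$) term of each $M_R$ is fixed by matching the one-loop data at the two saddles, yielding the explicit $2\times2$ integer matrices in~\eqref{M41I}--\eqref{M41IV}; the determinant normalization~\eqref{det41} together with~\eqref{WWT41b}--\eqref{WWT41} forces $C_R\in\GL(2,\BZ)$ and is consistent with $M_R\in\mathrm{Mat}_2(\BZ[[\tq]])\cap\GL_2$. Finally, the four matrices are linked by the Stokes relations~\eqref{4G}: the products $M_{II}^{-1}M_I$, $M_I^{-1}M_{IV}$, and so on must reproduce the half-plane Stokes matrices of~\eqref{SG}, and verifying this (again using~\eqref{W41inv} to pass between $q$ and $q^{-1}$) both confirms the four formulas and shows there is no residual ambiguity beyond the diagonal sign freedom of Lemma~\ref{lem.Wamb}.

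The hard part is the exactness claimed in the second step: that the exponentially small corrections to the $q\to1$ asymptotics of $g(\tau)$ are \emph{precisely} integer powers of $\tq$ with coefficients dictated by $W_{-1}(\tq)$, rather than merely agreeing to any finite order. For $\knot{4}_1$ this is tractable because the convergent bilinear identity~\eqref{Z410-fac} and its descendants replace the analytically delicate Borel resummation by an honest product of $q$- and $\tq$-series; the genuinely delicate analytic input that remains is the Borel summability of $\Phi$ along the rays bounding each sector, i.e. the $L^1$-continuation of $\wh\Phi$, which is exactly what obstructs a verbatim extension to general knots and keeps the statement conjectural in that generality.
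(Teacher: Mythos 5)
There is a genuine gap, and it is worth being precise about where: the statement you are proving is Conjecture~\ref{conj.41M0}, which the paper does \emph{not} prove. The paper supports it only by numerical evidence — radial asymptotics accelerated by Richardson transforms, Borel--Pad\'e resummation compared against the holomorphic lifts (Section~\ref{sub.41num} for the $x$-dependent version, specialized at $u=0$), and consistency checks such as the $q$-independence of $\mfS_{IV\rightarrow I}$ and $\mfS_{II\rightarrow III}$. Your first and third steps are sound but purely algebraic: the descendant factorization~\eqref{410-desc-fac}, the fundamental-solution and orthogonality properties of Theorem~\ref{thm.410}, and the rigidity of Lemma~\ref{lem.Wamb} are all proven in the paper, and they do constrain $M_R$ to have the form $W_{-1}(\tq)^T C_R$ \emph{once} the relation~\eqref{MR} is known to hold with some integer $\tq$-series matrix. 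The gap is your second step, specifically the sentence ``Borel summability along the rays bounding $R$, combined with the holomorphic extension from the first step, upgrades this to the exact relation~\eqref{MR}.'' This upgrade does not follow. Agreement of $\diag(\tau^{-1/2},\tau^{1/2})g(\tau)$ with $M_R(0)\,\Phi(\tau)$ to all orders in $\tau$, even together with Borel summability of each $\Phi_\s$, only determines the coefficient of the \emph{dominant} saddle in each direction; every other entry of $M_R(\tq)$, and all the $\tq^k$ corrections ($k\geq 1$), are exponentially small relative to it and are invisible to any all-orders matching. To fix them you would need an exact decomposition of the state-integral (or of $g(\tau)$) into Lefschetz-thimble contributions with controlled remainders — precisely the resurgence statements of Conjecture~\ref{conj.asy}(a),(b) — and no such decomposition is established. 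Your own final paragraph concedes that the $L^1$-continuation of $\wh\Phi$ is unproven; but note this makes the argument \emph{conditional on the very conjecture being tested}, not a proof with one technical lemma outstanding: without (i) Borel summability in the open sectors (on the sector boundaries it is actually false, since the rays through $\iota_{1,2},\iota_{2,1}$ and $2\pi\ri\BZ$ carry singularities) and (ii) the identification of the lateral Borel sums with specific linear combinations of the descendant integrals, there is no mechanism that forces the exponentially small corrections to be the integer $\tq$-series $W_{-1}(\tq)$ rather than some other transseries.

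Two smaller inaccuracies: the paper extracts the relevant block using the descendants with $m=-1$, $\mu=0$ (see the discussion around~\eqref{W410-desc} and the formula for $s_I(\Phi)(\tau)$), not ``$\mu=1$''; and your claim that the constant matrices $C_R$ are ``fixed by matching the one-loop data at the two saddles'' suffers from the same dominance problem — one-loop matching at the subdominant saddle requires a lateral resummation to even define the comparison, so the integer entries of~\eqref{M41I}--\eqref{M41IV} were in fact \emph{guessed} from high-precision numerics in~\cite{GGM:resurgent}, as the paper states, and then checked for consistency (e.g.\ against~\eqref{det41}--\eqref{WWT41} and the $q$-independence of the half-integer Stokes factors). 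Your architecture is the right one for a future proof, but as written it assumes the resurgent structure it sets out to establish.
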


Assuming the above conjecture, we can now describe completely the
resurgent structure of $\Phi(\tau)$.  The Stokes matrices are given by
\begin{equation}
  \label{Spm1}
  \ms{S}^+(q)= 
  \mathfrak{S}_{I \rightarrow II}(q)
  \mfS_{IV\rightarrow I}, \qquad
  \ms{S}^-(q)= 
  \mathfrak{S}_{III \rightarrow IV}(q)
  \mfS_{II\rightarrow III}\,,
\end{equation}
where 
\begin{subequations}
  \begin{align}
  \label{Spm3}
    \mathfrak{S}_{I \rightarrow II}(q)
    &= M_{II}(q)^{-1} M_I(q)
    & \mathfrak{S}_{III \rightarrow IV}(q)
    &= M_{IV}(q^{-1})^{-1}
  M_{III}(q^{-1}) \\
  \label{Spm4}
    \mfS_{IV\rightarrow I}
    & = M_I(q)^{-1}M_{IV}(q)
    & \mfS_{II\rightarrow III}
    &= M_{III}(q)^{-1}M_{II}(q) \,.
\end{align}
\end{subequations}
(Compare with Equations~\eqref{SG} and~\eqref{4G} after
we set $\tx=1$ and replace $\tq$ by $q$).
Note that since $M_I(q), M_{II}(q)$ and
$M_{III}(q), M_{IV}(q)$ are given respectively as $q$-series and
$q^{-1}$-series in \eqref{M41I},\eqref{M41II} and
\eqref{M41III},\eqref{M41IV}, analytic continuation as discussed below
\eqref{Gm} is needed when one computes
$\mfS_{IV\rightarrow I},\mfS_{II\rightarrow II}$ in \eqref{Spm4}.
Using~\eqref{det41}--\eqref{WWT41} we can express the answer in terms
of $W_{-1}(q)$. Explicitly, the Stokes matrices are given by
\begin{subequations}
\begin{align}
  \label{S41p}
  \ms{S}^+(q)
  &={1\over 2} \begin{pmatrix} 0 & -1 \\ 1 & 1 \end{pmatrix}
  W_{-1}(q) \begin{pmatrix} 0 & 1 \\ 1 & 0 \end{pmatrix}W_{-1}(q)^T
  \begin{pmatrix} 0 & -1 \\ 1 & 2 \end{pmatrix}, & |q|<1\,, \\
  \label{S41m}
  \ms{S}^-(q)
  &= {1\over 2}
  \begin{pmatrix} -1 & -1 \\ 0 & 1 \end{pmatrix} W_{-1}(q)
  \begin{pmatrix} 0 & 1 \\ 1 & 0 \end{pmatrix}W_{-1}(q)^T
  \begin{pmatrix} 1 & 0 \\ -2 & 1 \end{pmatrix}, & |q|<1 \,.
\end{align}
\end{subequations}
In the $q\rightarrow 0$ limit,
\begin{equation}
  \ms{S}^+(0) =
  \begin{pmatrix}
    1&3\\0&1
  \end{pmatrix},\quad
  \ms{S}^-(0) =
  \begin{pmatrix}
    1&0\\-3&1
  \end{pmatrix}
  \label{eq:S0u041}
\end{equation}
whose off-diagonal entries $-3,+3$ are Stokes constants associated to
the singularities $\iota_{2,1}$ and $\iota_{1,2}$ on the negative and
positive real axis respectively, and they agree with the matrix of
integers obtained numerically in~\cite{gh-res,GZ:kashaev}. In
addition, we can assemble the Stokes constants into
the matrix $\calS$ of Equation~\eqref{calS} (after we set $\tx=1$
and replace $\tq$ by $q$). The resulting matrix $\mc{S}^+(q)$
has entries in $q\BZ[[q]]$, and we find
\begin{align}
  \mc{S}^+_{\s_1,\s_1}(q) =
  &\ms{S}^+(q)_{1,1}-1\nn=
  &-8 q - 9 q^2 + 18 q^3 + 46 q^4 + 90 q^5 + 62 q^6+\cO(q^7),\\
  \mc{S}^+_{\s_1,\s_2}(q) =
  &\ms{S}^+(q)_{1,2}/\ms{S}^+(q)_{1,1}-\mc{S}^{(0)}_{\s_1,\s_2}\nn=
  &9 q + 75 q^2 + 642 q^3 + 5580 q^4 + 48558 q^5 + 422865 q^6+\cO(q^7),\\
  \mc{S}^+_{\s_2,\s_1}(q) =
  &\ms{S}^+(q)_{2,1}/\ms{S}^+(q)_{1,1}\nn=
  &-9 q - 75 q^2 - 642 q^3 - 5580 q^4 - 48558 q^5 - 422865 q^6+\cO(q^7),\\
  \mc{S}^+_{\s_2,\s_2}(q) =
  &\ms{S}^+(q)_{2,2} -1-
    \ms{S}^+(q)_{1,2}\ms{S}^+(q)_{2,1}/\ms{S}^+(q)_{1,1}\nn=
  &8 q + 73 q^2 + 638 q^3 + 5571 q^4 + 48538 q^5 + 422819 q^6+\cO(q^7).
\end{align}
We notice the symmetry
\begin{equation}
  \mc{S}^{(k)}_{1,2}=
  -\mc{S}^{(k)}_{2,1},\quad \text{for } k\in\IZ_{>0}\,,
\end{equation}
which is due to the reflection property
$\varphi_{\s_1}(-\tau^*)= \varphi_{\s_2}(\tau)^*$ of the asymptotic
series.  Also experimentally it appears that the entries of the matrix
$\mc{S}^+(q) = (\mc{S}^+_{\s_i,\s_j}(q))$ (except the upper-left one)
are (up to a sign) in $\IN[[q]]$.
% that except for the first series, all the other
% series have coefficients of fixed sign.
Similarly we can extract the Stokes constants
$\mc{S}^{(-k)}_{\s_i,\s_j}$ associated to the singularities in the
lower half planes and collect them in $q^{-1}$-series
$\mc{S}^-_{\s_i,\s_j}(q^{-1})$ accordingly, and we find
\begin{equation}
  % \mc{S}_{\s_i,\s_i}^{(-k)} =
  % \mc{S}^{(+k)}_{\s_\varphi(i),\s_\varphi(i)},
  % \quad\text{and}\quad
  \mc{S}_{\s_i,\s_j}^{(-k)} = -\mc{S}^{(+k)}_{\s_j,\s_i},\;i\neq j\,,
  \quad\text{and}\quad \mc{S}_{\s_1,\s_1}^{(-k)}  =
  \mc{S}_{\s_2,\s_2}^{(+k)},\;\; \mc{S}_{\s_2,\s_2}^{(-k)}  =
  \mc{S}_{\s_1,\s_1}^{(+k)},\quad \text{for } k\in \IZ_{>0}.
\end{equation}

A nontrivial consistency check in the above calculation is that the
matrices $\mfS_{IV\rightarrow I}(q)$ and $\mfS_{II\rightarrow III}(q)$
should come out to be independent of $q$, and coincide with
$\ms{S}^-(0)$ and $\ms{S}^+(0)$. That is exactly what we find.

The fourth and last ingredient, which makes a full circle of ideas, is
the descendant state-integral of the $\knot{4}_1$ knot
\begin{equation}
\label{Z410-desc}
Z_{\knot{4}_1,m,\mu}(0;\tau) = 
\int_{\mc{D}} \Phi_\bb(v)^2 \, \re^{-\pi \ri v^2
+ 2\pi(m \bb - \mu \bb^{-1})v} \, \rd v \qquad
(m, \mu \in \BZ) \,.
\end{equation}
The integration contour $\mc{D}$ asymptotes at infinity to the horizontal line
$\text{Im}{v} = v_0$ with $v_0 >|\text{Re}(m\bb-\mu\bb^{-1})|$ but is deformed
near the origin so that all the poles of the quantum dilogarithm
located at
\begin{equation}
  c_\bb + \ri\bb r +\ri \bb^{-1}s,\quad r,s\in\IZ_{\geq 0},
\end{equation}
are above the contour.
The integral $Z_{\knot{4}_1,m,\mu}(0;\tau)$ is a holomorphic function of $\tau \in \BC'$ that coincides with
$Z_{\knot{4}_1}(0;\tau)$ when $m=\mu=0$ and can be expressed bilinearly
in $G_m(q)$ and $G_\mu(\tq)$ as follows
\begin{equation}
  \label{410-desc-fac}
  Z_{\knot{4}_1,m,\mu}(0;\tau) 
  =
  (-1)^{m-\mu+1}\ri q^{\frac{m}{2}}\tq^{\frac{\mu}{2}} 
  \left(\frac{q}{\tq}\right)^{\frac{1}{24}} \frac{1}{2}
  \left(\sqrt{\tau} \,G^0_\mu(\tq) G^1_m(q) - \frac{1}{\sqrt{\tau}}
    G^1_\mu(\tq) G^0_m(q)\right) \,.
\end{equation}
It follows that the matrix-valued function 
% \begin{align}
%   \label{W410-desc}
%   W_{m,\mu}(\tau)
%   &= W_\mu(\tq)
%     \begin{pmatrix}
%       0 & \sqrt{\tau} \\ -1/\sqrt{\tau} & 0
%     \end{pmatrix} W_m(q)^T \\
%   \label{W410-desc3}
%   &= 2
%     \begin{pmatrix}
%       0&1\\-1&0
%     \end{pmatrix}
%     (W_\mu(\tq)^T)^{-1}
%     \begin{pmatrix}
%       1/\sqrt{\tau} & 0 \\ 0 & \sqrt{\tau}
%     \end{pmatrix} W_m(q)^T
% \end{align}
\begin{align}
  \label{W410-desc}
  W_{m,\mu}(\tau) = 
    (W_\mu(\tq)^T)^{-1}
    \begin{pmatrix}
      1/\sqrt{\tau} & 0 \\ 0 & \sqrt{\tau}
    \end{pmatrix} W_m(q)^T
\end{align}
defined for $\tau=\BC\setminus\BR$, has entries given by the
descendant state-integrals (up to multiplication by a prefactor
of~\eqref{410-desc-fac})
% agrees with $(Z_{\knot{4}_1,m+i,\mu+j}(\tau))_{i,j=0}^2$
and hence extends to a holomorphic function of $\tau \in \BC'$ for all
integers $m$ and $\mu$. Using this for $m=-1$ and $\mu=0$ and the
orthogonality relation~\eqref{WWT41b}, it follows that we can express
the Borel sums of $\Phi(\tau)$ in a region $R$ in terms of descendant
state-integrals and hence, as holomorphic functions of $\tau \in \BC'$
as follows. For instance, in the region $I$ we have
\begin{equation}
  s_I(\Phi)(\tau) = M_I(\tq)^{-1}
  \begin{pmatrix}
    \frac{1}{\sqrt{\tau}} g^0_0(\tau) \\
    \sqrt{\tau} g^1_0(\tau)
  \end{pmatrix}
  = \ri\left(\frac{q}{\tq}\right)^{-\frac{1}{24}}
  \begin{pmatrix}
    Z_{\knot{4}_1,0,0}(0;\tau) -\tq^{1/2} Z_{\knot{4}_1,0,-1}(0;\tau) \\ 
    Z_{\knot{4}_1,0,0}(0;\tau)
\end{pmatrix}
\end{equation}
  
This completes the discussion of $u=0$ for the $\knot{4}_1$ knot.

\subsection{The $\knot{5}_2$ knot when $u=0$}
\label{sub.52x=1}

The state integral of the $\knot{5}_2$ at $u=0$ is given by
\begin{equation}
  \label{Z520}
  Z_{\knot{5}_2}(0;\tau) = 
  \int_{\BR+\ri 0} \Phi_\bb(v)^3 \, \re^{-2\pi \ri v^2} \rd v \,.
\end{equation}
The critical points of the integrand are the logarithms of the
solutions $\xi_1 \approx 0.78492 + 1.30714 \ri$,
$\xi_2 \approx 0.78492 - 1.30714 \ri$ and $\xi_3 \approx 0.43016$ of
the polynomial equation
\begin{equation}
  \label{52xi}
  (1-y)^3=y^2 \,.
\end{equation}
The trace field of the $\knot{5}_2$ knot is $\BQ(\xi_1)$, the cubic
field of discriminant $-23$, which has three complex embeddings
labeled by $\s_j$ for $j=1,2,3$ corresponding to the $x_j$, and the
labeling set is $\calP=\{\s_1,\s_2,\s_3\}$, where $\s_1$ corresponds
to the geometric representation of the $\knot{5}_2$ knot, $\s_2$ to
the complex conjugate of the geometric representation and $\s_3$ for
the corresponding real representation.

The first ingredient is a vector of formal power series
\be\Phi(\tau)=
\begin{pmatrix} \Phi_{\s_1}(\tau)\\ \Phi_{\s_2}(\tau) \\
\Phi_{\s_3}(\tau) \end{pmatrix}
\ee
where 
\begin{equation}
  \label{volume-52}
  \Im\,V(\s_1) = -\Im\,V(\s_2) =\text{Vol}(\knot{5}_2)= 2.82812\ldots
\end{equation}
is the hyperbolic volume of the knot $\knot{5}_2$ and the first few
terms of
$\varphi_{\s_j}(\tau/(2\pi \ri)) \in \delta_j^{-1/2}
\BQ(\xi_j)[[\tau]]$ are given by 
{\small
\begin{align}
\label{varphi52}
  \varphi_{\s_j}\left(\frac{\tau}{2\pi \ri}\right) =
  &\left(\frac{-3\xi_j^2+3\xi_j-2}{23}\right)^{1/4}
    \left(1+\frac{-242\xi^2+209\xi-454}{2^2\cdot 23^2}\tau
    +\frac{12643\xi^2-22668\xi+25400}{2^5\cdot 23^3}\tau^2\right.
  \\
  \notag
  &\left.+\frac{-35443870\xi^2+85642761\xi-164659509}
    {2^7\cdot 3\cdot 5\cdot 23^5}\tau^3 + \dots \right) \,.    
  %   \left(1+\frac{33\xi_j^2+242\xi_j-245}{2^2\cdot 23^2}\tau
  %   +\frac{100250\xi_j^2-12643\xi_j+2732}{2^5\cdot 23^3}\tau^2\right.
  % \\
  % \notag
  % &\left.+\frac{-50198891\xi_j^2+35443870\xi_j-79016748}
  %   {2^7\cdot 3\cdot 5\cdot 23^5}\tau^3 + \dots \right) \,.
\end{align}
}

The second ingredient is two vectors
$H^+(q)= (H^+_0(q), H^+_1(q), H^+_2(q))^T$ and
$H^-(q) = (H^-_0(q),H^-_1(q),H^-_2(q))^T$ of $q$-series defined for
$|q|<1$ by
\begin{subequations}
\begin{align}
\label{Hp0}
  H^+_0(q)
  &=\sum_{n=0}^\infty \frac{q^{n(n+1)}}{(q;q)_n^3}\,,
\\
\label{Hp1}
  H^+_1(q)
  &=\sum_{n=0}^\infty \frac{q^{n(n+1)}}{(q;q)_n^3}
    \left(1+2n-3E_1^{(n)}(q)\right) \,,
  \\
  \label{Hp2}
  H^+_2(q)
  &=\sum_{n=0}^\infty \frac{q^{n(n+1)}}{(q;q)_n^3}
    \left((1+2n-3E_1^{(n)}(q))^2-3E_2^{(n)}(q)-\frac{1}{6}E_2(q)\right) \,.
\end{align}
\end{subequations}
and
\begin{subequations}
\begin{align}
\label{Hn0}
  H^-_0(q)
  &=\sum_{n=0}^\infty (-1)^n\frac{q^{\frac{1}{2}n(n+1)}}{(q;q)_n^3}\,,
\\
\label{Hn1}
  H^-_1(q)
  &=\sum_{n=0}^\infty (-1)^n\frac{q^{\frac{1}{2}n(n+1)}}{(q;q)_n^3}
    \left(\frac{1}{2}+n-3E_1^{(n)}(q)\right) \,,
  \\
  \label{Hn2}
  H^-_2(q)
  &=\sum_{n=0}^\infty (-1)^n\frac{q^{\frac{1}{2}n(n+1)}}{(q;q)_n^3}
  \left(\big(\frac{1}{2}+n-3E_1^{(n)}(q)\big)^2
    -3E_2^{(n)}(q)-\frac{1}{12}E_2(q)\right) \,,
\end{align}
\end{subequations}
where
\begin{equation}
  E_l^{(n)}(q) = \sum_{s=1}^\infty \frac{s^{l-1}q^{s(n+1)}}{1-q^s}.
  \label{eq:Eln}
\end{equation}
The two sets of $q$-series can be extended to $|q|>1$ and are in fact
related by $H^+_k(q^{-1})=(-1)^k H^-_k(q)$, and define a $q$-series
$H_k(q)$ for $|q| \neq 1$ by
\begin{equation}
  H_k(q) = \begin{cases}
    H^+_k(q) & \qquad |q|<1 \\
    (-1)^k H^-_k(q^{-1}) & \qquad |q|>1 \,.
  \end{cases}
\end{equation}
Likewise, we define holomorphic functions $h_k(\tau)$ in $\BC\setminus\BR$ by
\begin{equation}
  h_k(\tau) =
  \begin{cases}
    H^+_k(\re^{2\pi\ri \tau}), & \quad \Im(\tau) > 0\\
    (-1)^k H^-_k(\re^{-2\pi\ri \tau}), & \quad \Im(\tau) <0
  \end{cases}, \quad k=0,1,2\,. 
\end{equation}
These series appear in the factorization of the state-integral of the
$\knot{5}_2$ knot given in~\cite[Cor.1.8]{GK:qseries}
\begin{equation}
  \label{Z520-fac}
  Z_{\knot{5}_2}(0;\tau) =
  -\frac{\re^{\frac{\pi\ri}{4}}}{2} \left(\frac{q}{\tq}\right)^{\frac{1}{8}} 
  \left(
    \tau H_0^-(\tq) H_2^+(q) -2 H^-_1(\tq) H^+_1(q)+ \tau^{-1} H^-_2(\tq) H^+_0(q) 
  \right), \qquad (\tau \in \BC\setminus\BR).
\end{equation}
The above factorization follows by applying the residue theorem to the
integrand of~\eqref{Z520}, a meromorphic function of $v$ with
prescribed zeros and poles. In particular, the integrand
of~\eqref{Z520} determines the $q$-hypergeometric formula for the
vectors $H^+(q)$, $H^-(q)$ of
$q$-series.  

As in the case of the $\knot{4}_1$ knot, multiplying the vector
$h(\tau)^T=(h_0(\tau) , h_1(\tau), h_2(\tau) )^T$ by the automorphy
factors $\diag(\tau^{-1},1,\tau)$ (dictated by~\eqref{Z520-fac}),
and looking at the asymptotics as $\tau$ approaches zero in sectors,
we found that
\begin{equation}
\label{MR52}
\re^{\frac{3\pi\ri}{4}}
\diag(\tau^{-1},1,\tau) h(\tau)
= M_R (\tq) \, s_R(\Phi)(\tau) \,,
\end{equation}
where the right hand side depends on the sectors of Borel resummation.
The Borel plane singularities of the component series of the vector
$\Phi(\tau)$ are similarly located at 
\begin{equation}
  \iota_{i,j} = \frac{V(\s_i) - V(\s_j)}{2\pi\ri},\quad i,j=1,2,3,i\neq j\,,
\end{equation}
as well as
% and quantum singularities located at
\begin{equation}
  2\pi\ri k,\; \iota_{i,j}+2\pi\ri k,\quad k\in \IZ_{\neq 0},
\end{equation}
which form vertical towers as illustrated in Figure~\ref{fig:sings-u0-52}.
% We classify them into classical singularities located at
In particular, the two singularities $\iota_{1,2},\iota_{2,1}$ are on
the positive and negative real axis.  We pick out the four sectors
which separate the two singularities on the real axis and all the
others, and label them by $I,II,III,IV$, as illustrated in
Figure~\ref{fig:secs-u0-52}.
% As in the case of $\knot{4}_1$ we define four sectors
% $I,II,III,IV$ in the $\tau$-plane (shown in Figure~\ref{fig:secs-u0-52})
% which separate the classical the quantum singularities.
%

\begin{figure}[htpb!]
\leavevmode
\begin{center}
\includegraphics[height=7cm]{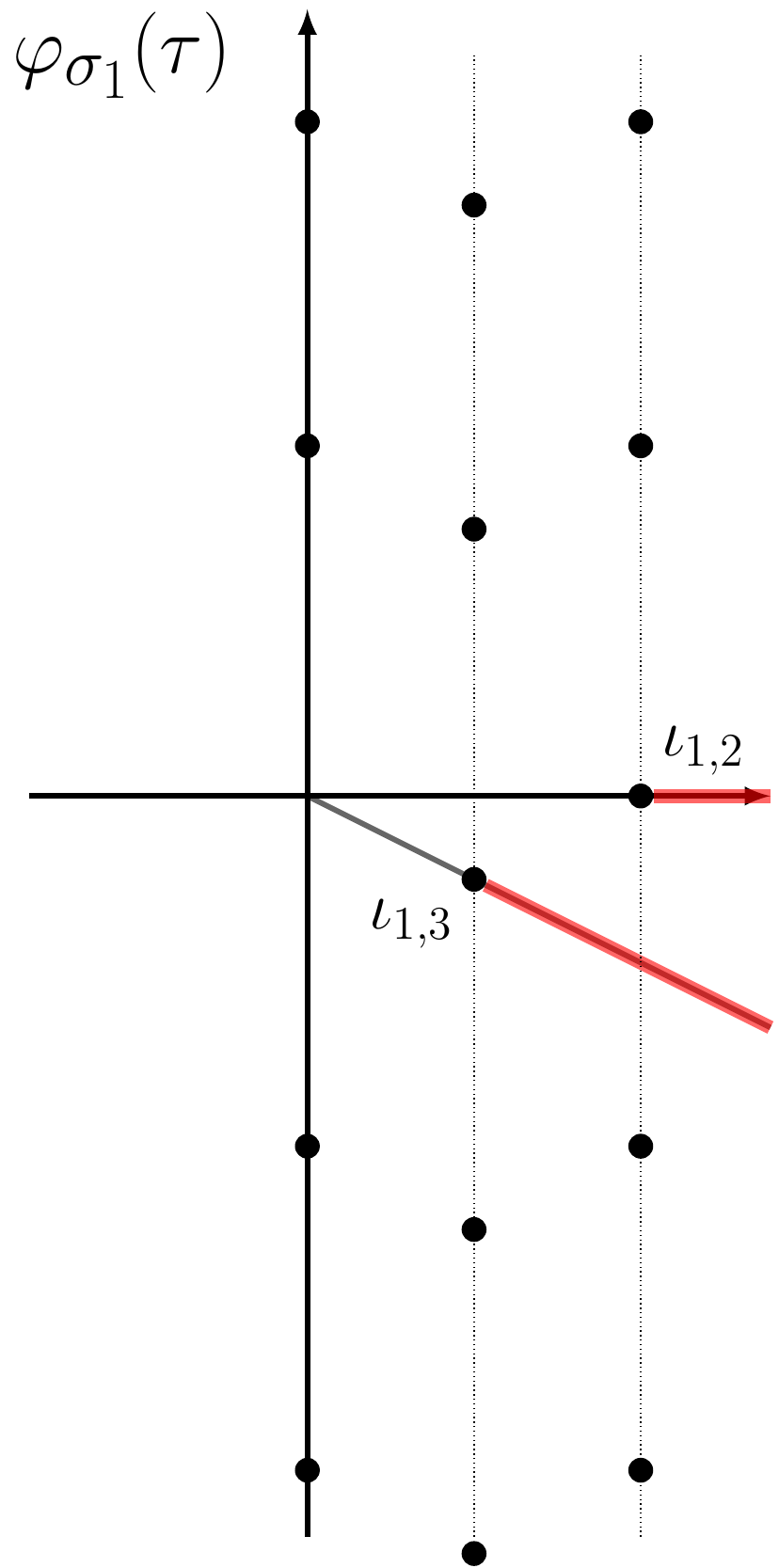}\hspace{3ex}
\includegraphics[height=7cm]{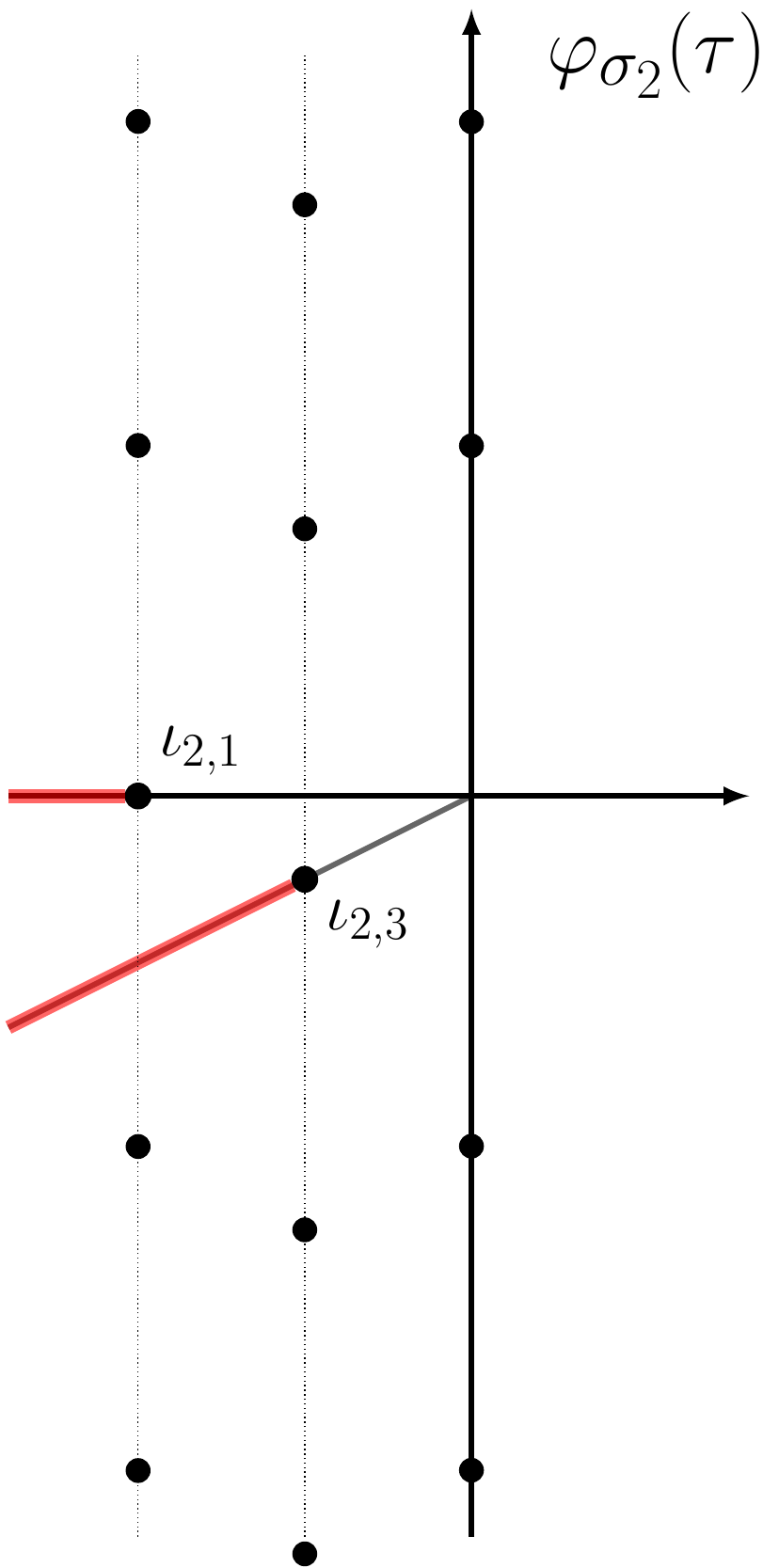}\hspace{3ex}
\includegraphics[height=7cm]{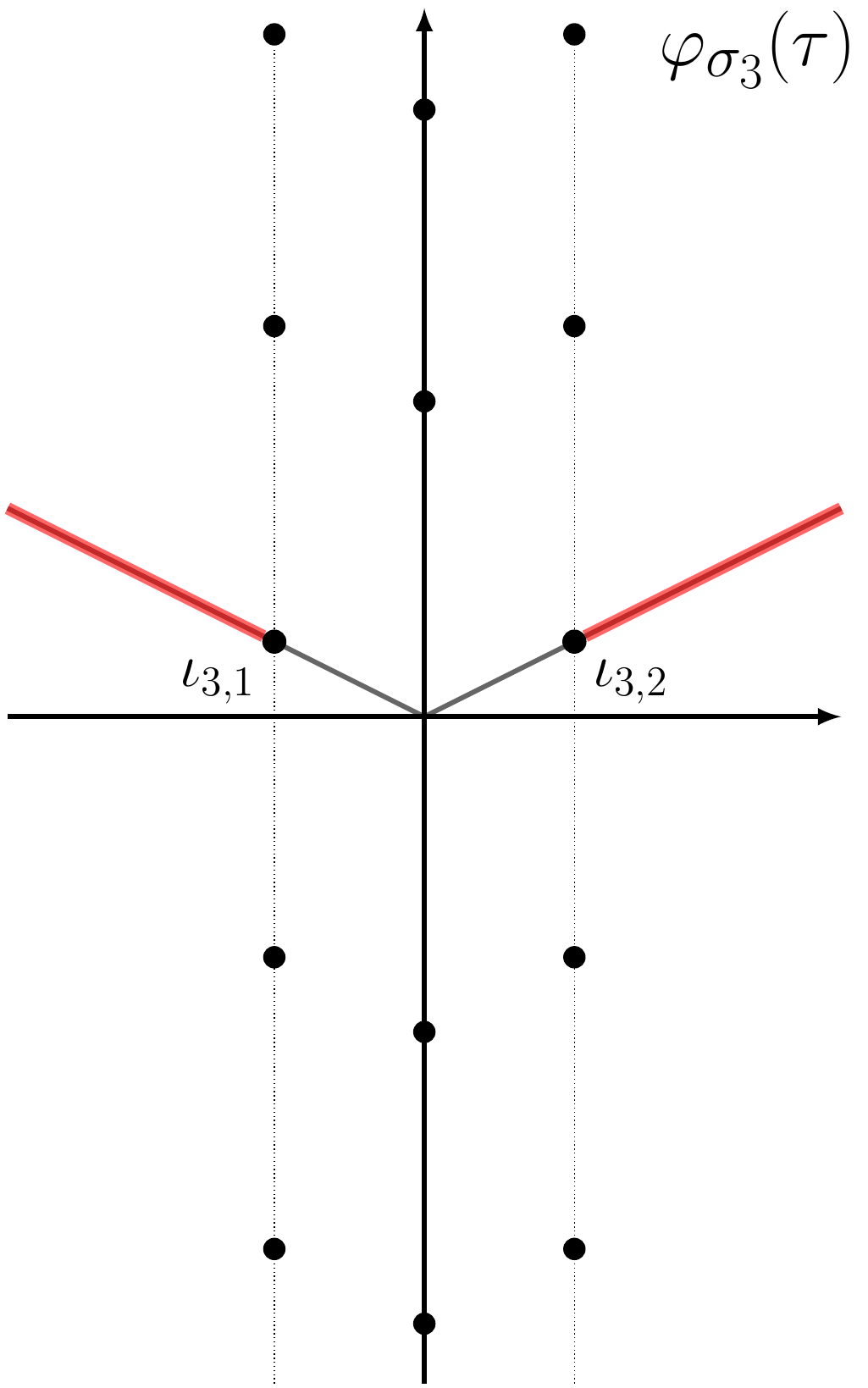}
\end{center}
\caption{The singularities in the Borel plane for the series
  $\varphi_{\sigma_{j}} (0;\tau)$ for $j=1,2,3$ of knot $\knot{5}_2$.}
\label{fig:sings-u0-52}
\end{figure}

\begin{figure}[htpb!]
\leavevmode
\begin{center}
\includegraphics[height=7cm]{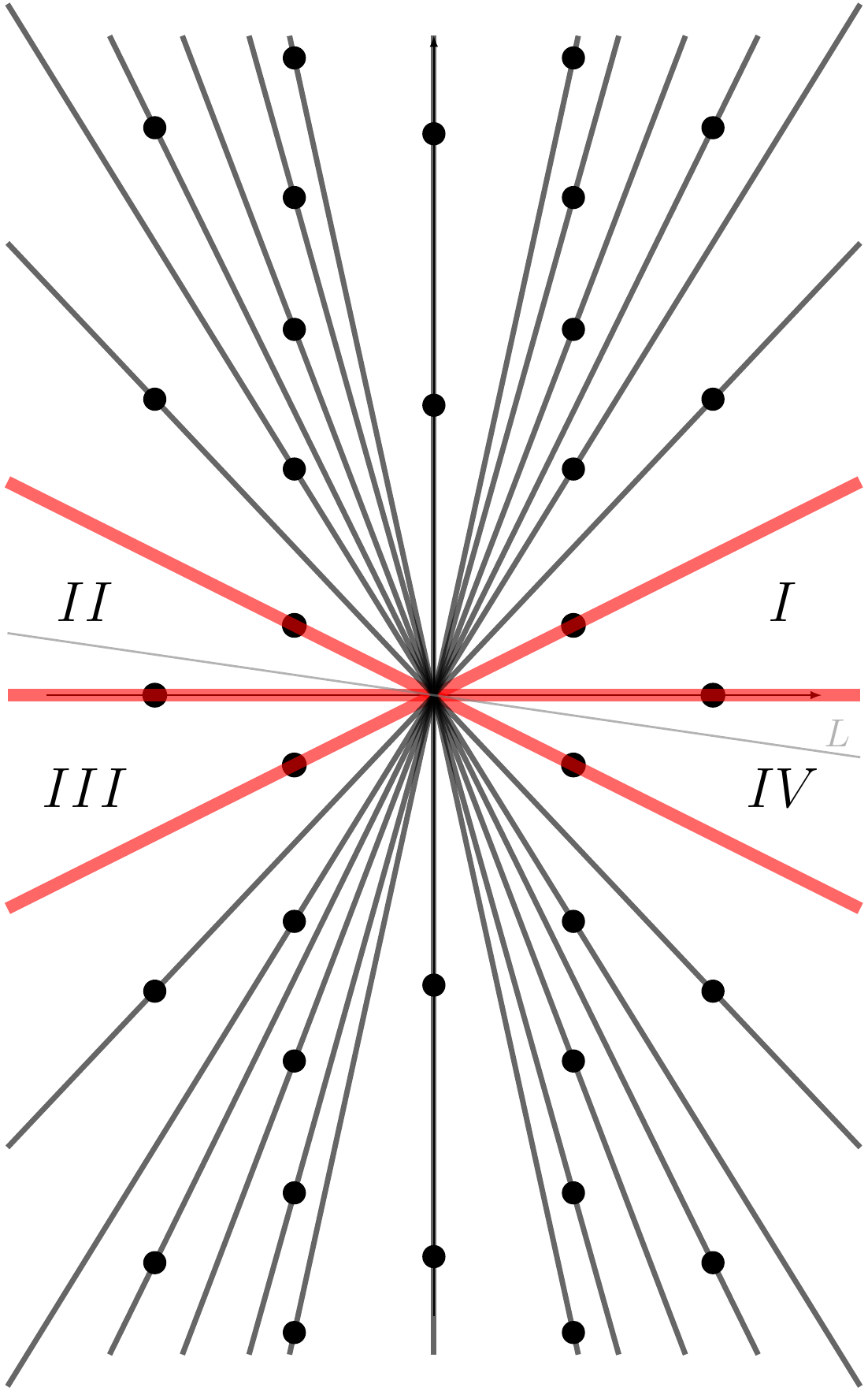}
\end{center}
\caption{Four different sectors in the $\tau$-plane for $\Phi(0;\tau)$
  of knot $\knot{5}_2$ and the line $L$ that divides the $\tau$-plane.}
\label{fig:secs-u0-52}
\end{figure} 

% \begin{figure}[htpb!]
% \leavevmode
% \begin{center}
% \includegraphics[height=7cm]{}
% \end{center}
% \caption{The singularities in the Borel plane for the series
%   $\varphi_{\sigma_{j}} (\tau)$ for $j=1,2$.}
% \label{bpf8-fig}
% \end{figure} 

To identify the matrices $M_R$, we consider the third ingredient,
the linear $q$-difference equation 
\begin{equation}
  \label{52qdiffp}
  y_{m}(q) -3y_{m+1}(q)+(3-q^{2+m}) y_{m+2}(q) - y_{m+3}(q)=0 \qquad (m \in \BZ) \,,
\end{equation}
% \begin{equation}
%   \label{52qdiffp}
%   y^+_{m}(q) -3y^+_{m+1}(q)+(3-q^{2+m}) y^+_{m+2}(q) - y^+_{m+3}(q)=0
% \qquad (m \in \BZ) \,,
% \end{equation}
% and
% \begin{equation}
%   \label{52qdiffn}
%   y^-_{m}(q) -(3-q^{1+m})y^-_{m+1}(q)+3 y^-_{m+2}(q) - y^-_{m+3}(q)=0
% \qquad (m \in \BZ) \,.
% \end{equation}
They have fundamental solution sets given by the columns of the
following matrix 
\begin{equation}
  W_m(q) =
  \begin{cases}
    W_m^+(q), & \quad |q|<1\\
    \begin{pmatrix}
      0&0&1\\0&1&0\\1&0&0
    \end{pmatrix}
    W_{-m-2}^-(q^{-1})
    \begin{pmatrix}
      1&0&0\\0&-1&0\\0&0&1
    \end{pmatrix}
    %\diag(1,-1,1)
    , & \quad |q| >1 \,.
  \end{cases}
\end{equation}
where the matrices $W^\v_m(q)$ with $\v=\pm$ are respectively
\begin{equation}
  \label{Hfund}
  W^{\v}_m(q) =
  \begin{pmatrix}
    H^\v_{0,m}(q) & H^\v_{1,m}(q) & H^\v_{2,m}(q) \\
    H^\v_{0,m+1}(q) & H^\v_{1,m+1}(q) & H^\v_{2,m+1}(q) \\
    H^\v_{0,m+2}(q) & H^\v_{1,m+2}(q) & H^\v_{2,m+2}(q) 
  \end{pmatrix}, \qquad (|q| \neq 1)
\end{equation}
with entries the $q$-series
% where $\v=\pm$ and $H^\v_{k,m}$ are defined by
\begin{subequations}
\begin{align}
\label{Hp0m}
  H^+_{0,m}(q)
  &=\sum_{n=0}^\infty \frac{q^{n(n+1)+nm}}{(q;q)_n^3}\,,
\\
\label{Hp1m}
  H^+_{1,m}(q)
  &=\sum_{n=0}^\infty \frac{q^{n(n+1)+nm}}{(q;q)_n^3}
    \left(1+2n+m-3E_1^{(n)}(q)\right) \,,
  \\
  \label{Hp2m}
  H^+_{2,m}(q)
  &=\sum_{n=0}^\infty \frac{q^{n(n+1)+nm}}{(q;q)_n^3}
    \left((1+2n+m-3E_1^{(n)}(q))^2-3E_2^{(n)}(q)-\frac{1}{6}E_2(q)\right) \,,
\end{align}
\end{subequations}
and
\begin{subequations}
\begin{align}
\label{Hn0m}
  H^-_{0,m}(q)
  &=\sum_{n=0}^\infty (-1)^n\frac{q^{\frac{1}{2}n(n+1)+nm}}{(q;q)_n^3}\,,
\\
\label{Hn1m}
  H^-_{1,m}(q)
  &=\sum_{n=0}^\infty (-1)^n\frac{q^{\frac{1}{2}n(n+1)+nm}}{(q;q)_n^3}
    \left(\frac{1}{2}+n+m-3E_1^{(n)}(q)\right) \,,
  \\
  \label{Hn2m}
  H^-_{2,m}(q)
  &=\sum_{n=0}^\infty (-1)^n\frac{q^{\frac{1}{2}n(n+1)+nm}}{(q;q)_n^3}
  \left(\big(\frac{1}{2}+n+m-3E_1^{(n)}(q)\big)^2-3E_2^{(n)}(q)
    -\frac{1}{12}E_2(q)\right) \,,
\end{align}
\end{subequations}
for $|q|<1$ and extended to $|q|>1$ by the relation
$H^+_{k,m}(q^{-1}) = (-1)^k H^-_{k,-m}(q)$.  Observe that
$H^\v_{k,0}(q)=H^\v_{k}(q)$.
% Consider the fundamental solution
% $W_m(q)$ given by
% \begin{equation}
%   W_m(q) =
%   \begin{cases}
%     W_m^+(q), & \quad |q|<1\\
%     W_{-m}^-(q^{-1})\cdot \diag(1,-1,1), & \quad |q| >1 \,.
%   \end{cases}
% \end{equation}
The matrix $W_m(q)$ of holomorphic functions in $|q| \neq 1$ satisfies
several properties summarized in the following theorem.

\begin{theorem}
  \label{thm.520}
  $W_m(q)$ is a fundamental solution of the linear $q$-difference
  equation~\eqref{52qdiffp} that has constant determinant
\begin{equation}
  \label{det52}
  \det(W_m(q))=2 \,,
\end{equation}
satisfies
%the symmetry 
%\begin{equation}
%  \label{W52inv}
%  W_m(q^{-1}) = W_{-m}(q)
%  \begin{pmatrix} 1 & 0 & 0 \\ 0 & -1 & 0 \\ 0 & 0 & 1 \end{pmatrix} \,,
%\end{equation}
the orthogonality property 
\begin{equation}
  \label{WWT52b}
  \frac{1}{2} W_{m-1}(q)
  \begin{pmatrix} 0 & 0 & 1 \\ 0&2&0\\ 1 & 0 &0  \end{pmatrix}
  W_{-m-1}(q^{-1})^T =
  \begin{pmatrix} 1 & 0 & 0 \\ 0 & 0 & 1\\0 & 1 & 3-q^{m} \end{pmatrix} \,.
\end{equation}
as well as 
\begin{equation}
  \label{WWT52}
  \frac{1}{2} W_m(q)
  \begin{pmatrix} 0 & 0 & 1 \\ 0&2&0\\ 1 & 0 &0 \end{pmatrix}
  W_{\ell}(q^{-1})^T \in \SL(3,\BZ[q^\pm]) 
\end{equation}
for all integers $m, \ell$ and for $|q| \neq 1$.
\end{theorem}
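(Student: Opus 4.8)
The plan is to prove the four assertions separately: that the three columns of $W_m(q)$ solve \eqref{52qdiffp}, that the resulting Casoratian $\det W_m(q)$ equals $2$ (which simultaneously gives fundamentality), and that the bilinear relations \eqref{WWT52b} and \eqref{WWT52} hold. First I would put \eqref{52qdiffp} in operator form: writing $S_m$ for the shift $m\mapsto m+1$ and using $1-3S_m+3S_m^2-S_m^3=(1-S_m)^3$, the recursion operator factors as
\[
 L_m = (1-S_m)^3 - q^{2+m}S_m^2 .
\]
Since $S_m$ multiplies $q^{nm}$ by $q^n$, applying $L_m$ to $H^+_{0,m}(q)=\sum_n a_nq^{nm}$ with $a_n=q^{n(n+1)}/(q;q)_n^3$ collapses, after reindexing the $S_m^2$-term, to the single termwise identity $a_n(1-q^n)^3=a_{n-1}q^{2n}$, which is immediate from $a_n/a_{n-1}=q^{2n}/(1-q^n)^3$; hence $H^+_{0,m}$ is a solution. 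For $H^+_{1,m}$ and $H^+_{2,m}$ I would exhibit all three series as the $\varepsilon^0,\varepsilon^1,\varepsilon^2$ coefficients of one $\varepsilon$-deformed $q$-hypergeometric sum designed so that the deformed summand solves a deformed recursion $L_m^{(\varepsilon)}y=0$ with $L_m^{(\varepsilon)}\equiv L_m \pmod{\varepsilon^3}$; the tails $E_1^{(n)},E_2^{(n)}$ and the global $E_2(q)$ then arise exactly as the first and second $\varepsilon$-derivatives of the deformed Pochhammer and prefactor, and extracting coefficients yields $L_mH^+_{1,m}=L_mH^+_{2,m}=0$. The relation $H^+_{k,m}(q^{-1})=(-1)^kH^-_{k,-m}(q)$ then transports everything to the $|q|>1$ branch.

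Next, fundamentality and the determinant. Because the columns are solutions, $W_{m+1}(q)=A_m(q)W_m(q)$ with $A_m(q)$ the companion matrix of \eqref{52qdiffp} (bottom row $(1,-3,3-q^{2+m})$), and a direct expansion gives $\det A_m(q)=1$; hence $\det W_m(q)$ is independent of $m$. I would evaluate it in the limit $m\to+\infty$ with $0<q<1$ fixed, where every $n\ge1$ term carries $q^{nm}\to0$ and each entry collapses to its $n=0$ value. The limiting matrix has rows $(1,\,b+i,\,(b+i)^2+c)$ for $i=0,1,2$, with $b=1+m-3E_1^{(0)}(q)$ and $c=-3E_2^{(0)}(q)-\frac{1}{6}E_2(q)$; subtracting $c$ times the first column from the third turns it into a Vandermonde matrix in $b,b+1,b+2$ with determinant $(1)(2)(1)=2$. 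Since $\det W_m(q)$ does not depend on $m$, it equals $2$ for $|q|<1$, and the piecewise definition of $W_m$ propagates this to $|q|>1$. The non-vanishing of this Casoratian is precisely the linear independence that makes $W_m(q)$ a fundamental solution.

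For the orthogonality relations I would first use the $|q|>1$ branch of the definition to rewrite $W_{-m-1}(q^{-1})$ through $W^-_{m-1}(q)$, turning \eqref{WWT52b} into a bilinear identity, valid for $|q|<1$, between the $+$ series (solutions of $L^+_m:=L_m$) and the $-$ series (solutions of the dual recursion $L^-_m(q):=L^+_{-m}(q^{-1})$, which is the formal adjoint of $L^+$). Let $M_m$ and $R_m$ denote the left- and right-hand sides of \eqref{WWT52b}. Using the two companion matrices one checks that $M_m$ satisfies a first-order conjugation recursion in $m$, and a direct computation shows the explicit matrix $R_m$ satisfies the same recursion; matching the two at a single value of $m$ (for instance via the $m\to+\infty$ collapse to $n=0$ terms, or order-by-order in $q$) then forces $M_m=R_m$ for all $m$, including the lone $m$-dependent entry $3-q^m$. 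Finally \eqref{WWT52} reduces to this base case: writing $W_m(q)=\Sigma^+W_{m_0}(q)$ and $W_\ell(q^{-1})=\Sigma^-W_{\ell_0}(q^{-1})$ with $\Sigma^\pm$ products of companion matrices, and noting that $A_m(q)$ and $A_m(q^{-1})$ have entries in $\BZ[q^\pm]$ and determinant $1$, the base matrix gets conjugated by elements of $\SL(3,\BZ[q^\pm])$, keeping the entries in $\BZ[q^\pm]$ and the determinant a unit.

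The main obstacle is the step in the first paragraph showing that $H^+_{1,m}$ and $H^+_{2,m}$ solve \eqref{52qdiffp}: the Eisenstein bookkeeping under the deformation (equivalently, producing a creative-telescoping certificate for these non-hypergeometric summands) is delicate, and the deformation must be arranged so that the spurious inhomogeneous terms generated by the $m$-dependence of the coefficient $q^{2+m}$ cancel to order $\varepsilon^2$. The adjointness/recursion check for the concomitant in the third paragraph is the second place demanding care, though it is essentially mechanical once the dual equation $L^-$ is identified.
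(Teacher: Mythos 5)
Your proposal is correct, but it takes a genuinely different route from the paper. The paper never manipulates the $q$-series $H^\pm_{k,m}$ directly: in Section~\ref{sub.52near1} it deduces Theorem~\ref{thm.520} as the $u=0$ limit of the general-$x$ Theorem~\ref{thm.52b}, Laurent-expanding the holomorphic blocks $A_m(\re^u;q)$, $B_m(\re^u;q)$, $C_m(\re^u;q)$ at $u=0$, so that \eqref{det52} falls out of the theta-function determinant \eqref{W520}, the orthogonality \eqref{WWT52b} falls out of \eqref{WW52b} (which in turn rests on the continuation identity \eqref{eq:H-con}, proved by creative telescoping with the \texttt{HF} package), and the recursion \eqref{52qdiffp} is the $x=1$ specialization of \eqref{rec52m}. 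You instead argue entirely at $x=1$: a termwise identity plus an $\varepsilon$-deformation for the recursion, companion matrices plus the $m\to+\infty$ collapse for the determinant, and a first-order conjugation recursion plus matching for the orthogonality. Your route is self-contained and elementary, avoiding the computer-algebra certificates, the general-$x$ blocks, and the handling of their double poles at $u=0$; the paper's route gets Theorem~\ref{thm.520} essentially for free from the stronger $x$-dependent theorem that it needs anyway.

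Three technical remarks. (1) The obstacle you flag as the main one is surmountable, and more cleanly than you fear: deform the summation index, $n\mapsto n+\varepsilon$, i.e. set $F_n(\varepsilon)=q^{(n+\varepsilon)(n+1+\varepsilon)+(n+\varepsilon)m}\,(q^{1+n+\varepsilon};q)_\infty^{3}(q;q)_\infty^{-3}$. The termwise ratio identity then holds exactly for $n\ge1$, and applying $(1-S_m)^3-q^{2+m}S_m^2$ to $\sum_{n\ge0}F_n(\varepsilon)$ leaves only the boundary term $(1-q^\varepsilon)^3F_0(\varepsilon)=O(\varepsilon^3)$; no spurious terms from the $m$-dependence of $q^{2+m}$ arise at all. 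Since the $\varepsilon^1$ and $\varepsilon^2$ coefficients of the deformed sum are $\log q\,H^+_{1,m}$ and $\tfrac12(\log q)^2\bigl(H^+_{2,m}+\tfrac16E_2(q)H^+_{0,m}\bigr)+\log q\,H^+_{0,m}$, all three columns are annihilated by \eqref{52qdiffp}. (2) Your ``matching at a single value of $m$'' is really an asymptotic matching, since both sides still depend on $m$ as $m\to+\infty$; to conclude equality for all $m$ you should note that the difference satisfies the same first-order conjugation recursion, that the transfer matrices tend to a unipotent matrix (so their products grow only polynomially in the number of steps), and that the difference decays like $q^{m}$ times a polynomial, hence vanishes identically. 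The limit itself does come out right: with your notation $b_i-\beta_j=i-j+\tfrac12$ and $c^++c^-=-\tfrac14$, so the product collapses to the matrix with integer entries $(i-j)(i-j+1)$, which is the stated right-hand side after the column permutation; and one checks directly that the right-hand side of \eqref{WWT52b} satisfies the same conjugation recursion. (3) The determinant of the product in \eqref{WWT52} is $\tfrac18\cdot2\cdot(-2)\cdot2=-1$, so your cautious conclusion that the determinant is a unit is the accurate one; the $\SL$ in the statement should be read as $\det=\pm1$ (compare \eqref{WW52}, where the paper writes $\PSL$ for the same reason).
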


\begin{conjecture}
  \label{conj.52M0}
  Equation~\eqref{MR52} holds where the matrices $M_R(q)$ are given in
  terms of $W_{-1}(q)$ as follows 
\begin{subequations}
\begin{align}
  \label{M52I}
  M_I(q)
  &= W_{-1}(q)^T \,
    \begin{pmatrix}
      0 & 0 & 1 \\
      -1 & 3 & 0 \\
      0 & -1 & 0
    \end{pmatrix}, & |q|<1 \,,  \\
  \label{M52II}
  M_{II}(q)
  &=\begin{pmatrix}
      1&0&0\\
      0&-1&0\\
      0&0&1
    \end{pmatrix} \,
           W_{-1}(q)^T \,
           \begin{pmatrix}
             0&0&1\\
             3&-1&0\\
             -1&0&0
           \end{pmatrix}, & |q|<1 \,, \\
  \label{M52III}
  M_{III}(q)
  &=
  %   \begin{pmatrix}
  %   1&0&0\\
  %   0&-1&0\\
  %   0&0&1
  % \end{pmatrix} \,
         W_{-1}(q)^T \,
         \begin{pmatrix}
           0&0&1\\
           -1&-1&0\\
           -1&0&0
         \end{pmatrix}, & |q| >1 \,, \\
  \label{M52IV}
  M_{IV}(q)
  &=\begin{pmatrix}
      1&0&0\\
      0&-1&0\\
      0&0&1
    \end{pmatrix} \,
    W_{-1}(q)^T \,
    \begin{pmatrix}
      0&0&1\\
      -1&-1&0\\
      0&-1&0
    \end{pmatrix}, & |q| >1 \,.
\end{align}
\end{subequations}
\end{conjecture}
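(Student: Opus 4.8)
The plan is to reduce the connection formula~\eqref{MR52} to the comparison of two fundamental solutions of one and the same $q$-holonomic system, and then to fix the connection matrices $M_R(q)$ using the algebraic identities of Theorem~\ref{thm.520}. First I would introduce the descendant state-integrals $Z_{\knot{5}_2,m,\mu}(0;\tau)$, the analog of~\eqref{Z410-desc}, and, by applying the residue theorem to the integrand of~\eqref{Z520} exactly as in the derivation of~\eqref{Z520-fac}, establish a bilinear factorization of $Z_{\knot{5}_2,m,\mu}(0;\tau)$ into the $q$-series $H^\pm_{k,m}(q)$ and $H^\pm_{k,\mu}(\tq)$. Assembling these into a matrix $W_{m,\mu}(\tau)$ built from $W_\mu(\tq)$, the automorphy factors $\diag(\tau^{-1},1,\tau)$ dictated by~\eqref{Z520-fac}, and $W_m(q)$ (the analog of~\eqref{W410-desc}) produces an object whose entries are the descendant integrals up to an explicit prefactor, and which therefore extends holomorphically to the cut plane $\BC'$ for all $m,\mu$.

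Next I would produce the second, resurgent description of the same integrals. Applying the saddle-point method to~\eqref{Z520} at the three critical points $\log\xi_j$, where the $\xi_j$ are the roots of~\eqref{52xi}, yields the vector $\Phi(\tau)$ and its $(m,\mu)$-descendants; since the descendant integrals are holomorphic in $\BC'$, their Borel resummation $s_R(\Phi)(\tau)$ is well defined in each of the four sectors $R=I,II,III,IV$. Comparing the two descriptions column by column shows that in each sector there is a matrix $M_R(\tq)$ with $q$-series entries relating the $q$-series side of~\eqref{MR52} to $s_R(\Phi)(\tau)$, which is the existence half of the statement. Because both the $q$-series $H^\pm_{k,m}$ and the Borel sums $s_R(\Phi)$ solve the recursion~\eqref{52qdiffp} in $m$, the connection matrix between the two fundamental solutions is independent of $m$, so it suffices to determine it on a single slice.

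To obtain the explicit form in terms of $W_{-1}(q)$, I would pin down this slice using the determinant~\eqref{det52} and the orthogonality relations~\eqref{WWT52b} and~\eqref{WWT52}, which reduce the a priori ambiguity to a constant matrix in $\GL(3,\BZ[q^\pm])$; the residual freedom is then removed by imposing the consistency requirement, already verified in the $\knot{4}_1$ case, that the sector-crossing matrices $\mfS_{IV\to I}$ and $\mfS_{II\to III}$ assembled from the $M_R$ be independent of $q$. I expect the main obstacle to be the rigorous resurgence input: proving that the Borel transforms $\wh\Phi_{\s_j}$ continue analytically with singularities only in the predicted set~\eqref{Phising}, and that the $\tau\to 0$ asymptotics of the $q$-series $h_k(\tau)$ are Borel-summable to $s_R(\Phi)$ with exactly these integer connection constants. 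This analytic matching is the heart of the difficulty; absent such control, the identification of the explicit entries of $M_R$ rests on high-precision numerics together with the algebraic constraints above, which is precisely why the statement is recorded as a conjecture.
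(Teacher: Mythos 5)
You should first be aware that the paper does not prove this statement: it is recorded as a conjecture, and the paper's support for it consists of (i) the proven algebraic scaffolding you also invoke --- Theorem~\ref{thm.520}, the bilinear factorization \eqref{520-desc-fac} and the holomorphic extension of \eqref{W520-desc}, and the $q$-difference equation \eqref{52qdiffp} --- together with (ii) numerical resurgence computations (Borel--Pad\'e sums of $\Phi(\tau)$ compared against the small-$\tau$ asymptotics of $\re^{\frac{3\pi\ri}{4}}\diag(\tau^{-1},1,\tau)h(\tau)$, with integer coefficients recognized at the exponentially small scales $\tq^k$), plus consistency checks such as the $q$-independence of $\mfS_{IV\rightarrow I}$ and $\mfS_{II\rightarrow III}$, the matching with the Stokes constants \eqref{GZ52} found in earlier numerical work, and compatibility with the $x\neq1$ Conjecture~\ref{conj.52M} in the limit $x\to 1$. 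Your closing assessment --- that absent analytic control of the Borel transforms the identification of the entries of $M_R$ rests on numerics plus algebraic constraints --- is therefore aligned with the paper's actual state of knowledge.

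However, as a proof attempt, two of your intermediate steps overstate what the scaffolding delivers. First, your ``existence half'' --- that in each sector one can write $\re^{\frac{3\pi\ri}{4}}\diag(\tau^{-1},1,\tau)h(\tau)=M_R(\tq)\,s_R(\Phi)(\tau)$ with $M_R$ a matrix of integer $\tq$-series --- does not follow from ``comparing the two descriptions column by column''; it is precisely part (b) of Conjecture~\ref{conj.asy}, i.e.\ the unproven resurgence statement, not a routine consequence of the factorization \eqref{520-desc-fac}. Second, your uniqueness logic is inverted. Once such a decomposition exists, $M_R(\tq)$ is automatically unique, because distinct transseries monomials $\tq^k\exp\bigl(V(\s)/(2\pi\ri\tau)\bigr)$ have pairwise distinct exponential scales along generic rays in $R$; there is no residual $\GL(3,\BZ[q^\pm])$ freedom for \eqref{det52}, \eqref{WWT52b}, \eqref{WWT52}, or the $q$-independence of $\mfS_{IV\rightarrow I}$, $\mfS_{II\rightarrow III}$ to remove. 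Conversely, those constraints are only necessary conditions and cannot single out the specific matrices \eqref{M52I}--\eqref{M52IV}: they constrain candidates only through combinations such as $M_I(q)^{-1}M_{IV}(q)$ and are invariant, for instance, under simultaneous right multiplication of related $M_R$'s by a common constant integer matrix. Your $m$-independence argument has the same circularity: to say the Borel-sum side is a fundamental solution of \eqref{52qdiffp} in $m$, you need descendant Borel sums identified with (combinations of) descendant state-integrals, which is again the conjecture itself. In the paper the entries of $M_R$ were found by recognizing integers numerically at the scales $\tq^k$ and only afterwards expressed through $W_{-1}$; no purely algebraic determination of them is known.
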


Assuming the above conjecture, we can now describe completely the
resurgent structure of $\Phi(\tau)$, following the same computation as
in the case of the $\knot{4}_1$ knot. The Stokes matrices are given
by~\eqref{Spm1}--\eqref{Spm4}.
Note that since $M_I(q), M_{II}(q)$ and
$M_{III}(q), M_{IV}(q)$ are given respectively as $q$-series and
$q^{-1}$-series in \eqref{M52I},\eqref{M52II} and
\eqref{M52III},\eqref{M52IV}, analytic continuation as discussed below
\eqref{eq:Eln} is needed when one computes
$\mfS_{IV\rightarrow I},\mfS_{II\rightarrow II}$ in \eqref{Spm4}.
Using~\eqref{det52}--\eqref{WWT52} we
can express the answer in terms of $W_{-1}(q)$. Once again, we find
that the the Stokes matrices $\mfS_{IV\rightarrow I}(q)$ and
$\mfS_{II\rightarrow III}(q)$ are independent of $q$, consistent with
semiclassical asymptotics. The Stokes matrices are given by
\begin{subequations}
\begin{align}
  \label{S52p}
  \ms{S}^+(q)
  &={1\over 2}
    \begin{pmatrix}
      0&1&0\\
      0&1&1\\
      -1&0&0
    \end{pmatrix}W_{-1}(q^{-1})
            \begin{pmatrix}
              0&0&1\\
              0&-2&0\\
              1&0&0
            \end{pmatrix} W_{-1}(q)^T
                   \begin{pmatrix}
                     0&0&-1\\
                     1&1&0\\
                     0&1&0
                   \end{pmatrix},\quad |q|<1\\
  \label{S52m}
  \ms{S}^-(q)
  &= {1\over 2}
    \begin{pmatrix}
      0&3&-1\\
      0&-1&0\\
      1&0&0
    \end{pmatrix} W_{-1}(q)
           \begin{pmatrix}
             0&0&1\\
             0&-2&0\\
             1&0&0
           \end{pmatrix}W_{-1}(q^{-1})^T
                  \begin{pmatrix}
                    0&0&1\\
                    3&-1&0\\
                    -1&0&0
                  \end{pmatrix} \,,\quad |q|<1.
\end{align}
\end{subequations}
These Stokes matrices completely describe the resurgent structure of
$\Phi(\tau)$.  They also satisfy other statements in
Conjectures~\ref{conj.asy} and~\ref{conj.S3D} when $x=1$. The
$q\rightarrow 0$ limit of the Stokes matrices factorizes
\begin{align}
  \ms{S}^+(0) =
  &\mfS_{\s_3,\s_1}\mfS_{\s_3,\s_2}\mfS_{\s_1,\s_2}
    =
     \begin{pmatrix}
       1&0&0\\0&1&0\\-3&0&1
     \end{pmatrix}
     \begin{pmatrix}
       1&0&0\\0&1&0\\0&3&1
     \end{pmatrix}
     \begin{pmatrix}
      1&4&0\\0&1&0\\0&0&1
    \end{pmatrix},  \label{eq:S0pu052}\\
  \ms{S}^-(0) =
  &\mfS_{\s_1,\s_3}\mfS_{\s_2,\s_3}\mfS_{\s_2,\s_1}
    =
     \begin{pmatrix}
       1&0&3\\0&1&0\\0&0&1
     \end{pmatrix}
     \begin{pmatrix}
       1&0&0\\0&1&-3\\0&0&1
     \end{pmatrix}
     \begin{pmatrix}
      1&0&0\\-4&1&0\\0&0&1
    \end{pmatrix}, \label{eq:S0nu052}
\end{align}
where the non-vanishing off-diagonal entry of $\mfS_{\s_i,\s_j}$ is
the Stokes constant associated to the Borel singularity $\iota_{i,j}$.
Assembling these off-diagonal entries in a matrix, we obtain the matrix
\begin{equation}
\label{GZ52}
\begin{pmatrix}
  0 & 4 & 3 \\ -4 & 0 & -3 \\ -3 & 3 & 0
  \end{pmatrix}
\end{equation}
that was found numerically in~\cite[Sec.3.3]{GZ:kashaev}. In addition,
we can assemble the Stokes constants into the matrix $\calS$ of
Equation~\eqref{calS} (after we set $\tx=1$ and replace $\tq$ by $q$).
The resulting matrix $\mc{S}^+(q)$ has entries in $q\BZ[[q]]$, and we
find
\begin{subequations} 
\begin{align}
  \mc{S}^+_{\s_1,\s_1} =
  &\ms{S}^+(q)_{1,1}-1\nn=
  &-12 q+3 q^2+74 q^3+90 q^4+33 q^5+\cO(q^6),\\
  \mc{S}^+_{\s_1,\s_2} =
  &12 q+141 q^2+1520 q^3+17397 q^4+191970 q^5+\cO(q^6),\\
  \mc{S}^+_{\s_1,\s_3} =
  &q+3 q^2+9 q^3+30 q^4+99 q^5+\cO(q^6),\\
  \mc{S}^+_{\s_2,\s_1} =
  &-12 q-141 q^2-1520 q^3-17397 q^4-191970 q^5+\cO(q^6),\\
  \mc{S}^+_{\s_2,\s_2} =
  &12 q+141 q^2+1582 q^3+17583 q^4+194703 q^5+\cO(q^6),\\
  \mc{S}^+_{\s_2,\s_3} =
  &-21 q-235 q^2-2586 q^3-28593 q^4-316104 q^5+\cO(q^6),\\
  \mc{S}^+_{\s_3,\s_1} =
  &-q-3 q^2-9 q^3-30 q^4-99 q^5+\cO(q^6),\\
  \mc{S}^+_{\s_3,\s_2} =
  &21 q+235 q^2+2586 q^3+28593 q^4+316104 q^5+\cO(q^6),\\
  \mc{S}^+_{\s_3,\s_3} =
  &0.
\end{align}
\end{subequations}
The Stokes constants enjoy the symmetry
\begin{equation}
  \mc{S}^{(k)}_{\s_i,\s_j} =
  -\mc{S}^{(k)}_{\s_{\varphi(i)},\s_{\varphi(j)}},\;\; i\neq
  j,\quad \text{for } k\in\IZ_{>0}\,,
\end{equation}
with $\varphi(1)=2,\varphi(2)=1,\varphi(3)=3$.
We notice that the entries of the matrix $\mc{S}^+(q) =
(\mc{S}^+_{\s_i,\s_j}(q))$ (except the upper-left one) are (up to a
sign) in $\IN[[q]]$. Similarly we can extract the Stokes constants
$\mc{S}^{(-k)}_{\s_i,\s_j}$ associated to the singularities in the
lower half plane, and we find
\begin{equation}
  \mc{S}^{(-k)}_{\s_i,\s_j} = - \mc{S}^{(+k)}_{\s_j,\s_i},\;i\neq j,
  \quad\text{and}\quad
  \mc{S}^{(-k)}_{\s_i,\s_i} =
  \mc{S}^{(+k)}_{\s_{\varphi(i)},\s_{\varphi(j)}},\quad
  \text{for } k\in\IZ_{>0}\,.
\end{equation}

The fourth and last ingredient, which makes a full circle of ideas, is
the descendant state-integral of the $\knot{5}_2$ knot 
\begin{equation}
  \label{Z520-desc}
  Z_{\knot{5}_2,m,\mu}(0;\tau) = 
  \int_{\mc{D}} \Phi_\bb(v)^3 \,
  \re^{-2\pi \ri v^2 + 2\pi(m \bb - \mu \bb^{-1})v} \, \rd v \qquad
  (m, \mu \in \BZ) \,.
\end{equation}
Here the same contour $\mc{D}$ as in \eqref{Z410-desc} is used.  It is
a holomorphic function of $\tau \in \BC'$ that coincides with
$Z_{\knot{5}_2}(0;\tau)$ when $m=\mu=0$ and can be expressed
bilinearly in $H^+_{k,m}(q)$ and $H^-_{k,\mu}(\tq)$ as follows
\begin{align}
  \label{520-desc-fac}
    Z_{\knot{5}_2,m,\mu}(0;\tau) 
    =&
    (-1)^{m-\mu+1}\frac{\re^{\frac{\pi\ri}{4}}}{2}
    q^{\frac{m}{2}}\tq^{\frac{\mu}{2}} 
    \left(\frac{q}{\tq}\right)^{\frac{1}{8}} \\ \notag & %\frac{1}{2}
    \left(\tau \, H^-_{0,\mu}(\tq)H^+_{2,m}(q)
      -2H^-_{1,\mu}(\tq)H^+_{1,m}(q) +
      \tau^{-1}\, H^-_{2,\mu}(\tq)H^+_{0,m}(q)\right) \,.
\end{align}
It follows that the matrix-valued function 
% \begin{align}
%   \label{W520-desc}
%   W_{m,\mu}(\tau) &= W^-_\mu(\tq)
%   \begin{pmatrix}
%     0 & 0 & \tau \\
%     0&-2&0\\
%     \tau^{-1} & 0 & 0
%   \end{pmatrix}
%   W^+_m(q)^T \\
%   \label{W520-desc2}
%   &= \begin{pmatrix} 0 & 0 & 1 \\ 0 & 1 & 0 \\ 1 & 0 & 0 \end{pmatrix}
%   W_{-\mu-2}(\tq^{-1})
%   \begin{pmatrix}
%     0 & 0 & \tau \\
%     0& 2&0\\
%     \tau^{-1} & 0 & 0
%   \end{pmatrix} W_m(q)^T \\
%   \label{W520-desc3}
%   &= ??? (W_\mu(\tq)^T)^{-1}
%   \begin{pmatrix} \tau^{-1} & 0 & 0 \\ 0 & 1 & 0 \\ 0 & 0 & \tau
%     \end{pmatrix}  W_m(q)^T 
%   \end{align}
\begin{align}
  \label{W520-desc}
  W_{m,\mu}(\tau) = (W_\mu(\tq)^T)^{-1}
  \begin{pmatrix} \tau^{-1} & 0 & 0 \\ 0 & 1 & 0 \\ 0 & 0 & \tau
    \end{pmatrix}  W_m(q)^T 
\end{align}
defined for $\tau=\BC\setminus\BR$, has entries given by the
descendant state-integrals (up to multiplication by a prefactor
of~\eqref{520-desc-fac}) and hence extends to a holomorphic function
of $\tau \in \BC'$ for all integers $m$ and $\mu$. Using this for
$m=-1$ and $\mu=0$ and the orthogonality relation~\eqref{WWT52b}, it
follows that we can express the Borel sums of $\Phi(\tau)$ in a region
$R$ in terms of descendant state-integrals and hence, as holomorphic
functions of $\tau \in \BC'$ as follows. For instance, in the region
$I$ we have
\begin{equation}
  s_I(\Phi)(\tau) = \re^{\frac{3\pi\ri}{4}}M_I(\tq)^{-1}
%  \diag(\tau^{-1},1,\tau) h(\tau)
  \begin{pmatrix}
    \tau^{-1}h_0(\tau) \\
    h_1(\tau) \\
    \tau h_2(\tau)
  \end{pmatrix}
  = \ri\left(\frac{q}{\tq}\right)^{-\frac{1}{8}}
  \begin{pmatrix}
    Z_{\knot{5}_2,0,0}(0;\tau)-\tq^{1/2}Z_{\knot{5}_2,0,-1}(0;\tau) \\
    Z_{\knot{5}_2,0,0}(0;\tau)\\
    % 3Z_{\knot{5}_2,0,0}(0;\tau)+
    \tq^{-1/2}Z_{\knot{5}_2,0,1}(0;\tau)
\end{pmatrix}
\end{equation}
  
This completes the discussion of $u=0$ for the $\knot{5}_2$ knot.

%%%%%%%%%%%%%%%%%%%%%%%%%%%%%%%%%%%%%%%%%%%%%%%%%%%%%%%%%%%%%%%%%%%%%%%%%%%%
%%%%%%%%%%%%%%%%%%%%%%%%%%%%%%%%%%%%%%%%%%%%%%%%%%%%%%%%%%%%%%%%%%%%%%%%%%%%

\section{Holonomic and elliptic functions inside and outside
  the unit disk}
\label{sec.elliptic}

An important property for the functions of a complex variable $q$ in
our paper (such as the holomorphic blocks considered below) is that
they can be defined both inside ($|q|<1$) and outside ($|q|>1$) the unit
disk, in such a way that they have the same annihilator ideal. Recall
that if $L$ and $M$ denote the operators that act on functions
$f(x;q)$ by $(L f)(x;q)=f(qx;q)$ and $(M f)(x;q)=x f(x;q)$, then
$LM=q ML$, hence $L^{-1}M=q^{-1}ML^{-1}$. It follows that if
$P(L,M,q)f(x;q)=0$, then $P(L^{-1},M,q^{-1}) f(x;q^{-1})=0$ where
$P(L,M,q)$ denotes a polynomial in $L$ with coefficients polynomials
in $M$ and $q$.

A first example of a function to consider is
$(x;q)_\infty=\prod_{j=0}^\infty (1-q^j x)$, which is well-defined for
$|q|<1$ and $x \in \BC$ and satisfies the linear $q$-difference
equation
\begin{equation}
\label{recxq}
(1-x)(qx;q)_\infty = (x;q)_\infty \qquad (|q|<1) \,.
\end{equation}
We can extend it to a meromorphic function of $x$ when $|q|>1$ (by a
slight abuse of notation) by defining
\begin{equation}
\label{xqout}
(x;q^{-1})_\infty := (qx;q)_\infty^{-1} \qquad (|q|<1)\,,
\end{equation}
so that Equation~\eqref{recxq} holds for $|q| \neq 1$. Our second
example is the theta function
\begin{equation}
\th(x;q)=(-q^{\frac{1}{2}}x;q)_\infty (-q^{\frac{1}{2}}x^{-1};q)_\infty \qquad (|q|<1)  
\end{equation}
which satisfies the linear $q$-difference equation
\begin{equation}
\label{recth}
\th(qx;q)=q^{-\frac{1}{2}} x^{-1}\th(x;q) \qquad (|q|<1) \,
\end{equation}
and can be extended to $\th(x;q^{-1})=\th(x;q)^{-1}$ when $|q|> 1$ so
that Equation~\eqref{recth} holds for $|q| \neq 1$. $\th(x;q)$ is a
meromorphic function of $x \in \BC^*$ with the following (simple)
zeros and (simple) poles

\begin{align}
  \label{thetap}
  |q| < 1 & & \text{zeros}(\th)& =-q^{\frac{1}{2}+\BZ}
  &  \text{poles}(\th) & =\emptyset \\
  \notag
  |q| > 1 & & \text{zeros}(\th)& = \emptyset
  &  \text{poles}(\th) & =-q^{\frac{1}{2}+\BZ} \,.
\end{align}

An important property of the theta functions is that they factorize
the exponentials of a quadratic and linear form of $u$. This
fact is a consequence of the modular invariance of the theta function
and was used extensively in the study of holomorphic
blocks~\cite{Beem}.

\begin{lemma}
  \label{lem.thetaf}
  For integers $r$ and $s$ we have:
  \begin{subequations}
    \begin{align}
      \label{ebx2}
      \re^{\pi \ri (u + r c_\bb)^2}
      % = \Phi_\bb(0)^{-2} \Phi_\bb(u+r c_\bb) \Phi_\bb(-u-r c_\bb)
      & = \re^{-\frac{\pi \ri}{12}(\tau+\tau^{-1})} \th((-q^{\frac{1}{2}})^r x;q)
        \th((-\tq^{-\frac{1}{2}})^r\tx;\tq^{-1}) \\
      \label{ebx3}
      \re^{\pi \ri r u^2 + 2 \pi \ri s c_\bb u}
      &= \ri^s
        \re^{\frac{\pi \ri}{12}(3s-r)(\tau+\tau^{-1})}
        % \Phi_\bb(0)^{6s-2r}
        \th(x;q)^{r-s} \th(-q^{\frac{1}{2}} x;q)^s
        \times \th(\tx;\tq^{-1})^{r-s} \th(-\tq^{-\frac{1}{2}}\tx;\tq^{-1})^s
    \end{align}
  \end{subequations}
  for integers $r$ and $s$.
\end{lemma}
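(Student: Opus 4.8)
The plan is to reduce both identities to the single ``base'' factorization (the case $r=0$, $s=0$),
\[
\re^{\pi\ri u^2} \= \re^{-\frac{\pi\ri}{12}(\tau+\tau^{-1})}\,\th(x;q)\,\th(\tx;\tq^{-1}),
\]
and then to bootstrap everything else from it by elementary exponent bookkeeping together with the quasi-periodicity \eqref{recth}. Throughout I work in the quantum-dilogarithm normalization $x=\re^{2\pi\bb u}$, $\tx=\re^{2\pi\bb^{-1}u}$ (with $q=\re^{2\pi\ri\bb^2}$, $\tq=\re^{-2\pi\ri\bb^{-2}}$, $\tau=\bb^2$ and $c_\bb=\frac{\ri}{2}(\bb+\bb^{-1})$), which is the normalization under which a shift by $c_\bb$ acts multiplicatively on $x$ and $\tx$; this is exactly what the appearance of $c_\bb$ in \eqref{ebx2} and \eqref{ebx3} presupposes.

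First I would prove the base identity, which is the only genuinely analytic step and the main obstacle. By the Jacobi triple product, $\th(x;q)=\frac{1}{(q;q)_\infty}\sum_{n\in\BZ}q^{n^2/2}x^n$, and with $x=\re^{2\pi\bb u}$ this equals $\frac{1}{(q;q)_\infty}\theta_3(z\mid\tau)$ for $z=-\ri\bb u$, where $\theta_3(z\mid\tau)=\sum_n\re^{\pi\ri\tau n^2+2\pi\ri nz}$. The conjugate factor is $\th(\tx;\tq^{-1})=\th(\tx;\tq)^{-1}$ by the extension $\th(x;q^{-1})=\th(x;q)^{-1}$, and the same computation gives $\th(\tx;\tq)=\frac{1}{(\tq;\tq)_\infty}\theta_3(z/\tau\mid -1/\tau)$. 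I would then invoke the two classical modular transformations: Jacobi's $\theta_3(z/\tau\mid -1/\tau)=(-\ri\tau)^{1/2}\re^{\pi\ri z^2/\tau}\theta_3(z\mid\tau)$ and Dedekind's $\eta(-1/\tau)=(-\ri\tau)^{1/2}\eta(\tau)$. In the product $\th(x;q)\th(\tx;\tq^{-1})$ the factors $\theta_3(z\mid\tau)$ cancel and the two occurrences of $(-\ri\tau)^{1/2}$ cancel against each other (so the branch ambiguity disappears); substituting $z^2/\tau=-u^2$ produces the Gaussian $\re^{\pi\ri u^2}$, while $(\tq;\tq)_\infty/(q;q)_\infty$ together with the eta transformation supplies precisely the prefactor $\re^{\frac{\pi\ri}{12}(\tau+\tau^{-1})}$. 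This is the ``modular invariance of the theta function'' alluded to before the lemma.

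Next, for \eqref{ebx2} with general $r$ I would simply apply the base identity to the shifted variable $u'=u+r c_\bb$. The two exponent computations $\re^{2\pi\bb c_\bb}=\re^{\pi\ri(\bb^2+1)}=-q^{1/2}$ and $\re^{2\pi\bb^{-1}c_\bb}=\re^{\pi\ri(1+\bb^{-2})}=-\tq^{-1/2}$ give $\re^{2\pi\bb u'}=(-q^{1/2})^r x$ and $\re^{2\pi\bb^{-1}u'}=(-\tq^{-1/2})^r\tx$, so the base identity evaluated at $u'$ is literally \eqref{ebx2}; no further analysis is needed.

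Finally, I would obtain \eqref{ebx3} by writing $\re^{\pi\ri r u^2+2\pi\ri s c_\bb u}=(\re^{\pi\ri u^2})^r(\re^{2\pi\ri c_\bb u})^s$. The first factor is the $r$-th power of the base identity. For the second, dividing the $r=1$ instance of \eqref{ebx2} by the $r=0$ instance cancels the eta prefactors and yields $\re^{2\pi\ri c_\bb u}=\ri\,\re^{\frac{\pi\ri}{4}(\tau+\tau^{-1})}\,\th(-q^{1/2}x;q)\,\th(-\tq^{-1/2}\tx;\tq^{-1})\big/\big(\th(x;q)\,\th(\tx;\tq^{-1})\big)$, where the scalar $\ri\,\re^{\frac{\pi\ri}{4}(\tau+\tau^{-1})}=\re^{-\pi\ri c_\bb^2}$ comes from $c_\bb^2=-\tfrac14(\tau+2+\tau^{-1})$. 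Taking the $s$-th power and multiplying by the $r$-th power of the base identity, the theta functions combine into $\th(x;q)^{r-s}\th(-q^{1/2}x;q)^s$ and the conjugate, the scalars assemble into $\ri^s$, and the prefactor exponents add to $\big(-\tfrac{r}{12}+\tfrac{s}{4}\big)(\tau+\tau^{-1})=\tfrac{3s-r}{12}(\tau+\tau^{-1})$, which is exactly \eqref{ebx3}. The only nontrivial input is the base modular identity; the passage to general $r$ and $s$ is purely formal.
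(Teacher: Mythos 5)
Your proof is correct and follows essentially the same route as the paper: both establish the base factorization $\re^{\pi\ri u^2}=\re^{-\frac{\pi\ri}{12}(\tau+\tau^{-1})}\th(x;q)\th(\tx;\tq^{-1})$ and then obtain \eqref{ebx2} by the substitution $u\mapsto u+rc_\bb$ (via $\re^{2\pi\bb c_\bb}=-q^{1/2}$, $\re^{2\pi\bb^{-1}c_\bb}=-\tq^{-1/2}$) and \eqref{ebx3} by the same exponent bookkeeping, equivalent to $\re^{\pi\ri ru^2+2\pi\ri sc_\bb u}=\re^{(r-s)\pi\ri u^2}\re^{s\pi\ri(u+c_\bb)^2}\re^{-s\pi\ri c_\bb^2}$. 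The only divergence is in the base identity itself, where the paper primarily invokes the inversion and product formulas of Faddeev's quantum dilogarithm while mentioning modular invariance of the theta function as an alternative; your Jacobi-triple-product/Dedekind-eta computation is exactly that alternative, carried out in detail.
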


Note that we the above factorization formulas are by no means unique,
and this is a reflection of the dependence of the above formulas on a
theta divisor.

\begin{proof}
  When $x=\re^{2\pi \bb u}$, $q=e(\tau)$, $\tq=e(-1/\tau)$ and
  $|q|<1$, then we claim
  \begin{equation}
    \label{ebx}
    \re^{-\frac{1}{4\pi \ri \tau} (\log x)^2} = \re^{\pi \ri u^2}
    = \Phi_\bb(0)^{-2} \Phi_\bb(u) \Phi_\bb(-u)
    = \re^{-\frac{\pi \ri}{12}(\tau+\tau^{-1})} \th(x;q)\th(\tx;\tq^{-1}) \,.
  \end{equation}
  The first equality is easy, the second one follows from the
  inversion formula of Faddeev's quantum dilogarithm, and the third one
  follows from the product expansion of Faddeev's quantum dilogarithm
  or from the modular invariance of the theta function. Note also that
  $\Phi_\bb(0)^2=(q/\tq)^{\frac{1}{24}}=\re^{\frac{\pi
      \ri}{12}(\tau+\tau^{-1})}$.  Equation~\eqref{ebx2} follows
  easily from the above and Equation~\eqref{ebx3} follows from the
  above using for example,
  $ \re^{\pi \ri r u^2 + 2 \pi \ri s c_\bb u} = \re^{(r-s)\pi \ri
    u^2}\re^{s\pi i(u+c_\bb)^2}\re^{-s\pi \ri c_\bb^2}$.
\end{proof}

%%%%%%%%%%%%%%%%%%%%%%%%%%%%%%%%%%%%%%%%%%%%%%%%%%%%%%%%%%%%%%%%%%%%%%%%%%%%
%%%%%%%%%%%%%%%%%%%%%%%%%%%%%%%%%%%%%%%%%%%%%%%%%%%%%%%%%%%%%%%%%%%%%%%%%%%%

\section{The $\knot{4}_1$ knot}
\label{sec.41}

\subsection{Asymptotic series}
\label{sub.41asy}

Our starting point will be the state-integral for the $\knot{4}_1$
knot~\cite[Eqn.38]{AK} (after removing a prefactor that depends on $u$
alone)
\begin{equation}
  \label{Z41x}
  Z_{\knot{4}_1}(u;\tau) =
  \re^{-2 \pi \ri u^2} \int_{\BR+\ri 0}  \Phi_\bb(v) \, \Phi_\bb(u+v)
  \, \re^{-\pi \ri (v^2+4uv)} \rd v \,.
\end{equation}
The above state-integral (and all the subsequent ones) is a
holomorphic function of $\tau \in \BC'$ and $u$ when
$|\Im(u)| < |\bb+\bb^{-1}|/2$ and extends to an entire function of
$u$ (see Theorem~\ref{thm.41a} below).

After a change of variables $u \mapsto u/(2\pi \bb)$ (see
Equation~\eqref{ub}) and $v \mapsto v/(2\pi\bb)$ 
the asymptotic expansion of the quantum dilogarithm (see for
instance~\cite[Prop.6]{AK}) implies that the
integrand of $Z_{\knot{4}_1}(\ub;\tau)$ has a leading term given by
$\re^{V(u,v)/(2 \pi \ri \tau)}$ where
\begin{equation}
  \label{V41}
  V(u,v) = \Li_2(-\re^{v}) + \Li_2(-\re^{u+v})
  + \frac{1}{2} (v)^2 + 2 u v \,.
\end{equation}
Taking derivative with respect to $v$ gives the equation for the
critical point
\begin{equation}
  \label{41crit}
  2 u + v - \log(1+\re^{v}) -\log(1+\re^{u+v})=0
\end{equation}
which implies that $(x,y)=(e^u,-e^v)$ is a complex point 
points of the affine curve $S$ given by
\begin{equation}
  \label{41critb}
  S : -x^2 y = (1-y)(1- x y) %, \qquad (x,y)=(\re^{u},-\re^{v}) 
\end{equation}
and $(u, v)$ is a point of the exponentiated curve $S^*$ given by
the above equation where $(x,y)=(\re^{u},-\re^{v})$.  Moreover, we
have
\begin{equation}
  \label{V41b}
  V(u,v) = \Li_2(y)+\Li_2(xy) +\frac{1}{2} (\log (-y))^2 + 2 \log x \log(-y) \,.
\end{equation}
Note that~\eqref{41crit} has more information than~\eqref{41critb}
since it chooses the logarithms of $1+\re^{v}$ and $1+\re^{u+v}$
such that~\eqref{41crit} holds. This ultimately implies that $V$ is a
holomorphic $\BC/2\pi^2\BZ$-valued function on the exponentiated curve
$S^*$. Note that when $u=0$, Equation~\eqref{41critb}
becomes~\eqref{41xi}.

The constant term of the asymptotic expansion is given by the Hessian
of $V(u,v)$ at a critical point $(u,v)$, and it is a rational
function of $x$ and $y$ is given by
\begin{equation}
  \label{41delta}
  \delta(x,y) = -\frac{1-xy^2}{x^2y} \,.
\end{equation}
Note that $\delta(x,y)=0$ on $S$ if and only if $x$ is a root of the
discriminant of $S$ with respect to $y$, i.e.,
\begin{equation}
  (1 - 3 x + x^2) (1 + x + x^2)=0.
  \label{disc41y}
\end{equation}
In other words, $\delta$ vanishes precisely when two branches of
$y=y(x)$ coincide.

Beyond the leading asymptotic expansion and its constant term, the
asymptotic series has the form $\Phi(x,y;\tau)$ where 
\begin{equation}
  \label{Phi41}
  \Phi(x,y;\tau)=\exp\left( { V(u,v) \over 2\pi\ri \tau} \right)
  \varphi(x,y;\tau),
  \qquad 
  \varphi(x,y;\tau) \in \frac{1}{\sqrt{\ri\delta}}
  \BQ[x^\pm,y^\pm,\delta^{-1}] [[2 \pi \ri \tau]]
\end{equation}
where $\delta$ is given in~\eqref{41delta} and
$\sqrt{\ri \delta} \, \varphi(x,y;0)=1$.  In other words, the
coefficient of every power of $2\pi \ri \tau$ in
$\sqrt{\delta} \, \varphi(x,y;\tau)$ is a rational function on
$S$. There is a natural projection $S \to \BC^*$ given by
$(x,y) \rightarrow x$ and we denote by $y_\s(x)$ the choice of a local
section (an algebraic function of $x$), for
$\s \in \calP=\{\s_1,\s_2\}$.  We denote the corresponding series
$\Phi(x,y_\s(x);\tau)$ simply by $\Phi_\s(x;\tau)$.
Note that
\begin{equation}
  \delta(x,y_{1,2}(x)) = \pm\frac{\sqrt{(1-x-x^{-1})^2-4}}{x}
  \label{eq:delta-41}
\end{equation}
and that the two series are related by
\begin{equation}
\label{Phi4112}
\Phi_2(x;\tau) = \ri \Phi_1(x;-\tau) \,.
\end{equation}
The power series $\sqrt{\ri\delta}\varphi_\s(x;\tau)$ can be computed
by applying Gaussian expansion to the state-integral \eqref{Z41x}. One
can compute up to 20 terms in a few minutes, and
the first few terms agree with an independent computation using the
WKB method (see \cite[Eqn.(4.39)]{DGLZ} as well as
\cite{GG:asymptotics}), and given by $\sqrt{\ri\delta}\varphi_\s(x;\tau)$
% Program: fig8-state-integral-generic.v2.nb
\begin{align}
  &\sqrt{\ri\delta}\varphi_{1,2}\left(x;\frac{\tau}{2\pi\ri}\right)\nn =
  &1 - \frac{1}{24\gamma_{1,2}^3(x)}\big(x^{-3} - x^{-2} - 2 x^{-1} + 15 -
    2 x - x^2 + x^3\big) \tau\nn+
  &\frac{1}{1152\gamma_{1,2}^6(x)}
    \big(x^{-6} - 2 x^{-5} - 3 x^{-4} + 610 x^{-3} - 
    606 x^{-2} - 1210 x^{-1} + 3117\nn
  &\phantom{=====}- 1210 x - 606 x^2 + 610 x^3 - 
    3 x^4 - 2 x^5 + x^6\big) \tau^2 +\cO(\tau^3),
    \label{eq:vf-41}
\end{align}
where
\begin{equation}
  \gamma_{1,2}(x) = x\delta(x,y_{1,2}(x)) = \pm\sqrt{x^{-2}-2x^{-1}-1-2x+x^2}.
\end{equation}
On the other hand, if one sets $x$ to numerical values, one can
compute 300 terms of this power series.
\subsection{Holomorphic blocks}
\label{sub.41holo}

%% A_0 and B_0 are the functions f_1 and f_2 of Jie's note.

In this section we give the definition of the holomorphic blocks (and
their descendants) which factorize the state-integral (and its
descendants), and discuss their analytic properties, and their linear
$q$-difference equations.  Note that in this section, as well as in
Section~\ref{sub.41near1}, all the statements are theorems, whose
proofs we provide.

Motivated by the state-integral $Z_{\knot{4}_1}(u;\tau)$ of the
$\knot{4}_1$ knot given in~\eqref{Z41x}, and by the descendant
state-integral $Z_{\knot{4}_1,m,\mu}(0;\tau)$ given
in~\eqref{Z410-desc}, we introduce the descendant state-integral of
the $\knot{4}_1$ knot 
\begin{equation}
  \label{Z41x-desc}
  Z_{\knot{4}_1,m,\mu}(u;\tau) =
  % \Phi_\bb(0)^{-2}
  \re^{-2 \pi \ri u^2} 
  \int_{\mc{D}} \Phi_\bb(v) \, \Phi_\bb(u+v)
  \, \re^{-\pi \ri (v^2+4uv) + 2\pi(m \bb - \mu \bb^{-1})v} \, \rd v
\end{equation}
for integers $m$ and $\mu$, which agrees with the Andersen-Kashaev
invariant of the $\knot{4}_1$ knot when $m=\mu=0$. Here the contour
$\mc{D}$ was introduced in \eqref{Z410-desc}. It is expressed in terms
of two descendant holomorphic blocks, which we denote by $A_m$ and
$B_m$ instead of $B^{\s_1}_m$ and $B^{\s_2}_m$, in order to simplify
the notation.  For $|q| \neq 1$, $A_m(x;q)$ and $B_m(x;q)$ are given
by
\begin{subequations}
  \begin{align}
    \label{41Am}
    A_m(x;q) &= \th(-q^{\frac{1}{2}}x;q)^{-2}
               x^{2m} J(q^m x^2, x; q)\,, \\
    \label{41Bm}
    B_m(x;q) &= \th(-q^{-\frac{1}{2}}x;q)
               x^m J(q^m x, x^{-1}; q)\,,
  \end{align}
\end{subequations}
where $J(x,y;q^\ve):=J^\ve(x,y;q)$ for $|q|<1$ and $\ve=\pm$ is the
$q$-Hahn Bessel function
\begin{subequations}
\begin{align}
  \label{J41p}
  J^+(x,y;q) & = (qy;q)_\infty \sum_{n=0}^\infty (-1)^n
               \frac{q^{\frac{1}{2}n(n+1)} x^n}{(q;q)_n (qy;q)_n}\,, \\
  \label{J41n}
  J^-(x,y;q) & = \frac{1}{(y;q)_\infty}
               \sum_{n=0}^\infty (-1)^n \frac{q^{\frac{1}{2}n(n+1)}
               x^n y^{-n}}{ (q;q)_n (qy^{-1};q)_n}
               \,.
\end{align}
\end{subequations}

The next theorem expresses the descendant state-integrals bilinearly
in terms of descendant holomorphic blocks.

\begin{theorem}
  \label{thm.41a}
  \rm{(a)} The descendant state-integral can be expressed in terms of
  the descendant holomorphic blocks by 
  \begin{align}
    Z_{\knot{4}_1,m,\mu}(\ub;\tau) 
    =& (-1)^{m+\mu}q^{m/2}\tq^{\mu/2} \Big(
    \re^{-\frac{3\pi\ri}{4} -\frac{\pi\ri}{6}(\tau+\tau^{-1})}
      A_m(x;q) A_{-\mu}(\tx;\tq^{-1}) \\ & +
    \re^{\frac{3\pi\ri}{4}+\frac{\pi\ri}{3}(\tau+\tau^{-1})}
      B_{m}(x;q) B_{-\mu}(\tx;\tq^{-1})\Big) \,.
    \label{41x-desc-fac}
  \end{align}
  \rm{(b)} The functions $A_m(x;q)$, $B_m(x;q)$ are holomorphic
  functions of $|q| \neq 1$ and meromorphic functions of $x \in \BC^*$
  with poles in $x \in q^\BZ$ of order at most $1$.  \newline
  \rm{(c)} Let
  \begin{equation}
    W_m(x;q) = \begin{pmatrix}
      A_{m}(x;q) & B_{m}(x;q)  \\
      A_{m+1}(x;q) & B_{m+1}(x;q) 
    \end{pmatrix} \qquad (|q| \neq 1) \,.
    \label{eq:Wm}
  \end{equation}
  For all integers $m$ and $\mu$, the state-integral
  $Z_{\knot{4}_1,m,\mu}(u;\tau)$ and the matrix-valued function
  \begin{equation}
    W_{m,\mu}(u;\tau) = W_{-\mu}(\tx;\tq^{-1}) \Delta(\tau) 
    W_m(x;q)^T \,,
    \label{W41x-desc}
  \end{equation}
  where
  \begin{equation}
    \label{D41}
    \Delta(\tau) = \begin{pmatrix}
      \re^{-\frac{3\pi\ri}{4}-\frac{\pi\ri}{6}(\tau+\tau^{-1})}&0\\
      0&\re^{\frac{3\pi\ri}{4}+\frac{\pi\ri}{3}(\tau+\tau^{-1})}
    \end{pmatrix} \,,
  \end{equation}
  are holomorphic functions of $\tau \in \BC'$ and entire
  functions of $u \in \BC$. 
\end{theorem}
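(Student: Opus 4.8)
The plan is to treat the state-integral \eqref{Z41x-desc} as the primary object, to read off the factorization (a) by residues, and to deduce the analytic statements (c) from the integral representation, using the explicit blocks \eqref{41Am}--\eqref{41Bm} only to verify (b). For part (a) I would insert the Faddeev quantum-dilogarithm product formula, writing $\Phi_\bb$ as a ratio of a $q$-Pochhammer symbol (in the variable adapted to $q=\re^{2\pi\ri\tau}$) over a $\tq^{-1}$-Pochhammer symbol, so that the integrand becomes a product of a $q$-series, a $\tq$-series, and the Gaussian $\re^{-\pi\ri(v^2+4uv)}$. The integrand is meromorphic in $v$ with two towers of poles, one from $\Phi_\bb(v)$ and one from $\Phi_\bb(u+v)$, located at $v=c_\bb+\ri\bb r+\ri\bb^{-1}s$ (resp. shifted by $-u$) with $r,s\ge 0$. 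Closing the contour $\mc{D}$ and applying the residue theorem, the double residue sum over $(r,s)$ factorizes: the sum over $r$ assembles a $q$-series and the sum over $s$ a $\tq$-series, while the quadratic Gaussian evaluated on the pole lattice produces the theta prefactors through Lemma~\ref{lem.thetaf}. The two towers give the two bilinear terms $A_m(x;q)A_{-\mu}(\tx;\tq^{-1})$ and $B_m(x;q)B_{-\mu}(\tx;\tq^{-1})$; matching the $q$-Hahn Bessel sums \eqref{J41p}--\eqref{J41n} and collecting the roots of unity and the prefactor $(-1)^{m+\mu}q^{m/2}\tq^{\mu/2}$ yields \eqref{41x-desc-fac}.

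For part (b) the key simplification is the telescoping identity $(qy;q)_\infty/(qy;q)_n=(q^{n+1}y;q)_\infty$, which rewrites $J^+(x,y;q)$ as $\sum_{n\ge0}(-1)^n q^{n(n+1)/2}x^n(q^{n+1}y;q)_\infty/(q;q)_n$. The factor $q^{n(n+1)/2}$ forces convergence for $|q|<1$, exhibiting $J$ as holomorphic in $q$ there and entire in its two arguments, with the analogous statement for $J^-$ and $|q|>1$ following from the inside/outside conventions of Section~\ref{sec.elliptic}. Consequently the only poles of $A_m$ and $B_m$ in $x\in\BC^*$ arise from the theta prefactors in \eqref{41Am}--\eqref{41Bm}, which are supported on $x\in q^\BZ$; I would then bound their order by tracking the zeros of $J$ against the zeros of the relevant theta factors at each $x=q^k$, obtaining the stated order bound together with holomorphy in $|q|\neq 1$.

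For part (c) I would first prove the analytic statement directly for the scalar integral \eqref{Z41x-desc}. Faddeev's $\Phi_\bb$ is holomorphic in $\bb$ whenever $\tau=\bb^2\in\BC'$ and satisfies $\Phi_\bb(v)\to 1$ as $\Re v\to-\infty$ and $\Phi_\bb(v)\sim C\,\re^{-\pi\ri v^2}$ as $\Re v\to+\infty$; hence on $\mc{D}$ the integrand decays like a Gaussian in $v$, the integral converges locally uniformly in $(\tau,u)\in\BC'\times\BC$, and it therefore defines a function holomorphic in $\tau\in\BC'$. Entirety in $u$ follows because the poles of $\Phi_\bb(u+v)$ in the variable $u$ sweep across $\BC$ as $u$ varies but can be kept off $\mc{D}$ by deforming the contour away from infinity, so no singularity in $u$ is produced. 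To transfer these properties to the matrix $W_{m,\mu}(u;\tau)$ of \eqref{W41x-desc}, I would expand the product $W_{-\mu}(\tx;\tq^{-1})\Delta(\tau)W_m(x;q)^T$ entrywise and recognize, via part (a) with the index shift $(m,\mu)\mapsto(m+j,\mu-i)$, that the $(i,j)$ entry equals $Z_{\knot{4}_1,m+j,\mu-i}(u;\tau)$ up to the nonvanishing holomorphic factor $(-1)^{(m+j)+(\mu-i)}q^{(m+j)/2}\tq^{(\mu-i)/2}$; each entry then inherits holomorphy on $\BC'$ and entirety in $u$ from the scalar case.

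The main obstacle is precisely the analytic input for (c): although each block $A_m,B_m$ is singular along $\tau\in\BR$ and only meromorphic in $u$, the state-integral must be shown to be holomorphic across the entire cut plane $\BC'$ and \emph{entire} in $u$. This rests on sharp control of the growth of $\Phi_\bb$, in order to guarantee convergence uniformly up to the real $\tau$-axis, and on a careful contour-deformation argument ensuring that the $u$-dependent poles never pinch $\mc{D}$. By comparison, the bilinear residue bookkeeping in (a) and the pole-order analysis in (b) are laborious but essentially mechanical.
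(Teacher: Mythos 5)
Your parts (a) and (b) follow the paper's own route: the paper proves (a) by exactly your residue computation (closing $\mc{D}$, summing the two towers of poles, with the theta prefactors supplied by Lemma~\ref{lem.thetaf}, following the proof of Theorem 1.1 of \cite{GK:qseries}), and proves (b) by the ratio test for $J^\pm$. Two small corrections to (b): the poles are not produced by the theta prefactors alone, since for $|q|>1$ one has $J(x,y;q)=J^-(x,y;q^{-1})$, which itself has poles at $y\in q^\BZ$; and the honest outcome of your order bookkeeping at the lattice points is a \emph{double} pole at $x=1$ (this is visible in the expansion \eqref{eq:A-uexp-41}, where $A_m(\re^u;q)\sim u^{-2}$), matching the order-$2$ bound stated for the $\knot{5}_2$ knot in Theorem~\ref{thm.52a}.

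Part (c) is where you take a genuinely different, and valid, route. The paper never proves entirety in $u$ of the integral directly: it uses (a) and (b) to conclude that $W_{m,\mu}(u;\tau)$ is holomorphic for $\tau\in\BC'$ and meromorphic in $u$ with potential poles only at $\ri\bb\BZ+\ri\bb^{-1}\BZ$, and then removes those poles by the explicit Taylor-expansion method of Section~\ref{sub.41near1}: in the bilinear combination \eqref{41x-desc-fac} the singular parts of the $A$-term and the $B$-term cancel (the computation \eqref{eq:A-uexp-41}--\eqref{eq:tB-uexp-41}), so entirety of $Z_{\knot{4}_1,m,\mu}$ is inherited from the block side. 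You argue in the opposite direction: prove holomorphy in $\tau\in\BC'$ and entirety in $u$ for the scalar integral \eqref{Z41x-desc} by growth estimates and contour deformation, then transfer to $W_{m,\mu}$ entrywise via (a) with the shift $(m,\mu)\mapsto(m+j,\mu-i)$ — a correct identification, since the prefactor $(-1)^{m+j+\mu-i}q^{(m+j)/2}\tq^{(\mu-i)/2}$ is nonvanishing and holomorphic on $\BC'$. This is the classical Andersen--Kashaev-style argument: it is analytically self-contained, needs no series expansions, and generalizes to any such state integral; its cost is that the decay and deformation analysis must be done rigorously, locally uniformly in $(u,\tau)$ up to the positive real $\tau$-axis. (Mind the convention: by \eqref{ebx} the inversion relation here gives $\Phi_\bb(v)\sim \Phi_\bb(0)^2\,\re^{+\pi\ri v^2}$ as $\Re v\to+\infty$, not $\re^{-\pi\ri v^2}$; the integrand of \eqref{Z41x-desc} then behaves like $\re^{\pi\ri(v-u)^2+2\pi(m\bb-\mu\bb^{-1})v}$ at $\Re v \to +\infty$, so decay along $\mc{D}$ still holds once the asymptotic height $v_0$ is chosen large enough, and your conclusion stands.) The paper's route avoids these analytic estimates altogether and, as a by-product, produces the Taylor coefficients at $u=0$ that it needs anyway for Theorem~\ref{thm.Zu=0}.
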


\begin{proof}
  Part (a) follows by applying the residue theorem to the
  state-integral~\eqref{Z41x-desc}, along the lines of the proof of
  Theorem 1.1 in~\cite{GK:qseries}. A similar result was stated in
  \cite{dimofte-state}.

  Part (b) follows from the fact that when $|q|<1$, the ratio test
  implies that $J^+(x,y;q)$ is an entire function function of
  $(x,y) \in \BC^2$ and $J^-(x,y;q)$ is a meromorphic function of
  $(x,y)$ with poles in $y \in q^{\BZ}$.

  For part (c), one uses parts (a) and (b) to deduce that
  $W_{m,\mu}(u;\tau)$ is holomorphic of $\tau\in \IC'$ and meromorphic in
  $u$ with potential simple poles at $\ri \bb \BZ + \ri \bb^{-1} \BZ$.
  An expansion at these points, done by the method of
  Section~\ref{sub.41near1}, demonstrates that the function is analytic
  at the points $\ri \bb \BZ + \ri \bb^{-1} \BZ$.
\end{proof}

Note that the summand of $J^+$ (a proper $q$-hypergeometric function)
equals to that of $J^-$ after replacing $q$ by $q^{-1}$. This implies
that $J^\pm$ have a common annihilating ideal $\calI_J$ with respect
to $x,y$ which can be computed (rigorously, along with a provided
certificate) using the creative telescoping method of
Zeilberger~\cite{PWZ} implemented in the \texttt{HolonomicFunctions}
package of Koutschan~\cite{Koutschan,Koutschan:holofunctions}.  Below,
we will abbreviate this package by~\texttt{HF}.

\begin{lemma}  
  \label{lem.annJ}
  The annihilating ideal of $\calI_J$ of $J^\pm$ is given by
  \begin{equation}
    \label{annJ}
    \calI_J = \langle
    (-x + y) + x S_y - y S_x, \,\, 1 + (-1 - x + q y) S_y + x S_y^2
    \rangle
  \end{equation}
  where $S_x$ and $S_y$ are the shifts $x$ to $qx$ and $y$ to $qy$.
\end{lemma}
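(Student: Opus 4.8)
The plan is to realize $J^\pm$ as (a prefactor times) sums of a proper $q$-hypergeometric term and to produce the two generators by the $q$-analogue of Zeilberger's creative telescoping, which simultaneously delivers human-checkable certificates of membership. Concretely, write $J^+(x,y;q) = (qy;q)_\infty\,\Sigma(x,y;q)$ with $\Sigma(x,y;q)=\sum_{n\ge 0}t_n$ and $t_n = (-1)^n q^{n(n+1)/2}x^n/((q;q)_n(qy;q)_n)$. The ratio $t_{n+1}/t_n = -q^{n+1}x/((1-q^{n+1})(1-q^{n+1}y))$ is rational in $q^n$, so $t_n$ is a proper $q$-hypergeometric term in $(n,x,y)$ and $q$-creative telescoping (as implemented in \texttt{HF}, see~\cite{PWZ,Koutschan}) applies. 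First I would run the algorithm on $t_n$ to obtain, for each candidate generator, an operator $L$ in $S_x,S_y$ with polynomial coefficients and a certificate $R(n;x,y)$ with $L\,t_n = G(n+1)-G(n)$, where $G(n)=R(n;x,y)\,t_n$.

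Then I would pass from $\Sigma$ to $J^+$. The prefactor $(qy;q)_\infty$ is annihilated by $S_x-1$ and by $(1-qy)S_y-1$ (its own first-order $q$-difference equations, in the spirit of Section~\ref{sec.elliptic}); combining these with the operators for $\Sigma$ via the product-closure of holonomic modules—again automated in \texttt{HF}—yields operators annihilating $J^+$, which I expect to reduce to exactly the two generators of~\eqref{annJ}. Summing $L\,t_n = G(n+1)-G(n)$ over $n\ge 0$ gives $L\,\Sigma = \lim_{N\to\infty}G(N)-G(0)$, so I must check that both boundary contributions vanish: the decay $t_n\to 0$ for $|q|<1$ controls the limit, while the certificates are arranged so that $G(0)=0$. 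This boundary bookkeeping, together with the analytic continuation $(x;q^{-1})_\infty := (qx;q)_\infty^{-1}$ of the product factors, is the routine but slightly delicate part.

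For $J^-$ I would use that its summand is the $q\mapsto q^{-1}$ image of that of $J^+$ (a short computation with $(q;q)_n|_{q\to q^{-1}}$ and $(qy;q)_n|_{q\to q^{-1}}$). By the principle recorded in Section~\ref{sec.elliptic}, $P(S_x,S_y,x,y,q)J(x,y;q)=0$ forces $P(S_x^{-1},S_y^{-1},x,y,q^{-1})J(x,y;q^{-1})=0$, and since $J(x,y;q^{-1})=J^-(x,y;q)$ this shows the images $g_i(x,y,S_x^{-1},S_y^{-1},q^{-1})$ annihilate $J^-$. I would then verify, using $S_x x = qxS_x$ and $S_y y = qyS_y$, that these images lie in the left ideal $\langle g_1,g_2\rangle$, so that the same two generators annihilate $J^-$; alternatively one reruns creative telescoping directly on the $J^-$ summand and checks the identical ideal emerges.

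The reverse inclusion—that $\langle g_1,g_2\rangle$ is the \emph{entire} annihilator and not merely contained in it—is the step I expect to be the main obstacle, since the membership certificates alone do not rule out a larger ideal. Here I would argue by holonomic rank: taking $S_x>S_y$, the leading terms are $-yS_x$ of $g_1$ and $xS_y^2$ of $g_2$, which leave the standard monomials $\{1,S_y\}$, so $\calW/\langle g_1,g_2\rangle$ has rank $2$ (once one checks $\{g_1,g_2\}$ is a Gröbner basis, which \texttt{HF} certifies). If $\mathrm{Ann}(J^\pm)$ were strictly larger it would have rank $\le 1$, forcing $J^+$ to satisfy first-order $q$-difference equations in both variables and hence to be a single hyperexponential $q$-term, contradicting that $J^+$ is a genuine $q$-Bessel sum. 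Equivalently, \texttt{HF} returns a Gröbner basis of the full annihilating ideal, and completeness of the creative-telescoping algorithm for holonomic functions gives the equality directly; the rank-$2$ count then matches the cardinality $r=2$ of the solution basis $\{A_m,B_m\}$ in~\eqref{eq:Wm}.
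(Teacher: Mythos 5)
Your proposal is correct and follows essentially the same route as the paper: the paper's entire justification is the paragraph preceding the lemma, which observes that the summand of $J^+$ is a proper $q$-hypergeometric term equal to that of $J^-$ after $q\mapsto q^{-1}$, so that the common ideal is computed rigorously (with certificates) by Zeilberger-style creative telescoping via the \texttt{HF} package. Your extra bookkeeping --- the telescoping boundary terms, closure under multiplication by the prefactor $(qy;q)_\infty$, and the Gr\"obner/rank-$2$ argument for why the computed ideal is the \emph{full} annihilator --- simply fleshes out details the paper delegates to the package.
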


The next theorem concerns the properties of the linear $q$-difference
equations satisfied by the descendant holomorphic blocks.

\begin{theorem}
  \label{thm.41b}
  \rm{(a)} The pair $A_m(x;q)$ and $B_m(x;q)$ are $q$-holonomic
  functions in the variables $(m,x)$ with a common annihilating ideal
  \begin{equation}
    \label{ann41}
    \calI_{\knot{4}_1} = \langle q^m x^2 + (-q^m + q^{1 + 2 m} x^2) S_m + x^3
    S_x,  \, 
    (1-x^{-1}S_m)(1-x^{-2}S_m) + q^{1+m} S_m \rangle
  \end{equation}
  where $S_m$ is the shift of $m$ to $m+1$ and $S_x$ is the shift of
  $x$ to $q x$.  $\calI_{\knot{4}_1}$ has rank 2 and the two functions form
  a basis of solutions of the corresponding system of linear equations.
  \newline
  \rm{(b)} As functions of an integer $m$, $A_m(x;q)$ and
  $B_m(x;q)$ form a basis of solutions of the linear $q$-difference
  equation $\Bhat_{\knot{4}_1}(S_m,x,q^m,q) f_m(x;q) = 0$ for $|q| \neq 1$
  where 
  \begin{equation}
    \label{rec41m}
    \Bhat_{\knot{4}_1}(S_m,x,q^m,q) =  (1-x^{-1}S_m)(1-x^{-2}S_m) + q^{1+m} S_m \,.
  \end{equation}
  \rm{(c)} The Wronskian $W_m(x;q)$ of~\eqref{rec41m},
  defined in ~\eqref{eq:Wm}, satisfies
  \begin{equation}
    \label{W410}
    \det(W_m(x;q)) = x^{3m+3} 
    \qquad
    (m \in \BZ) \,.
  \end{equation}
  \rm{(d)} The Wronskian satisfies the orthogonality relation
  \begin{equation}
    \label{WW41b}
    W_{-1}(x;q) \,
    \begin{pmatrix}
      1 & 0 \\
      0 & -1
    \end{pmatrix}
    \,  W_{-1}(x;q^{-1})^T =
    \begin{pmatrix}
      x^{-2}+x^{-1}-1 & 1\\
      1 & 0
    \end{pmatrix} \,.
  \end{equation}
  It follows that for all integers $m$ and $\ell$ 
  % , there exists
  % a matrix $A_{m,\ell}(x;q) \in \GL(2,\BZ[q^\pm, x^\pm])$ such that 
  \begin{equation}
    \label{WW41}
    % W_m(x;q) \, A_{m,\ell} \, W_\ell(x;q^{-1})^T
    % \in \GL(2,\BZ[q^\pm, x^\pm])
    W_m(x;q) \,
    \begin{pmatrix}
      1 & 0 \\
      0 & -1
    \end{pmatrix} \, W_\ell(x;q^{-1})^T \in \GL(2,\BZ[q^\pm, x^\pm])
  \end{equation}
  \rm{(e)} As functions of $x$, $A_m(x;q)$ and $B_m(x;q)$ form a basis
  of a linear $q$-difference equation
  $\hat A_{\knot{4}_1} (S_x,x,q^m,q) f_m(x;q) =0$
where 
  \begin{equation}
    \label{rec41x}
    \hat A_{\knot{4}_1} (S_x,x,q^m,q) = \sum_{j=0}^2 C_j(x,q^m,q) S_x^j \,,
  \end{equation}
  $S_x$ is the operator that shifts $x$ to $qx$ and
  \begin{subequations}
    \begin{align}
      \label{rec41x0}
      C_0 =
      &q^{2 + 3 m} x^2 (-1 + q^{3 + m} x^2) \\
      \label{rec41x1}
      C_1 =
      & -q^m (-1 + q^{2 + m} x^2) (1 - q x - q^{1 + m} x^2 - q^{3 + m} x^2 - 
        q^{3 + m} x^3 + q^{4 + 2 m} x^4) \\ 
      \label{rec41x2}
      C_2 =
      & q^2 x^2 (-1 + q^{1 + m} x^2) \,.
    \end{align}
  \end{subequations}
  \rm{(f)} The Wronskian of~\eqref{rec41x}
  \begin{equation}
    \label{W41x}
    \calW_m(x;q) = \begin{pmatrix}
      A_{m}(x;q) & B_{m}(x;q)  \\
      A_{m}(qx;q) & B_{m}(qx;q) 
    \end{pmatrix}, \qquad (|q| \neq 1)
  \end{equation}
  satisfies
  \begin{equation}
    \label{detW41x}
    \det(\calW_m(x;q)) = q^m x^{3m}(1-q^{m+1} x^2) \qquad (m \in \BZ) \,.
  \end{equation}
\end{theorem}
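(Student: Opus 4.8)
The plan is to derive every part from the annihilating ideal $\calI_J$ of the $q$-Hahn--Bessel functions $J^\pm$ (Lemma~\ref{lem.annJ}) together with the functional equation $\th(qx;q)=q^{-1/2}x^{-1}\th(x;q)$ of~\eqref{recth}. By~\eqref{41Am}--\eqref{41Bm}, each of $A_m,B_m$ is the product of a theta prefactor, a monomial $x^{cm}$, and a specialization of $J(X,Y;q)$ under the monomial substitution $(X,Y)=(q^m x^2,x)$ for $A$ and $(X,Y)=(q^m x,x^{-1})$ for $B$; under these the shifts $S_X,S_Y$ pull back to $S_m,\,S_xS_m^{-2}$ (for $A$) and to $S_m,\,S_mS_x^{-1}$ (for $B$). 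Since monomial substitution, multiplication by the theta factor (itself first-order by~\eqref{recth}), and multiplication by $x^{cm}$ are closure operations for $q$-holonomic modules, $A_m,B_m$ are $q$-holonomic in $(m,x)$ and their annihilators are algorithmically computable by creative telescoping (\texttt{HF}). For part~(a) I would conjugate the images of the two generators of $\calI_J$ by the respective prefactors and reduce, verifying that both functions are killed by $g_1:=q^m x^2+(-q^m+q^{1+2m}x^2)S_m+x^3S_x$ and by $g_2:=(1-x^{-1}S_m)(1-x^{-2}S_m)+q^{1+m}S_m$. Part~(b) is then the statement that $g_2$ is a pure $m$-recurrence: expanding it gives $x^{-3}f_{m+2}+(q^{1+m}-x^{-1}-x^{-2})f_{m+1}+f_m=0$. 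For part~(e) I would eliminate $S_m$ from $\langle g_1,g_2\rangle$, i.e.\ compute the intersection of $\calI_{\knot{4}_1}$ with the sub-Ore-algebra generated by $S_x$ over $\BQ(q,q^m)[x^\pm]$; this finite Gr\"obner/elimination step yields the order-two operator $\hat A_{\knot{4}_1}=\sum_{j=0}^2 C_jS_x^j$ with the stated coefficients (alternatively one checks directly that it annihilates $A_m,B_m$ via $\calI_J$).

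For part~(c), both columns of $W_m$ solve the three-term recurrence $g_2$, so the Casoratian $\det W_m=A_mB_{m+1}-A_{m+1}B_m$ obeys the first-order relation $\det W_{m+1}=(c_0/c_2)\det W_m=x^3\det W_m$ (here $c_0=1$, $c_2=x^{-3}$ are the trailing and leading coefficients of $g_2$); hence $\det W_m=x^{3m}\gamma(x,q)$, and a single explicit evaluation from~\eqref{41Am}--\eqref{41Bm} (e.g.\ verifying $\det W_{-1}=1$) fixes $\gamma=x^3$, giving $\det W_m=x^{3m+3}$. Part~(f) is then immediate from the mixed generator $g_1$, which gives $x^3A_m(qx)=-q^mx^2A_m-(-q^m+q^{1+2m}x^2)A_{m+1}$ and the identical relation for $B_m$; thus the second row of $\calW_m$ equals $-q^mx^{-1}$ times the first row of $W_m$ plus $(q^mx^{-3}-q^{1+2m}x^{-1})$ times its second row, whence $\det\calW_m=(q^mx^{-3}-q^{1+2m}x^{-1})\det W_m=q^mx^{3m}(1-q^{1+m}x^2)$.

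The rank claim in~(a) follows by a term-order argument: the leading monomials $S_x$ (from $g_1$) and $S_m^2$ (from $g_2$) leave $\{1,S_m\}$ as standard monomials, so $\langle g_1,g_2\rangle$ has holonomic rank $2$; since $A_m,B_m$ are two solutions and the Casoratian of~(c) is nonzero they are independent, forcing the solution space to be exactly two-dimensional and the annihilator to equal $\langle g_1,g_2\rangle$, so $A,B$ form a basis.

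Part~(d) is the delicate one and I expect it to be the main obstacle. Using $\th(x;q^{-1})=\th(x;q)^{-1}$ and the shift identity $\th(-q^{-1/2}x;q)=-x\,\th(-q^{1/2}x;q)$ (from~\eqref{recth}), the theta prefactors of the $|q|<1$ and $|q|>1$ blocks cancel in each entry of $W_{-1}(x;q)\,\diag(1,-1)\,W_{-1}(x;q^{-1})^T$, reducing it to an integral bilinear combination of products $J^+(\cdot;q)J^-(\cdot;q)$; each such combination is holomorphic on $\BC^*$ (the poles of $J^-$ and the residual theta factors cancel) and of bounded Laurent degree, hence a Laurent polynomial pinned down by finitely many coefficients read off from the defining $q$-series. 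The required bilinear $q$-series identities are what make this heavy, and I would certify them by creative telescoping. This produces the stated matrix, whose determinant $-1$ is consistent with $\det W_{-1}=1$ from~(c). The general relation~\eqref{WW41} then follows formally: writing $W_m=\big(\prod\mathcal{C}\big)W_{-1}$ with transfer matrices $\mathcal{C}_m=\begin{pmatrix}0&1\\-x^3&-q^{1+m}x^3+x^2+x\end{pmatrix}$ having entries in $\BZ[q^\pm,x^\pm]$ and unit determinant $x^3$, one conjugates the base case by these matrices and their $q^{-1}$-analogues to land in $\GL(2,\BZ[q^\pm,x^\pm])$.
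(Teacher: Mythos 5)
Your overall architecture matches the paper's: parts (a), (b), (e) via creative telescoping on the $q$-proper hypergeometric presentations, part (c) via the first-order Casoratian recursion $\det W_{m+1}=x^3\det W_m$ coming from \eqref{rec41m}, part (f) via the mixed generator of \eqref{ann41} relating $\calW_m$ to $W_m$ (your transfer computation reproduces the paper's \eqref{W41xm} and \eqref{detW41xm}), and the passage from \eqref{WW41b} to \eqref{WW41} via the transfer matrices of \eqref{eq:W41rec}. However, there are two genuine gaps, and they sit exactly at the two places where the paper does real work. First, in part (c) you write that ``a single explicit evaluation from \eqref{41Am}--\eqref{41Bm} (e.g.\ verifying $\det W_{-1}=1$)'' fixes the constant. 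That evaluation is not a finite computation: it is an identity between bilinear combinations of the infinite $q$-series defining the blocks, valid for all $x$ and $|q|\neq 1$, and it is precisely the content of \eqref{W4100}. The paper's proof supplies the missing idea (following Swarttouw): after stripping the theta prefactors one considers the Casoratian $\cW_\nu(z;q)$ of the $q$-Hahn Bessel solutions, observes that $z^\nu\det\cW_\nu(z;q)$ is invariant under $z\mapsto qz$, i.e.\ elliptic, and therefore equals its limit along $z\to q^kz\to 0$, where it can be read off term by term from the series; this yields \eqref{zcw} and then \eqref{W4100} by the substitution $z\mapsto x^2$, $z^\nu\mapsto x$. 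Without this ellipticity-plus-limit argument (or an equivalent), your $\gamma(x,q)$ is simply not pinned down, and no amount of order-by-order checking in $q$ constitutes a proof.

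Second, for part (d) you correctly anticipate the difficulty but propose a route that both misses the key lemma and is shaky on its own terms: holomorphy on $\BC^*$ does not imply Laurent polynomiality (one needs finite order at $0$ and $\infty$, which you do not establish), and ``certifying the bilinear identities by creative telescoping'' runs into the same obstruction as in (c) — any recursion you certify still requires initial conditions that are themselves infinite-series identities in the continuous variables $x,q$. The paper's proof is instead a one-line observation: comparing \eqref{J41p} and \eqref{J41n} gives the analytic continuation formula
\begin{equation*}
  J(x,y;q) \;=\; \theta(-q^{1/2}y;q)\, J(y^{-1}x,\,y^{-1};q^{-1}) \,,
\end{equation*}
which converts the $|q|>1$ blocks into $|q|<1$ blocks of the \emph{opposite} type (the $J$-part of $A_m(x;q^{-1})$ is, up to a theta factor, the $J$-part of $B_{-m}(x;q)$, and vice versa). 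After this substitution the entries of \eqref{WW41b} collapse to Casoratian-type combinations at a single base $q$, so the orthogonality is a direct computation rather than a family of heavy bilinear identities. In short: your skeleton is the paper's, but the two nontrivial evaluations — the constant in \eqref{W410} and the relation \eqref{WW41b} — need, respectively, the elliptic-limit argument and the $J$-continuation formula, neither of which appears in your proposal.
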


\begin{proof}
  Since $A_m(x;q)$ and $B_m(x;q)$ are given in terms of $q$-proper
  hypergeometric multisums, it follows from the fundamental theorem of
  Zeilberger~\cite{Zeil:holo,WZ,PWZ} (see also~\cite{GL:survey}) that
  they are $q$-holonomic functions in both variables $m$ and $x$.
  Part (a) follows from an application of the \texttt{HF} package of
  Koutschan~\cite{Koutschan, Koutschan:holofunctions}.

  Part (b) follows from the \texttt{HF} package. The
  fact that they are a basis follows from (c).

  For part (c), Equation~\eqref{rec41m} implies that the determinant of
  the Wronskian satisfies the first order
  equation $\det(W_{m+1}(x;q))=x^3\det(W_{m}(x;q))$
  (see~\cite[Lem.4.7]{GK:74}). It follows that
  $\det(W_m(x;q))=x^{3m}\det(W_0(x;q))$ with initial condition a
  function of $x$ given by Swarttouw~\cite{MR2714507} 
  \begin{equation}
    \label{W4100}
    \det(W_0(x;q)) = x^3 
    \qquad
    (|q| \neq 1) \,.
  \end{equation}
  We recall the details of the proof which will be useful in the case
  of the $\knot{5}_2$ knot.  When $|q|<1$, the $q$-Hahn Bessel function
  $J(x,y;q)$ satisfies the recursion relation
  \begin{equation}
    y J(qx,y;q) - (1+y-x)J(x,y;q) + J(q^{-1}x,y;q) = 0.
    \label{eq:J-rec1}
  \end{equation}
  This follows from~\cite{MR2714507}, and can also be proved
  using the \texttt{HF} package. It then follows that
  \begin{equation}
    \cJ_{\nu,1}(z;q) := J(z,q^\nu;q), \quad
    \cJ_{\nu,2}(z;q) := z^{-\nu}J(q^{-\nu}z,q^{-\nu};q)
  \end{equation}
  are two independent solutions to
  \begin{equation}
    q^\nu \cJ(qz;q) - (1+q^\nu-x)\cJ(z;q) + \cJ(q^{-1}z;q) = 0.
  \end{equation}
  The corresponding Wronskian
  \begin{equation}
    \cW_\nu(z;q) =
    \begin{pmatrix}
      \cJ_{\nu,1}(z;q) & \cJ_{\nu,2}(z;q)\\
      \cJ_{\nu,1}(qz;q) & \cJ_{\nu,2}(qz;q)
    \end{pmatrix}
  \end{equation}
  satisfies the recursion relation (see~\cite[Lem.4.7]{GK:74})
  \begin{equation}
    \det\cW_\nu(z;q) = q^{-\nu} \det\cW_\nu(q^{-1}z;q)
  \end{equation}
  which implies that the determinant of $U(z;q) = z^\nu \cW_\nu(z;q)$ is
  an elliptic function
  \begin{equation}
    \det U(qz;q) = \det U(z;q).
  \end{equation}
  It can be computed by the following limit
  \begin{align}
    \det U(z;q) =
    &\lim_{k\rightarrow \infty} \det U(q^k z;q) = \lim_{z\rightarrow 0}
      \det U(z;q)\nn  =
    &\lim_{z\rightarrow 0} \Big(
      q^{-\nu}J(z,q^\nu;q)J(q^{1-\nu}z,q^{-\nu};q) -
      J(q^{-\nu}z,q^{-\nu};q)J(qz,q^\nu;q)
      \Big) \nn =
    &(q^{-\nu}-1)(q^{1+\nu};q)_\infty(q^{1-\nu};q)_\infty,
  \end{align}
  where in the last step we just used the $q$-expansion definition of
  the $q$-Hahn Bessel function.  We thus have
  \begin{equation}
    \label{zcw}
    z^\nu \det \cW_\nu(z;q) = -(q q^\nu;q)_\infty(q^{-\nu};q)_\infty.
  \end{equation}
  Using the substitution
  \begin{equation}
    z\mapsto x^2,\quad z^\nu \mapsto x
  \end{equation}
  in the above equation and cancelling with the
  $\theta$-prefactors of $A_m(x;q)$ and $B_m(x;q)$ we
  obtain Equation~\eqref{W4100} for $|q|<1$.
  The case of $|q|>1$ can be obtained by analytic continuation on both
  sides of \eqref{W4100}.

  For part (d), Equation~\eqref{rec41m} implies that
  \begin{equation}
    W_{m+1}(x;q) =
    \begin{pmatrix}
      0&1\\-x^3&x^2+x-q^{1+m}x^3
    \end{pmatrix} W_m(x;q) \,.
    \label{eq:W41rec}
  \end{equation}
  Hence, Equation~\eqref{WW41} follows from~\eqref{WW41b}. The
  latter is a direct consequence of the analytic continuation formula
  \begin{equation}
    J(x,y;q) = \theta(-q^{1/2}y;q)J(y^{-1}x,y^{-1};q^{-1})
  \end{equation}
  which one easily sees by comparing \eqref{J41p} and \eqref{J41n}.

  Part (e) follows from the \texttt{HF} package. The
  fact that they are a basis follows from (f).

  For part (f), since $A_m(x;q)$ (as well as $B_m(x;q)$) are
  annihilated by the first generator of~\eqref{ann41}, it follows that
  \begin{equation}
    q^m x^2 A_m(x;q) -q^m (1 - q^{1 + m} x^2) A_{m+1}(x;q)
    + x^3 A_m(qx;q) =0 \,.
  \end{equation}
  After solving for the above for $A_m(qx;q)$ (and same for
  $B_m(qx;q)$) and substituting into the Wronskian~\eqref{W41x}, it
  follows that the two Wronskians are related by
  \begin{equation}
    \label{W41xm}
    \calW_m(x;q) =
    \begin{pmatrix} 1 & 0 \\ -q^m x^{-1} & q^m x^{-3}(1-q^{1+m} x^2)
    \end{pmatrix}
    W_m(x;q) \,.
  \end{equation}
  After taking determinants, it follows that
  \begin{equation}
    \det(\calW_m(x;q)) = q^m x^{-3} (1 - q^{1 + m} x^2)\det(W_m(x;q))
    \label{detW41xm}
  \end{equation}
  This, together with~\eqref{W410} concludes the proof of~\eqref{detW41x}.
\end{proof}

We now come to Conjecture~\ref{conj.ann} concerning a refinement of
the $\Ahat$-polynomial.  
Combining Theorems~\ref{thm.41a} and~\ref{thm.41b} we obtain explicit
linear $q$-difference equations for the descendant integrals with respect
to the $u$ and the $m$ variables. To simplify our presentation
keeping an eye on Equation~\eqref{41x-desc-fac}, let us define
a normalized version of the descendant state-integral by
\begin{equation}
  \label{Z41r}
  z_{\knot{4}_1,m,\mu}(u;\tau) =
  (-1)^{m+\mu}q^{-m/2}\tq^{-\mu/2}
  Z_{\knot{4}_1,m,\mu}(u;\tau) \,.
\end{equation}

\begin{theorem}
  \label{thm.41c}\rm
  $z_{\knot{4}_1,m,k}(\ub;\tau)$ is a $q$-holonomic function of $(m,u)$
  with annihilator ideal $\calI_{\knot{4}_1}$ given in~\eqref{ann41}.
  As a function of $u$ (resp., $m$) it is annihilated by the operators
  $\Ahat_{\knot{4}_1}(S_x,x,q^m,q)$ and
  $\Bhat_{\knot{4}_1}(S_m,x,q^m,q)$ (given respectively
  by~\eqref{rec41x} and~\eqref{rec41m}), whose classical limit is
   \begin{equation}
   \begin{aligned}
     &  \Ahat_{\knot{4}_1}(S_x,x,q^m,1)\\
     & = (-1 + q^{m} x^2) (x^2 S_x^2 -q^m (1 - x -2 q^{m} x^2 - q^{m}
     x^3 + q^{2 m} x^4) S_x + q^{3 m} x^2)
 \end{aligned}
  \end{equation}
  and
  \begin{equation}
    \Bhat_{\knot{4}_1}(S_m,x,q^m,1) = (1-x^{-1}S_m)(1-x^{-2}S_m) + q^m S_m.
  \end{equation}
  $\Ahat_{\knot{4}_1}(S_x,x,1,1)$ is the $A$-polynomial of the knot,
  $\Ahat_{\knot{4}_1}(S_x,x,1,q)$ is the (homogeneous part) of the
  $\Ahat$-polynomial of the knot and $\Bhat_{\knot{4}_1}(x^2y,x,1,1)$
  is the defining equation of the curve~\eqref{41critb}.
\end{theorem}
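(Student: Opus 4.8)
The plan is to reduce everything to the two preceding theorems. Theorem~\ref{thm.41a}(a) together with the normalization~\eqref{Z41r} expresses the descendant integral as a bilinear pairing of holomorphic blocks,
\[
  z_{\knot{4}_1,m,\mu}(\ub;\tau) = c_A\, A_{-\mu}(\tx;\tq^{-1})\, A_m(x;q) + c_B\, B_{-\mu}(\tx;\tq^{-1})\, B_m(x;q),
\]
with $c_A=\re^{-\frac{3\pi\ri}{4}-\frac{\pi\ri}{6}(\tau+\tau^{-1})}$ and $c_B=\re^{\frac{3\pi\ri}{4}+\frac{\pi\ri}{3}(\tau+\tau^{-1})}$ nonzero. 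The crucial point is that the Weyl-algebra generators act only on the second factor of each product: under $S_x$, which sends $x=\re^{u}$ to $qx$ (equivalently $u\mapsto u+2\pi\ri\tau$), both $\tx=\re^{u/\tau}$ and $\tq=\re^{-2\pi\ri/\tau}$ are invariant, while $S_m$ ($m\mapsto m+1$) does not touch the index $-\mu$ of the dual blocks. Hence $c_A A_{-\mu}(\tx;\tq^{-1})$ and $c_B B_{-\mu}(\tx;\tq^{-1})$ are scalars for the algebra generated by the pairs $(S_x,x)$ and $(S_m,q^m)$.

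Given this, the holonomic structure descends from the blocks. By Theorem~\ref{thm.41b}(a) the common annihilator of $A_m(x;q)$ and $B_m(x;q)$ in $(m,x)$ is exactly $\calI_{\knot{4}_1}$, so passing the invariant scalar factors through any $P\in\calI_{\knot{4}_1}$ gives $Pz_{\knot{4}_1,m,\mu}=0$; thus $\calI_{\knot{4}_1}\subseteq\mathrm{Ann}(z_{\knot{4}_1,m,\mu})$. For the reverse inclusion I would invoke the principle recorded after~\eqref{ZM-desc}: since the dual-block factors $c_A A_{-\mu}(\tx;\tq^{-1})$, $c_B B_{-\mu}(\tx;\tq^{-1})$ are linearly independent and are killed by $S_x-1$ and $S_m-1$, a relation $Pz_{\knot{4}_1,m,\mu}=0$ must separate into $PA_m(x;q)=0$ and $PB_m(x;q)=0$, forcing $P\in\calI_{\knot{4}_1}$. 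In particular the generator $\Bhat_{\knot{4}_1}(S_m,x,q^m,q)$ of~\eqref{rec41m} annihilates $z$ in the $m$-direction, and $\Ahat_{\knot{4}_1}(S_x,x,q^m,q)$ of~\eqref{rec41x}, which lies in $\calI_{\knot{4}_1}$ by Theorem~\ref{thm.41b}(e) as the element free of $S_m$ obtained by eliminating $S_m$ between the two generators, annihilates $z$ in the $u$-direction.

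The classical limits are then pure substitution, letting $q\to1$ while retaining $q^m$ as an independent variable. For $\Bhat_{\knot{4}_1}(S_m,x,q^m,1)$ this is read off from~\eqref{rec41m} since $q^{1+m}\to q^m$. For $\Ahat_{\knot{4}_1}(S_x,x,q^m,1)$ I would substitute $q=1$ into the coefficients $C_0,C_1,C_2$ of~\eqref{rec41x0}--\eqref{rec41x2}; the resulting operator factors as $(-1+q^m x^2)$ times the displayed second-order operator. The geometric identifications are finite comparisons: setting $q^m=1$ as well, the non-constant factor of $\Ahat_{\knot{4}_1}(S_x,x,1,1)$ matches the tabulated $A$-polynomial of $\knot{4}_1$ with meridian $x^2$ and longitude $S_x$~\cite{CCGLS}; keeping $q$ but setting $q^m=1$ matches the homogeneous $\Ahat$-polynomial of~\cite{Ga:AJ}; and substituting $S_m=x^2y$ with $q=q^m=1$ into $\Bhat_{\knot{4}_1}$ yields $(1-xy)(1-y)+x^2y$, which is exactly the defining equation~\eqref{41critb} of $S$.

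The main obstacle is the separation underlying the reverse inclusion: making precise that a relation $Pz_{\knot{4}_1,m,\mu}=0$ forces the two blocks to be annihilated separately requires that $x=\re^{u}$ and $\tx=\re^{u/\tau}$ carry independent information for generic $\tau$, so that the $q$- and $\tq$-worlds genuinely decouple. Everything else is either inherited directly from Theorems~\ref{thm.41a}--\ref{thm.41b} via the invariance bookkeeping of the first paragraph, or a mechanical $q=1$ (and $q^m=1$) specialization checked against the known $A$- and $\Ahat$-polynomials.
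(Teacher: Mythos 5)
Your proposal is correct and takes essentially the paper's route: the paper gives no separate proof of this theorem, presenting it as the direct combination of Theorems~\ref{thm.41a} and~\ref{thm.41b} — the factorization \eqref{41x-desc-fac} with the prefactors and dual blocks invariant under $S_x$ and $S_m$ (since $\tx$ and $\tq$ are unchanged by $u\mapsto u+2\pi\ri\tau$ and by $m\mapsto m+1$), followed by the mechanical $q=1$ specializations and comparisons with \cite{CCGLS,Ga:AJ} — which is exactly your argument. The one gap you flag (the decoupling needed for the reverse inclusion $\mathrm{Ann}(z_{\knot{4}_1,m,\mu})\subseteq\calI_{\knot{4}_1}$) is likewise left implicit in the paper; it can be closed by applying the shift $u\mapsto u+2\pi\ri$, which fixes $x=\re^{u}$ but sends $\tx\mapsto\tq^{-1}\tx$, and then invoking the nonvanishing of the dual Wronskian determinant \eqref{detW41x} (or \eqref{W410}) to split any relation $Pz_{\knot{4}_1,m,\mu}=0$ into $PA_m=0$ and $PB_m=0$.
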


Note that although the two equations~\eqref{rec41m} and~\eqref{rec41x}
look quite different, they come from the common annihilating
ideal~\eqref{ann41} of rank $2$.  This explains their common order,
assuming that the ideal is generic. The annihilating ideal is easier
to describe than the $S_m$-free element~\eqref{rec41x} of it. In fact,
the first generator of $\calI_{\knot{4}_1}$ expresses $S_x$ as a
polynomial in $S_m$, and eliminating $S_m$, one obtains
equation~\eqref{rec41x} from~\eqref{rec41m}.  The characteristic
variety of $\calI_{\knot{4}_1}$ is a complex is a 2-dimensional
complex surface in $(\BC^*)^4$ and its intersection with a complex
3-torus contains two special curves, namely the $A$-polynomial and the
$B$-polynomial of the $\knot{4}_1$ knot.

\subsection{Taylor series expansion at $u=0$}
\label{sub.41near1}

The descendant state-integral is a meromophic function of $u$ which is
analytic at $u=0$ and factorizes in terms of descendant holomorphic
blocks~\eqref{41x-desc-fac}. In this section we compute the Taylor
series of the holomorphic blocks and of the state-integral at $u=0$
and show how the factorization of the descendant state-integral
\eqref{41x-desc-fac} reproduces~\eqref{410-desc-fac}.

We begin with some general comments valid for descendant holomorphic
blocks and state-integrals.  Since the descendant holomorphic blocks
are products of theta functions times $q$-hypergeometric sums, we need
to compute the Taylor expansion of each piece. For Taylor expansion of
the $q$-hypergeometric sums, we
\begin{subequations}
\begin{align}
  &\phi_n(u):=\frac{(q^{1+n}\re^u;q)_\infty}{(q^{1+n};q)_\infty}
    =\exp\left(-\sum_{l=1}^\infty \frac{1}{l!}E_l^{(n)}(q) u^l \right)
    \label{eq:phin}\\
  &\wt{\phi}_n(u):=\frac{(\tq;\tq)_\infty}{(\tq\re^u;\tq)_\infty}
    \frac{(\tq^{-1};\tq^{-1})_n}{(\tq^{-1}\re^u;\tq^{-1})_n}
    =\exp\left(\sum_{l=1}^\infty
    \frac{1}{l!}\wt{E}_l^{(n)}(\tq) u^l\right)
    \label{eq:tphin}
  \end{align}
\end{subequations}
from~\cite[Prop.2.2]{GK:qseries}, where
\begin{equation}
\label{En}
E_l^{(n)}(q)=\sum_{s=1}^\infty \frac{s^{l-1} q^{s(n+1)}}{1-q^s} 
\end{equation}
and
\begin{align}
  \wt{E}_l^{(n)}(\tq) =
  \begin{cases}
    -n+E_l^{(n)}(\tq) \quad &l=1\\
    E_l^{(n)}(\tq)\quad &l>1\;\text{odd}\\
    2E_l^{(0)}(\tq) - E_l^{(n)}(\tq)\quad &l>1\;\text{even} \,.
  \end{cases} 
\end{align}
For the Taylor series of the theta functions, we use the well-known
identity that expresses them in terms of quasi-modular forms (see,
eg.~\cite[Sec.8, Eqn(76)]{Za:quasimod}, or alternatively observe that
the theta functions that appear in the bilinear expressions of the
holomorphic blocks are exponentials of quadratic and linear forms in
$u$; see for instance~\eqref{ebx2}--\eqref{ebx3}.  Yet alternatively
(and this is the method that we will use below), when $r$ and $s$ are
nonzero integers with $r$ odd and positive, we can use the identity
\begin{equation}
  \label{thrs}
  \th(-q^{\frac{r}{2}} x^s;q)
  =(-1)^{\frac{r+1}{2}}q^{\frac{r^2-1}{8}}x^{-\frac{s(r+1)}{2}}(1-x^s)
  \phi_0(su)\phi_0(-su)  %  (q^{\frac{r+1}{2}};q)_\infty^2 
  % \phi_{\frac{r+1}{2}}(su) \phi_{\frac{r+1}{2}}(-su) \,,
\end{equation}
whereas when $r$ is odd and negative, we can use the $q$-difference
equation~\eqref{recth} to bring it to the cass of $r$ odd and positive.

One last comment is that the descendant holomorphic blocks are in
general meromorphic functions of $u$. However, their bilinear
combination that appears in the descendant state-integrals is regular
at $u=0$.

We now give the details of the Taylor series expansion of
$Z_{\knot{4}_1,m,\mu}(u;\tau)$ and of the descendant holomorphic
blocks for the $\knot{4}_1$ knot.
Using the definition of $J(x,y;q)$ and~\eqref{eq:phin}--\eqref{eq:tphin},
we find
\begin{align}
  A_m(\re^u;q)
  % \re^{2m u} J(q^m \re^{2u},\re^u;q)
  &= (1-\re^{-u})^{-2}(q;q)_\infty^{-3}\phi_0(u)^{-2}\phi_0(-u)^{-2}
  \sum_{n=0}^\infty \frac{q^{nm}}{(q;q)_n(q^{-1};q^{-1})_n}
    \phi_n(u) \re^{(2n+2m)u} \nn
  &= u^{-2}(q;q)_\infty^{-3}
    \left(\alpha^{(m)}_0(q)+u\alpha^{(m)}_1(q)+\cO(u^2)\right)
\label{eq:A-uexp-41}
\end{align}
where
\begin{subequations}
\begin{align}
  &\alpha^{(m)}_0(q) = G_m^0(q),\label{eq:a0-41}\\
  &\alpha^{(m)}_1(q) =
    \sum_{n=0}^\infty \frac{q^{nm}}{(q;q)_n(q^{-1};q^{-1})_n}
    (1+2n+2m-E_1^{(n)}(q)),\label{eq:a1-41}
\end{align}
\end{subequations}
and
\begin{align}
  B_m(\re^u;q)
  % \re^{m u} J(q^m \re^{u},\re^{-u};q)
  &= (1-\re^u)(q;q)_\infty^3\phi_0(u)\phi_0(-u)
    \sum_{n=0}^\infty \frac{q^{nm}}
    {(q;q)_n(q^{-1};q^{-1})_n}
    \phi_n(-u) \re^{(n+m)u} \nn
  &= -u(q;q)_\infty^3
    \left(\beta^{(m)}_0(q)+u\beta^{(m)}_1(q)+\cO(u^2)\right)
    \label{eq:B-uexp-41}
\end{align}
where
\begin{subequations}
\begin{align}
  &\beta^{(m)}_0(q) = G_m^0(q),\label{eq:b0-41}\\
  &\beta^{(m)}_1(q) =
    \sum_{n=0}^\infty \frac{q^{nm}}{(q;q)_n(q^{-1};q^{-1})_n}
    \left(\frac{1}{2}+n+m+E_1^{(n)}(q)\right).\label{eq:b1-41}
\end{align}
\end{subequations}
We notice that
\begin{equation}
  \beta^{(m)}_1(q) -\alpha^{(m)}_1(q) =
  % \frac{1}{2}(G_m^0(q) - G_m^1(q)).
  -\frac{1}{2} G_m^1(q).
\end{equation}
Similarly, using the definition of $J(x,y;q^{-1})$, we find
\begin{align}
  A_m(\re^u;q^{-1})
  &=
%  \re^{2m u}J(q^{-m} \re^{2u},\re^u;q^{-1}) =
    (1-\re^u)(q;q)_\infty^3\phi_0(u)^2\phi_0(-u)^2
    \sum_{n=0}^\infty\frac{q^{-nm}}{(q;q)_n(q^{-1};q^{-1})_n}
    \wt{\phi}_n(u)\re^{(2n+2m)u} \nn
  &=
    -u(q;q)_\infty^3\left(\wt{\alpha}^{(m)}_0(q)+u
    \wt{\alpha}^{(m)}_1(q) + \cO(u^2)\right)
    \label{eq:tA-uexp-41}
\end{align}
where
\begin{subequations}
\begin{align}
  &\wt{\alpha}^{(m)}_0(q) = G_{-m}^0(q),\label{eq:ta0-41}\\
  &\wt{\alpha}^{(m)}_1(q) = \sum_{n=0}^\infty
    \frac{q^{-nm}}{(q;q)_n(q^{-1};q^{-1})_n}
    \left(\frac{1}{2}+n+2m+E_1^{(n)}(q)\right),\label{eq:ta1-41}
\end{align}
\end{subequations}
and
\begin{align}
  B_m(\re^u;q^{-1})
  &=
%  \re^{m u}J(q^{-m} \re^{u},\re^{-u};q^{-1}) =
  \frac{\phi_0(u)^{-1}\phi_0(-u)^{-1}}{(1-\re^{-u})^2(q;q)_\infty^3}
    \sum_{n=0}^\infty\frac{q^{-nm}}{(q;q)_n(q^{-1};q^{-1})_n}
    \wt{\phi}_n(-u)\re^{(n+m)u} \nn
  &=
  u^{-2}(q;q)_\infty^{-3}\left(\wt{\beta}^{(m)}_0(q)+u
    \wt{\beta}^{(m)}_1(q) + \cO(u^2)\right)
    \label{eq:tB-uexp-41}
\end{align}
where
\begin{subequations}
\begin{align}
  &\wt{\beta}^{(m)}_0(q) = G_{-m}^0(q),\label{eq:tb0-41}\\
  &\wt{\beta}^{(m)}_1(q) = \sum_{n=0}^\infty
    \frac{q^{-nm}}{(q;q)_n(q^{-1};q^{-1})_n}
    (1+2n+m-E_1^{(n)}(q)).\label{eq:tb1-41}
\end{align}
\end{subequations}
We also notice
\begin{equation}
  \wt{\beta}_1^{(m)}(q) - \wt{\alpha}_1^{(m)}(q) =
  \frac{1}{2} G_{-m}^1(q).
\end{equation}
Applying these results to the right hand side of \eqref{41x-desc-fac},
we find the $O(1/u)$ contributions from~\eqref{eq:tA-uexp-41}
and~\eqref{eq:tB-uexp-41} cancel, and the $O(u^0)$ contributions
reproduce exactly~\eqref{410-desc-fac}. Notice that the $\sqrt{\tau}$
terms that appear in~\eqref{410-desc-fac} come from expanding in terms
of $2\pi\bb\,u$ and $2\pi\bb^{-1}\,u$ in~\eqref{41x-desc-fac}.
    
As an application of the above computations, we obtain proof of a
simplified formula for the $q$-series $G^1(q)$ from~\eqref{Gm0} which
was found experimentally in~\cite{GZ:qseries}.

\begin{proposition}
  \label{prop.G}
  For $|q|<1$, we have:
  \begin{equation}
  \label{GGformula}
  G^1(q) = \sum_{n=0}^\infty (-1)^n \frac{q^{n(n+1)/2}}{(q;q)_n^2}
  (6n+1) \,.
  \end{equation}
\end{proposition}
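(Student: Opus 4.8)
The plan is to reduce \eqref{GGformula} to a single vanishing identity and then to establish that identity. First I would simplify the weight in the definition \eqref{Gm0} of $G^1(q)$. Using $\frac{1+q^j}{1-q^j}=1+\frac{2q^j}{1-q^j}$, the value $E_1(q)=1-4\sum_{j\ge1}q^j/(1-q^j)$, and the reindexing $\sum_{j=1}^n q^j/(1-q^j)=E_1^{(0)}(q)-E_1^{(n)}(q)$ that follows at once from \eqref{En}, one gets
\[
  E_1(q)+2\sum_{j=1}^n\frac{1+q^j}{1-q^j}=1+2n-4E_1^{(n)}(q),
\]
which is exactly the weight already produced by the $u$-expansion, cf.\ \eqref{eq:a1-41} and \eqref{eq:b1-41}. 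Writing $b_n:=(-1)^nq^{n(n+1)/2}/(q;q)_n^2$ and noting that $6n+1=(1+2n-4E_1^{(n)})+4\bigl(n+E_1^{(n)}\bigr)$, the Proposition is therefore equivalent to
\begin{equation*}
  \sum_{n=0}^\infty b_n\bigl(n+E_1^{(n)}(q)\bigr)=0. \tag{$\star$}
\end{equation*}

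Second, to prove $(\star)$ I would work with $G^0_m(q)=\sum_{n} b_n q^{mn}$ and the companion $S(m):=\sum_{n} b_n q^{mn}\bigl(n+E_1^{(n)}\bigr)$, so that $S(0)$ is the left-hand side of $(\star)$. Splitting $E_1^{(n)}=\sum_{s\ge1}q^{s(n+1)}/(1-q^s)$ and interchanging summations gives $S(m)=g_m+\Lambda_m$ with $g_m=\sum_n n\,b_n q^{mn}$ and $\Lambda_m=\sum_{s\ge1}\frac{q^s}{1-q^s}G^0_{m+s}$. I would then use the recursion \eqref{41qdiff}, $G^0_{m+1}-(2-q^m)G^0_m+G^0_{m-1}=0$, in two ways. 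Differentiating it in $m$ (equivalently applying $t\partial_t$ to $F(t)=\sum_n b_n t^n$ at $t=q^m$, where $F$ is the Hahn--Exton $q$-Bessel function of order $0$) shows that $g_m$ obeys the same recursion with inhomogeneity $-q^mG^0_m$. Setting $\Delta_m:=G^0_{m+1}-G^0_m$, the recursion reads $\Delta_{m-1}-\Delta_m=q^mG^0_m$, and since $\Delta_m\to0$ as $m\to+\infty$, telescoping gives $\sum_{s\ge1}q^sG^0_{m+s}=q^{-m}\Delta_m$; feeding this back shows that $\Lambda_m$ obeys the recursion with inhomogeneity $\Delta_m=G^0_{m+1}-G^0_m$. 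Hence $S(m)$ is a specific solution of an explicit inhomogeneous version of \eqref{41qdiff}, and $(\star)$ is the assertion that this solution vanishes at $m=0$.

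The main obstacle is precisely the evaluation of this ``constant of integration.'' The Lambert series $E_1^{(n)}$ takes the summand outside the $q$-hypergeometric class, and although $E_1^{(n)}$ is $\partial$-finite (it satisfies $E_1^{(n+1)}-E_1^{(n)}=-q^{n+1}/(1-q^{n+1})$, so creative telescoping applies in principle), the natural telescoping certificate for $(\star)$ — built from the tails $\sum_{k\ge n}b_k$ — turns out to be tautological and does not pin the value. To break this I would inject the special-function structure: writing $\mathcal{F}(x,y):=J^+(x,y;q)/(qy;q)_\infty$, so that $G^0_m=\mathcal{F}(q^m,1)$, the numbers $g_0$ and $\Lambda_0$ are expressible through the Taylor data of $\mathcal{F}$ at $x=y=1$ (indeed $g_0=\partial_x\mathcal{F}(1,1)$ and $\Lambda_0=E_1^{(0)}(q)G^0(q)-\partial_y\mathcal{F}(1,1)$), and I would obtain the needed linear relation among $\mathcal{F}(1,1)$, $\partial_x\mathcal{F}(1,1)$ and $\partial_y\mathcal{F}(1,1)$ by differentiating the two generators of the annihilator ideal $\calI_J$ of Lemma~\ref{lem.annJ} in the continuous variables and evaluating along the diagonal $x=y=1$. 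Equivalently, $(\star)$ is a Lommel-type bilinear sum for $F$ that should follow from the Wronskian normalisation $\det W_m=2$ in \eqref{det41} together with the orthogonality \eqref{WWT41b}. I expect that controlling the asymptotics of $S(m)$ as $m\to\pm\infty$, so as to isolate $S(0)$ from the two-parameter family of solutions of the inhomogeneous recursion, will be the step demanding the most care.
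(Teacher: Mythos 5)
Your first two steps coincide exactly with the paper's proof: the rewriting of the weight in \eqref{Gm0} as $1+2n-4E_1^{(n)}(q)$ is the paper's Equation~\eqref{eq:G1-im}, and the reduction of \eqref{GGformula} to your vanishing identity $(\star)$ is the paper's reduction to its Equation~\eqref{eq:E1n-id}. The gap is in the third step: $(\star)$ is never proved, and the paper's proof of it rests on one ingredient absent from your proposal, namely the symmetry $J(x,y;q)=J(y,x;q)$ of the $q$-Hahn Bessel function for $|q|<1$ (quoted from \cite{Beem}). Granting this symmetry, the proof is two lines: expanding
\begin{equation*}
  J(\re^{-u},\re^{u};q)
  =(q;q)_\infty\sum_{n=0}^\infty
  \frac{\phi_n(u)\,\re^{-nu}}{(q;q)_n(q^{-1};q^{-1})_n}
  =(q;q)_\infty\sum_{n=0}^\infty
  \frac{1-\bigl(n+E_1^{(n)}(q)\bigr)u}{(q;q)_n(q^{-1};q^{-1})_n}+O(u^2),
\end{equation*}
the left-hand side is invariant under $u\mapsto-u$, so the coefficient of $u$ vanishes, which is precisely $(\star)$. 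In fact your own reformulation of $(\star)$ as $\partial_x\mathcal F(1,1)-\partial_y\mathcal F(1,1)=-E_1^{(0)}(q)\,G^0(q)$ is exactly the derivative along the diagonal at $x=y=1$ of the symmetry relation $(qy;q)_\infty\mathcal F(x,y)=(qx;q)_\infty\mathcal F(y,x)$; once you invoke that symmetry, your argument closes immediately.

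None of the substitutes you propose can replace this input. (i) Your inhomogeneous-recursion analysis is correct as far as it goes (the inhomogeneities $-q^mG^0_m$ and $\Delta_m$ check out), but, as you acknowledge, it only places $S(m)$ in an affine space of solutions of \eqref{41qdiff} and cannot evaluate $S(0)$. (ii) Differentiating the generators of $\calI_J$ (Lemma~\ref{lem.annJ}) and evaluating at $x=y=1$ cannot yield the needed relation: those generators connect data at the shifted points $(1,q)$, $(q,1)$, $(1,q^2),\dots$ rather than closing at $(1,1)$, and, more decisively, the ideal admits solutions violating the relation --- for example $c(x)\,J^+(x,y;q)$ with $c$ a nonconstant multiplicatively elliptic function, $c(qx)=c(x)$, regular on $q^{\BZ}$ and with $c'(1)\neq0$ --- so the relation is not a formal consequence of the ideal and requires analytic input specific to $J^+$, which is exactly what the symmetry provides. (iii) The Wronskian/orthogonality route fails for a structural reason: \eqref{det41}, \eqref{WWT41b} and the recursion \eqref{41qdiff} are all invariant under the change of basis $G^1_m\mapsto G^1_m+cG^0_m$ (right multiplication of $W_m(q)$ by a unipotent matrix), whereas \eqref{GGformula} is not invariant under this shift; hence those identities alone can never imply the Proposition.
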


\begin{proof}
  We first show that the definition \eqref{Gm0} can equally be written
  as
  \begin{equation}
    G^1(q) = \sum_{n=0}^\infty (-1)^n \frac{q^{n(n+1)/2}}{(q;q)_n^2}
    \left(1+2n-4E_1^{(n)}(q)\right).\label{eq:G1-im}
  \end{equation}
  By definition
  \begin{equation}
    E_1^{(n)}(q) = \sum_{s=1}^\infty \frac{q^{s(n+1)}}{1-q^s},
  \end{equation}
  they satisfy the recursion relation
  \begin{equation}
    E_1^{(n)}(q) - E_1^{(n-1)}(q) = -\frac{q^n}{1-q^n},
  \end{equation}
  and therefore
  \begin{equation}
    E_1^{(n)}(q) = E_1^{(0)}(q) -\sum_{j=1}^n\frac{q^j}{1-q^j}.
  \end{equation}
  Using the identification $E_1(q) = 1-4E_1^{(0)}(q)$, one can then
  easily show that \eqref{eq:G1-im} is the same
  as \eqref{Gm0}.

  \eqref{GGformula} follows from \eqref{eq:G1-im} thanks to the
  non-trivial identity
  \begin{equation}
    \sum_{n=0}^\infty \frac{n+E_1^{(n)}(q)}{(q;q)_n(q^{-1};q^{-1})_n}
    = 0,\quad |q|<1,
    \label{eq:E1n-id}
  \end{equation}
  which now we prove.
  The crucial fact we use is that when $|q|<1$, $J(x,y;q)$ is
  symmetric between $x$ and $y$ (see a proof in \cite{Beem}).
  Let us consider the following expansion in small $u$
  \begin{align}
    J(\re^{-u},\re^u;q) =
    &\sum_{n=0}^\infty (q^{1+n}\re^u;q)_\infty\frac{\re^{-nu}}{(q^{-1};q^{-1})_n} =
    (q;q)_\infty\sum_{n=0}^\infty \frac{\phi_n(u)\re^{-nu}}{(q;q)_n(q^{-1};q^{-1})_n}\nn =
    &(q;q)_\infty\sum_{n=0}^\infty \frac{1-(n+E_1^{(n)}(q))u}{(q;q)_n(q^{-1};q^{-1})_n} +
    O(u^2). 
  \end{align}
  Since $J(\re^{-u},\re^u;q) = J(\re^{u},\re^{-u};q)$, the coefficient
  of $u$ (and in fact, of any odd power of $u$) in the expansion above
  vanishes, which leads to \eqref{eq:E1n-id}.
\end{proof}

As a second application, we demonstrate that Theorem~\ref{thm.410},
especially the identities \eqref{det41}, \eqref{WWT41}, as well as the
recursion relation \eqref{41qdiff}, can be proved by taking the $u=0$
limit of the analogue identities in Theorem~\ref{thm.41b}.

Using the expansion formulas of holomorphic blocks \eqref{eq:A-uexp-41},
\eqref{eq:B-uexp-41}, \eqref{eq:tA-uexp-41}, \eqref{eq:tB-uexp-41},
the Wronskians can be expanded as
\begin{align}
  W_m(\re^u;q) =
  &\begin{pmatrix}
    G_m^0(q)+u\alpha_1^{(m)}(q)&G_m^0(q)+u\beta_1^{(m)}(q)\\
    G_{m+1}^0(q)+u\alpha_1^{(m+1)}(q)&G_{m+1}^0(q)+u\beta_1^{(m+1)}(q)
  \end{pmatrix}
  \begin{pmatrix}
    u^{-2}(q;q)_\infty^{-3}&0\\
    0&-u(q;q)_\infty^3
  \end{pmatrix},
  \label{eq:Wp-uexp-41}\\
  W_m(\re^u;q^{-1}) =
  &\begin{pmatrix}
    G_{-m}^0(q)+u\wt{\alpha}_1^{(m)}(q)
    &G_{-m}^0(q)+u\wt{\beta}_1^{(m)}(q)\\
    G_{-m-1}^0(q)+u\wt{\alpha}_1^{(m+1)}(q)
    &G_{-m-1}^0(q)+u\wt{\beta}_1^{(m+1)}(q)
  \end{pmatrix}
  \begin{pmatrix}
    -u(q;q)_\infty^{3}&0\\
    0&u^{-2}(q;q)_\infty^{-3}
  \end{pmatrix}
  \label{eq:Wn-uexp-41}
\end{align}
Taking the determinant of \eqref{eq:Wp-uexp-41}, we find
\begin{equation}
  \det W_m(\re^u;q) = \frac{1}{2} \det W_m(q) + O(u)
\end{equation}
which together with the $u$-expansion of the right hand side of
\eqref{W410} leads to the determinant identity \eqref{det41}.
Furthermore, by substituting \eqref{eq:Wp-uexp-41},
\eqref{eq:Wn-uexp-41} into the Wronskian relation \eqref{WW41b}, the
latter also reduces in the leading order to the determinant identity
\eqref{det41}.  On the other hand, the Wronskian relation
\eqref{WWT41b} is equivalent to
\begin{equation}
  W_m(q) = 2
  \begin{pmatrix}
    0&-1\\1&0
  \end{pmatrix}\left(W_m(q)^{-1}\right)^T
  \begin{pmatrix}
    0&1\\-1&0
  \end{pmatrix}
\end{equation}
which can be proved directly by expressing the inverse matrix on the
right hand side by minors and determinant, using the explicit value of
the determinant given by the identity \eqref{det41}.

Finally, from the expression \eqref{eq:a0-41},\eqref{eq:b0-41} of the
leading order coefficients $\alpha_0^{(m)}(q)$, $\beta_0^{(m)}(q)$ of
$A_m(\re^u;q)$, $B_m(\re^u;q)$ in the expansion of $u$, one concludes
that the recursion relation \eqref{41qdiff} should be the $u=0$ limit
of the recursion relation \eqref{rec41m} in $m$, and one can easily
check it is indeed the case.

\subsection{Stokes matrices near $u=0$}
\label{sub.41stokes}

In this section we conjecture a formula for the Stokes matrices of the
asymptotic series $\varphi(x;\tau)$.  When we turn on the non-vanising
deformation parameter $u$, the resurgent structure discussed in
Section~\ref{sub.41x=1} undergoes significant changes.  Compared to
Figure~\ref{fig:sings-u0-41} there are many more singular points in
the Borel plane whose positions depend on $u$ in addition to $\tau$,
and the Stokes matrices also become $u$-dependent.  However, if we
focus on the case when $u$ is not far away from zero, equivalent to
$x$ not far away from 1, the resurgent data is holomorphic in $u$ and
reduces to those in Section~\ref{sub.41x=1} in the $u=0$ limit.  For
instance, each singular point in Figure~\ref{fig:sings-u0-41} splits
to a cluster of neighoring singular points separated with distance
$\log x$ as shown in Figure~\ref{fig:sings-m-41}. In particular, each
singular point $\iota_{i,j}$ on the real axis splits to a cluster of
three, in accord with the off-diagonal entries $\pm 3$ in
\eqref{eq:S0u041}, and if we choose real $x$, the split singularities
still lie on the real axis.  As in Section~\ref{sub.41x=1}, we label
the four regions separating singularities on real axis and all the
others by $I,II,III,IV$ (see Figure~\ref{fig:secs-m-41}).  In each of
the four regions, we have the following the results.

\begin{figure}%[htpb!]
\leavevmode
\begin{center}
\includegraphics[height=7cm]{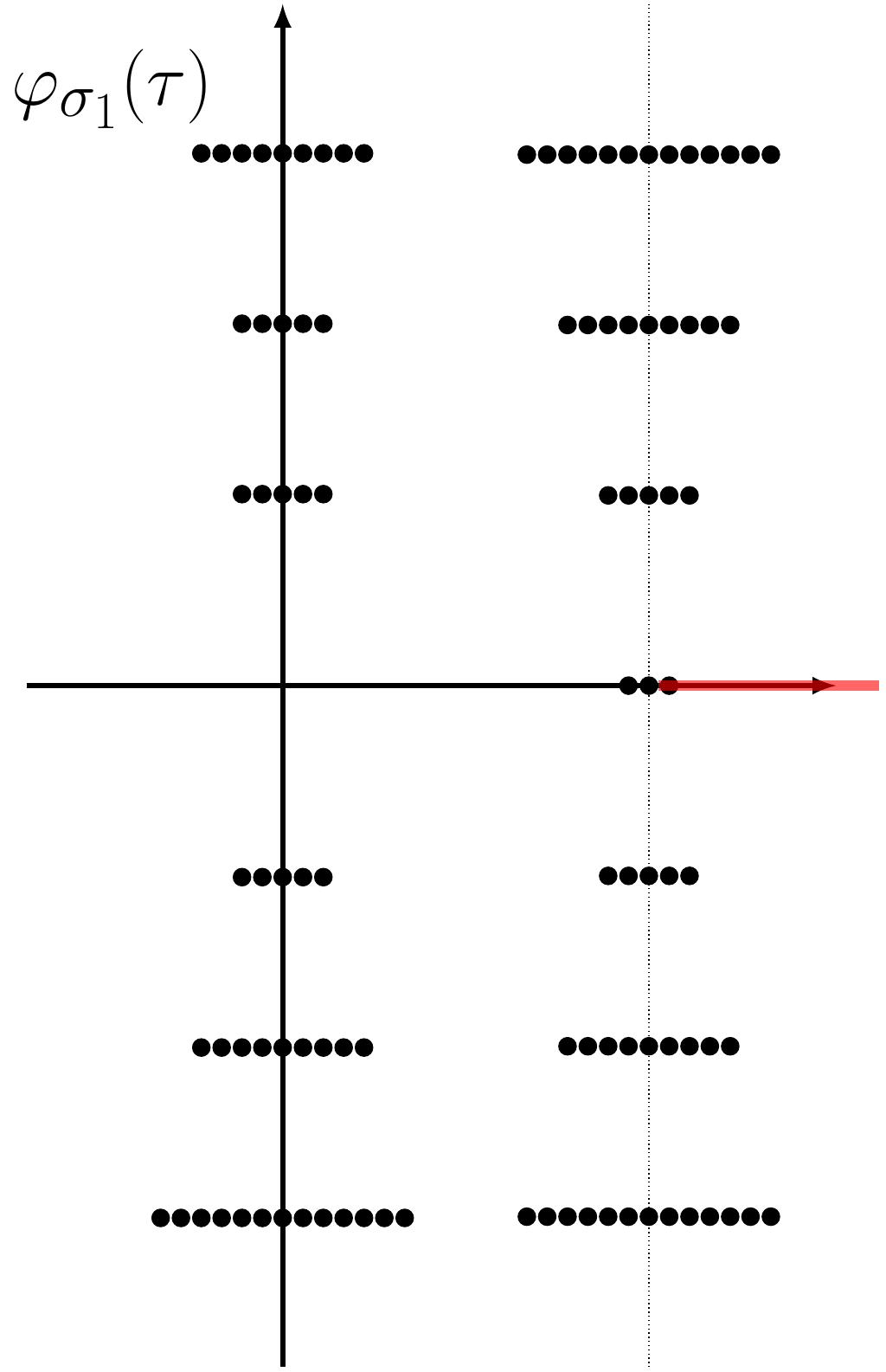}\hspace{4ex}
\includegraphics[height=7cm]{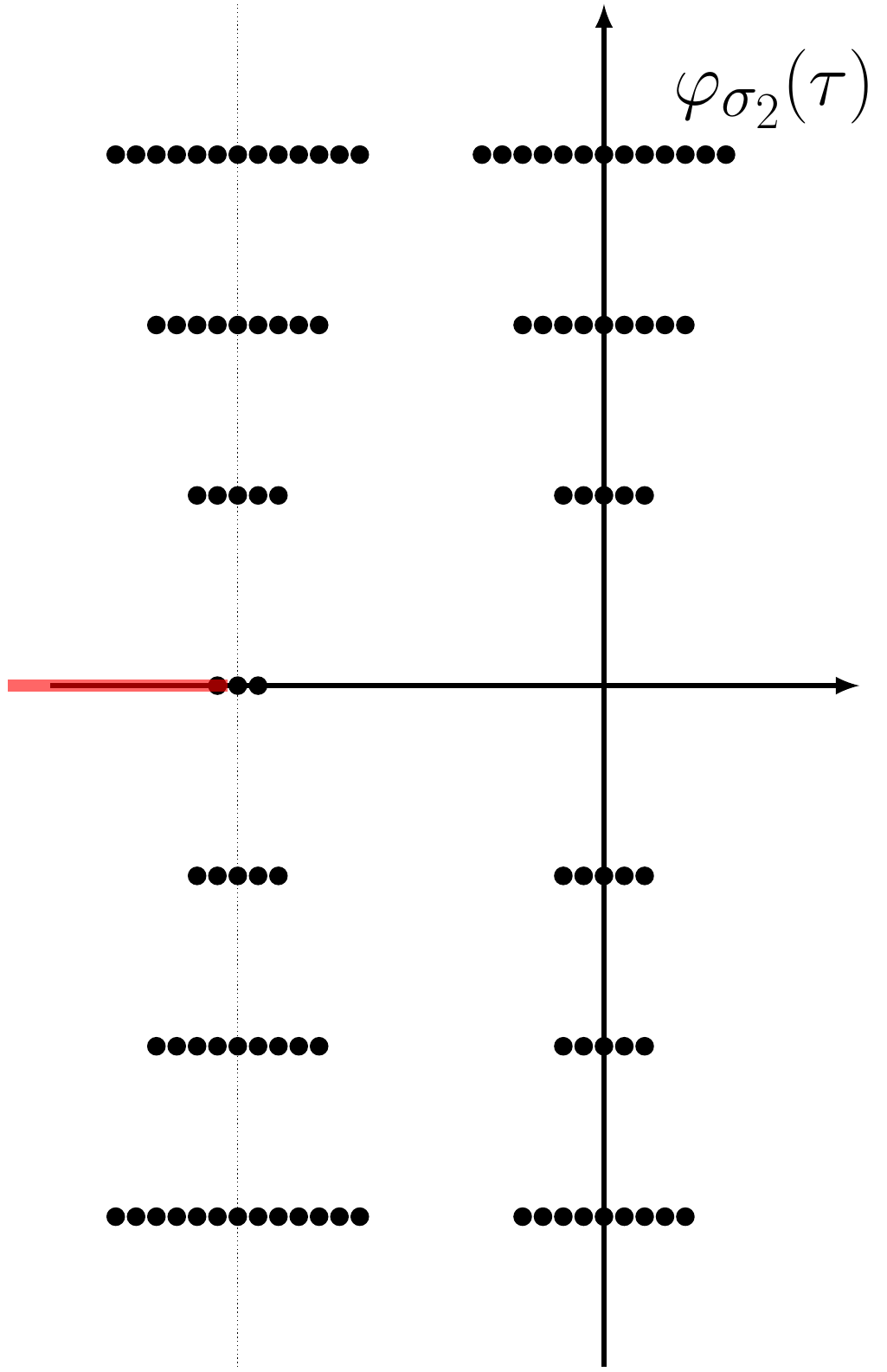}
\end{center}
\caption{The singularities in the Borel plane for the series
  $\varphi_{\sigma_{j}} (x;\tau)$ with $j=1,2$ of knot $\knot{4}_1$
  where $x$ is close to 1. The shortest vertical spacing between
  singularities is $2\pi\ri$, and the horizontal spacing between
  neighboring singularities in each cluster is $\log x$.}
\label{fig:sings-m-41}
\end{figure}

\begin{figure}%[htpb!]
\leavevmode
\begin{center}
\includegraphics[height=7cm]{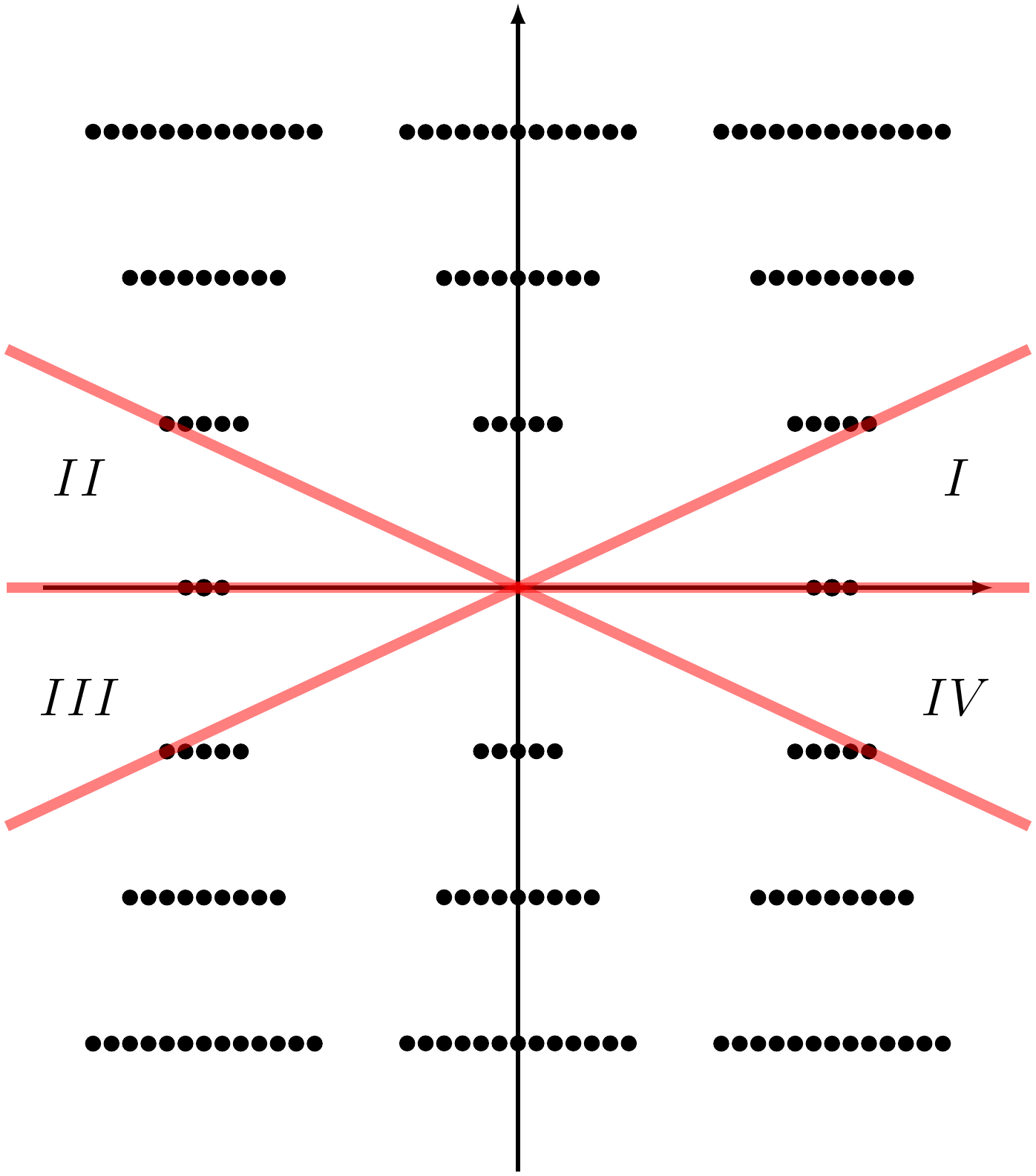}
\end{center}
\caption{Four different sectors in the $\tau$-plane for $\Phi(u;\tau)$
  of knot $\knot{4}_1$ with $u$ close to zero.}
\label{fig:secs-m-41}
\end{figure} 

\begin{conjecture}
  \label{conj.41M}
  The asymptotic series and the holomorphic blocks are related
  by~\eqref{abrelx} with the diagonal matrix $\Delta(\tau)$ as
  in~\eqref{D41}, where matrices $M_R(x;q)$ are given in terms of the
  matrices $W_{-1}(x;q)$ as follows 
\begin{subequations}
\begin{align}
  M_{I}(x;q) =
  &\begin{pmatrix}
    1&0\\0&-1
  \end{pmatrix} W_{-1}(x;q)^T
            \begin{pmatrix}
              0&1\\-x^{-1}&1
            \end{pmatrix}, & |q|<1\,,
  \label{eq:M1-41}\\
  M_{II}(x;q) =
  &W_{-1}(x^{-1};q)^T
    \begin{pmatrix}
      -x^{-1}&0\\-x^{-1}&1
    \end{pmatrix}, & |q|<1\,,
  \label{eq:M2-41}\\
  M_{III}(x;q) =
  &W_{-1}(x^{-1};q)^T
    \begin{pmatrix}
      -x^{-1}&0\\1+x&1
    \end{pmatrix}, & |q|>1\,,
  \label{eq:M3-41}\\
  M_{IV}(x;q) =
  &\begin{pmatrix}
    1&0\\0&-1
  \end{pmatrix} W_{-1}(x;q)^T
            \begin{pmatrix}
              0&1\\-x^{-1}&-x^{-1}-x^{-2}
            \end{pmatrix}, & |q|>1\,.
                            \label{eq:M4-41}
\end{align}
\end{subequations}
\end{conjecture}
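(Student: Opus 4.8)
The plan is to establish~\eqref{abrelx} sector by sector and then extract the explicit matrices by rewriting the answer through the Wronskian $W_{-1}(x;q)$. The conceptual backbone is that, viewed as functions of $x$ with $q=\re^{2\pi\ri\tau}$ fixed, the entries of \emph{both} vectors in~\eqref{abrelx} are bases of solutions of one and the same second-order linear $q$-difference equation, namely $\hat A_{\knot{4}_1}(S_x,x,1,q)f=0$ of~\eqref{rec41x} specialized to $m=0$ (so $q^m=1$). On the left, Theorem~\ref{thm.41b}(e) guarantees that the entries $A_0(x;q)$, $B_0(x;q)$ of the holomorphic-block vector $B(x;q)$ form such a basis, and the constant diagonal factor $\Delta(\tau)$ of~\eqref{D41} does not touch the $x$-dependence. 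On the right, the formal series $\varphi_\s(x;\tau)$ are the WKB solutions attached to the branches $y_\s(x)$ of the curve~\eqref{41critb}, hence formal solutions of the quantized curve $\hat A_{\knot{4}_1}(S_x,x,1,q)$; along a non-Stokes ray $\rho$ the Borel--Laplace sum $s_\rho(\Phi)$ inherits the difference equation satisfied formally by $\Phi$ and so yields genuine analytic solutions of the same equation.

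First I would deduce from this that the matrix relating the two fundamental systems is a \emph{pseudo-constant} in $x$, i.e.\ invariant under $x\mapsto qx$. The quasi-periodicity inherited from the factorization~\eqref{41x-desc-fac}, in which the dual block $B(\tx;\tq^{-1})$ carries all dependence on $\tx=\re^{u/\tau}$ and $\tq=\re^{-2\pi\ri/\tau}$, forces such pseudo-constants to be exactly the dual-modular matrices $M_\rho(\tx;\tq)$ with entries in $\BZ[\tx^\pm][[\tq]]$ predicted by Conjecture~\ref{conj.asy}(b). This step reproduces the \emph{shape} of~\eqref{abrelx}; what then remains is to compute the four matrices.

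To pin down the entries I would run the $\tau\to0$ saddle-point and quantum-modularity analysis of $A_0(x;q)$ and $B_0(x;q)$ sector by sector, exactly as in the $u=0$ computation of Section~\ref{sub.41x=1} but retaining the full $x$-dependence. The leading exponentials $\re^{V(\s)/(2\pi\ri\tau)}$ fix the $\tq^0$-term of $M_\rho$, while the subleading modular corrections, organized through the dual blocks, supply the higher powers of $\tq$. I would then use the orthogonality relation~\eqref{WW41b} together with the determinant~\eqref{W410} to eliminate the dual Wronskian in favour of $W_{-1}(x;q)$, producing the closed forms in the conjecture; the appearance of $W_{-1}(x^{-1};q)$ in $M_{II},M_{III}$ versus $W_{-1}(x;q)$ in $M_{I},M_{IV}$ reflects the reflection $u\mapsto-u$ across the separating line $L$, and the $|q|<1$ versus $|q|>1$ split reflects $\tau$ in the upper versus lower half-plane, matched by the analytic-continuation convention~\eqref{recth}. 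Two independent checks then remove the residual $\GL(2,\BZ[x^\pm])$ and sign ambiguities (the latter already controlled by Lemma~\ref{lem.Wamb}): the requirement via~\eqref{4G} that the ``corner'' matrices $\mfS_{IV\to I}$ and $\mfS_{II\to III}$ be independent of $\tq$, and the specialization $u\to0$, which must reduce the formulas to Conjecture~\ref{conj.41M0}.

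The hard part will be the quantitative control of the all-orders asymptotics. Even granting Conjecture~\ref{conj.asy}, which supplies the abstract existence of the $M_\rho$ and the location~\eqref{Phising} of the Borel singularities, proving that the resulting $\tq$-series are \emph{precisely} the integer series read off from $W_{-1}(x;q)$ requires establishing the resurgent and modular asymptotics of the holomorphic blocks to all orders and verifying Borel summability of $\Phi$ along each $\rho$; this is exactly the analytic input that is presently available only numerically, which is why the statement is formulated as a conjecture. A cleaner rigorous route for the explicit formulas, once the structural step is in hand, is to verify directly that the candidate right-hand sides built from $W_{-1}(x;q)$ solve $\hat A_{\knot{4}_1}(S_x,x,1,q)f=0$ in $x$ and share the leading $\tau\to0$ asymptotics of $\Delta(\tau)B(x;\tau)$, and then invoke uniqueness of the transition matrix between two fundamental systems.
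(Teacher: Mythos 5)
Your proposal is not a proof, and you should be aware that the paper itself contains none: Conjecture~\ref{conj.41M} is supported there only by numerical evidence (Section~\ref{sub.41num}), namely Borel--Pad\'e resummation of $\Phi(x;\tau)$ at many values of $x$ and $\tau$, order-by-order extraction of the integer $\tq$-series entries of the $M_R$, comparison of both sides of the holomorphic lifts \eqref{eq:HU1-41}--\eqref{eq:HU4-41} within resummation error margins, and consistency checks (the $x\mapsto x^{-1}$ symmetry, the $\tq$-independence of the corner matrices in \eqref{4G}, and the $x\to 1$ reduction to Conjecture~\ref{conj.41M0}). Your structural skeleton is close in spirit to the framework of Conjecture~\ref{conj.asy}, but the steps you present as the backbone are either unproven or circular. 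First, the claim that the Borel sums $s_\rho(\Phi)(x;\tau)$ exist along every non-Stokes ray and satisfy $\Ahat_{\knot{4}_1}(S_x,x,1,q)f=0$ in $x$ presupposes Borel summability and the resurgent structure of $\Phi$ --- exactly the missing analytic input, and the reason the statement is a conjecture; note moreover that the shift $S_x$ acts on $u$ by $u\mapsto u+2\pi\ri\tau$, so it entangles the variable being resummed with the shift, and commuting Borel--Laplace summation with such a $\tau$-dependent shift is itself a nontrivial resurgence statement, not a formality. Second, your assertion that pseudo-constancy under $x\mapsto qx$ plus quasi-periodicity ``forces'' the connection matrix to have entries in $\BZ[\tx^\pm][[\tq]]$ is not an argument: an $S_x$-invariant matrix is merely elliptic in $x$, and the $\tq$-power-series form and integrality of its entries are the content of Conjecture~\ref{conj.asy}(b), which cannot be assumed when proving an instance of it.

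Third, your ``cleaner rigorous route'' fails as stated: showing that the candidate right-hand side solves the same $q$-difference equation in $x$ and matches the leading $\tau\to 0$ asymptotics of $\Delta(\tau)B(x;q)$ cannot pin down the transition matrix, because all entries of $M_R(\tx;\tq)$ beyond their constant terms live at the exponentially small scales $\tq$ and $\tx$ and are invisible to any finite-order asymptotic matching, while the elliptic ambiguity is a whole matrix of functions rather than a discrete set removable by a leading-order check (compare Lemma~\ref{lem.Wamb}, which needs the factorization, orthogonality and analyticity hypotheses simultaneously just to cut the ambiguity of $W_m$ down to signs). Likewise, your plan to fix the entries by redoing the $u=0$ analysis of Section~\ref{sub.41x=1} with the $x$-dependence retained reduces this conjecture to Conjecture~\ref{conj.41M0}, which is equally unproven. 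What can be salvaged from your write-up is the list of constraints --- the orthogonality relation \eqref{WW41b}, the required $\tq$-independence of $\mfS_{IV\rightarrow I}$ and $\mfS_{II\rightarrow III}$, the $u\to 0$ limit, and the reflection symmetry --- which is essentially what the authors use, together with high-precision numerics, to guess and then test the explicit matrices; but neither in your proposal nor in the paper does this amount to a proof.
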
 
The above conjecture completely determines the resurgent structure of
$\Phi(\tau)$. Indeed, it implies that the Stokes matrices, defined in 
Equations~\eqref{4G} and~\eqref{SG}, are explicitly given by:

\begin{subequations}
\begin{align}
    \ms{S}^+(x;q) =
    &\begin{pmatrix}
      0&-1\\
      x^{-1}&-1-x
    \end{pmatrix}
              \cdot W_{-1}(x^{-1};q^{-1})\cdot W_{-1}(x;q)^T\cdot
              \begin{pmatrix}
                0&x\\
                -1&-1-x^{-1}
              \end{pmatrix} ,\quad |q|<1\,,
  \label{eq:Sp-41}\\
  %   \ms{S}^0(x) =
  % &\begin{pmatrix}
  %   0&x^{-1}+1+x\\
  %   -x^{-1}-1-x&0
  % \end{pmatrix}\,,
  % \label{eq:S0-41}\\
    \ms{S}^-(x;q)=
    &\begin{pmatrix}
      x&x\\
      0&-1
    \end{pmatrix}
              \cdot W_{-1}(x;q)\cdot W_{-1}(x^{-1};q^{-1})^T\cdot
              \begin{pmatrix}
                x^{-1}&0\\
                x^{-1}&-1
              \end{pmatrix},\quad |q|<1\,.
                                \label{eq:Sn-41}
\end{align}
\end{subequations}
We remark that since $s(\Phi_\s)(x;\tau)$ for $\s=1,2$ transform under
the reflection $\pi: x\mapsto x^{-1}$ uniformly by \eqref{eq:-xPhi} (see
the comment below), the Stokes matrices should be invariant under
$\pi$, and we have checked that \eqref{eq:Sp-41},\eqref{eq:Sn-41}
indeed satisfy this consistency condition.

In the $q\mapsto 0$ limit,
\begin{equation}
  \ms{S}^+(x;0) =
  \begin{pmatrix}
    1&x^{-1}+1+x\\0&1
  \end{pmatrix},\quad
  \ms{S}^-(x;0) =
  \begin{pmatrix}
    1&0\\-x^{-1}-1-x&1
  \end{pmatrix}
  \label{eq:S0x41}
\end{equation}
A curious corollary of our computation is that the matrices of
integers \eqref{eq:S0u041} from~\cite{gh-res,GZ:kashaev} which relates
the asymptotics of the coefficients of $\varphi(\tau)$ to the
coefficients themselves, spreads out to the matrices \eqref{eq:S0x41}
with entries in $\IZ[x^{\pm 1}]$.

Using the unique factorization Lemma~\ref{lem.fac} and the Stokes
matrix $\sf{S}$ from above, we can compute the Stokes constants and
the corresponding matrix $\calS$ of Equation~\eqref{calS} to arbitrary
order in $q$, and we find that
\begin{align}
  \mc{S}^+_{\s_1,\s_1}(x;q) =
  &\ms{S}^+(x;q)_{1,1}-1\nn=
  &(-2 - x^{-2} - 2x^{-1} - 2 x - x^2)q
    +(-3 - x^{-2} - 2x^{-1} - 2 x - x^2)q^2+\cO(q^3),\\
  \mc{S}^+_{\s_1,\s_2}(x;q) =
%  &\ms{S}^+(x;q)_{1,2}/\ms{S}^+(x;q)_{1,1}-(x+1+x^{-1})\nn=
  &\ms{S}^+(x;q)_{1,2}/\ms{S}^+(x;q)_{1,1}
  -(\mc{S}^{(1,0)}_{\s_1,\s_2}x+\mc{S}^{(0,0)}_{\s_1,\s_2}
  +\mc{S}^{(-1,0)}_{\s_1,\s_2}x^{-1})\nn=
  &(3 + x^{-2} + 2x^{-1} + 2 x + x^2)q\nn
  &+(17 + x^{-4} + 4x^{-3} + 9x^{-2} + 15x^{-1} + 15 x + 9 x^2 + 4 x^3 + x^4)
  q^2+\cO(q^3),\\
  \mc{S}^+_{\s_2,\s_1}(x;q) =
  &\ms{S}^+(x;q)_{2,1}/\ms{S}^+(x;q)_{1,1}\nn=
  &(-3 - x^{-2} - 2x^{-1} - 2 x - x^2)q\nn
  &+(-17 - x^{-4} - 4x^{-3} - 9x^{-2} - 15 x^{-1} - 15 x - 9 x^2 - 4 x^3 - x^4)
  q^2+\cO(q^3),\\
  \mc{S}^+_{\s_2,\s_2}(x;q) =
  &\ms{S}^+(x;q)_{2,2} -1-
    \ms{S}^+(x;q)_{1,2}\ms{S}^+(x;q)_{2,1}/\ms{S}^+(x;q)_{1,1}\nn=
  &(2 + x^{-2} + 2x^{-1} + 2 x + x^2) q \nn
  &+ (17 + x^{-4} + 4x^{-3} + 9x^{-2} + 14x^{-1} + 14 x + 9 x^2 + 4 x^3 + x^4)
  q^2+\cO(q^3).
\end{align}
They enjoy the symmetry
\begin{equation}
  \mc{S}^{+}_{\s_1,\s_2}(x;q) =
  -\mc{S}^{+}_{\s_2,\s_1}(x;q)\,,
\end{equation}
and experimentally, it appears that the entries of the matrix
$\mc{S}^+(x;q) = (\mc{S}^+_{\s_i,\s_j}(x;q))$ (except the upper-left
one) are (up to a sign) in $\BN[x^{\pm 1}][[q]]$. 
Similarly we can extract the Stokes constants
$\mc{S}^{(\ell,-k)}_{\s_i,\s_j}$ associated to the singularities in
the lower half plane, and assemble into $q^{-1}$-series
$\mc{S}^{-}_{\s_i,\s_j}(x;q^{-1})$.  We find they are related to
$\mc{S}^+_{\s_i,\s_j}(x;q)$ by
\begin{gather}
  \mc{S}^{-}_{\s_i,\s_j}(x;q) = -\mc{S}^{+}_{\s_j,\s_i}(x;q),\;\;i\neq j\,
  \nn
  \mc{S}^{-}_{\s_1,\s_1}(x;q) = \mc{S}^{+}_{\s_2,\s_2}(x;q),\;\;
  \mc{S}^{-}_{\s_2,\s_2}(x;q) = \mc{S}^{+}_{\s_1,\s_1}(x;q)\,.
\end{gather}

Let us now verify Conjecture~\ref{conj.S3D}.
From \eqref{eq:Sp-41} we find that indeed
\begin{equation}
  \ms{S}^+(x;q) \stackrel{\cdot}{=} W_{-1}(x^{-1};q^{-1})\cdot W_{-1}(x;q)^T.
\end{equation}
Using the recursion relation \eqref{eq:W41rec} and the relation
between two Wronskians \eqref{W41xm}, we further find 
\begin{equation}
  \ms{S}^+(x;q) \stackrel{\cdot\cdot}{=} \cW_{0}(x^{-1};q^{-1})\cdot
  \cW_{0}(x;q)^T.
  \label{eq:SW41}
\end{equation}
If we use the uniform notation for all holomorphic blocks
\begin{equation}
  (B_{K}^\alpha(x;q))_{\alpha=1,2} = (A_0(x;q),B_0(x;q)),
\end{equation}
the right hand side of \eqref{eq:SW41} reads 
\begin{equation}
  \cW_{0}(x^{-1};q^{-1})\cdot \cW_{0}(x;q)^T =
  \left(
    \sum_{\alpha}B^{\alpha}_{\knot{4}_1}(q^jx;q)
    B^{\alpha}_{\knot{4}_1}(q^{-i}x^{-1};q^{-1})
  \right)_{i,j=0,1}.
  \label{eq:WW41}
\end{equation}
which is precisely the right hand side of \eqref{eq:SInd} in
Conjecture~\ref{conj.S3D} following \eqref{eq:IndB}.\footnote{Note
  that because the form of the state integral in \cite{Beem} is
  slightly different from that in \cite{AK}, which we adopt, our
  convention for holomorphic blocks is also different from
  \cite{Beem}.  As a result, the entries of $\eqref{eq:WW41}$ equate
  the DGG indices computed in \cite{Beem} up to a prefactor
  \begin{equation}
    \left(\cW_{0}(x^{-1};q^{-1})\cdot \cW_{0}(x;q)^T\right)_{i+1,j+1} =
    (-q^{1/2})^{j-i}\text{Ind}_{\knot{4}_1}^{\text{rot}}
    (j-i,q^{\frac{j+i}{2}}x;q),\quad i,j=0,1.
  \end{equation}
  If we take this into account, Conjecture~\ref{conj.S3D} should be
  modified slightly by stating the accompanying matrices on the left
  and on the right are in $GL(2,\IZ(x,q^{1/2}))$.}
In addition, the forms of the accompanying matrices on the left and on the
right are such that the $(1,1)$ entry of $\ms{S}^+(x;q)$ equates
exactly the DGG index with no magnetic flux.
By explicit calculation,
\begin{equation}
\label{S11v}
\begin{aligned}
  \ms{S}^+(x;q)_{1,1}& =
  J(x, x^{-1};q) J(x, x^{-1}; q^{-1}) + J(x^{-2}, x^{-1};q) J(x^2, x; q^{-1}) \\
  &=1 -(2x^{-2}+ x^{-1}+2+x + 2x^2)q - (x^{-2}+2x^{-1} +3+ 2x + 
  x^2) q^2 + O(q^3),
  \end{aligned}
\end{equation}
which is the $\text{Ind}^{\text{rot}}_{\knot{4}_1}(0,x;q)$ given in
\cite{Beem}.

\subsection{The Borel resummations of the asymptotic series $\Phi$}
\label{sub.borel41}

In this section we explain how Conjecture~\ref{conj.41M} identifies the
Borel resummations of the factorially divergent series $\Phi(x;\tau)$
with the descendant state-integrals, thus lifting the Borel resummation
to holomorphic functions on the cut-plane $\BC'$. This is
interesting theoretically, but also practically in the numerical
computation of Borel resummations.

After multiplying the inverse of $M_R(\tx,\tq)$ from the left
on both sides of \eqref{abrelx}, we can also express the Borel sums
$s_R(\Phi)(x;\tau)$ in each region in terms of holomorphic functions
of $\tau \in \IC\backslash \IR$ as follows

\begin{corollary}(of Conjecture~\ref{conj.41M})
  \label{cor.41borel}
  We have
\begin{subequations}
\begin{align}
  s_{I}(\Phi)(x;\tau) =
  &\begin{pmatrix}
    -\tx&1+\tx^{-1}\\0&1
  \end{pmatrix} W_{-1}(\tx;\tq^{-1})\Delta(\tau)B(x;q)\,,
                        \label{eq:HU1-41}\\
  s_{II}(\Phi)(x;\tau) =
  &\begin{pmatrix}
    0&-\tx\\1&-\tx-\tx^2
  \end{pmatrix} W_{-1}(\tx^{-1};\tq^{-1})
               \begin{pmatrix}
                 1&0\\0&-1
               \end{pmatrix}\Delta(\tau) B(x;q)\,,
                         \label{eq:HU2-41}\\
  s_{III}(\Phi)(x;\tau) =
  &\begin{pmatrix}
    0&-\tx\\1&1   
  \end{pmatrix} W_{-1}(\tx^{-1};\tq^{-1})
               \begin{pmatrix}
                 1&0\\0&-1
               \end{pmatrix}\Delta(\tau) B(x;q)\,,
                         \label{eq:HU3-41}\\
  s_{IV}(\Phi)(x;\tau)=
  &\begin{pmatrix}
    -\tx&-\tx\\0&1   
  \end{pmatrix}W_{-1}(\tx,\tq^{-1})\Delta(\tau) B(x;q)\,.
                  \label{eq:HU4-41}
\end{align}
\end{subequations}
where the right hand side of~\eqref{eq:HU1-41}--\eqref{eq:HU4-41} are
holomorphic functions of $\tau \in \IC'$, as they are linear
combinations of the descendants \eqref{41x-desc-fac}.
\end{corollary}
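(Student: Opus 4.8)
The plan is to obtain \eqref{eq:HU1-41}--\eqref{eq:HU4-41} by inverting the relation supplied by Conjecture~\ref{conj.41M}. In each region $R\in\{I,II,III,IV\}$ that conjecture asserts the instance of \eqref{abrelx} reading $\Delta(\tau)B(x;q) = M_R(\tx;\tq)\,s_R(\Phi)(x;\tau)$, so the corollary follows once we left-multiply by $M_R(\tx;\tq)^{-1}$ and identify $s_R(\Phi)(x;\tau) = M_R(\tx;\tq)^{-1}\Delta(\tau)B(x;q)$ with the claimed right-hand side. The whole content is therefore the inversion of the explicit matrices \eqref{eq:M1-41}--\eqref{eq:M4-41}.

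Each $M_R$ factors as a $q$-independent matrix of monomial determinant times a single Wronskian factor, namely $W_{-1}(\tx;\tq)^T$ for $R=I,IV$ and $W_{-1}(\tx^{-1};\tq)^T$ for $R=II,III$. The $q$-independent factors are inverted by hand. The essential step is inverting the Wronskian factor, for which I would use the orthogonality relation \eqref{WW41b}: writing $\sigma=\diag(1,-1)$ and $P(x)=\begin{psmall} x^{-2}+x^{-1}-1 & 1 \\ 1 & 0 \end{psmall}$, it reads $W_{-1}(x;q)\,\sigma\,W_{-1}(x;q^{-1})^T = P(x)$, whence $(W_{-1}(x;q)^T)^{-1} = P(x)^{-1}\,W_{-1}(x;q^{-1})\,\sigma$ (here $P(x)^{-1}$ is symmetric, as $\det P(x)=-1$). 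Specialized to $x\in\{\tx,\tx^{-1}\}$ and $q=\tq$, this is exactly the mechanism that converts the $W_{-1}(\cdot\,;\tq)$ in $M_R$ into the $W_{-1}(\cdot\,;\tq^{-1})$ on the right of \eqref{eq:HU1-41}--\eqref{eq:HU4-41}. Collecting the leftover $q$-independent factors reproduces each prefactor; for $R=I$, for example, one finds $\begin{psmall} \tx & -\tx \\ 1 & 0 \end{psmall}P(\tx)^{-1} = \begin{psmall} -\tx & 1+\tx^{-1} \\ 0 & 1 \end{psmall}$. Two bookkeeping points guide this: the orthogonality relation holds for $|q|\neq 1$, consistent with the domains $|\tq|<1$ for $I,II$ and $|\tq|>1$ for $III,IV$; and a leftover factor $\sigma$ survives precisely for $R=II,III$, where $M_R$ has no outer $\sigma$, matching the $\diag(1,-1)$ present in \eqref{eq:HU2-41}--\eqref{eq:HU3-41} and absent from \eqref{eq:HU1-41},\eqref{eq:HU4-41}.

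It remains to justify that the four right-hand sides are holomorphic on $\BC'$. For $R=I,IV$ this is immediate: since $B(x;q)$ is the first column of $W_0(x;q)^T$, the vector $W_{-1}(\tx;\tq^{-1})\Delta(\tau)B(x;q)$ is the first column of the descendant matrix $W_{0,1}(u;\tau)$ of \eqref{W41x-desc}, and by \eqref{41x-desc-fac} its two components are $-\tq^{-1/2}Z_{\knot{4}_1,0,1}(\ub;\tau)$ and $Z_{\knot{4}_1,0,0}(\ub;\tau)$, which are holomorphic in $\tau\in\BC'$ and entire in $u$ by Theorem~\ref{thm.41a}(c); left multiplication by the $\BZ[\tx^\pm]$-prefactors preserves holomorphy.

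The main obstacle is the holomorphy for $R=II,III$. There the inner vector is $W_{-1}(\tx^{-1};\tq^{-1})\,\sigma\,\Delta(\tau)B(x;q)$, whose components are products such as $A_0(x;q)A_{-1}(\tx^{-1};\tq^{-1})$, pairing a block in $\tx^{-1}$ with a block in $x$. These are \emph{not} terms of any single descendant state-integral \eqref{41x-desc-fac}, which instead pairs blocks in $x$ with blocks in $\tx$ at a common value of $u$, so no direct recognition as descendants is available. To close this gap I would argue at the level of the Borel sums rather than of the algebra, invoking the reflection symmetry under $\pi\colon x\mapsto x^{-1}$ recorded in \eqref{eq:-xPhi}: it identifies $s_{II}(\Phi)(x;\tau)$ and $s_{III}(\Phi)(x;\tau)$, up to the explicit $\BZ[\tx^\pm]$-prefactors, with the already-established holomorphic expressions $s_{I}(\Phi)(x^{-1};\tau)$ and $s_{IV}(\Phi)(x^{-1};\tau)$, i.e.\ with columns of the descendant matrix $W_{0,1}(-u;\tau)$, which are again entire in $u$ and holomorphic on $\BC'$. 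Verifying that $\pi$ permutes the four sectors in the required way, and that the reflected prefactors are compatible, is the one step that uses genuine input beyond the mechanical inversion, and is where I would concentrate the verification.
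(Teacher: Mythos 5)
Your computational core is correct and is exactly the paper's (largely unwritten) proof: one left-multiplies \eqref{abrelx} by $M_R(\tx;\tq)^{-1}$, uses the orthogonality relation \eqref{WW41b} to trade $(W_{-1}(\cdot\,;\tq)^T)^{-1}$ for $W_{-1}(\cdot\,;\tq^{-1})$ times elementary matrices, and collects prefactors. Your region-$I$ check and your identification of $W_{-1}(\tx;\tq^{-1})\Delta(\tau)B(x;q)$ with the column $(-\tq^{-1/2}Z_{\knot{4}_1,0,1},\,Z_{\knot{4}_1,0,0})^T$ of the descendant matrix \eqref{W41x-desc} are both right, and they do settle the holomorphy claim for \eqref{eq:HU1-41} and \eqref{eq:HU4-41} just as the paper intends. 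You are also right---and this is a sharp observation---that in regions $II$, $III$ the inner vector pairs blocks in $x$ with blocks in $\tx^{-1}$, which is not an entry of any descendant matrix $W_{m,\mu}(\pm u;\tau)$, so the paper's one-clause justification (``linear combinations of the descendants \eqref{41x-desc-fac}'') is not immediate there.

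The gap is that your proposed repair fails. The reflection $\pi\colon x\mapsto x^{-1}$ does \emph{not} permute the four sectors; it fixes each of them. Both branches satisfy $V_\s(x^{-1})=V_\s(x)-2\pi\ri u$ (cf.\ \eqref{eq:-xV}), so the differences $V_\s-V_{\s'}$ entering \eqref{iota} are unchanged, and since $\ell$ runs over all of $\BZ$ in \eqref{Phising}, the set of Borel singularities---hence the Stokes rays and the sectors $I,\dots,IV$---is invariant under $\pi$. Consequently \eqref{eq:-xPhi} is sector-diagonal: $s_R(\Phi)(x^{-1};\tau)=(x\tx)^{-1}s_R(\Phi)(x;\tau)$ for the \emph{same} $R$. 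In particular $s_{II}(\Phi)(x;\tau)$ is not $s_{I}(\Phi)(x^{-1};\tau)$ up to an elementary prefactor: those two differ by the Stokes matrix $\mfS_{I\rightarrow II}(\tx;\tq)$, a nontrivial $\tq$-series carrying all the Stokes constants of the upper half-plane. Moreover, even the correct sector-diagonal use of the reflection does not close the gap: writing \eqref{eq:HU2-41} at $x^{-1}$ and applying \eqref{eq:-xPhi} gives
\begin{equation*}
  s_{II}(\Phi)(x;\tau)
  = x\tx
  \begin{psmall} 0 & -\tx^{-1} \\ 1 & -\tx^{-1}-\tx^{-2} \end{psmall}
  W_{-1}(\tx;\tq^{-1})
  \begin{psmall} 1 & 0 \\ 0 & -1 \end{psmall}
  \Delta(\tau) B(x^{-1};q)\,,
\end{equation*}
i.e.\ it merely swaps the pairing of $x$-blocks with $\tx^{-1}$-blocks for the pairing of $x^{-1}$-blocks with $\tx$-blocks, which is again not a column of any $W_{m,\mu}(\pm u;\tau)$. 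The reason the analogous issue is invisible at $u=0$ is the relation $W_m(q^{-1})=W_{-m}(q)\diag(1,-1)$ of \eqref{W41inv}; for $u\neq 0$ there is no such elementary relation between blocks at $\tx^{-1}$ and blocks at $\tx$, since the relevant $J$-function arguments differ by powers of $\tx$ rather than $\tq$, so no finite $q$-difference manipulation connects them. As it stands, then, your write-up proves the holomorphy statement only for regions $I$ and $IV$; for \eqref{eq:HU2-41}--\eqref{eq:HU3-41} it remains unproved (and, to be fair, it is asserted rather than argued in the paper as well).
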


The asymptotic series of the $\knot{4}_1$ knot have the
symmetry~\eqref{Phi4112} due to the fact that it is an amphichiral
knot. This gives a symmetry of the state-integral.

\begin{proposition}
  \label{prop.Z41uu}
  (Assuming Conjecture~\ref{conj.41M}) We have:
  \begin{equation}
  \label{Z41uu}
  Z_{\knot{4}_1}(u;\tau) = e^{2\pi (\bb+\bb^{-1})u}
  Z_{\knot{4}_1}(-u;\tau) \,.
  \end{equation}
\end{proposition}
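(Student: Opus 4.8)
The plan is to reduce the claimed identity to the reflection symmetry $x\mapsto x^{-1}$ of the holomorphic blocks. Since $u\mapsto -u$ sends $x=\re^{2\pi\bb u}\mapsto x^{-1}$ and $\tx=\re^{2\pi\bb^{-1}u}\mapsto \tx^{-1}$, and since the target prefactor is exactly $\re^{2\pi(\bb+\bb^{-1})u}=x\,\tx$, I would first use the factorization \eqref{41x-desc-fac} at $m=\mu=0$ (equivalently \eqref{Zfacx}) to write $Z_{\knot{4}_1}(\pm u;\tau)=B(\tx^{\pm1};\tq^{-1})^T\Delta(\tau)B(x^{\pm1};q)$ with $B=(A_0,B_0)^T$. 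The proposition then becomes the statement that this bilinear pairing is multiplied by $x\,\tx$ when $(x,\tx)$ is inverted, and the whole problem localizes to a reflection identity for the block vector $B(x;q)$.

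The reflection identity is where the amphichirality of $\knot{4}_1$ enters. Concretely, I would transform each factor of $A_0$ and $B_0$ under $x\mapsto x^{-1}$ using three inputs: the symmetry $\th(x;q)=\th(x^{-1};q)$ together with the quasi-periodicity \eqref{recth}, which produce explicit powers of $x$ from the theta prefactors in \eqref{41Am}--\eqref{41Bm}; the symmetry of the $q$-Hahn Bessel function $J(x,y;q)$ in its two arguments; and the analytic-continuation formula $J(x,y;q)=\th(-q^{\frac12}y;q)J(y^{-1}x,y^{-1};q^{-1})$ used in the proof of Theorem~\ref{thm.41b}(d). Structurally, the point is that $x\mapsto x^{-1}$ preserves the solution space of the difference equation $\hat A_{\knot{4}_1}(S_x,x,1,q)f=0$ of \eqref{rec41x}, because the $A$-polynomial of the amphichiral knot $\knot{4}_1$ is palindromic; hence $(A_0(x^{-1};q),B_0(x^{-1};q))$ is an elliptic-coefficient combination of $(A_0(x;q),B_0(x;q))$, and that combination can be pinned down from the determinant \eqref{W410} and the orthogonality \eqref{WW41b} together with matching the leading $x$-behavior.

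Finally I would substitute the $x$-side and the dual $\tx$-side reflection identities into the form $B(\tx^{-1};\tq^{-1})^T\Delta(\tau)B(x^{-1};q)$; the diagonal $\Delta(\tau)$ and the orthogonality pairing \eqref{WW41b} that realizes the factorization are arranged so that the elliptic reflection factors cancel in pairs, leaving the single scalar $x\,\tx$. An equivalent bookkeeping, which explains the hypothesis, is to invoke Conjecture~\ref{conj.41M}: by Corollary~\ref{cor.41borel} the state-integral is a fixed combination of the Borel sums $s_R(\Phi)(x;\tau)$, and these transform uniformly under $\pi\colon x\mapsto x^{-1}$ by the reflection identity~\eqref{eq:-xPhi} already used to check $\pi$-invariance of the Stokes matrices, the common automorphy factor being exactly $x\,\tx$. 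The main obstacle is the reflection identity of the second paragraph: the involution $x\mapsto x^{-1}$ interchanges the two blocks while simultaneously sending $q\mapsto q^{-1}$ (on the perturbative data the amphichiral involution acts as $\tau\mapsto-\tau$, cf.~\eqref{Phi4112}), so one must track the theta prefactors and the powers of $x$ with care, using Lemma~\ref{lem.thetaf} and the precise $J$-continuation formula, to be sure the net factor is exactly $x\,\tx$ and not $x\,\tx$ times a stray root of unity or power of $q$.
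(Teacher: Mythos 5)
Your closing ``equivalent bookkeeping'' is in fact the paper's actual proof in outline: by Corollary~\ref{cor.41borel} (i.e.\ Conjecture~\ref{conj.41M}), in region $I$ one has $Z_{\knot{4}_1}(u;\tau)=s_I(\Phi_2)(x;\tau)$ (the second line of \eqref{eq:HU1-41} together with \eqref{41x-desc-fac} at $m=\mu=0$), one then applies the reflection identity \eqref{eq:-xPhi}, and finally extends from region $I$ to all of $\BC'$ by holomorphicity of the descendant factorization. The genuine gap is that you cite \eqref{eq:-xPhi} as an available ingredient, when it is stated and proved \emph{inside} the paper's proof of this very proposition; establishing it is essentially the entire content. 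Concretely, writing $\Phi_2=\re^{V_2/(2\pi\ri\tau)}(\ri\delta_2)^{-1/2}\,\sqrt{\ri\delta}\varphi_2$ as in \eqref{Phi41}, one needs three facts: (i) the coefficients of $\sqrt{\ri\delta}\varphi_2(x;\tau)$ are invariant under $x\mapsto x^{-1}$ (visible in \eqref{eq:vf-41}), which is \eqref{eq:-xphi}; (ii) $\delta_2(x^{-1})=x^2\delta_2(x)$ from \eqref{eq:delta-41}, which is \eqref{eq:-xdelta} and produces the factor $x^{-1}$; and (iii) the volume shift $V_2(x^{-1})=V_2(x)-2\pi\ri\,u$ (with $u=\log x$), which is \eqref{eq:-xV} and, after exponentiating $V_2/(2\pi\ri\tau)$, produces the factor $\tx^{-1}$. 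Item (iii) is the non-trivial step: the paper proves it via the substitution $y=x^{-1}+\ty$, which renders the curve equation manifestly reflection-symmetric, followed by differentiation in $x$ and reduction to an identity of rational functions on $S$. None of this appears in your proposal; you assert the automorphy factor $x\,\tx$ rather than derive it, and the $\tx$ half of it has no source in your argument.

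Your primary route---a reflection identity for the holomorphic blocks fed into the bilinear factorization of Theorem~\ref{thm.41a}---is not the paper's route and is incomplete at exactly the point you flag. The structural claim (palindromic $\Ahat_{\knot{4}_1}$, hence $(A_0(x^{-1};q),B_0(x^{-1};q))$ is an elliptic-coefficient combination of $(A_0(x;q),B_0(x;q))$ pinned down by \eqref{W410} and \eqref{WW41b}) is undercut by your own observation: the continuation formula $J(x,y;q)=\th(-q^{1/2}y;q)J(y^{-1}x,y^{-1};q^{-1})$ drags $q\mapsto q^{-1}$, so for instance $J(x^{-2},x^{-1};q)=\th(-q^{1/2}x^{-1};q)J(x^{-1},x;q^{-1})$ is naturally expressed through blocks at $q^{-1}$, and there is no reason the reflection closes on the same-$q$ block vector with elliptic coefficients in the form you assume; moreover genuine (non-constant) elliptic functions of $x$ exist for $|q|<1$, so determinant, orthogonality and leading behavior alone do not force uniqueness without the modular-double rigidity used in Lemma~\ref{lem.Wamb}. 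A warning sign you missed: since Theorem~\ref{thm.41a} is unconditional, success of this route would prove \eqref{Z41uu} unconditionally, whereas the proposition is explicitly conditional on Conjecture~\ref{conj.41M}. Finally, whichever route through Borel sums one takes, the identity is first obtained for $\tau$ in a single region only, and the extension to $\tau\in\BC'$ via holomorphicity of \eqref{41x-desc-fac} must be stated.
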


\begin{proof}
  The second line of~\eqref{eq:HU1-41} indicates that in region $I$
\begin{equation}
  Z_{\knot{4}_1}(u;\tau) = s_I(\Phi_2)(x;\tau).
\end{equation}
Recall the structure of $\Phi_2(x;\tau)$ from \eqref{Phi41}
\begin{equation}
  \Phi_2(x;\tau) = \exp\left(\frac{V(x,y_2(x))}{2\pi\ri\tau}\right)
  \frac{1}{\sqrt{\ri\delta(x,y_2(x))}}
  \sqrt{\ri\delta}\varphi(x,y_2(x);\tau).
\end{equation}
Here
$\sqrt{\ri\delta}\varphi_2(x;\tau):=\sqrt{\ri\delta}\varphi(x,y_2(x);\tau)$
is an asymptotic series in $\tau$ with
$\sqrt{\ri\delta}\varphi_2(x;0)=1$.  The coefficients of the series
$\sqrt{\ri\delta}\varphi_2(x;\tau)$ are invariant under the
transformation $x\mapsto 1/x$ (see for instance \eqref{eq:vf-41}; this
is also true for
$\sqrt{\ri\delta}\varphi_1(x;\tau) =
\sqrt{\ri\delta}\varphi(x,y_1(x);\tau)$), and thus
\begin{equation}
  s(\sqrt{\ri\delta} \varphi_2)(1/x;\tau) = s(\sqrt{\ri\delta}
  \varphi_2)(x;\tau).
  \label{eq:-xphi}
\end{equation}
On the other hand, from definition \eqref{eq:delta-41} of
$\delta_2(x):=\delta(x,y_2(x))$, it is clear that
\begin{equation}
  \delta_2(1/x) = x^2\delta_2(x).
  \label{eq:-xdelta}
\end{equation}
Finally, to study the behavior of $V_2(x):=V(x,y_2(x))$ under the
transformation $x\mapsto 1/x$, it is convenient to do the change of
variables $y = x^{-1}+\ty$, so that the Equation~\eqref{41critb}
satisfied by $y$ becomes
\begin{equation}
  1-(1-x-x^{-1})\ty+\ty^2 = 0
\end{equation}
which is manifestly invariant under this transformation, and thus
$\ty(1/x) = \ty(x)$.
Expressed in terms of this variable
\begin{align}
  V_2(x) =
  &-\Li_2(-x^{-1}\ty_2)-\Li_2(-x \ty_2) -\frac{\pi}{3}
    -\frac{1}{2}\log^2(x\ty_2^{-1})\nn
  &+\log(1+x \ty_2^{-1})\log(-x\ty_2^{-1})
    -\log(1+x\ty_2)\log(-x\ty_2)\nn
  &-\frac{1}{2}\log^2(-1-x\ty_2^{-1})
    +\frac{1}{2}\log^2(-x^{-1}-\ty_2)
    +2\log x \log(-x^{-1}-\ty_2),
\end{align}
and it has the property that
\begin{equation}
  V_2(x^{-1}) = V_2(x) - 2\pi\ri x
  \label{eq:-xV}
\end{equation}
This can be proven by differentiating both sides with respect to $x$,
and reducing it to an identity of rational functions on the curve $S$.
Combining \eqref{eq:-xphi},\eqref{eq:-xdelta},\eqref{eq:-xV}, we have
\begin{equation}
  s_I(\Phi_2)(x^{-1};\tau) = x^{-1}\tx^{-1} s_I(\Phi_2)(x;\tau)
  \label{eq:-xPhi}
\end{equation}
which implies~\eqref{Z41uu}. We comment in the passing that the
identity~\eqref{eq:-xPhi} is true for both
$s(\Phi_{1,2})(x;\tau)$ for any $\tau\in\IC$ whenever the asymptotic
series is Borel summable.

Once we have established the identity \eqref{Z41uu} in region $I$, it
can be extended to $\tau\in\IC'$ by the holomorphicity of
\eqref{41x-desc-fac}.
\end{proof}

\subsection{Stokes matrices for real $u$}
\label{sub.41stokesp}

In Section~\ref{sub.41stokes} we only considered the resurgent
structure for $x$ near $1$, or equivalently, $u=\log x $ near $0$.
When $x$ is arbitrary, the resurgent structure of the vector $\Phi_\s(x;\tau)$
could be very different.  According to the Picard--Lefschetz theory
(for review, see for instance \cite{Witten:2010cx}), when a set of
asymptotic series originates from a (path) integral, the Borel sum of
each asymptotic series is the evaluation of the integral along a
Lefschetz thimble anchored to a critical point.  In the $x$-plane,
there are walls of marginal stability which start
from the roots to the discriminant \eqref{disc41y} and which end at
infinity.  When we cross such a Stokes line, Lefschetz thimbles jump
leading to linear transformations of the Borel summed asymptotic
series.  In this section we extend slightly the discussion of
Section~\ref{sub.41stokes} by considering the resurgent structure of
$\Phi_\s(x;\tau)$ for generic positive $x$ (see \cite{Beem} for a
similar discussion in complex Chern-Simons theory).  The positive real
axis is divided by the two real solutions to~\eqref{disc41y} 

\begin{equation}
  \label{z35}
  x_{\pm} = \frac{1}{2}(3\pm \sqrt{5})
\end{equation}
to three intervals
\begin{equation}
  (0,x_{-}),\;(x_-,x_+),\;(x_+,\infty).
\end{equation}
The middle interval is covered by Section~\ref{sub.41stokes}, while
the first (labeled by $<$) and third (labeled by $>$) intervals are
discussed below.

\begin{figure}[htpb!]
\leavevmode
\begin{center}
\includegraphics[height=7cm]{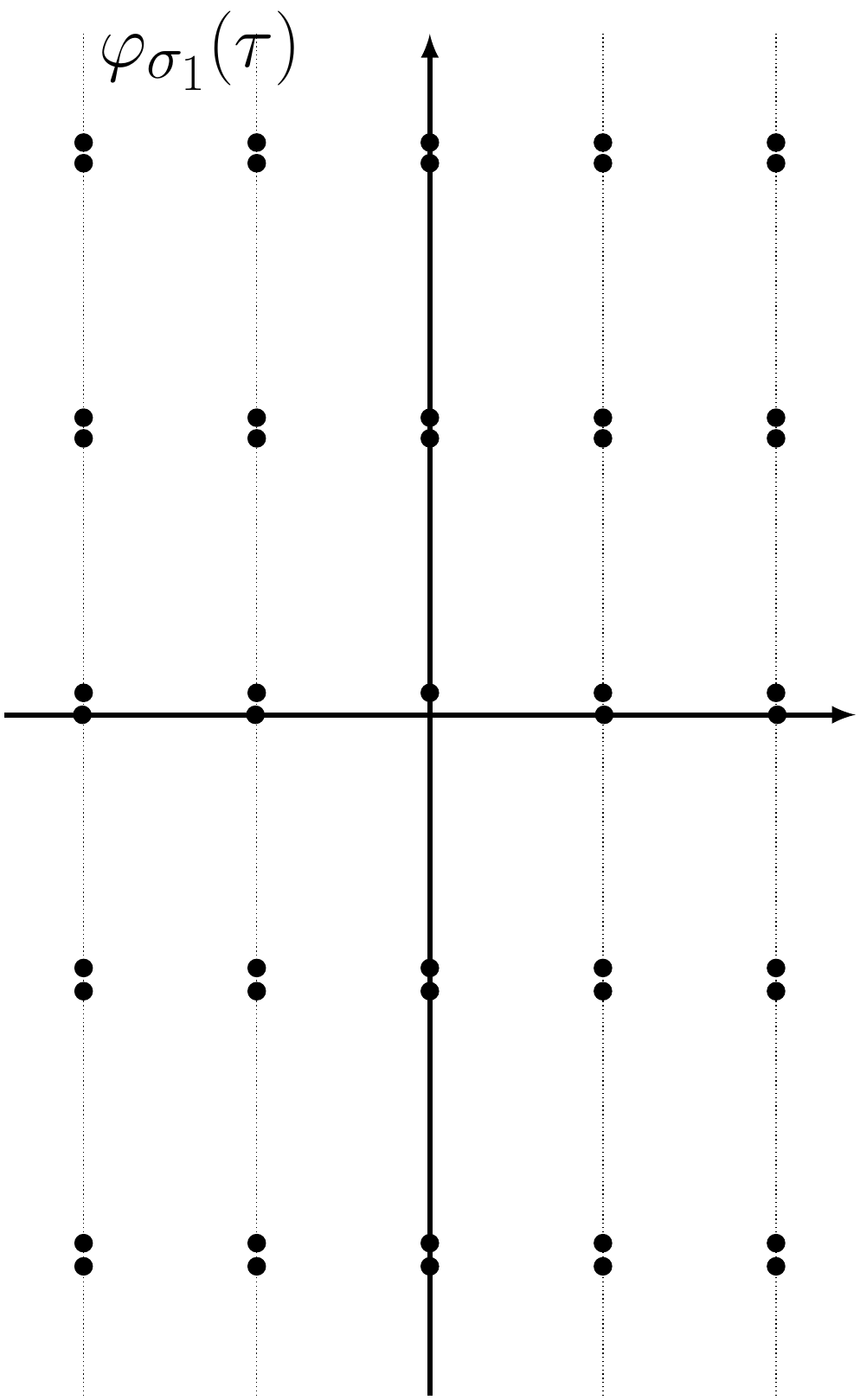}\hspace{4ex}
\includegraphics[height=7cm]{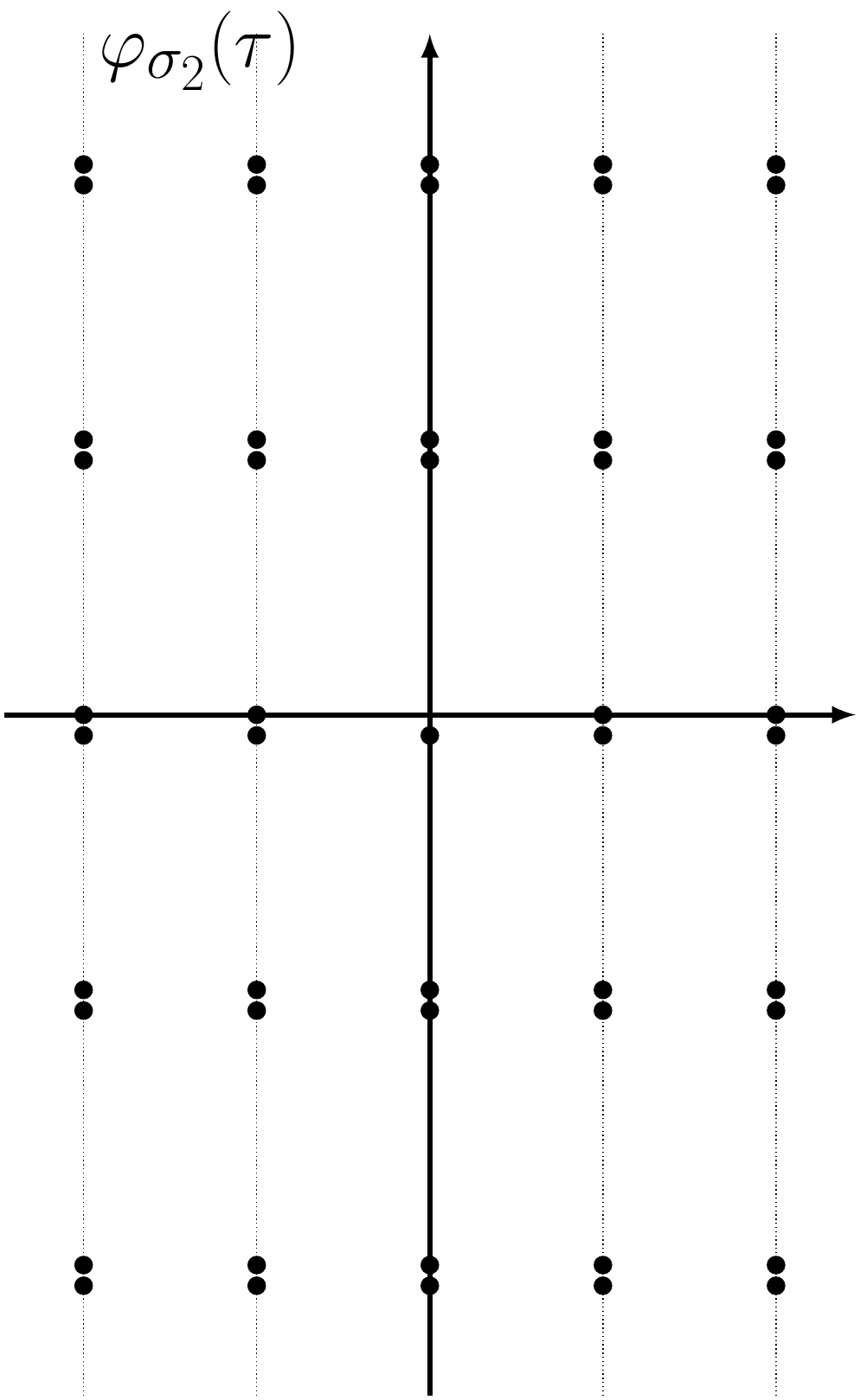}
\end{center}
\caption{The singularities in the Borel plane for the series
  $\varphi_{\sigma_{j}} (u;\tau)$ with $j=1,2$ of knot $\knot{4}_1$
  for $x=\re^{2\pi\bb u}$ in the first or the third interval of the
  positive axis.}
\label{fig:sings-l-41}
\end{figure}

\begin{figure}[htpb!]
\leavevmode
\begin{center}
\includegraphics[height=7cm]{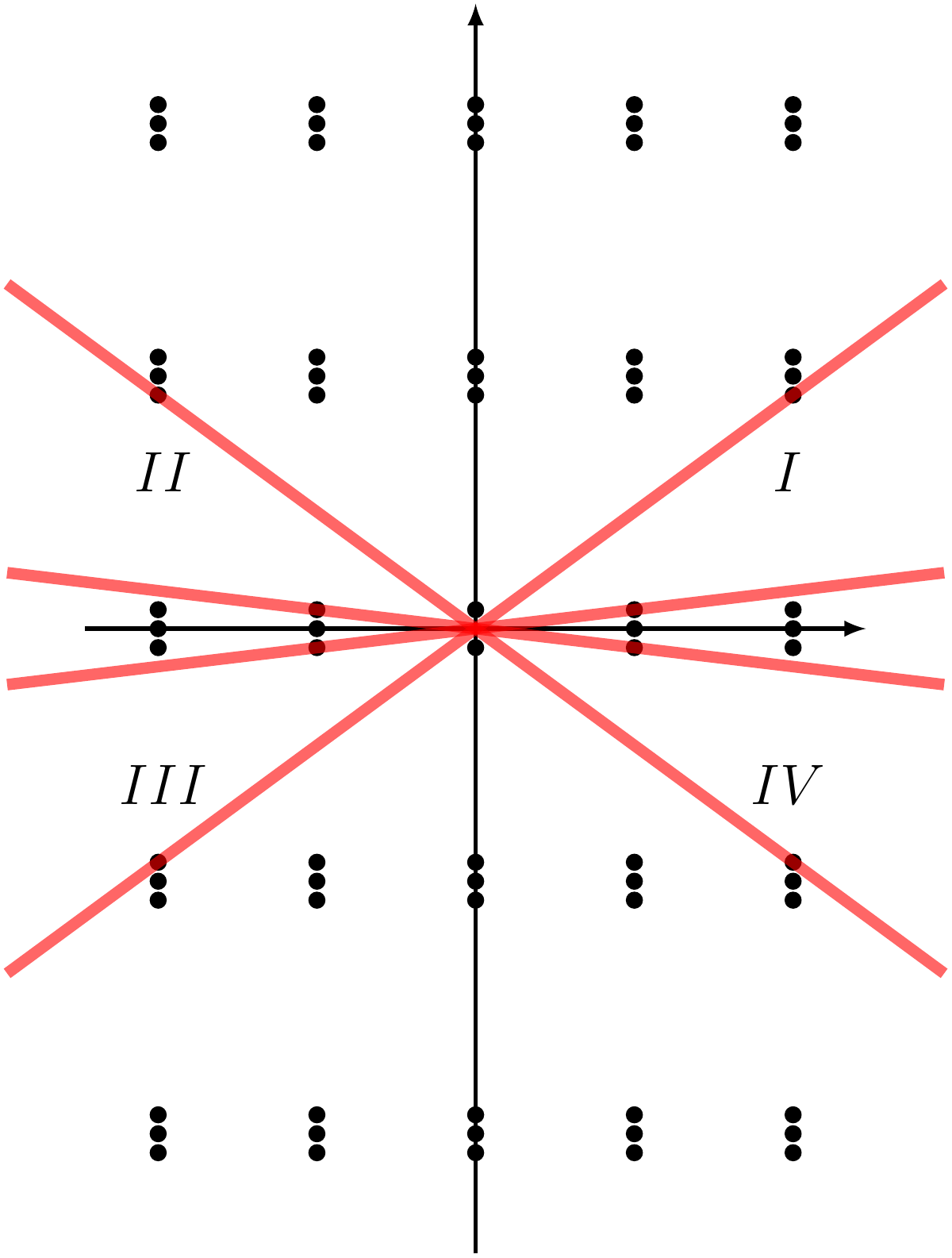}
\end{center}
\caption{Four different sectors in the $\tau$-plane for $\Phi(u;\tau)$
  of knot $\knot{4}_1$ with $x=\re^{2\pi\bb u}$ in the first or the
  third interval of the positive axis.}
\label{fig:secs-l-41}
\end{figure} 

First of all, we notice that the first and third intervals are related
by the reflection $x\mapsto x^{-1}$.  In fact, due to the property
\eqref{eq:-xphi} of the asymptotic series, the Borel plane
singularities for $\Phi_\s(x;\tau)$ and $\Phi_\s(x^{-1};\tau)$ are
identical, and we illustrate them uniformly in
Figure~\ref{fig:sings-l-41}. The positions of singularities are still
described by \eqref{iota}.  However, the difference of action
$(V(\s_1)-V(\s_2))/(2\pi\ri)$ is now imaginary and it describes the
vertical spacing between neighoring singularities.  The shortest
horizontal spacing is $\log x$. Finally all singularities are repeated
vertically by the spacing $2\pi\ri$.  Similar to the discussion in
Section~\ref{sub.41stokes}, we label in the $\tau$-plane by
$I,II,III,IV$ the four sectors which separate the 12 singularities
close to the real axis and other singularities along the imaginary
axis or away from the real axis, as in Figure~\ref{fig:secs-l-41}.  In
each of the four sectors, the Borel summed vector
$s_R^{\gtrless}(\Phi)(x;\tau)$ is a linear transformation of the Borel
summed vector $s_R(\Phi)(x;\tau)$ in the middle interval, as per the
Picard-Lefschetz theory
\begin{equation}
  s_R^{\gtrless}(\Phi)(x;\tau) = T_R^{\gtrless}(\tx)\cdot s_R(\Phi)(x;\tau).
\end{equation}
It is most convenient to compute the transformation matrix
$T_R^{\gtrless}$ by comparing the left hand side with the holomorphic
lifts of $s_R(\Phi)(x;\tau)$ summarized in
Corollary~\ref{cor.41borel}.  By doing so, we find in the first
interval
\begin{align}
  T^{<}_{I}(\tx) &=
    \begin{pmatrix}
      \tx&0\\-\tx&1
    \end{pmatrix}, &
         T^{<}_{II}(\tx) &=
           \begin{pmatrix}
             0&1\\-\tx&1
           \end{pmatrix}\\
  T^{<}_{III}(\tx) &=
    \begin{pmatrix}
      \tx&1\\-\tx&0
    \end{pmatrix}, &
         T^{<}_{IV}(\tx) &=
           \begin{pmatrix}
             \tx&1\\0&1
           \end{pmatrix},
\end{align}
while in the third interval
\begin{align}
  T^{>}_{I}(\tx) &=
    \begin{pmatrix}
      \tx^{-1}&0\\
      -\tx^{-1}&1
    \end{pmatrix},  &
              T^{>}_{II}(\tx) &=
                \begin{pmatrix}
                  0&1\\-\tx^{-1}&1
                \end{pmatrix}\\
  T^{>}_{III}(\tx) &=
    \begin{pmatrix}
      \tx^{-1}&1\\
      -\tx^{-1}&0
    \end{pmatrix}, &
              T^{>}_{IV}(\tx) &=
                \begin{pmatrix}
                  \tx^{-1}&1\\0&1
                \end{pmatrix}.
\end{align}
They are indeed related by
\begin{equation}
  T^{<}_R(\tx) = T^{>}_R(\tx^{-1}).
  \label{eq:Tgtrless}
\end{equation}

Once the linear combinations are known, the Stokes matrices can be
computed using the Stokes matrices in the middle interval given in
Section~\ref{sub.41stokes}
\begin{equation}
  \mfS^{\gtrless}_{R\rightarrow R'}(x;q) =
  T_{R'}^{\gtrless}(x)\cdot
  \mfS_{R\rightarrow R'}(x;q)\cdot \left(T_{R}^{\gtrless}(x)\right)^{-1}.
\end{equation}
We find
\begin{align}
  \mfS^>_{I\rightarrow II}(x;q) =
    &\begin{pmatrix}
      0&1\\
      -x^{-1}&1
    \end{pmatrix}\cdot
               \mfS_{I\rightarrow II}(x;q)\cdot
               \begin{pmatrix}
                 x&0\\1&1
               \end{pmatrix}\\
  \mfS^>_{II\rightarrow III}(x) =
    &\begin{pmatrix}
      1-x-x^2 && x+x^2\\-x-x^2 && 1+x+x^2
    \end{pmatrix}\\
  \mfS^>_{III\rightarrow IV}(x;q) =
    &\begin{pmatrix}
      x^{-1}&1\\-x^{-1}&0
    \end{pmatrix}\cdot \mfS_{III\rightarrow IV}(x;q)\cdot
    \begin{pmatrix}
      x&-x\\0&1
    \end{pmatrix}\\
  \mfS^>_{IV\rightarrow I}(x)=
    &\begin{pmatrix}
      1+x^{-1}+x^{-2} && x^{-1}+x^{-2}\\-x^{-1}-x^{-2} &&
      1-x^{-1}-x^{-2}
    \end{pmatrix},
\end{align}
and thanks to \eqref{eq:Tgtrless}, 
\begin{equation}
  \mfS^{<}_{R\rightarrow R'}(x;q) =
  \mfS^{>}_{R\rightarrow R'}(x^{-1};q).
\end{equation}

Note that $\mfS^>_{III\rightarrow I}(x)$ and
$\mfS^>_{II\rightarrow IV}(x)$ encode all the 12 singularities near the
real axis illustrated in Figure~\ref{fig:secs-l-41}, as well as the
Stokes constants associated to them.  Furthermore, the Stokes matrices
also have the property
\begin{equation}
  \mfS^>_{IV\rightarrow III}(x^{-1};q)^{-1} = \mfS^>_{I\rightarrow II}(x;q)^T,
\end{equation}
in accord with Conjecture~\ref{conj.ann}.

\subsection{Numerical verification}
\label{sub.41num}

In this section we explain the numerical verification of
Conjecture~\ref{conj.41M}. This involves, on the one hand, a numerical
computation of the asymptotics of the holomorphic blocks and on the
other hand, a numerical computation of the Borel resummation by the
Laplace integral of a Pad\'e approximation. Taking the two
computations into account, we found out numerically, integers
appearing at two exponentially small scales, namely $\tq$ and $\tx$,
and guessing these integers eventually led to
Conjecture~\ref{conj.41M}.

\begin{table}
  \centering%
  \subfloat[Region $I$: $\tau = \frac{1}{20}\re^{\frac{\pi\ri}{5}}$]
  {\begin{tabular}{*{5}{>{$}c<{$}}}\toprule
     &|\frac{s_I(\Phi)(x;\tau)}{P_I(x;\tau)}- 1|
     &|\frac{s_I(\Phi)(x;\tau)}{s_I'(\Phi)(x;\tau)}- 1|
     &|\tq(\tau)|& |\tx(x,\tau)|\\\midrule
     \s_1& 3.2\times 10^{-66} & 9.7\times10^{-66}
     & \multirow{2}{*}{$8.3\times 10^{-33}$}
      & \multirow{2}{*}{$0.05$}\\
     \s_2& 1.9\times 10^{-94} & 5.2\times 10^{-94}&&\\\bottomrule
   \end{tabular}}\\
 \subfloat[Region $II$: $\tau = \frac{1}{20}\re^{\frac{4\pi\ri}{5}}$]
 {\begin{tabular}{*{5}{>{$}c<{$}}}\toprule
    &|\frac{s_{II}(\Phi)(x;\tau)}{P_{II}(x;\tau)}- 1|
    &|\frac{s_{II}(\Phi)(x;\tau)}{s_{II}'(\Phi)(x;\tau)}- 1|
    &|\tq(\tau)|& |\tx(x,\tau)^{-1}|\\\midrule
    \s_1& 1.9\times 10^{-94} & 5.2\times 10^{-94}
    & \multirow{2}{*}{$8.3\times 10^{-33}$}
      & \multirow{2}{*}{$0.05$}\\
     \s_2& 3.2\times 10^{-66} & 9.7\times 10^{-66}&&\\\bottomrule
   \end{tabular}}\\
 \subfloat[Region $III$: $\tau = \frac{1}{20}\re^{-\frac{4\pi\ri}{5}}$]
 {\begin{tabular}{*{5}{>{$}c<{$}}}\toprule
    &|\frac{s_{IV}(\Phi)(x;\tau)}{P_{IV}(x;\tau)}- 1|
    &|\frac{s_{IV}(\Phi)(x;\tau)}{s_{IV}'(\Phi)(x;\tau)}- 1|
    & |\tq(\tau)^{-1}|& |\tx(x,\tau)|\\\midrule
    \s_1& 1.9\times 10^{-94} &5.2\times 10^{-94}
    & \multirow{2}{*}{$8.3\times 10^{-33}$}
      & \multirow{2}{*}{$0.05$}\\
     \s_2&3.2\times 10^{-66}  & 9.7\times 10^{-66}&&\\\bottomrule
  \end{tabular}}\\
 \subfloat[Region $IV$: $\tau = \frac{1}{20}\re^{-\frac{\pi\ri}{5}}$]
 {\begin{tabular}{*{5}{>{$}c<{$}}}\toprule
    &|\frac{s_{III}(\Phi)(x;\tau)}{P_{III}(x;\tau)}- 1|
    &|\frac{s_{III}(\Phi)(x;\tau)}{s_{III}'(\Phi)(x;\tau)}- 1|
    & |\tq(\tau)^{-1}|& |\tx(x,\tau)^{-1}|\\\midrule
    \s_1& 3.2\times 10^{-66} & 9.7\times 10^{-66}
    & \multirow{2}{*}{$8.3\times 10^{-33}$}
      & \multirow{2}{*}{$0.05$}\\
     \s_2& 1.9\times 10^{-94} & 5.2\times 10^{-94}&&\\\bottomrule
   \end{tabular}}
 \caption{Numerical tests of holomorphic lifts of Borel sums of
   asymptotic series for knot $\knot{4}_1$.  We perform the
   Borel-Pad\'e resummation on $\Phi(x;\tau)$ with 280 terms at
   $x=6/5$ and $\tau$ in four different regions, and compute the
   relative difference between them and the right hand side of
   \eqref{eq:HU1-41}--\eqref{eq:HU4-41}, which we denote by
   $P_R(x;\tau)$.  They are within the error margins of Borel-Pad\'e
   resummation, which are estimated by redo the resummation with 276
   terms, denoted by $s'_R(\bullet)$ in the tables. The relative
   errors are much smaller than $|\tq^{\pm 1}|, |\tx^{\pm 1}|$,
   possible sources of additional corrections.}
  \label{tab:num-41}
\end{table}

We found ample numerical evidence for the resurgent data
\eqref{eq:M1-41}--\eqref{eq:M4-41}.  First of all, due to the symmetry
$\Phi_{1}(x;-\tau) = -\ri \Phi_{2}(x;\tau)$,
$\Phi_{2}(x;-\tau) = \ri \Phi_{1}(x;\tau)$, the resurgent behavior of
$s(\Phi)(x;\tau)$ for $\tau$ in the lower half-plane can be deduced
from that for $\tau$ in the upper half-plane.  We only have to
numerically test the resurgent data for $\tau$ in regions $I,II$ in
the upper-half plane.

The first piece of evidence comes from analysing the radial
asymptotics of the left hand side of \eqref{abrelx}.  Note that the
matrix $\Delta(\tau) = \diag(\Delta_\s(\tau))$ is always diagonal, and
each row of \eqref{abrelx} is
\begin{equation}
  \Delta_{\s}(\tau)B^\s(x;q) =
  \sum_{\s'} M_R(\tx,\tq)_{\s,\s'}s_R(\Phi_{\s'})(x;\tau).
  \label{eq:abrelx-row}
\end{equation}
If we take $\tau = \re^{\ri\alpha}/k$ with the argument $\alpha$
depending on a ray in the region $R$ and $k$ a very large integer, the
difference between $\exp\left(\frac{V(\s')}{2\pi\ri\tau}\right)$
associated to different critical points is greatly magnified, and the
right hand side of \eqref{eq:abrelx-row} is dominated by a single
series.  Furthermore, when $\tau$ is in the upper (lower) half plane,
$\tq$ ($1/\tq$) is exponentially suppressed and the correction
$M_R(\tx,\tq)_{\s,\s'}$ as a series in $\tq$ ($1/\tq$) is dominated by
the leading term. \eqref{eq:abrelx-row} thus becomes
\begin{equation}
  \Delta_\s(\tau) B^\s(x;q) \sim
  \exp\left(\frac{V(\hat{\s}) + \omega_{\hat{\s}}}{2\pi\ri\re^{\ri\alpha}}k
    - \log (\ri \delta(x,y_{\hat{\s}}))
    + \sum_{n=1}^\infty S_n(x,y_{\hat{\s}})\re^{n \ri
      \alpha}k^{-n}\right),\quad k\gg 1,
\end{equation}
where $\omega_{\hat{\s}}$ is possible contribution from the leading
term of $M_R(\tx,\tq)_{\s,\hat{s}}$, and the series in $k^{-1}$ is
$\log\varphi(x,y_{\hat{\s}};\tau)$.  As pointed out in
\cite{GZ:qseries}, this equation can be tested numerically with the
help of Richardson transformations (see for instance
\cite{bender-orszag}).

Next, we can test \eqref{eq:abrelx-row} directly.  One way of doing
this is to compute Borel-Pad\'e resummation $s_R(\Phi_{\s'})(x;\tau)$
for various values of $x\in \IR$ and $\tau$ in the same region $R$,
and by comparing with the left hand side extract terms of
$M_R(\tx,\tq)_{\s,\s'}$ order by order.  To facilitate this operation,
instead of $M_R(\tx;\tq)$ we consider
\begin{equation}
  \wt{M}_R(\tx;\tq) =
  \begin{pmatrix}
    \theta(-\tq^{-1/2}\tx;\tq)^2&0\\
    0&\theta(-\tq^{1/2}\tx;\tq)^{-1}
  \end{pmatrix}M_R(\tx;\tq)
\end{equation}
whose entries are $\tq$-series with coefficients in $\IZ[\tx^{\pm 1}]$
instead of in $\IZ(\tx)$.
Using $280$ terms of $\Phi_{\s}(x;\tau)$, we find the results for
$\tau$ in the upper half plane
\begin{subequations}
\begin{align}
  &\wt{M}_I(x;q) =
    \begin{pmatrix}
      -x + (x^2+x^3)q + (x^2+x^3)q^2 & 1-(x+x^2+x^3)q-(x+x^2)q^2\\
      -1+(x^{-1}+x)q+(x^{-1}+x)q^2 & 1-(x^{-1}+1+x)q-(x^{-1}+x)q^2
    \end{pmatrix}+O(q^3)\\
  &\wt{M}_{II}(x;q) =
    \begin{pmatrix}
      -x+(1+x^{-1}+x^{-2})q+(1+x^{-1})q^2 &
      1-(x^{-1}+x^{-2})q-(x^{-1}+x^{-2})q^2\\
      -1+(x+1+x^{-1})q+(x+x^{-1})q^2&
      1-(x+x^{-1})q-(x+x^{-1})q^2
    \end{pmatrix}+O(q^3)
\end{align}
\end{subequations}
and they agree with \eqref{eq:M1-41}, \eqref{eq:M2-41}.  Another more
decisive way is to compute both sides of \eqref{eq:abrelx-row}
numerically assuming \eqref{eq:M1-41}, \eqref{eq:M2-41} and compare
them.  Alternatively, we can compare two sides of the equations of
holomoprhic lift \eqref{eq:HU1-41}, \eqref{eq:HU2-41}.  We find that
the difference between the two sides is always within the error margin
of Borel-Pa\'e resummation, and much smaller than
$\tq^{\pm 1},\tx^{\pm 1}$, i.e.~possible additional corrections. We
illustrate this comparison by one example with $x=6/5$ and
$\tau = \frac{1}{20}\re^{\pm\frac{\pi\ri}{5}},
\frac{1}{20}\re^{\pm\frac{4\pi\ri}{5}}$ in four regions in
Table~\ref{tab:num-41}.

A final way to test these results is to see that in the $x\mapsto 1$
limit, the resurgent data as well as the Stokes
matrices~\eqref{eq:Sp-41}--\eqref{eq:Sn-41} are compatible with the
results in section~\ref{sub.41x=1} where $x=1$. This is a non-trivial
test since the matrix $W_{-1}(x^{-1};q^{-1})$ ($|q|<1$)
in~\eqref{eq:Sp-41}--\eqref{eq:Sn-41} is divergent in the limit $x\mapsto 1$.

%%%%%%%%%%%%%%%%%%%%%%%%%%%%%%%%%%%%%%%%%%%%%%%%%%%%%%%%%%%%%%%%%%%%%%%%%%%% 
%%%%%%%%%%%%%%%%%%%%%%%%%%%%%%%%%%%%%%%%%%%%%%%%%%%%%%%%%%%%%%%%%%%%%%%%%%%%

\section{The $\knot{5}_2$ knot}
\label{sec.52}

\subsection{Asymptotic series}
\label{sub.52asy}

Our second example that we discuss in detail will be the case of the
$\knot{5}_2$ knot. The state-integral for the $\knot{5}_2$
knot~\cite[Eqn.(39)]{AK} (after removing a prefactor that depends on $u$ alone) 
\begin{equation}
  \label{Z52x}
  Z_{\knot{5}_2}(u;\tau) =
  \int_{\BR+\ri 0}  \Phi_\bb(v) \, \Phi_\bb(v+u)
  \, \Phi_\bb(v-u) \, \re^{-2\pi \ri v^2} \rd v \,.
\end{equation}
After a change of variables $u \mapsto u/(2\pi \bb)$ (see
Equation~\eqref{ub}) and $v \mapsto v/(2\pi\bb)$, it follows
that the integrand of $Z_{\knot{5}_2}(\ub;\tau)$ has a leading term
given by $\re^{V(u,v)/(2 \pi \ri \tau)}$ where
\begin{equation}
  \label{V52}
  V(u,v) = \Li_2(-\re^{v}) + \Li_2(-\re^{u+v}) + \Li_2(-\re^{-u+v})
  + (v)^2 \,.
\end{equation}
Taking derivative with respect to $v$ gives the equation for the
critical point
\begin{equation}
  \label{52crit}
  2 v - \log(1+\re^{v}) -\log(1+\re^{u+v}) -\log(1+\re^{-u+v})=0
\end{equation}
which implies that $x=\re^{2\pi \bb u}$ and $y=-\re^{2\pi \bb v}$ are
points of the affine curve $S$ given by
\begin{equation}
  \label{52critb}
  S : y^2 = (1-y)(1- x y)(1-x^{-1}y) 
\end{equation}
and $(u, v)$ are points of the exponentiated curve $S^*$ given
by the above equation with $(x,y)=(\re^{u},-\re^{v})$.
Moreover, 
\begin{equation}
  \label{V52b}
  V(u,v) = \Li_2(y)+\Li_2(xy) +\Li_2(x^{-1}y) + (\log (-y))^2 
\end{equation}
is a holomorphic $\BC/2\pi^2\BZ$-valued function on the exponentiated
curve $S^*$. Note that when $u=0$, Equation~\eqref{52critb}
becomes~\eqref{52xi}.

The constant term of the asymptotic expansion is given by the Hessian
of $V(u,v)$ at a critical point $(u,v)$, and it is a rational
function of $x$ and $y$ is given by 
\begin{equation}
  \label{52delta}
  \delta(x,y) = y-(1+x+x^{-1})y^{-1}+2y^{-2} \, .
  % \frac{2 x - y - x y - x^2 y + x y^3}{x^2y} \,.
\end{equation}
Note that $\delta(x,y)=0$ on $S$ if and only if $x$ is a root of the
discriminant of $S$ with respect to $y$, i.e., 
\begin{equation}
\label{disc52y}
1 - 6 x + 11 x^2 - 12 x^3 - 11 x^4 - 12 x^5 + 11 x^6 - 6 x^7 + x^8 =
0 \,.
\end{equation}
This happens at two points in the real line given approximately by
$ x \approx 0.235344$ and $x \approx 4.24909$. Moreover, when $x$
is a root of~\eqref{disc52y}, exactly two out of the three branches
of $y=y(x)$ collide, and the corresponding branch point is simple.

Beyond the leading asymptotic expansion and its constant term, the
asymptotic series has the form $\Phi(x,y;\tau)$ where 
\begin{equation}
  \label{Phi52}
  \Phi(x,y;\tau)=\exp\left( { V(u,v) \over 2\pi\ri \tau} \right)
  \varphi(x,y;\tau),
  \qquad 
  \varphi(x,y;\tau) \in \frac{1}{\sqrt{\ri\delta}}
  \BQ[x^\pm,y^\pm,\delta^{-1}] [[2 \pi \ri \tau]]
\end{equation}
where $\delta$ is given in~\eqref{52delta} and
$\sqrt{\ri \delta} \, \varphi(x,y;0)=1$.  In other words, the
coefficient of every power of $2\pi \ri \tau$ in
$\sqrt{\delta} \, \varphi(x,y;\tau)$ is a rational function on
$S$. There is a natural projection $S \to \BC^*$ given by
$(x,y) \mapsto x$ and we denote by $y_\s(x)$ the choice of a local
section (an algebraic function of $x$), for
$\s \in \calP=\{\s_1,\s_2,\s_3\}$.  We denote the corresponding series
$\varphi(x,y_\s(x);\tau)$ simply by $\varphi_\s(x;\tau)$.  When $x$ is
close to 1, we order $\mc{P}$ so that $\s_1,\s_2,\s_3$ correspond to
small deformations away from geometric, conjugate, and real
connections at $x=1$.  Note for $\s_3$, we only keep the real part of
$V$. The power series $\sqrt{\ri\delta}\varphi_\s(x;\tau)$ can be
computed by applying Gaussian expansion on the state-integral
\eqref{Z52x}, and one can compute up to 15 terms in a few minutes.
Let us write down the first few terms of
$\varphi_\s(x;\tau)$
% Program: tw3-gen-pert-series.nb
% Program: tw3-state-integral.test.nb
\begin{align}
  &\varphi_\s(x;\frac{\tau}{2\pi\ri})=
    1  +\frac{\tau}{24\delta_\s^3}\Big(
    81+112s-78s^2-70s^3+94s^4-38s^5+5s^6\nn
  &\phantom{========}+\big(138-254s+127s^2+44s^3-89s^4+38s^5-5s^6
    \big)y_\s\nn
  &\phantom{========}+\big(
    135-101s-11s^2+61s^3-33s^4+5s^5
    \big)y_\s^2 \Big)
    +O(\tau^2)\,,
    \label{eq:vf-52}
\end{align}
% \begin{align}
%   &\varphi_\s(x;\frac{\tau}{2\pi\ri})=
%     1  +\frac{y_\s^4}{12\gamma_\s^3}\Big(
%     -12+18s-4s^2-5s^3+s^4+\big(27-16s-s^2+6s^3-s^4\big)y_\s\nn
%   &\phantom{===}+s\big(-19+2s\big)y_\s^2 \Big)\tau
%     +\frac{y_\s^{10}}{288\gamma_\s(x)^6}\Big(24201 - 34032 s +
%     7438 s^2 + 8872 s^3 - 7337 s^4 +  5128 s^5 \nn
%   &\phantom{===}- 3479 s^6 + 1135 s^7- 93 s^8 - 
%     13 s^9 + s^{10} +  \big(-5832 + 6972 s + 4921 s^2 - 6026 s^3 + 
%     3034 s^4 \nn
%   &\phantom{===}- 2454 s^5 + 1030 s^6- 105 s^7 - 
%     12 s^8 + s^9\big) y_\s^2 + 
%     \big(4680 + 2562 s - 1132 s^2 - 8688 s^3 + 6400 s^4 \nn
%   &\phantom{===}- 
%     4338 s^5 + 3353 s^6 - 1145 s^7 + 94 s^8 + 
%     13 s^9 - s^{10}\big)y_\s\Big)\tau^2+O(\tau^3)\,,
%     \label{eq:vf-52}
% \end{align}
where
\begin{equation}
  s = s(x) = x^{-1}+1 +x\,.
\end{equation}
% and
% \begin{equation}
%   \gamma_\s(x) = 3-2 s(x) y_\s(x)+(s(x)-1)y_\s(x)^2\,.
% \end{equation}
On the other hand, if one sets $x$ to numerical values, the power
series can be computed to 200 terms.

\subsection{Holomorphic blocks}
\label{sub.52holo}

Motivated by the case of the $\knot{4}_1$ knot, we define the descendant
state-integral of the $\knot{5}_2$ knot by 
\begin{equation}
  \label{Z52x-desc}
  Z_{\knot{5}_2,m,\mu}(u;\tau) =
  \int_{\mc{D}}  \Phi_\bb(v) \, \Phi_\bb(v+u)
  \, \Phi_\bb(v-u) \, \re^{-2\pi \ri v^2+ 2\pi(m \bb - \mu
    \bb^{-1})v} \rd v 
\end{equation}
for integers $m$ and $\mu$, which agrees with the
Andersen-Kashaev invariant of the $\knot{5}_2$ knot when
$m=\mu=0$. Here the contour
$\mc{D}$ was introduced in \eqref{Z410-desc}. It is expressed in terms of three descendant
holomorphic blocks, which we denote by $A_m$, $B_m$ and $C_m$ instead
of $B^{\s_j}_m$ for $j=1,2,3$.  For $|q| \neq 1$, $A_m(x;q)$,
$B_m(x;q)$ and $C_m(x;q)$ are given by
\begin{subequations}
\begin{align}
  \label{Am}
  A_m(x;q) &=  H(x,x^{-1},q^m;q) \\
  \label{Bm}
  B_m(x;q) &= \th(-q^{1/2}x;q)^{-2}x^m H(x,x^{2},q^m x^2;q) \\
  \label{Cm}
  C_m(x;q) &= \th(-q^{-1/2}x;q)^{-2} x^{-m} H(x^{-1},x^{-2},q^m x^{-2};q)
\end{align}
\end{subequations}
where $H(x,y,z;q^\ve):=H^\ve(x,y,z;q)$ for $|q|<1$ and $\ve=\pm$ and
\begin{subequations}
  \begin{align}
\label{H52p}
H^+(x,y,z;q) & = (qx;q)_\infty (qy;q)_\infty \sum_{n=0}^\infty
\frac{q^{n(n+1)} z^n}{(q;q)_n (qx;q)_n (qy;q)_n} \\
\label{H52n}
H^-(x,y,z;q) & = \frac{1}{(x;q)_\infty (y;q)_\infty}
\sum_{n=0}^\infty (-1)^n \frac{q^{\frac{1}{2}n(n+1)}
  x^{-n} y^{-n}z^n}{ (q;q)_n (qx^{-1};q)_n (qy^{-1};q)_n}
\,.
\end{align}
\end{subequations}
Note that the summand of $H^+$ (a proper
$q$-hypergeometric function) equals to that of $H^-$ after replacing
$q$ by $q^{-1}$. This implies that $H^\pm$ have a common annihilating
ideal $\calI_H$ with respect to $x,y,z$ which can be computed as in
the case of Lemma~\ref{lem.annJ}.

The next theorem expresses the descendant state-integrals bilinearly
in terms of descendant holomorphic blocks.

\begin{theorem}
  \label{thm.52a}
  \rm{(a)} The descendant state-integral can be expressed in terms of
  the descendant holomorphic blocks by
  \begin{align}
    Z_{\knot{5}_2,m,\mu}(\ub;\tau) 
    =
    &(-1)^{m+\mu}
      q^{m/2}\tq^{\mu/2}\Big(
      \re^{\frac{3\pi\ri}{4}+\frac{5\pi\ri}{12}(\tau+\tau^{-1})}
      A_m(x;q) A_{-\mu}(\tx;\tq^{-1}) \nn + 
    &\re^{-\frac{\pi\ri}{4}+\frac{\pi\ri}{12}(\tau+\tau^{-1})}
      B_{m}(x;q) B_{-\mu}(\tx;\tq^{-1})
      + \re^{-\frac{\pi\ri}{4}+\frac{\pi\ri}{12}(\tau+\tau^{-1})}
      C_{m}(x;q) C_{-\mu}(\tx;\tq^{-1})\Big)\,.
    \label{52x-desc-fac}
  \end{align}
  \rm{(b)} The functions $A_m(x;q)$, $B_m(x;q)$ and $C_m(x;q)$ are
  holomorphic functions of $|q| \neq 1$ and meromorphic functions of
  $x \in \BC^*$ with poles in $x \in q^\BZ$ of order at most $2$.
  \newline
  \rm{(c)} Let 
  \begin{equation}
    W_m(x;q) = \begin{pmatrix}
      A_{m}(x;q) & B_{m}(x;q) & C_{m}(x;q) \\
      A_{m+1}(x;q) & B_{m+1}(x;q) & C_{m+1}(x;q) \\
      A_{m+2}(x;q) & B_{m+2}(x;q) & C_{m+2}(x;q) 
    \end{pmatrix} \qquad (|q| \neq 1) \,.
    \label{eq:Wm52}
  \end{equation}
  For all integers $m$ and $\mu$, state-integral $Z_{\knot{5}_2,m,\mu}(u;\tau)$
  and the matrix-valued function
  \begin{equation}
    W_{m,\mu}(u;\tau) = W_{-\mu}(\tx;\tq^{-1}) \Delta(\tau) 
    W_m(x;q)^T \,,
    \label{W52x-desc}
  \end{equation}
  where 
  \begin{equation}
    \label{D52}
    \Delta(\tau) = \begin{pmatrix}
      \re^{\frac{3\pi\ri}{4}+\frac{5\pi\ri}{12}(\tau+\tau^{-1})}&0&0\\
      0&\re^{-\frac{\pi\ri}{4}+\frac{\pi\ri}{12}(\tau+\tau^{-1})}&0\\
      0&0&\re^{-\frac{\pi\ri}{4}+\frac{\pi\ri}{12}(\tau+\tau^{-1})}\\
    \end{pmatrix} \,,
  \end{equation}
  are holomorphic functions of $\tau \in \BC'$ and entire
  functions of $u \in \BC$.
\end{theorem}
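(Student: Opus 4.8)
The plan is to follow closely the proof of Theorem~\ref{thm.41a}, the only structural change being that the integrand now has three families of poles instead of two.

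For part (a), I would apply the residue theorem to the descendant state-integral~\eqref{Z52x-desc}, exactly as in the proof of Theorem~1.1 of~\cite{GK:qseries}. The integrand is a meromorphic function of $v$ whose poles come from the three quantum dilogarithms $\Phi_\bb(v)$, $\Phi_\bb(v+u)$, $\Phi_\bb(v-u)$; closing the contour $\mc{D}$ and summing residues organizes the answer into three towers of residues, and the quasi-periodicity of $\Phi_\bb$ together with the theta-factorization identities of Lemma~\ref{lem.thetaf} recasts each tower as one of the bilinear products $A_m(x;q) A_{-\mu}(\tx;\tq^{-1})$, $B_m(x;q) B_{-\mu}(\tx;\tq^{-1})$, $C_m(x;q) C_{-\mu}(\tx;\tq^{-1})$. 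The root-of-unity and $\re^{\frac{\pi\ri}{12}(\tau+\tau^{-1})}$ prefactors collected in $\Delta(\tau)$ of~\eqref{52x-desc-fac} are produced by the same identities; matching them is routine bookkeeping once the pole structure has been sorted out.

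For part (b), the key observation is that when $|q|<1$ the prefactors $(qx;q)_\infty (qy;q)_\infty$ in~\eqref{H52p} cancel the factors $(qx;q)_n (qy;q)_n$ in the denominator, leaving $(q^{n+1}x;q)_\infty (q^{n+1}y;q)_\infty$, so that each summand is entire in $(x,y,z)$ and the series converges uniformly on compacta by the ratio test thanks to the factor $q^{n(n+1)}$. Hence $H^+(x,y,z;q)$ is entire, and therefore $A_m(x;q)=H^+(x,x^{-1},q^m;q)$ is holomorphic on $\BC^*$, while $B_m(x;q)$ and $C_m(x;q)$ inherit poles of order at most $2$ at $x\in q^{\BZ}$ solely from the prefactors $\th(-q^{1/2}x;q)^{-2}$ and $\th(-q^{-1/2}x;q)^{-2}$, whose zeros lie at $x\in q^{\BZ}$ by~\eqref{thetap}. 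For $|q|>1$ one repeats the analysis with $H^-(x,y,z;q^{-1})$, whose only singularities arise from $1/\big((x;q^{-1})_\infty (y;q^{-1})_\infty\big)$; substituting the pairs of arguments $(x,x^{-1})$, $(x,x^{2})$, $(x^{-1},x^{-2})$ again localizes the poles at $x\in q^{\BZ}$ and keeps their order at most $2$.

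For part (c), by part (a) the entries of $W_{m,\mu}(u;\tau)$ are, up to the explicit prefactors, the normalized descendant state-integrals of~\eqref{Zr} with shifted indices; these are holomorphic in $\tau\in\BC\setminus\BR$ from the block factorization and, through the convergent integral~\eqref{Z52x-desc}, holomorphic in $\tau\in\BC'$ and in $u$ on the strip $|\Im(u)|<|\bb+\bb^{-1}|/2$. The factorization~\eqref{W52x-desc} then furnishes the meromorphic continuation in $u$ to all of $\BC$, a priori with poles of order up to $2$ at the lattice $\ri\bb\BZ+\ri\bb^{-1}\BZ$. It remains to show these poles are spurious, and this is where I expect the main difficulty: unlike the $\knot{4}_1$ case, where the potential poles are simple, here one must expand the bilinear combination to second order in the local coordinate at each lattice point and check that both the double-pole and the simple-pole coefficients cancel. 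I would carry this out by the method of Section~\ref{sub.52x=1}, using the expansion~\eqref{thrs} of the theta prefactors and the expansions~\eqref{eq:phin}--\eqref{eq:tphin} of the $q$-hypergeometric factors $H^\pm$, and verifying that the principal parts contributed by the $q$- and $\tq$-blocks cancel term by term. This order-two cancellation is the genuinely new feature relative to Theorem~\ref{thm.41a} and is the crux of the argument.
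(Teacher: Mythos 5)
Your proposal is correct and matches the paper's own proof essentially step for step: part (a) by the residue theorem exactly as in Theorem~\ref{thm.41a} and \cite{GK:qseries}, part (b) by the ratio test and the pole structure of $H^\pm$ (poles at $x,y\in q^{\BZ}$, hence order at most $2$ after the specializations), and part (c) by observing that the a priori second-order poles of $W_{m,\mu}(u;\tau)$ at $\ri\bb\BZ+\ri\bb^{-1}\BZ$ are removable, verified by expansion via the method of Section~\ref{sub.52near1}. The order-two cancellation you identify as the crux is precisely what that section's computation establishes (the $O(1/u^2)$ and $O(1/u)$ contributions cancel), so there is no gap.
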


\begin{proof}
  Part (a) follows by applying the residue theorem, just as in the
  proof of part (a) of Theorem~\ref{thm.41a}. A similar result was stated
  in \cite{dimofte-state}.

  Part (b) follows from the fact that when $|q|<1$, the ratio test
  implies that $H^+(x,y,z;q)$ is an entire function of
  $(x,y,z) \in \BC^3$ and $J^-(x,y,z;q)$ is a meromorphic function of
  $(x,y,z) \in \BC^2 \times \BC$ with poles at $x=q^\BZ$ and
  $y \in q^\BZ$.

  For part (c), one uses parts (a) and (b) to deduce that $W_{m,\mu}(u;\tau)$
  is holomorphic of $\tau\in \IC'$ and meromorphic in $u$ with possible
  poles of second order at $\ri \bb \BZ + \ri \bb^{-1} \BZ$.
    An expansion at these points, done by the method of
  Section~\ref{sub.52near1}, demonstrates that the function is analytic
  at the points $\ri \bb \BZ + \ri \bb^{-1} \BZ$.
\end{proof}

Note that the holomorphic blocks have the symmetry
\begin{equation}
  A_m(x^{-1};q) = A_m(x;q),\quad B_m(x^{-1};q) = C_m(x;q),
\end{equation}
which implies the symmetry of the matrix $W_m(x;q)$
\begin{equation}
  W_m(x^{-1};q) = W_m(x;q)
  \begin{pmatrix}
    1&0&0\\0&0&1\\0&1&0
  \end{pmatrix} \,.
  \label{eq:-xWm}
\end{equation}
Consequently $W_{m,\mu}(u;\tau)$ is invariant under the
reflection $u\mapsto -u$.

\begin{lemma}  
  \label{lem.annH}
  \rm{(a)}
  The annihilating ideal of $\calI_H$ of $H^\pm$ is given by
  \begin{align}
  \label{annH}
  \calI_J =& \langle
  y z S_x - x z S_y + (x^2 y - x y^2) S_z + (-x^2 y + x y^2 + x z - y z),
  \\ \notag & -x y^2 S_z^2 + z S_y + (y^2 + x y^2 - q y z) S_z + (-y^2 - z),
  -q y S_y S_z + S_y - 1,
  \\ \notag & z S_y^2  + (-x + q y + 
   q x y - q^2 y^2 - z - q z) S_y + q x y S_z + (x - q y - q x y + q z)
  \rangle
\end{align}
where $S_x$, $S_y$ and $S_z$ are the shifts $x$ to $qx$, $y$ to $qy$
and $z$ to $qz$, respectively.
\newline
\rm{(b)} When $|q|<1$, we have
  \begin{equation}
    H(x,y,z;q^{-1}) = \frac{
      \det
      \begin{pmatrix}
        H(x^{-1},x^{-1}y,x^{-2}z;q) &H(y^{-1},y^{-1}x,y^{-2}z;q)\\
        x H(x^{-1},x^{-1}y,q^{-1}x^{-2}z;q) & y
        H(y^{-1},y^{-1}x,q^{-1}y^{-2}z;q)
      \end{pmatrix}
    }{y\th(-q^{-\frac{1}{2}}x;q)\th(-q^{-\frac{1}{2}}y;q)\th(-q^{-\frac{1}{2}}xy^{-1};q)}.
    \label{eq:H-con}
  \end{equation}
\end{lemma}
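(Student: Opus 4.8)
The plan is to treat the two parts separately: part (a) is a routine holonomic computation, while part (b), the genuine $q\leftrightarrow q^{-1}$ connection formula, carries essentially all the content and is where the real work lies.

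For part (a) I would proceed exactly as in Lemma~\ref{lem.annJ}. The summand of $H^+$, with the two infinite Pochhammer prefactors folded in and rewritten as $(q^{n+1}x;q)_\infty (q^{n+1}y;q)_\infty\, q^{n(n+1)}z^n/(q;q)_n$, is a proper $q$-hypergeometric term in $(x,y,z,n)$, so by the fundamental theorem of Zeilberger $H^+$ is $q$-holonomic in $(x,y,z)$. Running creative telescoping through the \texttt{HF} package of Koutschan~\cite{Koutschan} produces the generators in~\eqref{annH} together with rational certificates, and a Gröbner-basis computation in the $q$-Weyl algebra confirms that these generate the full annihilator and fixes the holonomic rank. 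That the same ideal annihilates $H^-$ is then immediate from the observation recorded just before the statement (the summand of $H^+$ becomes that of $H^-$ under $q\mapsto q^{-1}$) together with the elementary rule $P(L,M,q)f=0\Rightarrow P(L^{-1},M,q^{-1})f(\,\cdot\,;q^{-1})=0$ from Section~\ref{sec.elliptic}; alternatively one simply reruns \texttt{HF} on $H^-$ and checks the two ideals agree.

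For part (b) I would set up a connection problem for a single-variable $q$-difference equation. First I would eliminate $S_x$ and $S_y$ from $\calI_H$ to extract the second-order equation $\mathcal L_z$ in the shift $S_z\colon z\mapsto qz$ governing the $z$-dependence (its order $2$ being reflected in the size of the determinant), which annihilates $H(x,y,z;q)$ and hence, by part (a), also $H(x,y,z;q^{-1})=H^-(x,y,z;q)$. The entries $f_1(z)=H(x^{-1},x^{-1}y,x^{-2}z;q)$ and $f_2(z)=H(y^{-1},y^{-1}x,y^{-2}z;q)$ of the determinant are the two argument substitutions exchanged by the symmetry $H^+(a,b,c)=H^+(b,a,c)$, and I would show, using the generators of $\calI_H$, that the bilinear combination $D(z)=y f_1(z)f_2(q^{-1}z)-x f_2(z)f_1(q^{-1}z)$ satisfies $\mathcal L_z$ up to the $z$-independent factor $\Theta=\th(-q^{-1/2}x;q)\,\th(-q^{-1/2}y;q)\,\th(-q^{-1/2}xy^{-1};q)$. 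This reduces the identity to matching the two sides on the reduced second-order equation and then comparing finitely many Taylor coefficients in $z$. The normalisation $\Theta$ itself I would obtain as a Casoratian evaluation, exactly in the spirit of the Wronskian computation~\eqref{zcw} in Theorem~\ref{thm.41b}: a suitable monomial times $D(z)$ becomes elliptic under $z\mapsto qz$, and its value is read off from the $z\to 0$ limit using the explicit $q$-series for $H^\pm$, producing precisely the product of three theta functions; the residual pseudo-constant ambiguity is then removed using the pole structure in $x\in q^{\BZ}$ from Theorem~\ref{thm.52a}(b) together with the zeros and poles of $\th$ in~\eqref{thetap}.

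The main obstacle is this last step of part (b). Unlike the single-term connection formula for $J$ in the $\knot{4}_1$ case, which followed by direct inspection of~\eqref{J41p}--\eqref{J41n}, here the two substitutions producing $f_1$ and $f_2$ satisfy genuinely different second-order $z$-equations, so $D(z)$ is a \emph{twisted} Casoratian rather than an ordinary one. Verifying that $D(z)/\Theta$ solves $\mathcal L_z$ and evaluating the twisted Casoratian in closed form, while simultaneously controlling the analytic continuation across $|q|=1$ that relates the $q^{-1}$-series $H^-$ to the $q$-series building blocks of the determinant, is where the bulk of the calculation lies and is the only place a hidden pseudo-constant factor could sneak in; this is the step I expect to demand the most care.
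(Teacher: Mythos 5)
Your part (a) is fine and is exactly what the paper does (Zeilberger/\texttt{HF} plus the $q\mapsto q^{-1}$ rule from Section~\ref{sec.elliptic}). Part (b), however, rests on a false structural claim: eliminating $S_x,S_y$ from $\calI_H$ does \emph{not} produce a second-order equation in $S_z$. The minimal pure $z$-shift operator annihilating $H(x,y,z;q)$ is \emph{third} order --- it is the recursion \eqref{eq:H-rec1}, equivalently the operator $r= S_z^3 + (-1 - x - y)S_z^2 + (x + y + x y - q z) S_z - x y$, whose solution space is three-dimensional (spanned by the functions $\cH_{\mu,\nu,k}$ appearing in the proof of Theorem~\ref{thm.52b}(c)). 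Consequently the four entries of the determinant in \eqref{eq:H-con} satisfy (two different) third-order equations in $z$; the $2\times 2$ size of the determinant does not reflect the order of any $z$-equation, and $D(z)$ is not a Casoratian, twisted or otherwise, of a second-order system. This collapses your closing step: a bilinear combination of solutions of two distinct third-order equations satisfies, a priori, only a ninth-order equation, and nothing makes a monomial multiple of $D(z)$ quasi-periodic in $z$, so there is no analogue of the elliptic-function evaluation \eqref{zcw} here. Two further misconceptions compound this: the prefactor $\Theta=\th(-q^{-1/2}x;q)\th(-q^{-1/2}y;q)\th(-q^{-1/2}xy^{-1};q)$ is independent of $z$, so it never needs to be ``evaluated'' as a function of $z$ at all; and no analytic continuation across $|q|=1$ enters, since both sides of \eqref{eq:H-con} are series in the same $q$ with $|q|<1$.

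The skeleton you chose (both sides are $q$-holonomic in $z$; find a common annihilating operator; compare finitely many Taylor coefficients) is the right one and is the paper's, but the implementation must go through higher order. The paper uses \texttt{HF} to find third-order operators $r_{ij}$ annihilating the four entries, then uses holonomic closure under products and sums (\texttt{DFiniteTimes}) to produce a ninth-order operator $R$ annihilating the determinant, and finally uses \texttt{OreReduce} to prove that $R$ is a \emph{left multiple} of the third-order operator $r$ annihilating $H(x,y,z;q^{-1})$ (the theta denominator, being $z$-independent, does not disturb this). Both sides then satisfy the same ninth-order recursion with nonvanishing leading coefficient, so it suffices to match the coefficients of $z^k$ for $k=0,\dots,8$, each of which reduces, after stripping Pochhammer prefactors, to an identity in $\BQ(x,y,q)$. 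Your stronger proposal --- that $D(z)$ itself satisfies $r$ up to a $z$-independent factor, which would require checking only three coefficients --- may well be true, but you offer no mechanism to prove it, and establishing it directly is essentially equivalent to the identity itself; the divisibility argument $R\in\calW\cdot r$ is precisely the device that makes this step effective.
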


\begin{proof}
Part (a) follows as in the proof of Lemma~\ref{lem.annJ}.

For part (b), observe that both sides of the equation are 
power series in $z$ and $q$-holonomic functions of $z$. Using the
\texttt{HF} package, we find that the $(i,j)$-entry
of the determinant is annihilated by the operator $r_{ij}$ given by
% see: Proof of the orthogonality of H[x,y,z;q] for the 5_2 knot in
% koutschan/qBessel.Hahn.Function.nb
\begin{align*}
  r_{11} &=
  -y S_z^3 + (x + y + x y - q^2 z) S_z^2  + (-x - x^2 - x y) S_z + x^2
\\
r_{12} &=
-x S_z^3 + (x + y + x y - q^2 z) S_z^2 + (-y - x y - y^2) S_z + y^2
\\
r_{21} &=
-y S_z^3 + (x + y + x y - q z)  S_z^2 + (-x - x^2 - x y) S_z + x^2
\\
r_{22} &=
-x S_z^3 + (x + y + x y - q z) S_z^2 + (-y - x y - y^2) S_z + y^2 \,,
 \end{align*}
 whereas the left hand side of~\eqref{eq:H-con}, after being multiplied
 by the denominator of the right hand side, is annihilated by the
 operator
 \begin{equation}
 r= S_z^3 + (-1 - x - y)  S_z^2 + (x + y + x y - q z) S_z - x y \,.
 \end{equation}
 Using the commands \texttt{DFiniteTimes} and \texttt{DFiniteTimes},
 we computed a 9th order operator $R$ (which is too long to type here)
 that annihilates the determinant, and using the command
 \texttt{OreReduce}, we proved that it is a left multiple of $r$. It
 follows that both sides of~\eqref{eq:H-con} satisfy the same 9th
 order recursion with respect to $z$, with nonvanishing leading
 term. Thus, the identity follows once we prove that the coefficient
 of $z^k$ in both sides agree, for $k=0,\dots,8$. When $|q|<1$, the
 coefficient of $z^k$ in $H(x,y,z;q)$ (resp., $H(x,y,z;q^{-1})$) is in
 $(qx;q)_\infty(qy;q)_\infty \BQ(x,y,q)$ (resp.,
 $(x;q)_\infty^{-1}(y;q)_\infty^{-1} \BQ(x,y,q)$), and this implies
 that the equality of the coefficient of $z^k$ in the above identity
 reduces to an equality on the field $\BQ(x,y,q)$ of rational
 functions in three variables. The latter is easy to check for
 $k=0,\dots,8$. This completes the proof of~\eqref{eq:H-con}.
\end{proof}

The next theorem concerns the properties of the linear $q$-difference
equations satisfied by the descendant holomorphic blocks.

\begin{theorem}
  \label{thm.52b}
  \rm{(a)} They are $q$-holonomic functions in the variables $(m,x)$
  with a common annihilating ideal
  \begin{equation}
    \label{ann52}
    \calI_{\knot{5}_2} = \langle P_1, P_2, P_3 \rangle
  \end{equation}
  where
  %% see rec52gb of koutschan/qBessel.Hahn.Function.nb
  {\tiny
    \begin{subequations}
      \begin{align}
        \label{52P1}
        P_1 =
        & x (1 - q^3 x^2)
          (1 - q x^2 - q^2 x^2 - q^{3 + m} x^3 + q^3 x^4)
          -(1 - q x) (1 + q x) (1 - q x^2) (1 - q^3 x^2)
          S_m \\ \notag
        & -x(1-qx)(1+qx)
          (1-qx-qx^2-q^3x^2+q^2x^3+q^4x^3-q^{3+m}x^3-q^{4+m}x^3
          +q^4x^4-q^5x^5 ) S_x\\ \notag
        & - q^{4+m} x^4(1 - q x^2) S_x^2 
        \\
      \label{52P2}
        P_2 =
        & x  -  S_m - x S_x + q x^2 S_x S_m 
        \\
      \label{52P3}
        P_3 =
        & x (1 -q^{1 + m} - q x^2)  - 
          (1 - q^{1 + m} +  x - q x^2 + 
          q^{2 + m} x^2 - q x^3) S_m \\ \notag
        &+ (1 - q x^2) S_m^2 +  q^{1+m} x S_x \,.
      \end{align}    
    \end{subequations}
  }
  $\calI_{\knot{5}_2}$ has rank 3 and the three functions form a basis of
  solutions of the corresponding system of linear equations.  \newline
  \rm{(b)} As functions of an integer $m$, $A_m(x;q)$, $B_m(x;q)$ and
  $C_m(x;q)$ form a basis of solutions of the linear $q$-difference
  equation $\Bhat_{\knot{5}_2}(S_m,x,q^m,q) f_m(x;q) = 0$ for $|q| \neq 1$ where 
  \begin{equation}
    \label{rec52m}
    \Bhat_{\knot{5}_2}(S_m,x,q^m,q) = (1-S_m)(1-xS_m)(1-x^{-1}S_m) - q^{2+m} S_m^2 \,.
  \end{equation}
  \rm{(c)} The Wronskian $W_m(x;q)$ of~\eqref{rec52m}, 
  defined in \eqref{eq:Wm52}, satisfies 
  %% see wdettp of koutschan/qBessel.Hahn.Function.nb
  \begin{equation}
    \label{W520}
    \det(W_m(x;q)) =
    -\th(-q^{-\frac{1}{2}}x; q)^{-2} \th(-q^{-\frac{1}{2}}x^2; q)
     \qquad
    (|q| \neq 1) \,.
  \end{equation}
  \rm{(d)} The Wronskian satisfies the orthogonality relation
  \begin{equation}
    \label{WW52b}
    W_{-1}(x;q) \, W_{-1}(x;q^{-1})^T =
    \begin{pmatrix}
      1 & 0 & 0\\
      0 & 0 & 1\\
      0 & 1 & x+x^{-1}
    \end{pmatrix} \,.
  \end{equation}
  It follows that for all integers $m,\ell$ 
  % there exists a matrix
  % $A_{m,\ell}(x;q) \in \SL(3,\BZ[q^\pm, x^\pm])$ such that
  \begin{equation}
    \label{WW52}
    % W_m(x;q) \, A_{m,\ell}(x;q) \, W_\ell(x;q^{-1})^T \in
    % \PSL(3,\BZ[q^\pm, x^\pm]) 
    W_m(x;q) \,  W_\ell(x;q^{-1})^T \in
    \PSL(3,\BZ[q^\pm, x^\pm])
  \end{equation}
  \rm{(e)} As functions of $x$, they form a basis of a linear
  $q$-difference equation
  $\hat A_{\knot{5}_2}(S_x,x,q^m,q) f_m(x;q) =0$
where 
  \begin{equation}
    \label{rec52x}
    \hat A_{\knot{5}_2} (S_x,x,q^m,q) = \sum_{j=0}^3 C_j(x,q^m,q) S_x^j \,,
  \end{equation}
  $S_x$ is the operator that shifts $x$ to $qx$ and
  {\tiny
    \begin{subequations}
      \begin{align}        
        C_0 =
        & -q^{2 + m} x^2 (1 - q^2 x) (1 + q^2 x) (1 - q^5 x^2)
          \label{rec52x0}\\
        C_1 =
        &(1 - q x) (1 + q x) (1 - q^5 x^2) \times
        \nn
        & (1 - q x - q x^2 - q^4 x^2 + q^{2 + m} x^2 + q^{3 + m} x^2 +
          q^2 x^3
          + q^5 x^3 + q^5 x^4 + q^{5 + m} x^4 - q^6 x^5)
          \label{rec52x1}\\
        C_2 =
        & qx(1-q^2x)(1+q^2x)(1-qx^2)\times \nn
        &(1-q^2x-q^{2+m}x-q^2x^2-q^5x^2+q^4x^3+q^7x^3
          -q^{5+m}x^3-q^{6+m}x^3+q^7x^4-q^9x^5)
          \label{rec52x2}\\
        C_3 =
        & q^{8+m} x^4 (1 - q x) (1 + q x) (1 - q x^2)
          \label{rec52x3}
      \end{align}
    \end{subequations}
  }
  \rm{(f)} The Wronskian of Equation~\eqref{rec52x}
  \begin{equation}
    \label{W52x}
    \calW_m(x;q) = \begin{pmatrix}
      A_{m}(x;q) & B_{m}(x;q) & C_{m}(x;q) \\
      A_{m}(qx;q) & B_{m}(qx;q) & C_{m}(qx;q) \\
      A_{m}(q^2x;q) & B_{m}(q^2x;q) & C_{m}(q^2x;q) 
    \end{pmatrix}, \qquad (|q| \neq 1)
  \end{equation}
  satisfies 
  \begin{equation}
    \det \calW_m(x;q) = q^{-5-2m}x^{-5}%(1-qx)(1+qx)
    (1-q^2x^2)(1-qx^2)(1-q^3x^2)
    \th(-q^{-\frac{1}{2}}x;q)^{-2}\th(-q^{-\frac{1}{2}}x^2;q).
    \label{detW52x}
  \end{equation}
\end{theorem}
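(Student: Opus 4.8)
The plan is to reduce the statement to part (c), exactly as in the proof of part (f) for the $\knot{4}_1$ knot. Since $A_m$, $B_m$, $C_m$ all lie in the kernel of the rank-$3$ ideal $\calI_{\knot{5}_2}$ of~\eqref{ann52}, the three columns of $\calW_m(x;q)$ are obtained from the three columns of $W_m(x;q)$ by one and the same $\BQ(x,q^m,q)$-linear transformation of rows; that is, $\calW_m(x;q) = M(x;q^m,q)\, W_m(x;q)$ for an explicit $3\times 3$ matrix $M$ whose first row is $(1,0,0)$. Once $M$ is found, $\det\calW_m = \det M \cdot \det W_m$, and I would substitute the value $\det W_m(x;q) = -\th(-q^{-\frac{1}{2}}x;q)^{-2}\th(-q^{-\frac{1}{2}}x^2;q)$ from~\eqref{W520}.

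To build $M$, I would first use the generator $P_3$ of~\eqref{52P3}, which is the unique generator linear in $S_x$: solving $P_3 f = 0$ for $S_x f$ gives
\begin{equation}
  A_m(qx) = \frac{1}{q^{1+m}x}\Big(-x(1-q^{1+m}-qx^2) A_m
  + (1-q^{1+m}+x-qx^2+q^{2+m}x^2-qx^3) A_{m+1} - (1-qx^2) A_{m+2}\Big),
\end{equation}
and the identical identity (same coefficients) for $B_m$, $C_m$. Writing this second row of $M$ as $(a_0,a_1,a_2)$, I would then apply $S_x$ once more to produce $A_m(q^2x)$ as a combination of $A_m(qx)$, $A_{m+1}(qx)$, $A_{m+2}(qx)$; expanding each of these by the previous display at indices $m,m+1,m+2$ yields a combination of $A_m,\dots,A_{m+4}$. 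Next I would reduce $A_{m+3}$ and $A_{m+4}$ back to the basis $\{A_m,A_{m+1},A_{m+2}\}$ using the order-$3$ recursion $\Bhat_{\knot{5}_2}(S_m,x,q^m,q)$ of~\eqref{rec52m}, which gives $A_{m+3} = A_m - (1+x+x^{-1})A_{m+1} + (1+x+x^{-1}-q^{2+m})A_{m+2}$. The outcome is the third row $(b_0,b_1,b_2)$ of $M$, and since the first row is $(1,0,0)$ one has $\det M = a_1 b_2 - a_2 b_1$, which I expect to simplify to $-q^{-5-2m}x^{-5}(1-qx^2)(1-q^2x^2)(1-q^3x^2)$. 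Multiplying this by~\eqref{W520} then reproduces~\eqref{detW52x}.

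The main obstacle is the bookkeeping in the second row-reduction: the coefficients $a_i$ must be shifted $q^m\mapsto q^{m+1},q^{m+2}$, multiplied together, and combined with the $\Bhat_{\knot{5}_2}$-reduction of $A_{m+3}$, $A_{m+4}$, so the rational-function arithmetic is intricate, although entirely finite; I would carry it out and certify the final simplification of $\det M$ with the \texttt{HF} package already used above. As an independent check, and as a self-contained alternative, I would verify that $\det\calW_m$ satisfies the first-order $q$-difference equation $\det\calW_m(qx;q) = -\tfrac{C_0}{C_3}\det\calW_m(x;q)$ coming from the order-$3$ operator $\hat A_{\knot{5}_2}$ of~\eqref{rec52x}, by the standard Casoratian argument (replace the bottom row using the recursion, then cyclically permute the rows); with $C_0$, $C_3$ from~\eqref{rec52x0} and~\eqref{rec52x3}, and using $\th(qx;q)=q^{-1/2}x^{-1}\th(x;q)$ to check the shift behaviour of the theta prefactors, this fixes $\det\calW_m$ up to a $q$-elliptic factor, which is then pinned down by matching the zeros, poles, and a limiting value exactly as in the proof of~\eqref{W520}. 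Finally, having established~\eqref{detW52x} for $|q|<1$, I would extend it to $|q|>1$ by analytic continuation of both sides, as in the earlier parts of the theorem.
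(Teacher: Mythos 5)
Your argument for part (f) is correct and is essentially the paper's own: the paper likewise writes $\calW_m(x;q)=M_m(x;q)\,W_m(x;q)$ with first row $(1,0,0)$ and concludes by taking determinants, arriving at the relation \eqref{detW52xm}, whose prefactor $-q^{-5-2m}x^{-5}(1-qx)(1+qx)(1-qx^2)(1-q^3x^2)$ coincides with your predicted $\det M$ since $(1-qx)(1+qx)=1-q^2x^2$. The only (cosmetic) difference is how the third row of $M$ is produced: the paper uses the generator $P_1$ of \eqref{ann52}, which already contains $S_x^2$, together with $P_3$, whereas you iterate the $P_3$-relation in $x$ and then reduce $A_{m+3},A_{m+4}$ via \eqref{rec52m}; both are computations in the same rank-$3$ module and yield the same matrix. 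Your alternative Casoratian route (the shift equation $\det\calW_m(qx;q)=-\tfrac{C_0}{C_3}\det\calW_m(x;q)$, ellipticity, and fixing the residual factor by zeros, poles and a limit) is also sound — it is in fact the same technique the paper uses to prove part (c) — and your consistency check of \eqref{detW52x} against $-C_0/C_3$ does work out, including the theta-prefactor shifts.

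The gap is that the statement is the whole of Theorem~\ref{thm.52b}, and your proposal establishes only part (f), consuming parts (a), (b) and (c) (and (e), for the cross-check) as inputs. Parts (a), (b), (e) are proved in the paper by creative telescoping (the \texttt{HF} package), which you could have invoked in one line; but parts (c) and (d) carry genuine content that your proposal nowhere supplies. Part (c) — the evaluation \eqref{W520} that you substitute at the very end — requires showing $\det W_{m+1}=\det W_m$ from \eqref{rec52m} and then computing the constant; the paper does this via the auxiliary Wronskian $\cW_{\mu,\nu}(z;q)$ of the function $H$, its first-order $q$-shift equation, the ellipticity of $z^{\mu+\nu}\det\cW_{\mu,\nu}(z;q)$, and the $z\to 0$ limit \eqref{zcw52}, followed by the specialization $q^\mu=x^{-1}$, $q^\nu=x$, $z=1$. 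Part (d), the orthogonality \eqref{WW52b}, rests on the inversion identity \eqref{eq:H-con} of Lemma~\ref{lem.annH}, itself proved by a nontrivial $q$-holonomic argument (a ninth-order operator and matching of nine initial coefficients). So as written you have a correct proof of (f) conditional on (a)--(c), not a proof of the theorem; completing it requires adding the arguments for (c) and (d), which is where most of the work in the paper's proof actually lies.
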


\begin{proof} Since $A_m(x;q)$, $B_m(x;q)$, and
  $C_m(x;q)$ are given in terms of $q$-proper hypergeometric
  multisums, it follows from the fundamental theorem of
  Zeilberger~\cite{Zeil:holo,WZ,PWZ} (see also~\cite{GL:survey}) that
  they are $q$-holonomic functions in both variables $m$ and $x$.
  Part (a) follows from an application of the
  \texttt{HF} package of Koutschan~\cite{Koutschan,
    Koutschan:holofunctions}.

  Part (b) follows from the \texttt{HF} package. The
  fact that they are a basis follows from (c).

  For part (c), Equation~\eqref{rec52m} implies that the determinant of
  the Wronskian satisfies the first order
  equation $\det(W_{m+1}(x;q))=\det(W_{m}(x;q))$
  (see~\cite[Lem.4.7]{GK:74}). It follows that
  $\det(W_m(x;q))=\det(W_0(x;q))$ with initial condition a
  function of $x$ given by
  \begin{equation}
    \label{W5200}
    \det(W_0(x;q)) = -\th(-q^{-\frac{1}{2}}x;q)^{-2}
    \th(-q^{-\frac{1}{2}}x^2;q)
    \qquad
    (|q| \neq 1) \,,
  \end{equation}
  which can be proved in a manner similar to section~\ref{sub.41holo}.
  Using Lemma~\ref{lem.annH} and the \texttt{HF} package we find the
  following recursion relation for the $q$-function $H(x,y,z;q)$ when
  $|q|<1$
  \begin{equation}
    x y H(x,y,q z;q) - (x+y+x y-z)H(x,y,z;q)
    +(1+x+y)H(x,y,q^{-1}z;q) + H(x,y,q^{-2}z;q) = 0.
    \label{eq:H-rec1}
  \end{equation}
  It then follows that
  \begin{align}
    &\cH_{\mu,\nu,1}(z;q) = H(q^{\mu},q^{\nu},z;q),\\
    &\cH_{\mu,\nu,2}(z;q) = z^{-\mu}H(q^{-\mu},q^{\nu-\mu},q^{-2\mu}z;q),\\
    &\cH_{\mu,\nu,3}(z;q) = z^{-\nu}H(q^{-\nu},q^{\mu-\nu},q^{-2\nu}z;q),
  \end{align}
  are three independent solutions to
  \begin{equation}
    q^{\mu+\nu} H(q z;q) - (q^\mu+q^\nu+q^{\mu+\nu}-z)H(z;q)
    +(1+q^\mu+q^\nu)H(q^{-1}z;q) + H(q^{-2}z;q) = 0.
  \end{equation}
  The corresponding Wronskian
  \begin{equation}
    \cW_{\mu,\nu}(z;q) =
    \begin{pmatrix}
      \cH_{\mu,\nu,1}(q^{-1}z;q) & \cH_{\mu,\nu,2}(q^{-1}z;q)
      & \cH_{\mu,\nu,3}(q^{-1}z;q)\\
      \cH_{\mu,\nu,1}(z;q) & \cH_{\mu,\nu,2}(z;q)
      & \cH_{\mu,\nu,3}(z;q)\\
      \cH_{\mu,\nu,1}(q z;q) & \cH_{\mu,\nu,2}(q z;q)
      & \cH_{\mu,\nu,3}(q z;q)\\
    \end{pmatrix}
  \end{equation}
  satisfies the recursion relation (see~\cite[Lem.4.7]{GK:74})
  \begin{equation}
    \det \cW_{\mu,\nu}(z;q) = q^{-\mu-\nu} \det\cW_{\mu,\nu}(q^{-1}z;q)
  \end{equation}
  which implies that the determinant of
  $U(z;q) = z^{\mu+\nu} \cW_{\mu,\nu}(z;q)$ is invariant under the shift
  $z\mapsto q z$.  We can thus identify it with the limit
  $z\mapsto 0$, which is easy to compute.
  Since
  \begin{equation}
    \lim_{z\mapsto 0} H(x,y,z;q) = (qx;q)_\infty(qy;q)_\infty,
  \end{equation}
  we have
  \begin{align}
    &\lim_{z\mapsto 0} \det U(z;q) \nn=
    &\lim_{z\mapsto 0}
      \det \begin{pmatrix}
        H(q^{\mu},q^{\nu},q^{-1}z;q) &
        q^{\mu}H(q^{-\mu},q^{\nu-\mu},q^{-2\mu}q^{-1}z;q) &
        q^{\nu}H(q^{-\nu},q^{\mu-\nu},q^{-2\nu}q z^{-1};q)\\
        H(q^{\mu},q^{\nu},z;q) &
        H(q^{-\mu},q^{\nu-\mu},q^{-2\mu}z;q) &
        H(q^{-\nu},q^{\mu-\nu},q^{-2\nu}z;q)\\
        H(q^{\mu},\re^{\nu},q z;q) &
        q^{-\mu}H(q^{-\mu},q^{\nu-\mu},q^{-2\mu}q z;q) &
        q^{-\nu}H(q^{-\nu},q^{\mu-\nu},q^{-2\nu}q z;q)
    \end{pmatrix}\nn=
    &(q q^{\mu};q)_\infty(q q^{\nu};q)_\infty
     (q q^{-\mu};q)_\infty(q q^{\nu-\mu};q)_\infty
     (q q^{-\nu};q)_\infty(q q^{\mu-\nu};q)_\infty
      \det
      \begin{pmatrix}
        1&q^\mu&q^\nu\\
        1&1&1\\
        1&q^{-\mu}&q^{-\nu}\\
      \end{pmatrix}\nn=
    &q^{-\mu-\nu}(1-q^\mu)(1-q^\nu)(q^\mu-q^\nu)\nn
    &\times(q q^{\mu};q)_\infty(q q^{\nu};q)_\infty
     (q q^{-\mu};q)_\infty(q q^{\nu-\mu};q)_\infty
     (q q^{-\nu};q)_\infty(q q^{\mu-\nu};q)_\infty.
    \label{eq:detU52}
  \end{align}
  We thus have
  \begin{equation}
    \label{zcw52}
    z^{\mu+\nu} \det \cW_{\mu,\nu}(z;q) =
    -(q^\mu;q)_\infty(q q^{-\mu};q)_\infty
    (q q^\nu;q)_\infty(q^{-\nu};q)_\infty
    (q q^{\mu-\nu};q)_\infty (q^{\nu-\mu};q)_\infty.
  \end{equation}
  Using the substitution
  \begin{equation}
    q^\mu = x^{-1},\quad q^{\nu} = x,\quad z =1
  \end{equation}
  in the above equation and cancelling with the $\theta$-prefactors of
  $B_m(x;q)$ and $C_m(x;q)$ we obtain Equation~\eqref{W5200} for
  $|q|<1$.  The case for $|q|>1$ can be obtained by 
  analytic continuation on both sides of \eqref{W5200}.

  For part (d), Equation~\eqref{WW52b} follows from~\eqref{eq:H-con}.
  To see this, let us introduce
  \begin{equation}
    \wt{W}_{m}(x;q) =
    \begin{pmatrix}
      H(x,x^{-1},q^m;q)&x^mH(x,x^2,q^mx^2;q)&x^{-m}H(x^{-1},x^{-2},q^m
      x^{-2};q)\\
      H(x,x^{-1},q^{m+1};q)
      &x^{m+1}H(x,x^2,q^mx^2;q)&x^{-m-1}H(x^{-1},x^{-2},q^{m+1}
      x^{-2};q)\\
      H(x,x^{-1},q^{m+2};q)
      &x^{m+2}H(x,x^2,q^{m+2}x^2;q)&x^{-m-2}H(x^{-1},x^{-2},q^{m+2}
      x^{-2};q)
    \end{pmatrix}.
  \end{equation}
  Then Equation~\eqref{WW52b} can equally be written as
  \begin{equation}
    \wt{W}_{-1}(x;q^{-1}) =
    \begin{pmatrix}
      1&0&0\\
      0&0&1\\
      0&1&x+x^{-1}
    \end{pmatrix}\left(\wt{W}_{-1}(x;q)^{-1}\right)^T
    \begin{pmatrix}
      1&0&0\\0&x^{-2}&0\\0&0&x^2
    \end{pmatrix},
    \label{eq:tW52-id}
  \end{equation}
  consisting of 9 scalar equations.  Each of these equations is a
  specialization of \eqref{eq:H-con}, sometimes after applying the
  recursion relation \eqref{rec52m}.
  
  Observe that Equation~\eqref{rec52m} written in matrix form,
  implies that
  \begin{equation}
    \label{Wmatrec}
    W_{m+1}(x;q) =
    \begin{pmatrix}
      0&1&0\\0&0&1\\1&-s(x)&s(x)-q^{2+m}
    \end{pmatrix} W_m(x;q) 
  \end{equation}
  where 
  \begin{equation}
    \label{sx}
    s(x) = x+x^{-1}+1 \,.
  \end{equation}
  Equation~\eqref{WW52} follows from~\eqref{WW52b} together
  with~\eqref{Wmatrec}.

  Part (e) follows from the \texttt{HF} package. The
  fact that they are a basis follows from (f).

  For part (f), the first and third generators of the annihilating
  ideal \eqref{ann52}, which annihilate $A_m(x;q)$, $B_m(x;q)$,
  $C_m(x;q)$ allow expressing $A_m(q x;q)$, $A_m(q^2x;q)$ in terms of
  $A_m(x;q), A_{m+1}(x;q), A_{m+2}(x;q)$, and similarly for
  $B_m(q^j x;q)$, $C_m(q^j x;q)$ ($j=1,2$).  It follows that the
  Wronskian~\eqref{W52x} and the Wronskian~\eqref{eq:Wm52} are related
  \begin{equation}
    \cW_m(x;q) = M_m(x;q) W_m(x;q)
    \label{eq:W52xm}
  \end{equation}
  where $M_m(x;q)$ is a $3\times 3$ matrix with entries
  \begin{align}
    (M_m(x;q))_{1,1} =
    &1,\quad (M_m(x;q))_{1,2} =
      0,\quad(M_m(x;q))_{1,3} = 0,\nn
      (M_m(x;q))_{2,1} =
    &-q^{-1-m}(1-q^{1+m}-q x^2)\nn
      (M_m(x;q))_{2,2} =
    &q^{-1-m}x^{-1}(1-q^{1+m}+x+(-q+q^{2+m}) x^2
      -qx^3)\nn
      (M_m(x;q))_{2,3} =
    &-q^{-1-m}x^{-1}(1-qx^2)\nn
      (M_m(x;q))_{3,1} =
    &q^{-5-2m}x^{-3}
      \big(1 + (-q + q^{2+m}) x -(q + q^2 + q^3) x^2 \nn +
    &(q^2 + q^3 + q^4 - q^{3+m} - 2 q^{4+m} - q^{5+m} + q^{5+2m}) x^3 \nn +
    &(q^3 + q^4 + 
      q^5) x^4 + (-q^4 - q^5 - q^6 + q^{5+m} + q^{6+m} + q^{7+m}) x^5 - 
      q^6 x^6 + q^7 x^7\big)\nn
      (M_m(x;q))_{3,2} =
    &-q^{-5-2m}x^{-4}(1-qx)(1+qx)\big(
      1 + (1-q + q^{2+m}) x + (-2 q - q^3 + q^{2+m}) x^2 \nn+
    &(-q + q^2 - q^3 +
      q^4 - 2 q^{3+m} - q^{4+m} - q^{5+m} + q^{4+2m} + q^{5+2m}) x^3\nn+
    &(q^2 + 2 q^4 - q^{3+m} - q^{4+m} - q^{5+m}) x^4
      + (q^4 - q^5 + q^{6+m}) x^5 - q^5 x^6   \big)\nn
      (M_m(x;q))_{3,3} =
    &q^{-5-2m}x^{-4}(1-qx) (1+qx)\big(1-qx-(q+q^3) x^2 \nn+
    &(q^2+q^4-q^{3+m}-q^{4+m})x^3 +q^4 x^4 - q^5 x^5\big)
  \end{align}
  After taking determinants on both sides, one finds that
  \begin{equation}
    \det(\calW_m(x;q)) = -q^{-5-2m} x^{-5}
    (1-qx)(1+qx)(1-qx^2)(1-q^3x^2)
    \det(W_m(x;q))
    \label{detW52xm}
  \end{equation}
  This, together with~\eqref{W520} concludes the proof of~\eqref{detW52x}.
\end{proof}

We now come to Conjecture~\ref{conj.ann} concerning a refinement of
the $\Ahat$-polynomial.  
As in Section~\ref{sub.41holo}, we can use Theorems~\ref{thm.52a}
and~\ref{thm.52b} to obtain explicit linear $q$-difference equations
for the descendant integrals with respect to the $u$ and the $m$
variables, and in doing so, we will obtain a refinement of the
$\Ahat$-polynomial. To simplify Equation~\eqref{41x-desc-fac}, let us define
a normalized version of the descendant state-integral 
\begin{equation}
\label{Z52r}
z_{\knot{5}_2,m,\mu}(u;\tau) =
(-1)^{m+\mu}q^{-m/2}\tq^{-\mu/2}
Z_{\knot{5}_2,m,\mu}(u;\tau) \,.
\end{equation}

Our next theorem confirms Conjecture~\ref{conj.ann} for the $\knot{5}_2$
knot.

\begin{theorem}
  \label{thm.52c}
  \rm
  $z_{\knot{5}_2,m,k}(\ub;\tau)$ is a $q$-holonomic function of $(m,u)$
  with annihilator ideal $\calI_{\knot{5}_2}$ given in~\eqref{ann52}.
  As a function of $u$ (resp., $m$) it is annihilated by the operators
  $\Ahat_{\knot{5}_2}(S_x,x,q^m,q)$ (resp.,
  $\Bhat_{\knot{5}_2}(S_m,x,q^m,q)$) given by~\eqref{rec52m}
  and~\eqref{rec52x}, whose classical limit is
\begin{equation}
\begin{aligned}
  & \Ahat_{\knot{5}_2}(S_x,x,q^m,1) =\\
  &-(1-x)^2(1+x)^2
  \Big(q^mx^2-(1-x-2(1-q^m)x^2+2x^3+(1+q^m)x^4-x^5)S_x\\
  &-x(1-(1+q^m)x-2x^2+2(1-q^m)x^3+x^4-x^5)S_x^2-q^mx^4S_x^3\Big)\,,
\end{aligned}
\end{equation}
and
\begin{equation}
  \Bhat_{\knot{5}_2}(S_m,x,q^m,1) =
  (1-S_m)(1-xS_m)(1-x^{-1}S_m)-q^m S_m^2\,.
\end{equation}
$\Ahat_{\knot{5}_2}(S_x,x,1,1)$ is the $A$-polynomial of the knot,
$\Ahat_{\knot{5}_2}(S_x,x,1,q)$ is the (homogeneous part) of the
$\Ahat$-polynomial of the knot and $\Bhat_{\knot{5}_2}(y,x,1,1)$ is
the defining equation of the curve~\eqref{52critb}.
\end{theorem}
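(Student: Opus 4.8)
The plan is to mirror, for $\knot{5}_2$, the argument sketched after Theorem~\ref{thm.41c} for $\knot{4}_1$, combining the bilinear factorization of Theorem~\ref{thm.52a} with the $q$-holonomic data of Theorem~\ref{thm.52b}. First I would note that, by the definition~\eqref{Z52r} and the factorization~\eqref{52x-desc-fac}, the prefactors $(-1)^{m+\mu}q^{\pm m/2}\tq^{\pm\mu/2}$ cancel, so that
\[
  z_{\knot{5}_2,m,\mu}(\ub;\tau)=\sum_{\sigma} c_\sigma(\mu,\tau)\,B^{\sigma}_m(x;q),
  \qquad (B^\sigma_m)_\sigma=(A_m,B_m,C_m),
\]
where $c_\sigma(\mu,\tau)=\re^{\cdots}\,B^\sigma_{-\mu}(\tx;\tq^{-1})$ is independent of $(x,m)$. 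The decisive observation is that the generators $S_x,x,S_m,q^m$ of $\calW$ act on $z$ \emph{only} through the holomorphic blocks $A_m(x;q),B_m(x;q),C_m(x;q)$: the shift $S_x$ sends $u\mapsto u+2\pi\ri\tau$, hence $x\mapsto qx$ while leaving $\tx=\re^{u/\tau}$ fixed (it multiplies $\tx$ by $\re^{2\pi\ri}=1$), and $S_m$ touches only the index $m$, so the $\tx$-, $\tq$- and $\mu$-data are genuine spectators. Consequently every operator annihilating all three blocks also annihilates $z$, giving $\calI_{\knot{5}_2}\subseteq\mathrm{Ann}(z)$.

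For the reverse inclusion I would use that $\calI_{\knot{5}_2}$ has rank $3$ and that $A_m,B_m,C_m$ form a basis of its solution space (Theorem~\ref{thm.52b}(a)). If $P\in\calW$ annihilates the family $z_{\knot{5}_2,m,\mu}$, then $\sum_\sigma c_\sigma(\mu,\tau)\,(P B^\sigma_m)=0$; as $\mu$ ranges over $\BZ$ the coefficient vectors $(c_\sigma(\mu,\tau))_\sigma$ span $\BC^3$ (the blocks being linearly independent), forcing $P B^\sigma_m=0$ for each $\sigma$, hence $P\in\calI_{\knot{5}_2}$. This yields $\mathrm{Ann}(z)=\calI_{\knot{5}_2}$ and the holonomicity assertion. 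The two distinguished operators are then identified directly from Theorem~\ref{thm.52b}: $\Bhat_{\knot{5}_2}(S_m,x,q^m,q)$ of~\eqref{rec52m} is the $S_x$-free generator of part (b), governing the $m$-dependence, and $\Ahat_{\knot{5}_2}(S_x,x,q^m,q)$ of~\eqref{rec52x} is the $S_m$-free operator of part (e), obtained by eliminating $S_m$ from the rank-$3$ ideal exactly as~\eqref{rec41x} is produced from~\eqref{rec41m} for $\knot{4}_1$. Since $z$ is annihilated by all of $\calI_{\knot{5}_2}$, it is annihilated by both.

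The classical limits are then routine substitutions. Setting $q=1$ in the explicit coefficients~\eqref{rec52x0}--\eqref{rec52x3}: each $C_j$ carries a factor degenerating at $q=1$ to $(1-x)^2(1+x)^2$, and factoring it out reproduces the stated $\Ahat_{\knot{5}_2}(S_x,x,q^m,1)$; similarly $q=1$ in~\eqref{rec52m} gives $\Bhat_{\knot{5}_2}(S_m,x,q^m,1)$. Finally, writing $y$ for $S_m$ and specializing $q^m=1$, $q=1$ gives $\Bhat_{\knot{5}_2}(y,x,1,1)=(1-y)(1-xy)(1-x^{-1}y)-y^2$, which is exactly the defining polynomial of the curve~\eqref{52critb}.

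The last and most delicate step is the identification of $\Ahat_{\knot{5}_2}(S_x,x,1,1)$ with the $A$-polynomial of $\knot{5}_2$ (with meridian variable $x^2$ and longitude $S_x$) and of $\Ahat_{\knot{5}_2}(S_x,x,1,q)$ with the homogeneous part of its $\Ahat$-polynomial. These targets are defined through the $\SL(2,\BC)$-character variety and the colored Jones function, independently of the state-integral, so the match is not formal: I would compute the $q^m=1$ specialization of~\eqref{rec52x}, clear content and homogenize in the sense of~\cite{Ga:AJ}, and compare termwise with the tabulated $A$- and $\Ahat$-polynomials of $\knot{5}_2$ in~\cite{CCGLS,Ga:AJ}. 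I expect this reconciliation of our normalization and meridian/longitude conventions with the literature to be the main obstacle; the remaining steps are structural consequences of Theorems~\ref{thm.52a} and~\ref{thm.52b} together with the $q=1$ evaluations above.
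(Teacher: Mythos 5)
Your proposal is correct and takes essentially the same approach the paper intends: the paper offers no separate proof of Theorem~\ref{thm.52c}, treating it as an immediate consequence of combining the bilinear factorization of Theorem~\ref{thm.52a} with the $q$-holonomic data of Theorem~\ref{thm.52b} — precisely your spectator argument (shifts in $x$ and $m$ leave $\tx$, $\tq$, $\mu$ untouched) together with the identification of $\Bhat_{\knot{5}_2}$ and $\Ahat_{\knot{5}_2}$ from parts (b) and (e) of Theorem~\ref{thm.52b}. Your extra details — the spanning argument for the reverse inclusion $\mathrm{Ann}(z)\subseteq\calI_{\knot{5}_2}$ (which rests on the invertibility of the Wronskian from Theorem~\ref{thm.52b}(c)) and the termwise $q=1$ specializations — merely flesh out steps the paper leaves implicit, and the concluding comparison with the tabulated $A$- and $\Ahat$-polynomials of \cite{CCGLS,Ga:AJ} is exactly the finite check the authors rely on.
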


\subsection{Taylor series expansion at $u=0$}
\label{sub.52near1}

In this section we discuss the Taylor expansion of the descendant
holomorphic blocks and descendant state-integral of the $\knot{5}_2$
knots at $u=0$.  
% Equation \eqref{52x-desc-fac} can be written as
% {\small
% \begin{align} \notag
%   Z_{\knot{5}_2,m,\mu}(u;\tau) =&
%     \re^{\frac{3\pi\ri}{4}+\frac{5\pi\ri}{12}(\tau+\tau^{-1})}
%     (-1)^{m+\mu}q^{m/2}\tq^{\mu/2} 
%   \times\Big(
%     \cA_{m}(x;q)\cA_{-\mu}(\tx,\tq^{-1}) \\ &
%     +\re^{\pi\ri(-2u^2-4c_\bb u)}\cB_{m}(x;q)\cB_{-\mu}(\tx;\tq^{-1})
%     +\re^{\pi\ri(-2u^2+4c_\bb u)}\cC_{m}(x;q)\cC_{-\mu}(\tx;\tq^{-1})
%     \Big) 
%     \label{eq:Z52des-fac}
%   \end{align}
%   }
% where
% \begin{subequations}
% \begin{align}
%   &\cA_{m}(x;q) = H(x,x^{-1},q^m;q),\\
%   &\cB_{m}(x;q) = x^{m} H(x,x^2,q^m x^2;q) \,\\
%   &\cC_{m}(x;q) = x^{-m} H(x^{-1},x^{-2},q^m x^{-2};q) \, .
% \end{align}
% \end{subequations}
Using the definition of $H(x,y,z;q)$
and~\eqref{eq:phin}--\eqref{eq:tphin}, we find
% \begin{align}
%   \cA_m(\re^u;q)
%   % \re^{2m u} J(q^m \re^{2u},\re^u;q)
%   &= (q;q)_\infty^2
%     \sum_{n=0}^\infty \frac{q^{n(n+1)+nm}}{(q;q)_n^3}
%     \phi_n(u) \phi_n(-u)
%     \nn
%   &= (q;q)_\infty^2
%     (\alpha^{(m)}_0(q)+u^2\alpha^{(m)}_2(q))+O(u^3)
% \end{align}
\begin{align}
  A_m(\re^u;q)
  % \re^{2m u} J(q^m \re^{2u},\re^u;q)
  &= (q;q)_\infty^2
    \sum_{n=0}^\infty \frac{q^{n(n+1)+nm}}{(q;q)_n^3}
    \phi_n(u) \phi_n(-u)
    \nn
  &= (q;q)_\infty^2
    \left(\alpha^{(m)}_0(q)+u^2\alpha^{(m)}_2(q)+O(u^3)\right)
    \label{eq:A-uexp-52}
\end{align}
where
\begin{align}
  &\alpha^{(m)}_0(q) = H^{+}_{0,m}(q),\label{eq:a0-52}\\
  &\alpha^{(m)}_2(q) =
    \sum_{n=0}^\infty \frac{q^{n(n+1)+nm}}{(q;q)_n^3}
    (-E_2^{(n)}(q)),\label{eq:a2-52}
\end{align}
and
% \begin{align}
%   \cC_m(\re^{-u};q) = \cB_m(\re^u;q)
%   % \re^{m u} J(q^m \re^{u},\re^{-u};q)
%   &= (q;q)_\infty^2 \sum_{n=0}^\infty
%     \frac{q^{n(n+1)+nm}}{(q;q)_n^3}
%     \phi_n(u)\phi_n(2u) \re^{(m+2n)u}
%   \nn
%   &= (q;q)_\infty^2
%        (\beta^{(m)}_0(q)+u\beta^{(m)}_1(q)+u^2\beta^{(m)}_2(q))+O(u^3)
% \end{align}
\begin{align}
 C_m(\re^{-u};q)& = B_m(\re^u;q)\nn
  % \re^{m u} J(q^m \re^{u},\re^{-u};q)
  &= (1-\re^{-u})^{-2}(q;q)_\infty^{-2}
    \phi_0(u)^{-2}\phi_0(-u)^{-2}
    \sum_{n=0}^\infty
    \frac{q^{n(n+1)+nm}}{(q;q)_n^3}
    \phi_n(u)\phi_n(2u) \re^{(m+2n)u} \nn
  &= u^{-2}(q;q)_\infty^{-2}
    \left(\beta^{(m)}_0(q)+u\beta^{(m)}_1(q)
    +u^2\beta^{(m)}_2(q)+O(u^3)\right)
    \label{eq:B-uexp-52}
\end{align}
where the coefficients satisfy
% \begin{align}
%   &\beta^{(m)}_0(q) = H^{+}_{0,m}(q),\\
%   &\beta^{(m)}_1(q) =
%     \sum_{n=0}^\infty \frac{q^{n(n+1)+nm}}{(q;q)_n^3}
%     (2n+m-3E_1^{(n)}(q)),\\
%   &\beta^{(m)}_2(q) =
%     \frac{1}{2}\sum_{n=0}^\infty \frac{q^{n(n+1)+nm}}{(q;q)_n^3}
%     ((2n+m-3E_1^{(n)}(q))^2 - 5E_2^{(n)}(q)).
% \end{align}
\begin{align}
  &\beta^{(m)}_0(q) = H^{+}_{0,m}(q),
    \label{eq:b0-52}\\
  &\beta^{(m)}_1(q) = H^{+}_{1,m}(q),
    \label{eq:b1-52}\\
  &\beta^{(m)}_2(q) =
    \frac{1}{2}H^+_{2,m}(q)+\alpha^{(m)}_2(q).
    \label{eq:b2-52}
\end{align}
% We notice that
% \begin{equation}
%   \beta^{(m)}_1(q) -\alpha^{(m)}_1(q) =
%   \frac{1}{2}(G_m^0(q) - G_m^1(q)).
% \end{equation}
Similarly, using the definition of $H(x,y,z;q^{-1})$, we find
% \begin{align}
%   \cA_m(\re^u;q^{-1})
%   &=
% %  \re^{2m u}J(q^{-m} \re^{2u},\re^u;q^{-1}) =
%     \frac{1}{(1-\re^u)(1-\re^{-u})(q;q)_\infty^2}
%     \sum_{n=0}^\infty\frac{(-1)^nq^{\frac{1}{2}n(n+1)-nm}}
%     {(q;q)_n^3}
%     \wt{\phi}_n(u) \wt{\phi}_n(-u) \nn
%   &= -\frac{1}{u^2(q;q)_\infty^2}
%     (\wt{\alpha}^{(m)}_0(q)+u^2
%     \wt{\alpha}^{(m)}_2(q)) + O(u^3)
%     \label{eq:tA-uexp-52}
% \end{align}
\begin{align}
  A_m(\re^u;q^{-1})
  &=
%  \re^{2m u}J(q^{-m} \re^{2u},\re^u;q^{-1}) =
    \frac{1}{(1-\re^u)(1-\re^{-u})(q;q)_\infty^2}
    \sum_{n=0}^\infty\frac{(-1)^nq^{\frac{1}{2}n(n+1)-nm}}
    {(q;q)_n^3}
    \wt{\phi}_n(u) \wt{\phi}_n(-u) \nn
  &= -u^{-2}(q;q)_\infty^{-2}
    \left(\wt{\alpha}^{(m)}_0(q)+u^2
    \wt{\alpha}^{(m)}_2(q) + O(u^3)\right)
    \label{eq:tA-uexp-52}
\end{align}
where
\begin{align}
  &\wt{\alpha}^{(m)}_0(q) = H^{-}_{0,-m}(q),
  \label{eq:ta0-52}\\
  &\wt{\alpha}^{(m)}_2(q) = \sum_{n=0}^\infty
    \frac{(-1)^nq^{\frac{1}{2}n(n+1)-nm}}
    {(q;q)_n^3}\left(-\frac{1}{12}+2E_2^{(0)}(q)-E_2^{(n)}(q)\right),
    \label{eq:ta2-52}
\end{align}
and
% \begin{align}
%   \cC_m(\re^{-u};q^{-1})=\cB_m(\re^u;q^{-1})
%   &=
% %  \re^{m u}J(q^{-m} \re^{u},\re^{-u};q^{-1}) =
%     \frac{1}{(1-\re^{u})(1-\re^{2u})(q;q)_\infty^2}
%     \sum_{n=0}^\infty\frac{(-1)^nq^{\frac{1}{2}n(n+1)-nm}}
%     {(q;q)_n^3}
%     \wt{\phi}_n(u)\wt{\phi}_n(2u)\re^{(2n+m)u} \nn
%   &= \frac{1}{2u^2(q;q)_\infty^2}(\wt{\beta}^{(m)}_0(q)+u
%     \wt{\beta}^{(m)}_1(q)+u^2
%     \wt{\beta}^{(m)}_2(q)) + O(u^3)
%     \label{eq:tB-uexp-52}
% \end{align}
\begin{align}
  C_m(\re^{-u};q^{-1})
  &=B_m(\re^u;q^{-1})\nn
  &=
  % \re^{m u}J(q^{-m} \re^{u},\re^{-u};q^{-1}) =
    (1-\re^{u})(1-\re^{2u})^{-1}(q;q)_\infty^2\phi_0(u)^2\phi_0(-u)^2
    \sum_{n=0}^\infty\frac{(-1)^nq^{\frac{1}{2}n(n+1)-nm}}
    {(q;q)_n^3}
    \wt{\phi}_n(u)\wt{\phi}_n(2u)\re^{(2n+m)u} \nn
  &= \frac{1}{2}(q;q)_\infty^2\left(\wt{\beta}^{(m)}_0(q)+u
    \wt{\beta}^{(m)}_1(q)+u^2
    \wt{\beta}^{(m)}_2(q) + O(u^3)\right)
    \label{eq:tB-uexp-52}
\end{align}
where the coefficients satisfy
% \begin{align}
%   &\wt{\beta}^{(m)}_0(q) = H^{-}_{0,-m}(q),\\
%   &\wt{\beta}^{(m)}_1(q) = \sum_{n=0}^\infty
%     \frac{(-1)^nq^{\frac{1}{2}n(n+1)-nm}}
%     {(q;q)_n^3}(-\frac{3}{2}-n+m+3E_1^{(n)}(q)),\\
%   &\wt{\beta}^{(m)}_2(q) = \frac{1}{2}\sum_{n=0}^\infty
%     \frac{(-1)^nq^{\frac{1}{2}n(n+1)-nm}}
%     {(q;q)_n^3}((-\frac{3}{2}-n+m+3E_1^{(n)}(q))^2-\frac{5}{12}+10E_2^{(0)}(q) - 5E_2^{(n)}(q)).
% \end{align}
\begin{align}
  &\wt{\beta}^{(m)}_0(q) = H^{-}_{0,-m}(q),
  \label{eq:tb0-52}\\
  &\wt{\beta}^{(m)}_1(q) = -H^{-}_{1,-m}(q),
  \label{eq:tb1-52}\\
  &\wt{\beta}^{(m)}_2(q) =
    \frac{1}{2}H^{-}_{2,-m}(q) + \wt{\alpha}_2^{(m)}(q)
    \label{eq:tb2-52}
\end{align}
% We also notice
% \begin{equation}
%   \wt{\beta}_1^{(m)}(q) - \wt{\alpha}_1^{(m)}(q) =
%   \frac{1}{2}
%   (G_{-m}^0(q)+G_{-m}^1(q)).
% \end{equation}
Applying these results to the right hand side of \eqref{52x-desc-fac},
% and using the fact that
% \begin{align}
%   &\alpha_0^{(m)}(q) + \beta_1^{(m)}(q)
%     = H^{+}_{1,m}(q),\\
%   &\wt{\alpha}_0^{(\mu)}(\tq) + \wt{\beta}_1^{(\mu)}(\tq)
%     = -H^{-}_{1,-\mu}(\tq),\\
%   &\alpha_0^{(m)}(q) - 2\alpha_2^{(m)}(q)
%     +2\beta_1^{(m)}(q) +2 \beta_2^{(m)}(q)
%     = H^{+}_{2,m}(q)+\frac{1}{6}E_2(q) H^+_{0,m}(q),\\
%   &\wt{\alpha}_0^{(\mu)}(\tq) - 2\wt{\alpha}_2^{(\mu)}(\tq)
%     +2\wt{\beta}_1^{(\mu)}(\tq) +2\wt{\beta}_2^{(\mu)}(\tq)
%     = H^{-}_{2,-\mu}(\tq)-\frac{1}{6}E_2(\tq) H^-_{0,-\mu}(\tq),
% \end{align}
as well as using the trick
\begin{equation}
  \frac{1}{2\pi\ri} = -\frac{1}{12}(\tau E_2(q) - \tau^{-1}E_2(\tq)),
\end{equation}
we find the $O(1/u^2)$ and $O(1/u)$
contributions from~\eqref{eq:tA-uexp-52} and~\eqref{eq:tB-uexp-52}
cancel, and the $O(u^0)$ contributions reproduce~\eqref{520-desc-fac}.
% Notice that the $\sqrt{\tau}$ terms that appear
% in~\eqref{520-desc-fac} come from expanding the exponential of the
% quadratic forms in $u$ in~\eqref{eq:Z52des-fac}.

As an application of the above computations, we demonstrate that
Theorem~\ref{thm.520}, especially \eqref{det52}, \eqref{WWT52}, as
well as the recursion relation \eqref{52qdiffp}, can be proved by
taking the $u=0$ limit of the analogue identities in
Theorem~\ref{thm.52b}.

Using the expansion formulas of holomorphic blocks \eqref{eq:A-uexp-52},
\eqref{eq:B-uexp-52}, \eqref{eq:tA-uexp-52}, \eqref{eq:tB-uexp-52},
the Wronskians can be expanded as
%{\small
\begin{align}
  W_m(\re^u;q) =
  &\left(\sum_{j=0}W^+_{m,j}(q)u^j\right)
    % \left(W_{m,0}^+(q) + u W_{m,1}^+(q) +u^2 W_{m,2}^+(q) +u^3 W_{m,3}^+(q) +O(u^4)\right)
  \begin{pmatrix}
    (q;q)_\infty^{2}&0&0\\
    0&u^{-2}(q;q)_\infty^{-2}&0\\
    0&0&u^{-2}(q;q)_\infty^{-2}
  \end{pmatrix},
         \label{eq:Wp-uexp-52}\\
  W_m(\re^u;q^{-1}) =
  &\left(\sum_{j=0}W_{m,j}^-(q)u^j\right)
  \begin{pmatrix}
    -u^{-2}(q;q)_\infty^{-2}&0&0\\
    0&\frac{1}{2}(q;q)_\infty^{2}&0\\
    0&0&\frac{1}{2}(q;q)_\infty^2
  \end{pmatrix}
  \label{eq:Wn-uexp-52}
\end{align}%}
where
\begin{equation}
  W_{m,j}^+(q) =
  \begin{pmatrix}
    \alpha_j^{(m)} & \beta_j^{(m)} & (-1)^j\beta_j^{(m)}\\
    \alpha_j^{(m+1)} & \beta_j^{(m+1)} & (-1)^j\beta_j^{(m+1)}\\
    \alpha_j^{(m+2)} & \beta_j^{(m+2)} & (-1)^j\beta_j^{(m+2)}
  \end{pmatrix}(q), \quad W_{m,j}^-(q) =
  \begin{pmatrix}
    \wt{\alpha}_j^{(m)} & \wt{\beta}_j^{(m)} & (-1)^j\wt{\beta}_j^{(m)}\\
    \wt{\alpha}_j^{(m+1)} & \wt{\beta}_j^{(m+1)} & (-1)^j\wt{\beta}_j^{(m+1)}\\
    \wt{\alpha}_j^{(m+2)} & \wt{\beta}_j^{(m+2)} &
    (-1)^j\wt{\beta}_j^{(m+2)}
  \end{pmatrix}(q).
\end{equation}
Note that $\alpha_j^{(m)}(q) = \wt{\alpha}_j^{(m)}(q) = 0$ if $j$ is odd.
Taking the determinant of \eqref{eq:Wp-uexp-52}, we find
\begin{equation}
  \det W_m(\re^u;q) = u^{-1}(q;q)_\infty^{-2} \det W_m(q) + O(u^0)
\end{equation}
which together with the $u$-expansion of the right hand side of
\eqref{W520} leads to the determinant identity \eqref{det52}.
Furthermore, by substituting \eqref{eq:Wp-uexp-52},
\eqref{eq:Wn-uexp-52} into the Wronskian relation \eqref{WW52b},
we find the left hand side reduces to
\begin{equation}
  \frac{1}{2}W_{-1}(q)
  \begin{pmatrix}
    0&0&1\\0&2&0\\1&0&0
  \end{pmatrix}
  W_{-1}(q^{-1})
  \begin{pmatrix}
    0&0&1\\0&1&0\\1&0&0
  \end{pmatrix} + O(u^1),
\end{equation}
which together with the $u$-expansion of the right hand side, leads in
the leading order to \eqref{WWT52b} for $m=1$.
The more general case follows from the identity of $m=1$ by applying
the recursion relations \eqref{52qdiffp} on the Wronskians.

Finally, from the expression \eqref{eq:a0-52},\eqref{eq:b0-52} of the
leading order coefficients $\alpha_0^{(m)}(q)$,
$\beta_0^{(m)}(q)$, of $A_m(\re^u;q)$,
$B_m(\re^u;q)$, $C_m(\re^u;q)$ in the expansion of $u$,
one concludes that the recursion relation \eqref{52qdiffp} should be
the $u=0$ limit of the recursion relation \eqref{rec52m} in $m$, and
one can easily check it is indeed the case.

\subsection{Stokes matrices near $u=0$}
\label{sub.52stokes}

\begin{figure}[htpb!]
\leavevmode
\begin{center}
\includegraphics[height=7cm]{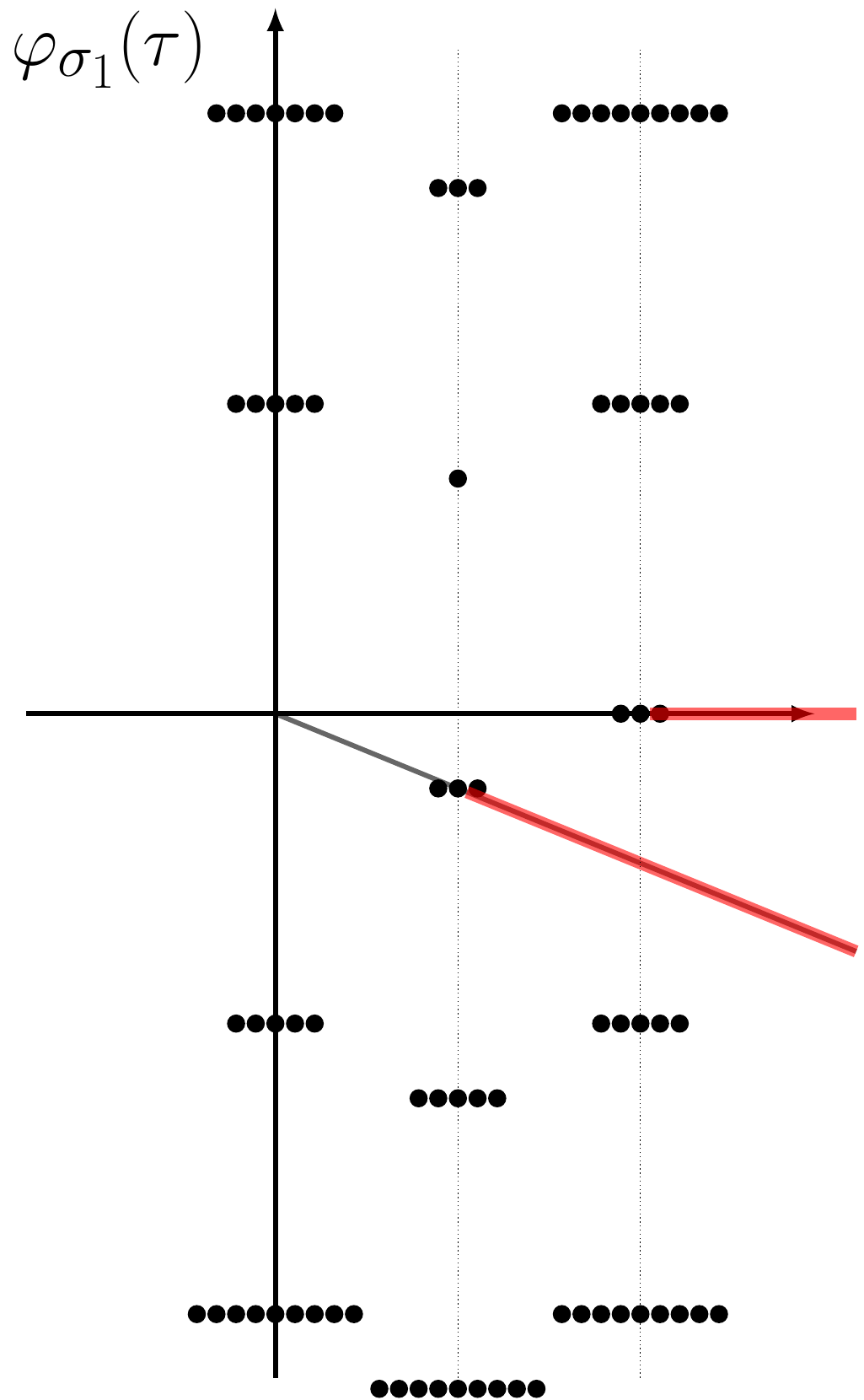}\hspace{4ex}
\includegraphics[height=7cm]{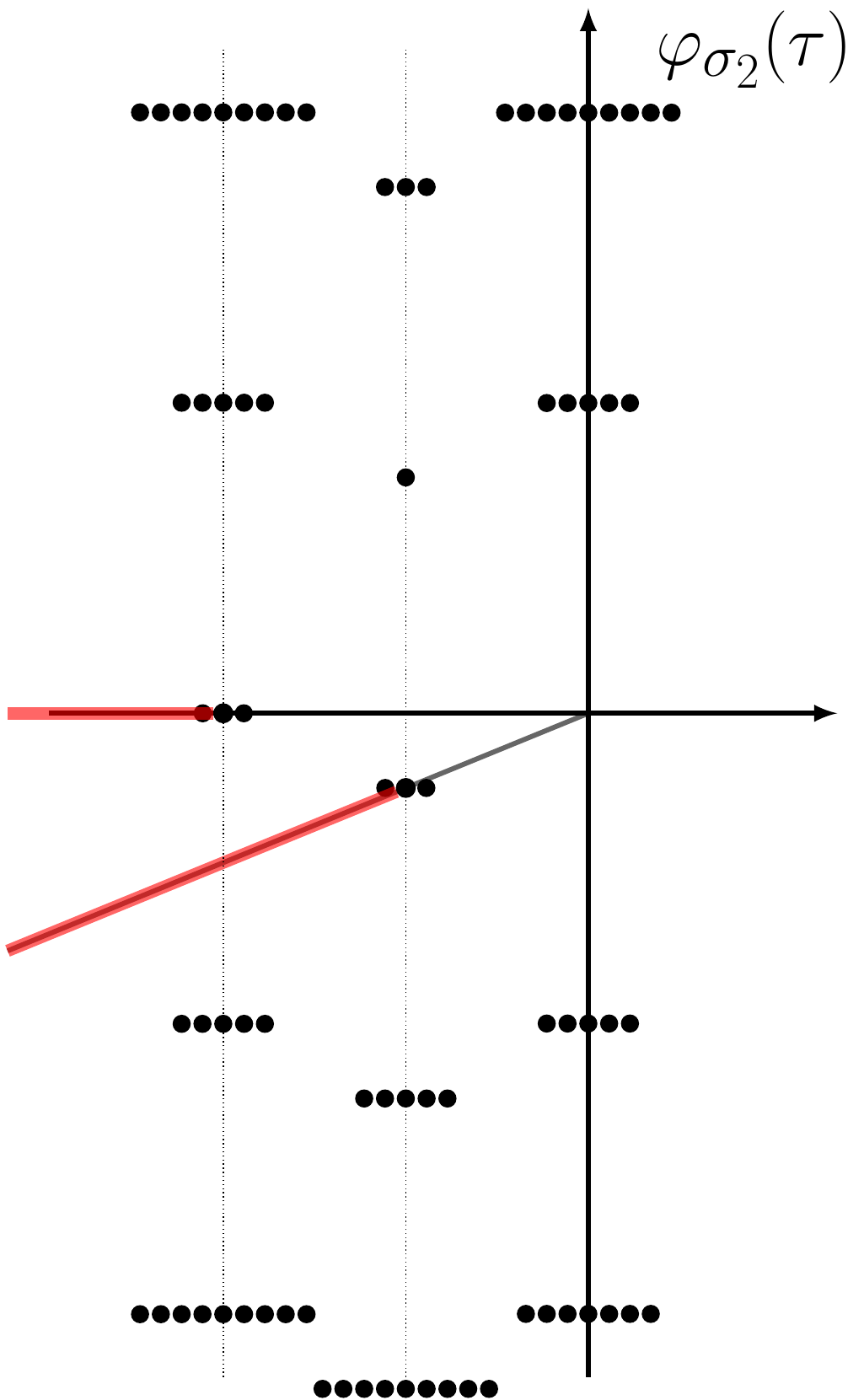}\hspace{4ex}
\includegraphics[height=7cm]{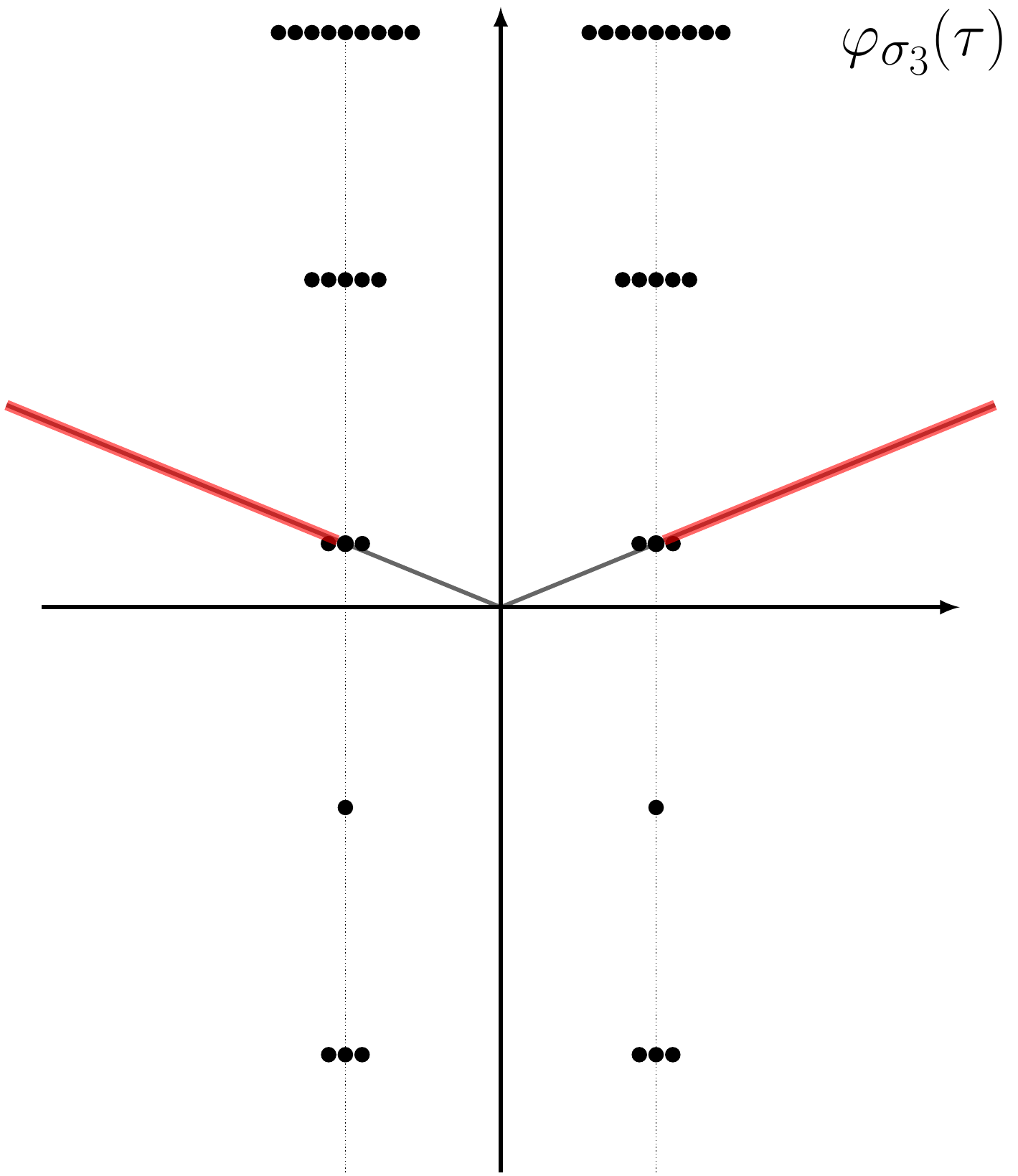}
\end{center}
\caption{The singularities in the Borel plane for the series
  $\varphi_{\sigma_{j}} (u;\tau)$ with $j=1,2,3$ of knot $\knot{5}_2$
  where $u$ is close to zero.}
\label{fig:sings-m-52}
\end{figure}

\begin{figure}[htpb!]
\leavevmode
\begin{center}
\includegraphics[height=7cm]{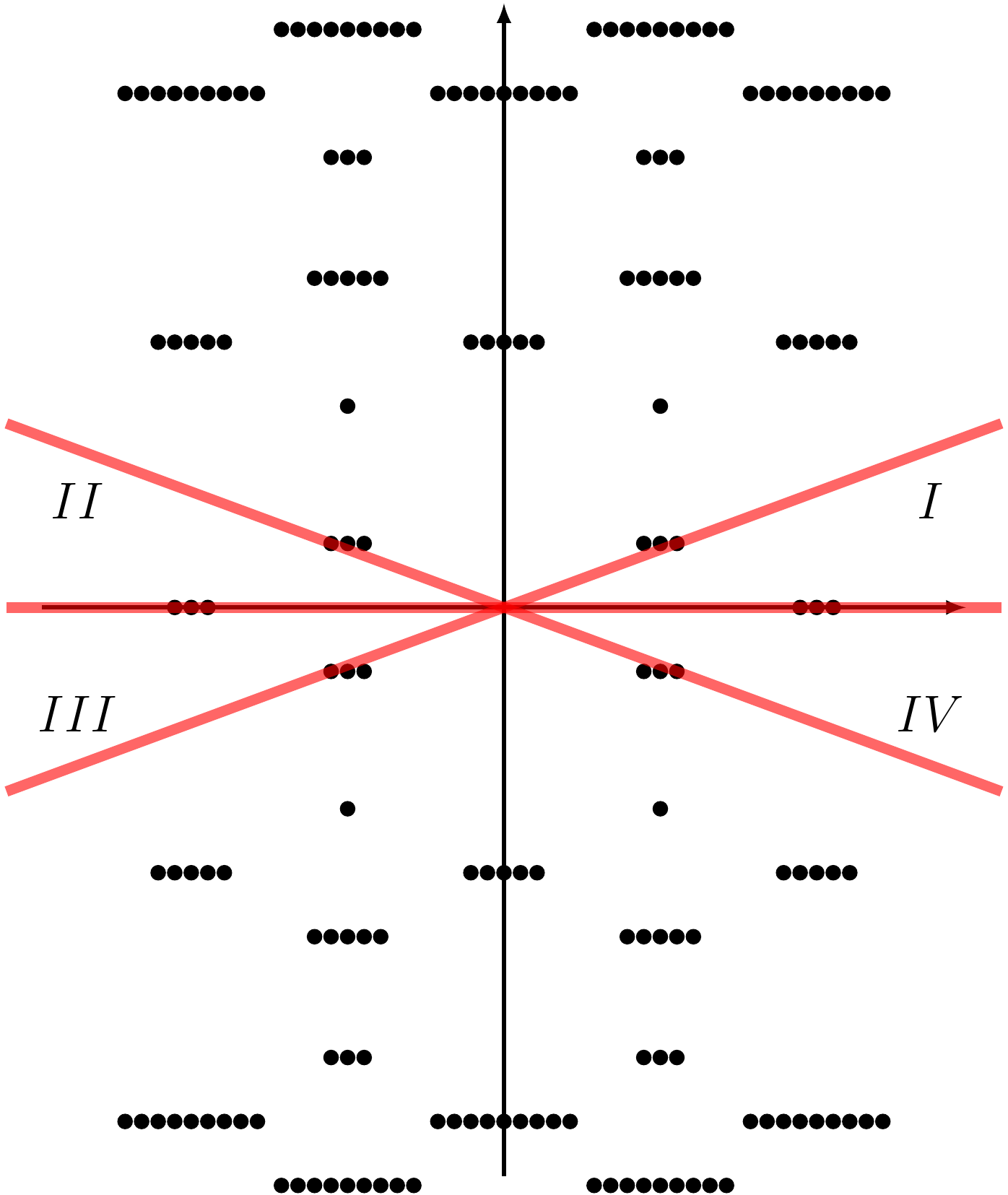}
\end{center}
\caption{Four different sectors (and additional two auxiliary regions)
  in the $\tau$-plane for $\Phi(u;\tau)$ of knot $\knot{5}_2$ with $u$
  close to zero.}
\label{fig:secs-m-52}
\end{figure} 

In this section we give a conjecture for the Stokes matrices of
the asymptotic series $\varphi(x;\tau)$.

We only consider the case when $u$ is not far away from zero, or
equivalently $x$ not far away from 1.  To be more precise, we focus on
real $x$ and constrain $x$ to be in the interval containing 1 between
the two real solutions to the discriminant \eqref{disc52y}.  In this
case, there are mild changes to the resurgent structure discussed in
Section~\ref{sub.52x=1}.  Each singular point in
Figure~\ref{fig:sings-u0-52} splits to a cluster of neighboring
singular points as shown in Figure~\ref{fig:sings-m-52}.  In
particular, each of the six singular points $\iota_{i,j}$ ($i\neq j$)
splits to a cluster of neighboring three separated from each other by
$\log x$.  We label the four regions separating singularities on
positive and negative real axis and the other singularities by
$I,II,III,IV$ (see Figure~\ref{fig:secs-m-52}).  In each of the four
regions, we have the following the results.

\begin{conjecture}
  \label{conj.52M}
  The asymptotic series and the holomorphic blocks are related
  by~\eqref{abrelx} with the diagonal matrix $\Delta(u)$ as
  in~\eqref{D52} where the matrices $M_R(x;q)$ are given in terms of
  the $W_{-1}(x;q)$ as follows 
\begin{subequations}
\begin{align}
  M_{I}(x;q) =
  &W_{-1}(x;q)^T
    \begin{pmatrix}
      0&0&-1\\1&-s(x)&0\\0&1&0
    \end{pmatrix}, & |q|<1\,,
  \label{eq:M1-52}\\
  M_{II}(x;q) =
  &W_{-1}(x^{-1};q)^T
    \begin{pmatrix}
      0&0&-1\\-s(x)&1&0\\1&0&0
    \end{pmatrix}, & |q|<1\,,
  \label{eq:M2-52}\\
  M_{III}(x;q) =
  &W_{-1}(x^{-1};q)^T
    \begin{pmatrix}
      0&0&-1\\1&1&0\\1&0&0
    \end{pmatrix}, & |q|>1\,,
  \label{eq:M3-52}\\
  M_{IV}(x;q) =
  &W_{-1}(x;q)^T
    \begin{pmatrix}
      0&0&-1\\1&1&0\\0&1&0
    \end{pmatrix}, & |q|>1\,,
                             \label{eq:M4-52}
\end{align}
\end{subequations}
and where $s(x)$ is given in~\eqref{sx}. 
\end{conjecture}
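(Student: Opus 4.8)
The statement to establish is the family of identities~\eqref{abrelx} together with the explicit forms~\eqref{eq:M1-52}--\eqref{eq:M4-52} of the connection matrices $M_R(x;q)$ for the $\knot{5}_2$ knot. The plan is to reduce the determination of each $M_R$ to the rigid algebraic data already proven in Theorem~\ref{thm.52b}, using the $u=0$ specialization of Conjecture~\ref{conj.52M0} as the anchoring boundary condition and the Stokes factorization as the consistency constraint that removes all remaining freedom. First I would record what is forced by the resurgence ansatz itself: assuming, as in Conjecture~\ref{conj.asy}, that on each Stokes-free ray $\rho$ there is a matrix $M_\rho(\tx;\tq)$ with entries in $\BZ[\tx^\pm][[\tq]]$ satisfying $\Delta(\tau)B(x;q) = M_\rho(\tx;\tq)\,s_\rho(\Phi)(x;\tau)$, the leading $\tau\to 0$ asymptotics of the holomorphic blocks in the sector $R$ single out the columns of $M_R$ modulo exponentially small corrections, while the subleading corrections are organized by the towers of Borel singularities~\eqref{iota}, whose positions are controlled by $\tx$ and $\tq$. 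This fixes the overall shape of $M_R$ and forces its entries to be $\tq$-series with coefficients in $\BZ[\tx^\pm]$.

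Second, I would pin down the specific matrices. The four $M_R$ cannot be independent: they are tied together by the factorization~\eqref{52x-desc-fac}, which after multiplication by $M_R^{-1}$ expresses each Borel sum $s_R(\Phi)$ as a fixed linear combination of the holomorphic descendant integrals $Z_{\knot{5}_2,0,\mu}$ (the $\knot{5}_2$ analogue of Corollary~\ref{cor.41borel}), and by the orthogonality relation~\eqref{WW52b} and recursion~\eqref{Wmatrec}. Computing the Stokes matrices from the ansatz via~\eqref{4G} and~\eqref{SG} and substituting the Wronskian relations of Theorem~\ref{thm.52b}, one verifies that the two half-plane products reproduce $\ms{S}^+(x;q)\stackrel{\cdot}= W_{-1}(x^{-1};q^{-1})\,W_{-1}(x;q)^T$ of Conjecture~\ref{conj.asy}(c), together with the inversion relation~\eqref{Ssym}. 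The permutation symmetry~\eqref{eq:-xWm} gives the remaining consistency between the pairs $M_{I},M_{IV}$ and $M_{II},M_{III}$ under $x\mapsto x^{-1}$, and the $x\to 1$ limit, taken through the Taylor expansion of Section~\ref{sub.52near1} so as to cancel the order-$2$ poles of $W_{-1}$ at $x=1$, must return the $u=0$ matrices of Conjecture~\ref{conj.52M0}. Together these constraints determine $M_R$ uniquely up to the $\GL(3,\BZ[x^\pm])$ ambiguity of Lemma~\ref{lem.Wamb}, and the displayed forms are the representatives in that class.

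The main obstacle is that the first step, the existence with integer coefficients of the connection matrices $M_R$, is not accessible by current rigorous methods: it would require controlling the analytic continuation of the Borel transform $\wh{\Phi}(x;\zeta)$ and proving that its singularities lie exactly on the towers~\eqref{iota} with the predicted integer Stokes constants. Absent such control, this step remains conjectural and is confirmed instead by high-precision Borel--Pad\'e resummation, exactly as carried out for the $\knot{4}_1$ knot in Section~\ref{sub.41num}: one resums $\Phi(x;\tau)$ numerically for real $x$ in the central interval between the two real roots of~\eqref{disc52y} and for several values of $\tau$ in each region, extracts the $\tq$-series $M_R(\tx;\tq)$ order by order, and matches against~\eqref{eq:M1-52}--\eqref{eq:M4-52}. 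Once existence is granted, the algebraic determination above is fully rigorous, since every ingredient it relies on, namely Theorems~\ref{thm.52a} and~\ref{thm.52b} and the Taylor expansion at $u=0$, has already been established.
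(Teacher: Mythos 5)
Your proposal lands, in substance, on the same ground as the paper: Conjecture~\ref{conj.52M} is not proven there, and its support is exactly the numerical program you describe in your final paragraph --- Borel--Pad\'e resummation of $\Phi(x;\tau)$ for real $x$ in the central interval of~\eqref{disc52y}, order-by-order extraction of the $\tq$-series of $M_R$, direct comparison of the holomorphic lifts, and the consistency checks under $x\mapsto x^{-1}$ (via~\eqref{eq:-xWm}) and in the $x\to 1$ limit against the $u=0$ data. That part of your proposal is faithful to Section~\ref{sub.52num}.

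However, your middle step contains a genuine gap: the claim that ``once existence is granted, the algebraic determination is fully rigorous'' is not correct. First, the constraints you anchor to are themselves conjectures --- Conjecture~\ref{conj.52M0} (the $u=0$ matrices), Conjecture~\ref{conj.asy}(c), and the inversion relation~\eqref{Ssym} --- so nothing in that chain upgrades the status of~\eqref{eq:M1-52}--\eqref{eq:M4-52} beyond conjectural, and in the paper these relations run in the opposite direction: the explicit $M_R$ are found numerically and \emph{then} shown to be consistent with~\eqref{S52p}, \eqref{S52m} and with the symmetry of~\eqref{eq:Sp-52}, \eqref{eq:Sn-52}. Second, the constraints do not determine $M_R$ uniquely even in principle: the relations of Conjecture~\ref{conj.asy}(c) hold only up to left and right multiplication by $\GL(3,\BZ[x^\pm])$, and the $x\to 1$ anchor cannot see $x$-dependence that degenerates at $x=1$ --- for instance replacing $s(x)=x+1+x^{-1}$ of~\eqref{sx} by the constant $3$ in the right-hand factors of~\eqref{eq:M1-52}--\eqref{eq:M2-52} passes both of your constraints, yet gives a different matrix. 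The $x$-dependence of the integer entries is precisely what only the numerics (the order-by-order extraction in $\tq$ with coefficients in $\BZ[\tx^{\pm 1}]$) pins down. Third, Lemma~\ref{lem.Wamb} is misapplied: it classifies the ambiguity of the fundamental solution $W_m$ (a diagonal matrix of signs), not an equivalence class of connection matrices $M_R$ under $\GL(3,\BZ[x^\pm])$; and in any case the conjecture asserts the $M_R$ exactly, not up to such an equivalence. So your proposal should be read as: the same numerical verification as the paper, plus consistency checks that the paper also performs, but without the claimed rigorous determination.
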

Just as in the case of the $\knot{4}_1$ knot, 
the above conjecture completely determines the resurgent structure of
$\Phi(\tau)$. Indeed, it implies that the Stokes matrices, given by
Equations~\eqref{4G} and~\eqref{SG} are explicitly
given by:
\begin{subequations}
\begin{align}
    \ms{S}^+(x;q) =
    &\begin{pmatrix}
      0&1&0\\
      0&1&1\\
      -1&0&0
    \end{pmatrix}
               W_{-1}(x^{-1};q^{-1})
               W_{-1}(x;q)^T
              \begin{pmatrix}
                0&0&-1\\
                1&1&0\\
                0&1&0
              \end{pmatrix}\,,
  \label{eq:Sp-52}\\
    \ms{S}^-(x;q)=
    &\begin{pmatrix}
      0&-s(x)&1\\
      0&1&0\\
      -1&0&0
    \end{pmatrix}
            W_{-1}(x;q)
            W_{-1}(x^{-1};q^{-1})^T
              \begin{pmatrix}
                0&0&-1\\
                -s(x)&1&0\\
                1&0&0
              \end{pmatrix}
              \label{eq:Sn-52}
              %\\
    %&\phantom{====================================}|q|<1\,.\nonumber
\end{align}
\end{subequations}
for $|q|<1$. Note that %from Section~\ref{sub.52asy}, especially
Equations~\eqref{52critb}, \eqref{V52b}, \eqref{52delta},
\eqref{eq:vf-52} imply that (one can also see this from \eqref{Z52x})
\begin{equation}
  s(\Phi_\s)(x^{-1};\tau) = s(\Phi_\s)(x;\tau)
\end{equation}
for any $\tau\in\IC$ whenever the asymptotic series is Borel summable.
It follows that the Stokes matrices must be invariant under the
reflection $\pi: x\mapsto x^{-1}$.  Using the property \eqref{eq:-xWm}
of $W_{m}(x;q)$, it is easy to show that the Stokes matrices
\eqref{eq:Sp-52},\eqref{eq:Sn-52} indeed satisfy this consistency
condition.

The $q\mapsto 0$ limit of the Stokes matrices factorizes
\begin{align}
  \notag
  \ms{S}^+(x;0)  &=
  \mfS_{\s_3,\s_1}(x)\mfS_{\s_3,\s_2}(x)\mfS_{\s_1,\s_2}(x)
    \\ &=
     \begin{pmatrix}
       1&0&0\\0&1&0\\-s(x)&0&1
     \end{pmatrix}
     \begin{pmatrix}
       1&0&0\\0&1&0\\0&s(x)&1
     \end{pmatrix}
     \begin{pmatrix}
      1&s(x)+1&0\\0&1&0\\0&0&1
    \end{pmatrix},  \label{eq:S0px52}\\
    \notag
  \ms{S}^-(x;0)  &=
  \mfS_{\s_1,\s_3}(x)\mfS_{\s_2,\s_3}(x)\mfS_{\s_2,\s_1}(x)
    \\ &=
     \begin{pmatrix}
       1&0&s(x)\\0&1&0\\0&0&1
     \end{pmatrix}
     \begin{pmatrix}
       1&0&0\\0&1&-s(x)\\0&0&1
     \end{pmatrix}
     \begin{pmatrix}
      1&0&0\\-s(x)-1&1&0\\0&0&1
    \end{pmatrix}, \label{eq:S0nx52}
\end{align}
where the non-vanishing off-diagonal entries of $\mfS_{\s_i,\s_j}(x)$
encode the Stokes constants associated to the Borel singularities
split from $\iota_{i,j}$.  Using the unique factorization
Lemma~\ref{lem.fac} and the Stokes matrix $\sf{S}$ from above, we can
compute the Stokes constants and the corresponding matrix $\calS$ of
Equation~\eqref{calS} to arbitrary order in $q$, and we find that
\begin{subequations}
\begin{align}
  \mc{S}^+_{\s_1,\s_1} =
  &\ms{S}^+(q)_{1,1}-1\nn=
  &(-4-x^{-2}-3x^{-1}-3 x-x^2) q+(-1+x^{-3}+x^{-2}+x^2+x^3) q^2+\cO(q^3),\\
  \mc{S}^+_{\s_1,\s_2} =
  &(4+x^{-2}+3x^{-1}+3 x+x^2) q\nn+
  &(37+x^{-4}+5x^{-3}+16x^{-2}+30x^{-1}+30 x+16 x^2+5 x^3+x^4) q^2+\cO(q^3),\\
  \mc{S}^+_{\s_1,\s_3} =
  &q+(1+x^{-1}+x) q^2+\cO(q^3),\\
  \mc{S}^+_{\s_2,\s_1} =
  &-(4+x^{-2}+3x^{-1}+3 x+x^2) q\nn-
  &(37+x^{-4}+5x^{-3}+16x^{-2}+30x^{-1}+30 x+16 x^2+5 x^3+x^4) q^2+\cO(q^3),\\
  \mc{S}^+_{\s_2,\s_2} =
  &(4+x^{-2}+3x^{-1}+3 x+x^2) q\nn+
  &(37+x^{-4}+5x^{-3}+16x^{-2}+30x+30 x+16 x^2+5 x^3+x^4) q^2+\cO(q^3),\\
  \mc{S}^+_{\s_2,\s_3} =
  &-(7+2x^{-2}+5x^{-1}+5 x+2 x^2) q\nn-
  &(59+2x^{-4}+10x^{-3}+27x^{-2}+49x^{-1}+49 x+27 x^2+10 x^3+2 x^4) q^2+\cO(q^3),\\
  \mc{S}^+_{\s_3,\s_1} =
  &-q-(1+x^{-1}+x) q^2+\cO(q^3)\\
  \mc{S}^+_{\s_3,\s_2} =
  &(7+2x^{-2}+5x^{-1}+5 x+2 x^2) q\nn+
  &(59+2x^{-4}+10x^{-3}+27x^{-2}+49x^{-1}+49 x+27 x^2+10 x^3+2 x^4) q^2+\cO(q^3),\\
  \mc{S}^+_{\s_3,\s_3} =
  &0.
\end{align}
\end{subequations}
Note that the series $\mc{S}^+_{\s_1,\s_2}$ and $\mc{S}^+_{\s_2,\s_2}$
are different, even though their first few terms are coincidental. They
differ in higher orders, as one can already see in the $u=0$ limit in
Section~\ref{sub.52x=1}. The matrix $\calS$ satisfies the symmetry
\begin{equation}
  \mc{S}^+_{\s_i,\s_j}(x;q) =
  -\mc{S}^+_{\s_{\varphi(i)},\s_{\varphi(j)}}(x;q),\quad i\neq j,
\end{equation}
with $\varphi(1)=2,\varphi(2)=1,\varphi(3)=3$, and they display the
familiar feature that the entries of the
matrix$\mc{S}^+(x;q)=(\mc{S}^+_{\s_i,\s_j}(x;q))$ (except the
upper-left one) are (up to a sign) in $\IN[x^{\pm 1}][[q]]$.
% except for the first series, all the other
% series have coefficients of fixed sign.
Similarly we can extract the Stokes constants
$\mc{S}^{(\ell,k)}$ ($k<0$) associated to the singularities below
$\iota_{i,j}$ in the lower half plane and assemble into
$q^{-1}$-series $\mc{S}^-_{\s_i,\s_j}(x;q^{-1})$.  We find the
relation
\begin{gather}
  \mc{S}^-_{\s_i,\s_j}(x;q) = -\mc{S}^+_{\s_j,\s_i}(x;q),\;\; i\neq
  j\quad\text{and}\quad
  \mc{S}^-_{\s_i,\s_i}(x;q) = +\mc{S}^+_{\s_{\varphi(i)},\s_{\varphi(i)}}(x;q)\,.
\end{gather}

Let us now verify Conjecture~\ref{conj.S3D} for the $\knot{5}_2$ knot.
The same logic presented at the end of Section~\ref{sub.41stokes} also
holds here.
From the form of the Stokes matrix \eqref{eq:Sp-52} as well as
\eqref{Wmatrec},\eqref{eq:W52xm}, we immediately conclude that
\begin{equation}
  \ms{S}^+(x;q) \stackrel{\cdot\cdot}{=} \cW_{0}(x^{-1};q^{-1})\cdot
  \cW_{0}(x;q)^T.
  \label{eq:SW52}
\end{equation}
Using the uniform notation for all holomorphic blocks
\begin{equation}
  (B_{\knot{5}_2}^\alpha(x;q))_{\alpha=1,2,3} = (A_0(x;q),B_0(x;q),C_0(x;q)),
\end{equation}
the right hand side of \eqref{eq:SW52} reads\footnote{Note that if we
  compare with the DGG indices computed in \cite{Beem} we have to
  modify the last line in \eqref{eq:WW52} slightly due to different
  conventions for holomorphic blocks 
  \begin{equation}
    \left(\cW_{0}(x^{-1};q^{-1})\cdot \cW_{0}(x;q)^T\right)_{i+1,j+1} =
    q^{\frac{3}{2}(i^2-j^2)}x^{-3(j-i)}
    \text{Ind}_{\knot{5}_2}^{\text{rot}}
    (j-i,q^{-\frac{j+i}{2}}x^{-1};q),\quad i,j=0,1,2.
  \end{equation}
  The right hand side of \eqref{eq:SInd} should be modified
  accordingly.  This however does not affect \eqref{S3Dx}.} 
\begin{align}
  \cW_{0}(x^{-1};q^{-1})\cdot \cW_{0}(x;q)^T =
  &\left(
    \sum_{\alpha}B^{\alpha}_{\knot{5}_2}(q^jx;q)
    B^{\alpha}_{\knot{5}_2}(q^{-i}x^{-1};q^{-1})
    \right)_{i,j=0,1,2}\nn = 
  &\left(
    \text{Ind}_{\knot{5}_2}^{\text{rot}}(j-i,q^{\frac{j+i}{2}}x;q)
    \right)_{i,j=0,1,2},
  \label{eq:WW52}
\end{align}
reproducing the right hand side of \eqref{eq:SInd} in
Conjecture~\ref{conj.S3D}.  Furthermore, the forms of the accompanying
matrices on the left and on the right are such that the $(1,1)$ entry
of $\ms{S}^+(x;q)$ equates exactly the DGG index with no magnetic
flux.  By explicit calculation,
\begin{align}
  \ms{S}^+(x;q)_{1,1} =
  &H(x,x^{-1},1;q)H(x^{-1},x,1;q^{-1})
    +H(x,x^2,x^2;q)H(x^{-1},x^{-2},x^{-2};q^{-1})\nn
  &+H(x^{-1},x^{-2},x^{-2};q)H(x,x^2,x^2;q^{-1})\nn=
  &1 -(x^{-2}+ 3x^{-1}+4+3 x + x^2) q + (x^{-3} + x^{-2} -1+ x^2 + 
    x^3) q^2 + O(q^3).
    \label{eq:S1152}
\end{align}
The right hand side of the first two lines in \eqref{eq:S1152} is the formula
for the DGG index given in \cite{Beem}.

\subsection{The Borel resummations of the asymptotic series $\Phi$}
\label{sub.borel52}

As in the case of the $\knot{4}_1$ knot, Conjecture~\ref{conj.52M}
identifies the Borel resummations of the factorially divergent series
$\Phi(x;\tau)$ with the descendant state-integrals, thus lifting the
Borel resummation to holomorphic functions on the cut-plane $\BC'$.

\begin{corollary}(of Conjecture~\ref{conj.52M})
  \label{cor.52borel}
  We have 
  \begin{subequations}
    \begin{align}
      s_{I}(\Phi)(x;\tau) =
      &\begin{pmatrix}
        0&1&1\\0&1&0\\-1&0&0
      \end{pmatrix} W_{-1}(\tx;\tq^{-1})\Delta(\tau)B(x;q)\,,
                        \label{eq:HU1-52}\\
  s_{II}(\Phi)(x;\tau) =
  &\begin{pmatrix}
    0&1&0\\0&1&1\\-1&0&0
  \end{pmatrix} W_{-1}(\tx^{-1};\tq^{-1})
               % \begin{pmatrix}
               %   1&0&0\\0&0&1\\0&1&0
               % \end{pmatrix}
                                    \Delta(\tau) B(x;q)\,,
                         \label{eq:HU2-52}\\
  s_{III}(\Phi)(x;\tau) =
  &\begin{pmatrix}
    0&1&0\\0&-s(\tx)&1\\-1&0&0
  \end{pmatrix} W_{-1}(\tx^{-1};\tq^{-1})
               % \begin{pmatrix}
               %   1&0&0\\0&0&1\\0&1&0
               % \end{pmatrix}
                                    \Delta(\tau) B(x;q)\,,
                         \label{eq:HU3-52}\\
  s_{IV}(\Phi)(x;\tau)=
  &\begin{pmatrix}
    0&-s(\tx)&1\\0&1&0\\-1&0&0
  \end{pmatrix}W_{-1}(\tx,\tq^{-1})\Delta(\tau) B(x;q)\,.
                  \label{eq:HU4-52}
\end{align}
\end{subequations}
where the right hand side of~\eqref{eq:HU1-52}--\eqref{eq:HU4-52} are
holomorphic functions of $\tau \in \IC'$, as they are linear
combinations of the descendants \eqref{52x-desc-fac}.
\end{corollary}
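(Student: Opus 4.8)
The plan is to obtain the four identities by inverting the basic relation~\eqref{abrelx} of Conjecture~\ref{conj.52M}, which reads $\Delta(\tau)B(x;q)=M_R(\tx;\tq)\,s_R(\Phi)(x;\tau)$ and hence gives $s_R(\Phi)(x;\tau)=M_R(\tx;\tq)^{-1}\Delta(\tau)B(x;q)$, and then rewriting each $M_R(\tx;\tq)^{-1}$ so that it is expressed through a single matrix of holomorphic blocks evaluated at $\tq^{-1}$. The point is that each of~\eqref{eq:M1-52}--\eqref{eq:M4-52} has the shape $M_R(\tx;\tq)=W_{-1}(\tx^{\epsilon_R};\tq)^{T}N_R(\tx)$, with $\epsilon_I=\epsilon_{IV}=+1$, $\epsilon_{II}=\epsilon_{III}=-1$, and $N_R(\tx)$ an explicit matrix independent of $\tq$; so the whole content of the corollary is the computation of $N_R(\tx)^{-1}\,(W_{-1}(\tx^{\epsilon_R};\tq)^{T})^{-1}$.

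The key algebraic step is to convert $(W_{-1}(\tx^{\epsilon_R};\tq)^{T})^{-1}$ into the block $W_{-1}(\tx^{\epsilon_R};\tq^{-1})$ living outside the unit $\tq$-disk, using the orthogonality relation~\eqref{WW52b}, $W_{-1}(y;q)\,W_{-1}(y;q^{-1})^{T}=G(y)$ with $G(y)=\left(\begin{smallmatrix}1&0&0\\0&0&1\\0&1&y+y^{-1}\end{smallmatrix}\right)$, which holds for all $|q|\neq 1$ and so applies in every region. Since $G(y)^{-1}$ is symmetric this gives $(W_{-1}(y;q)^{T})^{-1}=G(y)^{-1}W_{-1}(y;q^{-1})$, whence $s_R(\Phi)(x;\tau)=N_R(\tx)^{-1}G(\tx^{\epsilon_R})^{-1}\,W_{-1}(\tx^{\epsilon_R};\tq^{-1})\,\Delta(\tau)B(x;q)$. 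It then remains to check, region by region, that the constant prefactor $N_R(\tx)^{-1}G(\tx^{\epsilon_R})^{-1}$ equals the matrix displayed in~\eqref{eq:HU1-52}--\eqref{eq:HU4-52}; this is four finite $3\times 3$ computations whose only simplifications are $s(\tx^{-1})=s(\tx)$ and $y+y^{-1}=s(y)-1$ from~\eqref{sx}. The holomorphicity on $\BC'$ is then immediate in regions $I$ and $IV$: there the right-hand side is a constant matrix times $W_{-1}(\tx;\tq^{-1})\Delta(\tau)B(x;q)$, which is the first column of $W_{0,1}(u;\tau)=W_{-1}(\tx;\tq^{-1})\Delta(\tau)W_0(x;q)^{T}$ (because $B(x;q)$ is the first column of $W_0(x;q)^{T}$), and this extends holomorphically to $\tau\in\BC'$, entirely in $u$, by Theorem~\ref{thm.52a}(c).

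The hard part will be the holomorphicity in regions $II$ and $III$, where the reflected block $W_{-1}(\tx^{-1};\tq^{-1})$ appears. I would first apply the reflection symmetry~\eqref{eq:-xWm}, $W_{-1}(\tx^{-1};\tq^{-1})=W_{-1}(\tx;\tq^{-1})\Pi$ with $\Pi$ the permutation exchanging the last two coordinates, together with $\Pi\Delta(\tau)=\Delta(\tau)\Pi$ (valid since the last two diagonal entries of $\Delta$ coincide), to recast the expression as a constant matrix times $W_{-1}(\tx;\tq^{-1})\Delta(\tau)B(x^{-1};q)$. The difficulty is that this bilinear pairs the tilde-blocks at $\tx$ (parameter $+u$) with the blocks at $x^{-1}$ (parameter $-u$); it is therefore \emph{not} a column of any descendant matrix $W_{m,\mu}$, and since the holomorphic blocks $A,B,C$ are an independent basis the cross-terms cannot be absorbed into a finite combination of the descendant integrals~\eqref{52x-desc-fac}, so Theorem~\ref{thm.52a}(c) does not apply verbatim. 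To close this gap I would use the reflection invariance $s_R(\Phi)(x;\tau)=s_R(\Phi)(x^{-1};\tau)$ of the Borel sums — a consequence of the $x\mapsto x^{-1}$ symmetry of $S$, $V$ and $\delta$ in~\eqref{52critb},~\eqref{V52b},~\eqref{52delta} and of the series~\eqref{eq:vf-52} — together with the Stokes factorisation $s_{II}(\Phi)=\mfS_{I\rightarrow II}\,s_I(\Phi)$ of~\eqref{lap2},~\eqref{4G}: the factor $\mfS_{I\rightarrow II}$ has entries in $\BZ[\tx^{\pm}][[\tq]]$ and is holomorphic for $|\tq|<1$, while $s_I(\Phi)$ is already holomorphic on $\BC'$, so $s_{II}(\Phi)$ is holomorphic throughout the upper half-plane, and the analogous continuation through region $III$ from below forces a common single-valued extension across the positive real $\tau$-axis. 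Verifying this gluing — equivalently, that the mixed bilinear has no cut on $(0,\infty)$ and only the intrinsic cut on $(-\infty,0]$ — is the one genuinely delicate point; everything else reduces to the bookkeeping of the four explicit matrix inversions.
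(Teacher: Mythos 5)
Your computational core is exactly the paper's (unwritten, but clearly intended) derivation: multiply \eqref{abrelx} on the left by $M_R(\tx;\tq)^{-1}$, write each matrix of Conjecture~\ref{conj.52M} as $M_R=W_{-1}(\tx^{\epsilon_R};\tq)^T N_R(\tx)$, convert $\bigl(W_{-1}(y;\tq)^T\bigr)^{-1}$ into $G(y)^{-1}W_{-1}(y;\tq^{-1})$ via the orthogonality relation \eqref{WW52b} (using that $G$ is symmetric, $G(y^{-1})=G(y)$ and $s(y^{-1})=s(y)$), and evaluate the four constant prefactors $N_R(\tx)^{-1}G(\tx)^{-1}$. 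I checked your region-$I$ product, $N_I^{-1}G^{-1}=\bigl(\begin{smallmatrix}0&1&1\\0&1&0\\-1&0&0\end{smallmatrix}\bigr)$, and it reproduces \eqref{eq:HU1-52}; the other three work the same way. Your holomorphy argument in regions $I$ and $IV$ — the right-hand side is the first column of $W_{0,1}(u;\tau)=W_{-1}(\tx;\tq^{-1})\Delta(\tau)W_0(x;q)^T$, i.e.\ a vector of descendants $z_{\knot{5}_2,0,\mu}$ up to elementary prefactors, hence holomorphic on $\BC'$ by Theorem~\ref{thm.52a}(c) — is precisely the paper's justification, and it is correct there.

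Where you diverge is regions $II$, $III$, and here your diagnosis is sharper than the paper's text, but your repair does not close the gap. You are right that, via \eqref{eq:-xWm} and $\Delta_2=\Delta_3$, the object $W_{-1}(\tx^{-1};\tq^{-1})\Delta(\tau)B(x;q)$ pairs tilde-blocks at $+u$ with blocks at $-u$: entrywise it equals the corresponding descendant entry plus the cross term $\Delta_2\,\bigl(B-C\bigr)_{i}(\tx;\tq^{-1})\,\bigl(C-B\bigr)_{0}(x;q)$, which is not of the form \eqref{52x-desc-fac} for any $(m,\mu)$, nor a column of $W_{m,\mu}(\pm u;\tau)$. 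The paper does not address this: its closing parenthetical ("as they are linear combinations of the descendants") is asserted uniformly for all four regions but literally applies only to $I$ and $IV$, so there is no hidden argument in the paper that you failed to reproduce. However, your proposed fix fails at exactly the same spot. The factor $\mfS_{I\rightarrow II}(\tx;\tq)$ has entries in $\BZ[\tx^{\pm}][[\tq]]$, convergent only for $|\tq|<1$; so $s_{II}(\Phi)=\mfS_{I\rightarrow II}(\tx;\tq)\,s_I(\Phi)$ proves holomorphy of $s_{II}(\Phi)$ only on the open upper half-plane, which is automatic anyway, and gives nothing across $(0,\infty)$ — the only nontrivial part of the claim. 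Moreover, the continuations of $s_{II}$ from above and of $s_{III}$ from below are a priori unrelated functions (regions $II$ and $III$ are not adjacent across the positive axis), so "forcing a common single-valued extension" is a non sequitur, as you essentially concede. In summary: your algebra and your regions $I/IV$ argument coincide with the paper and are complete; for regions $II/III$ you have correctly exposed that the holomorphy assertion requires an argument that neither your proposal nor the paper supplies.
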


\subsection{Numerical verification}
\label{sub.52num}

\begin{table}
  \centering%
  \subfloat[Region $I$: $\tau = \frac{1}{18}\re^{\frac{\pi\ri}{12}}$]
  {\begin{tabular}{*{5}{>{$}c<{$}}}\toprule
     &|\frac{s_{I}(\Phi)(x;\tau)}{P_{I}(x;\tau)}- 1|
     &|\frac{s_{I}(\Phi)(x;\tau)}{s'_{I}(\Phi)(x;\tau)}- 1|
     &|\tq(\tau)|& |\tx(x,\tau)^{-1}|\\\midrule
     \s_1& 1.1\times 10^{-38} & 1.1\times 10^{-38}
     &\multirow{3}{*}{$1.9\times 10^{-13}$}
     &\multirow{3}{*}{$0.042$}\\
     \s_2& 2.2\times 10^{-63} & 1.7\times 10^{-62}&&\\
     \s_3& 2.7\times 10^{-48} & 2.5\times 10^{-48}&&\\
     \bottomrule
   \end{tabular}}\\
 \subfloat[Region $II$: $\tau = \frac{1}{18}\re^{\frac{11\pi\ri}{12}}$]
 {\begin{tabular}{*{5}{>{$}c<{$}}}\toprule
    &|\frac{s_{II}(\Phi)(x;\tau)}{P_{II}(x;\tau)}- 1|
    &|\frac{s_{II}(\Phi)(x;\tau)}{s'_{II}(\Phi)(x;\tau)}- 1|
    &|\tq(\tau)|& |\tx(x,\tau)|\\\midrule
    \s_1& 2.2\times 10^{-63} & 1.7\times 10^{-62}
    &\multirow{3}{*}{$1.9\times 10^{-13}$}
    &\multirow{3}{*}{$0.042$}\\
    \s_2& 1.1\times 10^{-38} & 1.1\times 10^{-38}&&\\
    \s_3& 2.7\times 10^{-48} & 2.5\times 10^{-48}&&\\
    \bottomrule
   \end{tabular}}
 \caption{Numerical tests of holomorphic lifts of Borel
   sums of asymptotic series for knot $\knot{5}_2$.  We perform the
   Borel-Pad\'e resummation on $\Phi(x;\tau)$ with 180 terms at
   $x=6/5$ and $\tau$ in regions $I,II$, and compute the relative
   difference between them and the right hand side of
   \eqref{eq:HU1-52}--\eqref{eq:HU2-52}, which we denote by
   $P_R(x;\tau)$.  They are within the error margins of Borel-Pad\'e
   resummation, which are estimated by redo the resummation with 276
   terms, denoted by $s'_R(\bullet)$ in the tables. The relative
   errors are much smaller than $|\tq^{\pm 1}|, |\tx^{\pm 1}|$,
   possible sources of additional corrections.}
  \label{tab:num-52}
\end{table}

In this section we explain the numerical verification of
Conjecture~\ref{conj.52M}, which involves a richer resurgent structure
than that of the $\knot{4}_1$ knot. 
We found ample numerical evidence for the resurgent data
\eqref{eq:M1-52}--\eqref{eq:M4-52}.  These numerical tests are
parallel to those performed for knot $\knot{4}_1$, so we will be
sketchy here. Besides, we will mostly focus on $\tau$ in the upper
half plane, while the lower half plane is similar.

The first test is the analysis of radial asymptotics of the left hand
side of \eqref{eq:abrelx-row}, which can be easily done.  The second
test is to compute the Borel resummation $s_R(\Phi_\s')(x;\tau)$ and
by comparing with the left hand side extract terms of
$M_R(\tx,\tq)_{\s,\s'}$ order by order.
To expediate the operation of extraction, instead of $M_R(\tx;\tq)$
we consider
\begin{equation}
  \wt{M}_R(\tx;\tq) =
  \begin{pmatrix}
    1&0&0\\
    0&\th(-\tq^{-1/2}\tx;\tq)^2&0\\
    0&0&\th(-\tq^{1/2}\tx;\tq)^2
  \end{pmatrix} M_R(\tx;\tq)
\end{equation}
whose entries are $\tq$-series
with coefficients in $\IZ[\tx^{\pm 1}]$ instead of in $\IZ(\tx)$.
% with Laurant polynomials of $\tx$
% instead of rational functions $\tx$ as coefficients.
Using 180 terms
of $\Phi_\s(x;\tau)$ at various values of $x$ and $\tau$, we find
entries of $\wt{M}_{I,II}(x;q)$ up to $O(q^2)$
following results
%\begin{subequations}
  \begin{align}
    \wt{M}_{I}(x;q)_{1,1}=
    &1-(x^{-1}+x)q-(x^{-1}-2+x)q^2+O(q^3)\,,\nn
      \wt{M}_{I}(x;q)_{1,2}=
    &-x^{-1}-x+(x^{-2}+2+x^2)q+(x^{-2}-2x^{-1}+1-2x+x^2)q^2+O(q^3)\,,\nn
      \wt{M}_{I}(x;q)_{1,3}=
    &-1+(x^{-1}-1+x)q+(x^{-1}-2+x)q^2+O(q^3)\,,\nn
      \wt{M}_{I}(x;q)_{2,1}=
    &x^2-(x^3+x^4)q-(x^3-x^5)q^2+O(q^3)\,,\nn
      \wt{M}_{I}(x;q)_{2,2}=
    &-x-x^2+(x^2+2x^3+x^4)q+(x^2+x^3-x^4-2x^5)q^2+O(q^3)\,,\nn
      \wt{M}_{I}(x;q)_{2,3}=
    &-x+x^2q+(x^2-x^4)q^2+O(q^3)\,,\nn
      \wt{M}_{I}(x;q)_{3,1}=
    &x^{-2}-(x^{-4}+x^{-3})q+(x^{-5}-x^{-3})q^2+O(q^3)\,,\nn
      \wt{M}_{I}(x;q)_{3,2}=
    &-x^{-2}-x^{-1}+(x^{-4}+2x^{-3}+x^{-2})q
      -(2x^{-5}+x^{-4}-x^{-3}-x^{-2})q^2+O(q^3)\,,\nn
      \wt{M}_{I}(x;q)_{3,3}=
    &-x^{-1}+x^{-2}q-(x^{-4}-x^{-2})q^2+O(q^3)\,.
  \end{align}
%\end{subequations}
and
%\begin{subequations}
  \begin{align}
    \wt{M}_{II}(x;q)_{1,1} =
    &-x-x^{-1}+(x^2+2+x^{-2})q+(x^2-2x+1-2x^{-1}+x^{-2})q^2+O(q^3)\,,\nn
      \wt{M}_{II}(x;q)_{1,2} =
    &1-(x+x^{-1})q-(x-2+x^{-1})q^2+O(q^3)\,,\nn
      \wt{M}_{II}(x;q)_{1,3} =
    &-1+(x-1+x^{-1})q+(x-2+x^{-1})q^2+O(q^3)\,,\nn
      \wt{M}_{II}(x;q)_{2,1} =
    &-x-1+(1+2x^{-1}+x^{-2})q+(1+x^{-1}-x^{-2}-2x^{-3})q^2
      +O(q^3)\,,\nn
      \wt{M}_{II}(x;q)_{2,2} =
    &1-(x^{-1}+x^{-2})q-(x^{-1}-x^{-3})q^2+O(q^3)\,,\nn
      \wt{M}_{II}(x;q)_{2,3} =
    &-x+q+(1-x^{-2})q^2+O(q^3)\,,\nn
      \wt{M}_{II}(x;q)_{3,1} =
    &-1-x^{-1}+(x^2+2x+1)q-(2x^3+x^2-x-1)q^2+O(q^3)\,,\nn
      \wt{M}_{II}(x;q)_{3,2} =
    &1-(x^2+x)q+(x^3-x)q^2+O(q^3)\,,\nn
      \wt{M}_{II}(x;q)_{3,3} =
    &-x^{-1}+q-(x^2-1)q^2+O(q^3)\,.
  \end{align}
%\end{subequations}
  They are in agreement with \eqref{eq:M1-52},\eqref{eq:M2-52}.  More
  decisively, we can compare the numerical evaluation of both sides of
  the equations of holomorphic lifts
  \eqref{eq:HU1-52},\eqref{eq:HU2-52}.  We find the relative
  difference between the two sides is always within the error margin
  of Borel-Pad\'e resummation, and much smaller than
  $\tq^{\pm 1},\tx^{\pm 1}$, possible sources of additional
  corrections.  We illustrate this by one example with $x=6/5$ and
  $\tau=\frac{1}{18}\re^{\frac{\pi\ri}{12}},
  \frac{1}{18}\re^{\frac{11\pi\ri}{12}}$ in regions
  $I,II$ in Table~\ref{tab:num-52}.  Finally we can test the resurgent
  data by checking that in the $x\mapsto 1$ limit the Stokes matrices
  \eqref{eq:Sp-52}, \eqref{eq:Sn-52} reduce properly to \eqref{S52p},
  \eqref{S52m}.  This is a non-trivial test as $W_{-1}(x;q^{-1})$
  ($|q|<1$) in \eqref{eq:Sp-52}, \eqref{eq:Sn-52} itself diverges in
  the limit $x\mapsto 1$.

%%%%%%%%%%%%%%%%%%%%%%%%%%%%%%%%%%%%%%%%%%%%%%%%%%%%%%%%%%%%%%%%%%%%%%%%%%%% 
%%%%%%%%%%%%%%%%%%%%%%%%%%%%%%%%%%%%%%%%%%%%%%%%%%%%%%%%%%%%%%%%%%%%%%%%%%%%

\section{One-dimensional state-integrals and their
    descendants}
\label{sec.onedim}

In a sense, the results of our paper are not about the asymptotics and
resurgence of complex Chern--Simons theory, but rather involve power
series and $q$-difference equations that arise from $K_2$-Lagrangians
(clearly advocated in Kontsevich's talks~\cite{kontsevich-talk}), or
from symplectic matrices (advocated in~\cite[Sec.7]{GZ:kashaev}). The
connection with complex Chern--Simons theory comes via ideal
triangulations of a 3-manifold with torus boundary components, a
concept introduced by Thurston for the study of complete hyperbolic
structures and their deformations~\cite{Thurston}.  The gluing
equations of such triangulations are encoded by matrices which are the
upper half of a symplectic matrix (see Neumann-Zagier~\cite{NZ}). The
upper half of these symplectic matrices define state-integrals, as well 
as the asymptotic series $\Phi(x;\tau)$ (this was the approach
taken in~\cite{DG}) and the 3D-index (see~\cite{DGG1}).

In this section we discuss briefly general one-dimensional
state-integrals and their descendants, and their asymptotic series. We
will not aim for maximum generality, but instead consider the
one-dimensional state-integral
\begin{equation}
  \label{I1d}
  Z_{A,r}(u,t;\tau)
  =\int_{\BR+\ri 0}
  \left(\prod_{j=1}^r\Phi_\bb(v+u_j)\right)
  \re^{-A\pi\ri v^2 + 2\pi\ri v t} \rd v \,,% + 2\pi(m\bb+\mu\bb^{-1}) v},
\end{equation}
where $u=(u_1,\dots,u_r) \in \BC^r$ with $|\Im u_j| < |\Im c_\bb|$
(this ensures that all poles of the integrand are above the real
axis), $t \in \BC$ and $A$ and $r$ integers with $r>A>0$. (This
ensures that the integrand decays exponentially at infinity, hence the
integral is absolutely convergent.) We have already encountered two
special cases in Equations~\eqref{Z41x-desc} and~\eqref{Z52x-desc}:
\begin{align}
  \label{I4152a}
  \knot{4}_1:
  & \,\, (A,r)=(1,2), \,\, (u_1,u_2) = (u,0), \,\, w = -2u, \\
  \label{I4152b} \knot{5}_2:
  & \,\, (A,r)=(2,3), \,\, (u_1,u_2,u_3) = (0,u,-u),
    \,\, w = 0 \,.
\end{align}
We are interested in the descendants of $Z$ defined by
\begin{equation}
  \label{I1dd}
  z_{A,r,m,\mu}(u,t;\tau) = (-1)^{m+\mu} \,
  q^{-\frac{1}{2}m} \, \tq^{-\frac{1}{2}\mu} \,
  Z_{A,r}(u,t - m \ri \bb + \mu \ri \bb^{-1};\tau)
\end{equation}
for integers $m$ and $\mu$, where the extra factor was inserted to
simplify the formulas below.

We will express the factorization of the state-integral~\eqref{I1dd}
in terms of the auxiliary function
\begin{equation}
  \label{Gar}
  G_{A,r}(y,z;q)
  =\frac{1}{(q;q)_\infty} \sum_{n=0}^\infty
  (-1)^{An} q^{\frac{A}{2}n(n+1)} z^n \prod_{j=1}^{r}(q^{1+n}y_j;q)_\infty 
\end{equation}
for $y=(y_1,\dots,y_r)$ and its specialization 
\begin{equation}
  \label{bmxw}
  b^{(k)}_{m}(x,w;q)
  % = \frac{1}{x_k^{m+A/2}}
  = \frac{1}{x_k^{m}}
  G_{A,r}\left(\frac{1}{x_k} x, x_k^{-A} w^{-1} q^{m};q \right)
  q\end{equation}
for $x=(x_1,\dots,x_r)$ 
and its renormalization 
\begin{equation}
  \label{Bmxw}
  B^{(k)}_{m}(x,w;q) =
  \th(-q^{-1/2}x_k;q)^{-A+1}\th(-q^{-1/2}x_k w;q)^{-1}\th(w;q)
  b^{(k)}_{m}(x,w;q)
\end{equation}
for $k=1,\dots,r$.

\begin{theorem}
  \label{thm.Zfac}
  \rm{(a)} The descendant state-integral can be expressed in terms of
  the descendant holomorphic blocks by
\begin{equation}
  \label{Z-fac}
  z_{A,r,m,\mu}(u,t;\tau) 
  =
  B_{-\mu}(\tx,\tw;\tq^{-1})^T \Delta(\tau) B_m(x,w;q),
  \qquad (m, \mu \in \BZ)
\end{equation}
where 
\begin{equation}
  x_j = \re^{2\pi \bb u_j},\quad \tx_j =
  \re^{2\pi \bb^{-1}u_j},\quad
  w = \re^{2\pi\bb t},\quad \tw = \re^{2\pi\bb^{-1}t} \,.
\end{equation}
and 
\begin{equation}
\label{Dr}
\Delta(\tau) =
%\re^{\frac{\ri\pi}{4}(2A+1)+\frac{\ri\pi}{12}(4A+1)(\tau+\tau^{-1})}
%\md{1}
\re^{-\frac{\ri\pi}{4}+\frac{(A-2)\ri\pi}{12}(\tau+\tau^{-1})}\md{1}
\end{equation}
and $B_m(x,w;q)=(B^{(1)}_m(x,w;q), \dots, B^{(r)}_m(x,w;q))^T$. 
Consider the matrix $W_m(x,w;q)$ defined by
\begin{equation}
  \label{Wrm}
  W_m(x,w;q) =
  \begin{pmatrix}
    B^{(1)}_{m}(x,w;q) & \ldots & B^{(r)}_{m}(x,w;q) \\
    \vdots & & \vdots \\
    B^{(1)}_{m+r-1}(x,w;q) & \ldots & B^{(r)}_{m+r-1}(x,w;q)
  \end{pmatrix}
\end{equation}
\rm{(b)} The entries of $W_m(x,w;q)$ are holomorphic
functions of $|q| \neq 1$ and meromorphic functions of $(x,w) \in
(\BC^*)^r \times \BC^*$
with poles in $x_j \in q^\BZ$ (for $j=1,\dots,r$) and
$w \in q^\BZ$ of order at most $r$. 
  \newline
  \rm{(c)} The columns of $W_m(x,w;q)$ are a basis of solutions of the
  linear $q$-difference equation $\Bhat(S_m,q^m,x,w,q) f_m(x;q)=0$ for
  $|q| \neq 1$ and $m \in \BZ$ where 
    \begin{equation}
  \label{bkm}
  \Bhat(S_m,q^m,x,w,q) = \prod_{k=1}^r(1-x_i S_m) - (-q)^A w^{-1} q^{m} S_m^A \,.
  \end{equation}
  In particular, $m \mapsto z_{A,r,m,\mu}(u,t;\tau)$ is annihilated by
  the operator $\Bhat(S_m,q^m,x,w,q)$.
\end{theorem}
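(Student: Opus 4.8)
The plan is to prove the three parts in turn, with part (c) --- and in particular the annihilation of the descendant state-integral --- resting on a single telescoping identity for the auxiliary function $G_{A,r}$. First I would establish part (a) exactly as in the proofs of Theorems~\ref{thm.41a} and~\ref{thm.52a}: close the contour of the absolutely convergent integral~\eqref{I1dd} in the upper half-plane and sum the residues of the $r$ towers of poles contributed by the factors $\Phi_\bb(v+u_j)$. The quasi-periodicity and product expansion of Faddeev's quantum dilogarithm turn each tower into one of the $q$-hypergeometric blocks $B^{(k)}_m(x,w;q)$ together with its $\tq$-partner, and Lemma~\ref{lem.thetaf} supplies the Gaussian and theta prefactors that assemble into the scalar matrix $\Delta(\tau)$ of~\eqref{Dr}. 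Part (b) then follows from the ratio test applied to the series~\eqref{Gar}, which converges for all $(y,z)$ when $|q|<1$; the poles of $W_m$ at $x_j,w\in q^\BZ$ of order at most $r$ are produced by the $\th$-prefactors of~\eqref{Bmxw} and by the $q\mapsto q^{-1}$ continuation of Section~\ref{sec.elliptic}.

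For part (c) the key step is to show that each block satisfies $\Bhat(S_m,q^m,x,w,q)B^{(k)}_m=0$. Since the $\th$-prefactors in~\eqref{Bmxw} are independent of $m$, it suffices to prove the same recursion for $b^{(k)}_m(x,w;q)=x_k^{-m}G_{A,r}(y,z;q)$, where I write $y=x/x_k$ (so that $y_k=1$) and $z=x_k^{-A}w^{-1}q^m$. Tracking the action of the shift gives $S_m^a b^{(k)}_m = x_k^{-m-a}G_{A,r}(y,q^a z;q)$, and using $e_a(y)=x_k^{-a}e_a(x)$ for the elementary symmetric polynomials together with $q^m x_k^{-A}w^{-1}=z$ one finds
\begin{equation}
\Bhat\, b^{(k)}_m = x_k^{-m}\Big(\prod_{i=1}^r(1-y_i T)\,G_{A,r}(y,z;q)-(-q)^A z\,T^A G_{A,r}(y,z;q)\Big),
\end{equation}
where $T$ is the shift $z\mapsto qz$. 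On the summand $c_n=(-1)^{An}q^{\frac{A}{2}n(n+1)}z^n\prod_j(q^{1+n}y_j;q)_\infty$ of~\eqref{Gar}, the factorization $(1-y_jq^n)(q^{1+n}y_j;q)_\infty=(q^n y_j;q)_\infty$ collapses $\prod_i(1-y_iT)c_n$ to $P_n:=(-1)^{An}q^{\frac{A}{2}n(n+1)}z^n\prod_j(q^n y_j;q)_\infty$, while a direct computation of the quadratic exponent shows $(-q)^A z\,T^A c_n = P_{n+1}$. Hence the bracket telescopes to the single boundary term $P_0=\prod_j(y_j;q)_\infty$, which vanishes because $y_k=1$ forces the factor $(1;q)_\infty=0$. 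This proves the recursion for $|q|<1$, and the case $|q|>1$ follows by the $q\mapsto q^{-1}$ mechanism of Section~\ref{sec.elliptic}.

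With the recursion in hand the remaining assertions are immediate. The factorization~\eqref{Z-fac} writes $z_{A,r,m,\mu}$ as $B_{-\mu}(\tx,\tw;\tq^{-1})^T\Delta(\tau)B_m(x,w;q)$, in which only the factor $B_m(x,w;q)$ depends on $m$ and $\Delta(\tau)$ is scalar; since each component of $B_m$ is killed by $\Bhat(S_m,q^m,x,w,q)$ and this operator acts purely through $S_m$, linearity yields $\Bhat\, z_{A,r,m,\mu}=0$, which is the final ``in particular'' claim. To upgrade the solution space to a genuine basis of rank $r$ I would show $\det W_m(x,w;q)\neq 0$ by the first-order-recursion-plus-limit argument used for~\eqref{W410} and~\eqref{W520}, computing $\det W_0$ from the $z\to 0$ asymptotics of $G_{A,r}$ as in~\eqref{eq:detU52}.

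I expect the main obstacle to be part (a): for general $(A,r)$ one must organise the residue sum so that the $r$ pole-towers match exactly the $r$ blocks $B^{(k)}$, and pin down the precise roots of unity and $(\tau+\tau^{-1})$-powers entering $\Delta(\tau)$ in~\eqref{Dr}. This is the bookkeeping carried out by hand for $\knot{4}_1$ and $\knot{5}_2$; the substance is the same, but a uniform formulation over all $(A,r)$ with $r>A>0$ requires care with the convergence of the closed contour and with the normalizations in~\eqref{bmxw}--\eqref{Bmxw}. By contrast, the telescoping identity driving part (c), and hence the annihilation of $z_{A,r,m,\mu}$, is robust and essentially independent of $(A,r)$.
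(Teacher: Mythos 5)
Your proof takes essentially the same route as the paper's: residues plus Lemma~\ref{lem.thetaf} for (a), convergence/continuation properties of $G_{A,r}$ for (b), and for (c) the translation of $\Bhat$ through~\eqref{bmxw} into the $z$-shift operator~\eqref{Gzrec} acting on $G_{A,r}$, whose kernel property the paper attributes to a standard creative-telescoping argument. Your version is in fact slightly sharper on two points: you make explicit that the telescoping leaves the boundary term $P_0=\prod_j(y_j;q)_\infty$, which vanishes only because the specialization in~\eqref{bmxw} has $y_k=1$ (as literally stated in the paper, the claim that~\eqref{Gzrec} annihilates $z\mapsto G_{A,r}(y,z;q)$ fails for generic $y$ precisely by this inhomogeneous term), and you sketch the Wronskian-determinant argument needed to upgrade ``solutions'' to ``a basis of solutions'' in (c), a step the paper's proof does not address.
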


\begin{proof}
  For part (a), summing up all the residues of the integrand
  of~\eqref{I1d} in the upper half-plane as in~\cite{GK:qseries}, we
  find that 
\begin{align}
  \label{eq:Ira-fac}
  z_{A,r,m,\mu}(u,t;\tau)
  &=
    \re^{-\frac{\pi\ri}{12} +\frac{1}{3}\pi\ri c_\bb^2+2\pi\ri c_\bb t}
    \sum_{k=1}^r \re^{-A\pi\ri(u_k-c_\bb)^2-2\pi\ri u_k t} 
    b^{(k)}_{m}(x,w;q) b^{(k)}_{\mu}(\tx,\tw;\tq^{-1}) \\
    \label{eq:Ira-fac2}
  &=
    \re^{-\frac{\pi\ri}{12}+\frac{1}{3}\pi\ri c_\bb^2+\frac{\pi\ri}{12}(A-1)(\tau+\tau^{-1})}
    \sum_{k=1}^r B^{(k)}_{m}(x,w;q) B^{(k)}_{\mu}(\tx,\tw;\tq^{-1})
  \end{align}
  The last equation follows from~\eqref{ebx2} (which takes care of
  $\re^{-A \pi \ri (u_k-c_\bb)^2}$) and~\eqref{ebx3} (which takes
  care of the $t$-terms under the assumption that $u_k t = \rho_k u$
  and $t = \rho u$ for integers $\rho$ and $\rho_k$) 

  For part (b), note that $G_{A,r}(y,z;q)$ is symmetric with respect
  to permutation of the coordinates of $y$ and that the specialization
  to $y_r=1$ is given by
\begin{equation}
\label{Garr}
 G_{A,r}(y,z;q)|_{y_r=1} 
    = \sum_{n=0}^\infty
    (-1)^{An} \frac{q^{\frac{A}{2}n(n+1)}}{(q;q)_n} z^n
    \prod_{j=1}^{r-1}(q^{1+n}y_j;q)_\infty, \qquad (|q| \neq 1)\,.
\end{equation}
It follows that $G_{A,r}(y,z;q)|_{y_r=1}$ is holomorphic for $(y,z) \in
\BC^{r-1} \times \{1\} \times \BC$ when $|q|<1$ and meromorphic in
$(y,z) \in (\BC^*)^{r-1} \times \{1\} \times \BC^*$ with
poles in $y_j \in q^\BN$ for $j=1,\dots,r$. Since $b^{(k)}_m(x,w;q)$
are expressed in terms of a specialization of $G_{A,r}(y,z;q)|_{y_r=1}$,
part (b) follows. 

  Part (c) follows from Equation~\eqref{bmxw} and the fact (proven
  by a standard creative telescoping argument) that the function
  $z \mapsto G_{A,r}(y,z;q)$ is annihilated by the operator
  \begin{equation}
  \label{Gzrec}
  \prod_{k=1}^r (1-y_k L_z) - (-q)^A z L_z^A 
  \end{equation}
  where $L_z$ shifts $z$ to $q z$.
\end{proof}

%%%%%%%%%%%%%%%%%%%%%%%%%%%%%%%%%%%%%%%%%%%%%%%%%%%%%%%%%%%%%%%%%%%%%%%%%%%%
%%%%%%%%%%%%%%%%%%%%%%%%%%%%%%%%%%%%%%%%%%%%%%%%%%%%%%%%%%%%%%%%%%%%%%%%%%%%

\bibliographystyle{hamsalpha}
\bibliography{biblio2}
\end{document}